\documentclass[11pt]{article}

\usepackage{verbatim,latexsym,amsfonts,amsmath,amssymb,graphicx,fancyhdr,hyperref,asymptote}
\usepackage{appendix,latexsym,amsfonts,amsmath,amssymb,graphicx,hyperref,amsthm,soul,verbatim,authblk}
\usepackage[framemethod=tikz]{mdframed}

\setlength{\textwidth}{6.25in} \setlength{\oddsidemargin}{0in}
\setlength{\textheight}{20 cm}

\newcommand{\EE}{\mathbb{ E}}
\newcommand{\PP}{\mathbb{P}}

\newcommand{\R}{\mathbb{R}}
\newcommand{\C}{\mathbb{C}}
\newcommand{\Q}{\mathbb{Q}}

\newcommand{\HH}{\mathbb{H}}
\newcommand{\N}{\mathbb{N}}
\newcommand{\D}{\mathbb{D}}

\newcommand{\pa}{\partial}

\newcommand{\F}{{\cal F}}

\newcommand{\no}{\noindent}

\newcommand{\aeq}{\overset{\mathrm{ae}}{=}}

\def\eps{\varepsilon}
\def\til{\widetilde}
\def\ha{\widehat}
\def\sem{\setminus}
\def\lin{\overline}
\def\ulin{\underline}

\def\del{\delta}

\def\L{{\cal L}}

\def\I{{\cal I}}
\def\lr{\leftrightarrow}

\DeclareMathOperator{\rad}{rad}
\DeclareMathOperator{\sign}{sign} \DeclareMathOperator{\diam}{diam}
\DeclareMathOperator{\dist}{dist} 
\DeclareMathOperator{\hcap}{hcap} 
\DeclareMathOperator{\Imm}{Im } \DeclareMathOperator{\Ree}{Re }

\DeclareMathOperator{\mA}{m} 
 
 \DeclareMathOperator{\cc}{c}
\DeclareMathOperator{\bb}{b} \DeclareMathOperator{\doub}{doub}
\DeclareMathOperator{\disj}{disj} \DeclareMathOperator{\Hull}{Hull}

\theoremstyle{plain}
\newtheorem{Theorem}{Theorem}[section]
\newtheorem{Lemma}[Theorem]{Lemma}
\newtheorem{Corollary}[Theorem]{Corollary}
\newtheorem{Proposition}[Theorem]{Proposition}
\theoremstyle{definition}
\newtheorem{Definition}[Theorem]{Definition}
\newtheorem{Remark}[Theorem]{Remark}

\numberwithin{equation}{section}
\newcommand{\BGE}{\begin{equation}}
\newcommand{\BGEN}{\begin{equation*}}
\newcommand{\EDE}{\end{equation}}
\newcommand{\EDEN}{\end{equation*}}

\begin{document}
\title{Two-curve Green's function for $2$-SLE: the boundary case}
\author{Dapeng Zhan
}
\affil{Michigan State University}
\maketitle

\begin{abstract}
   We prove that for $\kappa\in(0,8)$, if $(\eta_1,\eta_2)$ is a $2$-SLE$_\kappa$ pair in a simply connected domain $D$ with an analytic boundary point $z_0$, then  $\lim_{r\to 0^+}r^{-\alpha} \PP[\dist(z_0,\eta_j)<r,j=1,2]$ converges to a positive number for some $\alpha>0$, which is called the  two-curve Green's function. The exponent $\alpha$ equals $\frac{12}{\kappa}-1$ or $2(\frac{12}{\kappa}-1)$ depending on whether $z_0$ is  one of the endpoints of $\eta_1$ and $\eta_2$. We also find the convergence rate and the exact formula of the Green's function up to a multiplicative constant. To derive these results, we construct   two-dimensional diffusion processes and use orthogonal polynomials to obtain their transition density.
\end{abstract}

\tableofcontents

\section{Introduction}
\subsection{Main results}
This paper is the follow-up of   \cite{Two-Green-interior}, in which we proved the existence of two-curve Green's function for $2$-SLE$_\kappa$ at an interior point, and obtained the formula of the Green's function up to a multiplicative constant. In the present paper, we will study the case when the interior point is replaced by a boundary point. 

As a particular case of multiple SLE$_\kappa$, a $2$-SLE$_\kappa$ consists of two random curves in a simply connected domain connecting two pairs of boundary points (more precisely, prime ends), which satisfy the property that, when any one curve is given, the conditional law of the other curve is that of a chordal SLE$_\kappa$ in a complement domain of the first curve.

The two-curve Green's function of a $2$-SLE$_\kappa$ is about the rescaled limit of the probability that the two curves in the $2$-SLE$_\kappa$ both approach a marked point in $\lin D$. More specifically, it was proved in \cite{Two-Green-interior} that, for any $\kappa\in(0,8)$, if $(\eta_1,\eta_2)$ is a $2$-SLE$_\kappa$ in $D$, and $z_0\in D$, then the limit
\BGE G(z_0):=\lim_{r\to 0^+} r^{-\alpha}\PP[\dist(\eta_j,z_0)<r,j=1,2] \label{Gz0}\EDE
converges to a positive number, where the exponent $\alpha$ is $\alpha_0:=\frac{(12-\kappa)(\kappa+4)}{8\kappa}$. The limit $G(z_0)$ is called the (interior) two-curve Green's function for  $(\eta_1,\eta_2)$. The paper \cite{Two-Green-interior} also derived the convergence rate and the exact formula of $G(z_0)$ up to an unknown constant.

In this paper we study the limit in the case that $z_0\in\pa D$ assuming that $\pa D$ is analytic near $z_0$. Below is our  main theorem.

\begin{Theorem}
Let $\kappa\in(0,8)$. Let $(\eta_1,\eta_2)$ be a $2$-SLE$_\kappa$ in a simply connected domain $D$. Let $z_0\in\pa D$. Suppose $\pa D$ is analytic near $z_0$.   We have the following results in two cases.
\begin{enumerate}
	\item [(A)] If $z_0$ is not any endpoint of $\eta_1$ or $\eta_2$, then the limit in (\ref{Gz0}) exists and lies in $(0,\infty)$ for $\alpha= \alpha_1= \alpha_2:=2(\frac{12}{\kappa}-1)$.
	\item [(B)] If $z_0$ is one of the endpoints of $\eta_1$ and $\eta_2$, then the limit in (\ref{Gz0}) exists and lies in $(0,\infty)$ for $\alpha=\alpha_3:= \frac{12}{\kappa}-1$.
\end{enumerate}
Moreover, in each case we may compute $G_D(z_0)$   up to some constant $C>0$ as follows. Let $F$ denote the hypergeometric function $_2F_1(\frac 4\kappa,1-\frac 4\kappa;\frac 8\kappa,\cdot)$.
Let $f$ map $D$ conformally onto $\HH$ such that $f(z_0)=\infty$. Let $J$ denote the map $z\mapsto -1/z$.
\begin{enumerate}
	\item [(A1)] Suppose Case (A) happens and none of $\eta_1$ and $\eta_2$ separates $z_0$ from the other curve. We label the $f$-images of the four endpoints of $\eta_1$ and $\eta_2$ by $v_-<w_-<w_+<v_+$. Then
	$$G_D(z_0)= C_1 |(J\circ f)'(z_0)|^{\alpha_1} G_{1 }(\ulin w;\ulin v),$$
	where $C_1>0$ is a constant depending only on $\kappa$, and
\BGE G_1(\ulin w;\ulin v):=\prod_{\sigma\in\{+,-\}}( |w_\sigma-v_\sigma|^{\frac 8\kappa-1} |w_\sigma-v_{-\sigma}|^{\frac 4\kappa} ) F\Big(\frac{(w_+-w_-)(v_+-v_-)}{(w_+-v_-)(v_+-w_-)}\Big)^{-1}.\label{G1(w,v)}\EDE
	\item [(A2)] Suppose Case (A) happens and  one of $\eta_1$ and $\eta_2$ separates $z_0$ from the other curve. We label the $f$-images of the four endpoints of $\eta_1$ and $\eta_2$ by $v_-<w_-<w_+<v_+$. Then
	$$G_D(z_0)= C_2 |(J\circ f)'(z_0)|^{\alpha_2} G_{2}(\ulin w;\ulin v)$$
where $C_2>0$ is a constant depending only on $\kappa$, and
 \BGE G_2(\ulin w;\ulin v):=\prod_{u\in\{w,v\}} |u_+-u_-|^{\frac 8\kappa -1} \prod_{\sigma\in\{+,-\}}  |w_\sigma-v_{-\sigma}|^{\frac {4}\kappa} F\Big(\frac{(v_+-w_+)(w_--v_-)}{(w_+-v_-)(v_+-w_-)}\Big)^{-1}.  \label{G2(w,v)}\EDE
	\item [(B)] Suppose Case (B) happens. We label the $f$-images of the other three endpoints of $\eta_1$ and $\eta_2$ by $w_+,w_-,v_+$, such that $f^{-1}(v_+)$ and $z_0$ are endpoints of the same curve, and $w_+,v_+$ lie on the same side of $w_-$. Then
	$$G_D(z_0)= C_3 |(J\circ f)'(z_0)|^{\alpha_3} G_{3}(\ulin w;v_+ ),$$
where $C_3>0$ is a constant depending only on $\kappa$, and
\BGE G_3(\ulin w;v_+)=|w_+-w_-|^{\frac 8\kappa -1}   |v_+-w_{-}|^{\frac {4}\kappa}  F\Big(\frac{ v_+-w_+ }{v_+-w_- }\Big)^{-1}.  \label{G3(w,v)}\EDE
\end{enumerate} \label{main-Thm1}
\end{Theorem}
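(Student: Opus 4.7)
The plan is to follow the strategy used in \cite{Two-Green-interior} for the interior case, adapted to the boundary setting in four conceptual steps.

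First, I would use conformal invariance of $2$-SLE$_\kappa$ to reduce to a canonical problem in $\mathbb{H}$. The hypothesis that $\pa D$ is analytic at $z_0$ guarantees that $f$ extends analytically past $z_0$, so small Euclidean neighborhoods of $z_0$ in $D$ correspond, up to the factor $|(J\circ f)'(z_0)|^{-1}$, to complements of large half-disks in $\mathbb{H}$. This explains the derivative prefactor in each formula and reduces the task to computing, in $\mathbb{H}$ with $z_0=\infty$ and marked boundary points on $\mathbb{R}$, the rescaled probability that both curves exit every compact set, which is the content of the ``pure'' Green's functions $G_1,G_2,G_3$.

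Second, I would grow the two curves of the pair simultaneously via a commuting SLE construction, parameterized by the half-plane capacity seen from $\infty$. Modulo the $2$-parameter subgroup of M\"obius maps fixing $\infty$, the configuration of marked real points reduces to a $2$-dimensional process in Case (A) (naturally described by the cross-ratios appearing in the arguments of $F$) and a $1$-dimensional process in Case (B). By the standard partition-function description of multiple SLE, the $2$-SLE$_\kappa$ measure is obtained from two independent chordal SLEs by a Girsanov weighting whose density is the hypergeometric partition function $F$ evaluated at the cross-ratio of the marked points.

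Third, I would identify a positive local martingale $M_t$ whose terminal value encodes the rescaled hitting probability. Guided by the form of the answer, $M_t$ should equal, up to constants, the candidate prefactor $|(J\circ \til f_t)'(z_0)|^{\alpha_j}$ times the explicit function $G_j$ evaluated at the evolved positions of the marked points, where $\til f_t$ is the appropriate mapping-out function. Verifying the martingale property via It\^o's formula reduces to showing that $G_j$ solves a second-order PDE whose diffusion and drift are determined by the driving SDEs; this in turn reduces to the hypergeometric equation satisfied by $F={}_2F_1(\tfrac{4}{\kappa},1-\tfrac{4}{\kappa};\tfrac{8}{\kappa};\cdot)$. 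The boundary exponents $\tfrac{8}{\kappa}-1$ and $\tfrac{4}{\kappa}$ on the factors $|w_\sigma-v_\sigma|$ and $|w_\sigma-v_{-\sigma}|$ are then forced by matching powers under reparametrization, and their sum produces the total exponents $\alpha_1=\alpha_2=2(\tfrac{12}{\kappa}-1)$ in Case (A) and $\alpha_3=\tfrac{12}{\kappa}-1$ in Case (B).

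Fourth, applying Girsanov to tilt by $M_t$ produces a new diffusion on the configuration space. I would prove that this tilted diffusion has a unique invariant distribution $\mu$ and that its transition density converges to $\mu$ exponentially fast. This is where orthogonal polynomials enter: in the one-dimensional Case (B) the tilted generator should be a Jacobi-type operator whose eigenfunctions are classical orthogonal polynomials, giving an $L^2$ expansion $p_t(x,y)=\rho(y)\sum_n e^{-\lambda_n t}\phi_n(x)\phi_n(y)$ with a positive spectral gap, while the two-dimensional Cases (A1) and (A2) require a two-variable orthogonal polynomial system adapted to $\mu$. Combining this exponential ergodicity with a standard boundary one-arm estimate for the hitting time of a neighborhood of $\infty$ by a single SLE curve yields the existence of the limit, a power-law convergence rate, and the formulas stated in (A1), (A2), (B) up to overall constants $C_i$. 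I expect the main obstacle to lie in this last step, specifically in constructing the two-variable orthogonal polynomial basis that diagonalizes the tilted generator in Case (A) and extracting a quantitative spectral gap; Case (B), being one-dimensional, should fall out of a classical Jacobi analysis with only bookkeeping changes.
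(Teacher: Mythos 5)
Your high-level architecture — conformal reduction to $\HH$ with $z_0=\infty$, synchronized two-arm growth, a Girsanov relation to a tractable commuting SLE pair, orthogonal-polynomial diagonalization, and exponential ergodicity — matches the paper's. But two of your load-bearing choices are misstated. First, your reading of Case (B) as a one-dimensional Jacobi problem would lead you astray inside the two-curve route: the paper still grows the single curve $\eta_w$ from both ends $w_\pm$ with a two-dimensional time $\ulin t=(t_+,t_-)$, and the resulting state $\ulin R=(R_+,R_-)\in[0,1]^2$ is genuinely two-dimensional (the $3-2=1$ count applies to the initial boundary data, not to the evolving pair). Rather than inventing a one-dimensional version, the paper inserts artificial force points $v_0=(w_++w_-)/2$ and $v_-=2v_0-v_+$ and feeds Case (B) into the same two-variable Jacobi-polynomial system used for Cases (A1)–(A2), with the degenerate weights $\rho_0=\rho_-=0$, $\rho_+=2$. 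Second, your parametrization ``by half-plane capacity seen from $\infty$'' omits the balance condition: the time curve $\ulin u(t)$ is defined by \emph{two} constraints, $V_+-V_-=e^{2t}(v_+-v_-)$ (exponential growth of harmonic measure) and $V_+-V_0=V_0-V_-$ (equal harmonic measure of the two arms). Without the second, Lemma \ref{lem-uj} identifying the speeds $u_\sigma'$ fails and $(R_+,R_-)$ does not satisfy a clean autonomous SDE.

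There is also a large ingredient absent from your sketch. For $\kappa\in(4,8)$ the two arms touch, so the commuting pair of Loewner curves has intersecting hulls; the entire deterministic theory of Section \ref{section-deterministic} (extending driving and force-point functions across intersections, the modified-line construction $\R_{\ulin w}$, continuity of $g^{\ulin w}_{K(\ulin t)}$, the relation $C_+=D_-$ when the hulls meet) exists precisely to make ``grow the two curves simultaneously'' well defined past the first collision, and this is where most of the technical novelty lies. Relatedly, the Girsanov chain is two links (commuting hSLE pair $\to$ commuting SLE$_\kappa(2,\ulin\rho)$ pair $\to$ independent coupling), not a single weighting from independent SLEs, and the martingale $M_j=G_j(W_+,W_-;V_+,V_-)$ must be shown to extend continuously to $\R_+^2$, vanish off ${\cal D}_j$, and remain a.s.\ positive on ${\cal D}_j$ even when $\eta_+(t_+)=\eta_-(t_-)$ — see Lemmas \ref{M-cont}, \ref{M-cont2} and \ref{positive-M-T}. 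These positivity and boundary-behavior statements are exactly what lets the paper invert the RN derivative and read off the transition density in Lemmas \ref{transition-1}, \ref{transition-2}, \ref{transition-3}.
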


Our long-term goal is to prove the existence of Minkowski content of double points of chordal SLE$_\kappa$ for $\kappa\in(4,8)$, which may be transformed into the existence of Minkowski content of the intersection of the two curves in a $2$-SLE$_\kappa$. Following the approach in \cite{LR}, we need to prove the existence of two-curve two-point Green's function for $2$-SLE$_\kappa$, where Theorem \ref{main-Thm1} is expected to serve as the boundary estimate in the proof.

\subsection{Strategy}
For the proof of the main theorem, we use a two-curve technique, which was introduced in \cite{Two-Green-interior}, and  recently used in \cite{Green-cut} to study the Green's function for the cut points of chordal SLE$_\kappa$.

By conformal invariance of $2$-SLE$_\kappa$, we may assume that  $D=\HH:=\{z\in\C:\Imm z>0\}$, and $z_0=\infty$. It suffices to consider the limit $\lim_{L\to \infty} L^{\alpha} \PP[\eta_j\cap \{|z|>L\}\ne\emptyset,j=1,2]$.  In Case (A) of Theorem \ref{main-Thm1}, we label the four endpoints of $\eta_1$ and $\eta_2$ by $v_+>w_+>w_->v_-$. There are two possible link patterns: $(w_+\lr v_+;w_-\lr v_-)$ and $(w_+\lr w_-;v_+\lr v_-)$, which respectively correspond to Case (A1) and Case (A2) of Theorem \ref{main-Thm1}.

For the first link pattern, we label the two curves by $\eta_+$ and $\eta_-$. By translation and dilation, we may assume that $v_\pm=\pm1$. We assume that $0\in[w_-,w_+]$ and let $v_0=0$. We then grow $\eta_+$ and $\eta_-$ simultaneously from $w_+$ and $w_-$ towards $v_+$ and $v_-$, respectively, up to the time that either curve reaches its target, or separates $v_+$ or $v_-$ from $\infty$. For each $t$ in the lifespan $[0,T^u)$, let $H_t$ denote the unbounded connected component of $\HH\sem (\eta_+[0,t]\cup \eta_-[0,t])$. During the lifespan $[0,T^u)$ of the process, the speeds of  $\eta_+$ and $\eta_-$ are controlled by two factors:
\begin{enumerate}
  \item [(F1)] the harmonic measure of $[v_-,v_+]\cup \eta_+[0,t]\cup \eta_-[0,t]$ in $H_t$ viewed from $\infty$ increases in $t$ exponentially with factor $2$, and
  \item [(F2)]   $[v_-,v_0]\cup \eta_-[0,t]$ and $[v_0,v_+]\cup \eta_+[0,t]$ have the same harmonic measure viewed from $\infty$.
\end{enumerate}
Suppose $g_t$ maps $H_t$ conformally onto $\HH$ and satisfies $g_t(z)=z+o(1)$ as $z\to\infty$. Define
$V_+(t)=\lim_{x\downarrow \max([v_0,v_+]\cup \eta_+[0,t]\cap \R)} g_t(x)$ and $V_-(t)=\lim_{x\uparrow \max([v_-,v_0]\cup \eta_-[0,t]\cap \R)} g_t(x)$. Then (F1) is equivalent to that
$V_+(t)-V_-(t)=e^{2t}(v_+-v_-)$. The inverse $g_t^{-1}$ extends continuously to $\lin\HH$. We will see that there is a unique $V_0(t)\in (V_-(t),V_+(t))$ such that   $g_t^{-1}$  maps $[ V_0(t),V_\sigma(t)]$ into $[v_0,v_\sigma]\cup \eta_\sigma[0,t]$ for $\sigma\in\{+,-\}$. Then (F2) is equivalent to that $V_+(t)-V_0(t)=V_0(t)-V_-(t)$. In the case $\kappa\le 4$, $V_\sigma(t)$ is simply $g_t(v_\sigma)$ for $\sigma\in\{+,-,0\}$. We will also be able to deal with the case $\kappa\in(4,8)$, which requires more work.

At the time $T^u$, one of the two curves, say $\eta_+$, separates $v_+$ or $v_-$ from $\infty$. If $\eta_+$ separates $v_+$, the rest of $\eta_+$ grows in a bounded connected component of $\HH\sem \eta_+[0,T^u)$; if $\eta_+$ separates $v_-$, the whole $\eta_-$ is disconnected from $\infty$ by $\eta_+[0,T^u)$. Thus, after $T^u$, at least one curve can not get closer to $\infty$. So we may focus on the parts of $\eta_+$ and $\eta_-$ before $T^u$.
Using Koebe's $1/4$ theorem (applied to $g_t$ at $\infty$) and Beurling's estimate (applied to a planar Brownian motion started near $\infty$), we find that for $0\le t<T^u$, the diameter of both $\eta_+[0,t]$ and $\eta_-[0,t]$ are comparable to $e^{2t}$.

We define a two-dimensional diffusion process $\ulin R(t)=(R_+(t),R_-(t))\in [0,1]^2$, $0\le t<T^u$, by $R_\sigma(t)=\frac{W_\sigma(t)-V_0(t)}{V_\sigma(t)-V_0(t)}$, $\sigma\in\{+,-\}$, where $W_\sigma(t)=g_t(\eta_\sigma(t))\in [V_0(t),V_\sigma(t)]$. Here $\eta_\sigma(t)$ is understood as a prime end of $H_t$.
We then use the knowledge of $2$-SLE$_\kappa$ partition function and a technique of orthogonal polynomials to derive the transition density  of $(\ulin R)$, which will play a central role in the proof of Case A1 of Theorem \ref{main-Thm1}.

For the link pattern $(w_+\lr w_-;v_+\lr v_-)$, we label the curves by $\eta_w$ and $\eta_v$. We observe that $\eta_v$ disconnects $\eta_w$ from $\infty$. Thus, for $L>\max\{|v_+|,|v_-|\}$, $\eta_w$ intersects $\{|z|>L\}$ implies that $\eta_v$ does the intersection as well. Then the two-curve Green's function reduces to a single-curve Green's function. But we will still use a two curve approach. We  assume that $v_\pm=\pm1$ and $0\in(w_-,w_+)$, and let $v_0=0$ as in the previous case. This time, we grow $\eta_+$ and $\eta_-$ simultaneously along the same curve $\eta_w$ such that $\eta_\sigma$ runs from $w_\sigma$ towards $w_{-\sigma}$, $\sigma\in\{+,-\}$. The growth is stopped if $\eta_+$ and $\eta_-$ together exhaust the range of $\eta_w$, or any of them disconnects its target from $\infty$. The speeds of the curves are also controlled by  (F1) and (F2). Then we define $V_0,V_\pm,W_\pm,R_\pm$ in the same way as before, and derive the transition density of $\ulin R=(R_+,R_-)$, which also plays a central role in the proof.

In Case (B),  we may assume that $v_+=1$ and $w_++w_-=0$. Now we introduce two new points: $v_0=0$ and $v_-=-1$. Unlike the previous cases, $v_-$ is not an end point of any curve. For this case, we grow $\eta_+$ and $\eta_-$ simultaneously from $w_+$ and $w_-$ along the same curve $\eta_w$ as in Case (A2). The rest of the proof almost follows the same approach as in Case (A2).

\subsection{Outline}
Below is the outline of the paper. In Section \ref{section-prel}, we recall definitions, notations, and some basic results that will be needed in this paper. In Section \ref{section-deterministic} we develop a framework on a commuting pair of deterministic chordal Loewner curves, which do not cross but may touch each other. The work extends the disjoint ensemble of Loewner curves that appeared in \cite{reversibility,duality}. At the end of the section, we describe the way to grow the two curves simultaneously with properties (F1) and (F2). In Section \ref{section-commuting-SLE-kappa-rho}, we use the results from the previous section to study a pair of multi-force-point SLE$_\kappa(\ulin\rho)$ curves, which commute with each other in the sense of \cite{Julien}. We obtain a two-dimensional diffusion process $\ulin R(t)=(R_+(t),R_-(t))$, $0\le t<\infty$,   and derive its transition density using orthogonal two-variable polynomials. In Section \ref{section-other-commut}, we study three types of commuting pair of hSLE$_\kappa$ curves, which correspond to the three cases in Theorem \ref{main-Thm1}. We  prove that each of them is {\it locally} absolutely continuous w.r.t.\  a commuting pair of SLE$_\kappa(\ulin\rho)$ curves for certain force values, and also find the Radon-Nikodym derivative at different times. For each commuting pair of hSLE$_\kappa$ curves, we obtain a two-dimensional diffusion process $\ulin R(t)=(R_+(t),R_-(t))$ with random finite lifetime, and derive its transition density and quasi-invariant density. In the last section we finish the proof of Theorem \ref{main-Thm1}.

\section*{Acknowledgments}
The author thanks Xin Sun for suggesting the problem  on the (interior and boundary) two-curve Green's function for $2$-SLE.

\section{Preliminary} \label{section-prel}
We first fix some notation. Let $\HH=\{z\in\C:\Imm z>0\}$. For $z_0\in\C$ and $S\subset \C$, let $\rad_{z_0}(S)=\sup\{|z-z_0|:z\in S\cup\{z_0\}\}$. If a function $f$ is absolutely continuous on a real interval $I$, and $f'=g$ a.e.\ on $I$, then we write $f'\aeq g$ on $I$.  This means that $f(x_2)-f(x_1)=\int_{x_1}^{x_2} g(x)dx$ for any $x_1<x_2\in I$. Here $g$ may not be defined on a subset of $I$ with Lebesgue measure zero. We will also use ``$\aeq$'' for PDE or SDE in some similar sense.

\subsection{$\HH$-hulls and chordal Loewner equation}
A  relatively closed subset $K$ of $\HH$ is called an $\HH$-hull if $K$ is bounded and $\HH\sem K$ is a simply connected domain. For a set $S\subset\C$, if there is an $\HH$-hull $K$ such that $\HH\sem K$ is
the unbounded connected component of $\HH\sem \lin S$, then we say that $K$ is  the $\HH$-hull generated by $S$, and write $K=\Hull(S)$.
For an $\HH$-hull $K$, there is a unique conformal map $g_K$ from $\HH\sem K$ onto $\HH$ such that $g_K(z)=z+\frac cz+O(1/z^2)$ as $z\to \infty$ for some $c\ge 0$. The constant $c$, denoted by $\hcap(K)$, is called the $\HH$-capacity of $K$, which is zero iff $K=\emptyset$. We write $\hcap_2(K)$ for $\hcap(K)/2$. If $\pa(\HH\sem K)$ is locally connected, then $g_K^{-1} $ extends continuously from $\HH$ to $\lin\HH$, and we use $f_K$ to denote the continuation. If $K=\Hull(S)$, then we write $g_S,f_S, \hcap(S),\hcap_2(S)$ for $g_K,f_K,\hcap(K),\hcap_2(K)$, respectively.

 If $K_1\subset K_2$ are two $\HH$-hulls, then we define $K_2/K_1=g_{K_1}(K_2\sem K_1)$, which is also an $\HH$-hull. Note that $g_{K_2}=g_{K_2/K_1}\circ g_{K_1}$ and  $\hcap(K_2)=\hcap(K_2/K_1)+\hcap(K_1)$, which imply that $\hcap(K_1),\hcap(K_2/K_1)\le \hcap (K_2)$. If $K_1\subset K_2\subset K_3$ are $\HH$-hulls, then $K_2/K_1\subset K_3/K_1$ and
\BGE (K_3/K_1)/(K_2/K_1)=K_3/K_2. \label{K123}\EDE

Let $K$ be a non-empty $\HH$-hull. Let $K^{\doub}=\lin K\cup\{\lin z:z\in K\}$, where $\lin K$ is the closure of $K$, and $\lin z$ is the  complex conjugate of $z$. By Schwarz reflection principle, there is a compact set $S_K\subset\R$ such that $g_K$ extends to a conformal map from $\C\sem K^{\doub}$ onto $\C\sem S_K$. Let $a_K=\min(\lin{K}\cap \R)$, $b_K=\max(\lin{K}\cap\R)$,  $c_K=\min S_K$, $d_K=\max S_K$. Then the extended $g_K$ maps  $\C\sem (K^{\doub}\cup [a_K,b_K])$ conformally onto $\C\sem [c_K,d_K]$. Since $g_K(z)=z+o(1)$ as $z\to\infty$, by Koebe's $1/4$ theorem, $\diam(K)\asymp \diam(K^{\doub}\cup [a_K,b_K])\asymp d_K-c_K$.
\vskip 2mm

\no{\bf Example}.
Let $x_0\in\R$, $r>0$. Then $H:=\{z\in\HH:|z-x_0|\le r\}$ is an $\HH$-hull with $g_H(z)=z+\frac{r^2}{z-x_0}$, $\hcap(H)=r^2$, $a_H=x_0-r$, $b_H=x_0+r$, $H^{\doub}=\{z\in\C:|z-x_0|\le r\}$, $c_H=x_0-2r$, $d_H=x_0+2r$.
\vskip 2mm

The next proposition combines  \cite[Lemmas 5.2 and 5.3]{LERW}.
\begin{Proposition}
  If $L\subset K$ are two non-empty $\HH$-hulls, then $[a_K,b_K]\subset [c_K,d_K]$, $[c_L,d_L]\subset [c_K,d_K]$, and $[c_{K/L},d_{K/L}]\subset [c_K,d_K]$. \label{abcdK}
\end{Proposition}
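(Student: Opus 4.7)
The plan is to prove the three inclusions in the stated order, each step building on the previous. For the first inclusion $[a_K,b_K]\subset[c_K,d_K]$, I realize $K$ as the terminal hull $K_T$ of a chordal Loewner chain $(K_t)_{0\le t\le T}$ with continuous driving function $\xi(t)$, approximating $K$ if necessary by hulls generated by smooth curves. For $x>b_K$, the point $x$ lies in $(b_t,\infty)$ throughout the evolution, so $g_t(x)>d_t\ge \xi(t)$, and hence $\partial_t g_t(x)=2/(g_t(x)-\xi(t))>0$. Integrating from $0$ to $T$ yields $g_K(x)>x$, and letting $x\downarrow b_K$ gives $d_K\ge b_K$. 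The inequality $c_K\le a_K$ follows by the symmetric argument on $(-\infty,a_K)$.

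For the second inclusion $[c_{K/L},d_{K/L}]\subset[c_K,d_K]$, I use the composition identity $g_K=g_{K/L}\circ g_L$ extended by Schwarz reflection to $\C\sem K^{\doub}$. The extended $g_L$ is a bijection of $\C\sem L^{\doub}$ onto $\C\sem S_L$, and it maps $K^{\doub}\sem L^{\doub}$ onto $(K/L)^{\doub}$ after appropriate handling of real closures. So $g_K^{-1}(S_{K/L})=g_L^{-1}((K/L)^{\doub})\subset K^{\doub}$, giving $S_{K/L}\subset g_K(K^{\doub})=S_K\subset[c_K,d_K]$, and passing to convex hulls yields the desired inclusion. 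For the third inclusion $[c_L,d_L]\subset[c_K,d_K]$, it suffices to show $d_L\le d_K$; the inequality $c_K\le c_L$ is symmetric. If $d_L\le b_{K/L}$, then applying the first inclusion to $K/L$ and combining with the second inclusion gives $d_L\le b_{K/L}\le d_{K/L}\le d_K$. Otherwise $d_L>b_{K/L}$, and for $x$ slightly greater than $b_K$, monotonicity of $g_L$ together with $b_L\le b_K$ forces $g_L(x)>b_{K/L}$; applying the first step's inequality to $K/L$ at $y=g_L(x)$ then gives $g_K(x)=g_{K/L}(y)>y=g_L(x)$, and letting $x\downarrow b_K$ yields $d_K\ge d_L$.

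The main obstacle will be justifying the identity $g_K^{-1}(S_{K/L})=g_L^{-1}((K/L)^{\doub})$ used in the second step. This rests on how the Schwarz extension of $g_L$ acts on $\lin K\cap\R$, where distinct prime ends of $\HH\sem L$ may be identified: one must verify that the closure in $(K/L)^{\doub}$ correctly captures the images under $g_L$ of the real parts of $K\sem L$, which requires careful accounting of the boundary behavior of $g_L$ on $\lin L\cap\R$. The approximation step in the first inclusion also deserves a reference to the standard fact that every $\HH$-hull is the terminal hull of a Loewner chain with continuous driving function.
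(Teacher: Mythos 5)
The paper does not prove this proposition itself; it cites it as a combination of Lemmas 5.2 and 5.3 of the referenced LERW paper, so there is no internal proof to compare with. Your overall strategy (prove the three inclusions using $g_K=g_{K/L}\circ g_L$ and the Schwarz-reflected mapping properties) is sound and would work.

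However, one claimed justification is wrong: it is \emph{not} true that every $\HH$-hull is the terminal hull of a chordal Loewner chain with continuous driving function. That property is equivalent to the local growth condition $\bigcap_{\delta>0}\lin{K_{t+\delta}/K_t}=\{\text{pt}\}$, which fails for, e.g., a hull that is a disjoint union of two half-discs. The approximation \emph{idea} is fine (any hull can be approximated by hulls generated by analytic curves, e.g.\ $\Hull(g_K^{-1}(\{\Imm z>\eps\}))$, and $a_K,b_K,c_K,d_K$ vary continuously), but the stated ``standard fact'' should be replaced by an actual approximation argument. A cleaner route for the first inclusion, which avoids Loewner chains and approximation entirely and uses the same machinery the paper invokes in the very next proposition, is the Nevanlinna-type representation $g_K^{-1}(z)=z-\int_{S_K}\frac{d\mu_K(y)}{z-y}$ with $\mu_K\ge 0$ supported on $S_K$: for real $z>d_K$ every $z-y>0$, so $g_K^{-1}(z)<z$; letting $z\downarrow d_K$ gives $b_K\le d_K$, and symmetrically $c_K\le a_K$. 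If you do keep the Loewner-chain argument, be careful not to invoke $\xi(t)\le d_{K_t}$ (that is Proposition~\ref{winK}, which the paper proves \emph{from} this proposition); instead get $g_t(x)>\xi(t)$ for $x>b_K$ by continuity in $t$ together with $g_0(x)-\xi(0)=x-\ha w(0)>0$, since $\ha w(0)\in\lin K\cap\R\subset[a_K,b_K]$. The boundary-behavior point you flag in step two is genuinely subtle but does work: for $z\in\R\sem\lin K$, $g_L$ maps a full disc around $z$ conformally onto a neighborhood of $g_L(z)$ whose upper half avoids $K/L$, forcing $g_L(z)\notin\lin{K/L}$; hence $g_L(\C\sem K^{\doub})\subset\C\sem(K/L)^{\doub}$ and $S_{K/L}\subset S_K$ as you claim.
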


\begin{Proposition}
  For any $x\in\R\sem K^{\doub}$, $0<g_K'(x)\le 1$. Moreover, $g_K'$ is decreasing on $(-\infty,a_K)$ and increasing on $(b_K,\infty)$.
   \label{Prop-contraction}
\end{Proposition}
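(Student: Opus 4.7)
The plan is to obtain a Herglotz (Cauchy-transform) representation of $g_K^{-1}-\id$ on $\HH$, and then read off both the bound on $g_K'$ and its monotonicity by differentiating. The representation hinges on the single inequality $\Imm g_K(z)\le \Imm z$ for all $z\in\HH\sem K$. To establish this, I would apply the minimum principle to the bounded harmonic function $h(z):=\Imm z-\Imm g_K(z)$ on $\HH\sem K$: it vanishes on the real part of $\pa(\HH\sem K)$ outside $K^{\doub}$, and has non-negative boundary values equal to $\Imm z$ on $\pa K\cap\lin\HH$ since $g_K$ sends these prime ends into $S_K\subset\R$; the expansion $g_K(z)=z+\hcap(K)/z+O(1/z^2)$ gives $h(z)\to 0$ at $\infty$. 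Exhausting $\HH\sem K$ by $\{|z|<R\}$ and letting $R\to\infty$ then forces $h\ge 0$. Equivalently, $\psi(w):=g_K^{-1}(w)-w$ maps $\HH$ into $\lin\HH$, extends analytically across $\R\sem S_K$ with real values, and vanishes at $\infty$.

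By the standard Herglotz representation, there is a finite positive Borel measure $\nu$ supported on $S_K$ with $\psi(w)=\int_{S_K}\frac{d\nu(x)}{x-w}$, and by Schwarz reflection this identity extends to $\C\sem S_K$. Differentiating yields
\[(g_K^{-1})'(w)=1+\int_{S_K}\frac{d\nu(x)}{(x-w)^2}\ge 1\qquad(w\in\R\sem S_K).\]
Since $g_K$ is real-analytic and orientation-preserving on $\R\sem K^{\doub}$, one has $g_K'>0$, and the identity $g_K'(x)\cdot(g_K^{-1})'(g_K(x))=1$ then gives $0<g_K'(x)\le 1$.

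For the monotonicity, I would reuse the integral formula. For $x$ ranging over $S_K\subset[c_K,d_K]$, the integrand $(x-w)^{-2}$ is increasing in $w$ on $(-\infty,c_K)$ and decreasing in $w$ on $(d_K,\infty)$, so $(g_K^{-1})'$ inherits both monotonicities on those intervals. By Proposition \ref{abcdK}, $g_K$ maps $(-\infty,a_K)$ increasingly into $(-\infty,c_K)$ and $(b_K,\infty)$ increasingly into $(d_K,\infty)$, so the relation $g_K'=1/((g_K^{-1})'\circ g_K)$ inverts and composes these to give that $g_K'$ is decreasing on $(-\infty,a_K)$ and increasing on $(b_K,\infty)$. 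The only mildly subtle point, which I would flag as the main technical issue, is the boundary step: if $\pa K$ fails to be locally connected then $g_K$ need not extend continuously there, but the exhaustion argument only needs boundedness of $h$ near $\pa K$ (automatic) together with the fact that $\Imm g_K$ vanishes harmonic-measure-almost-everywhere on $\pa(\HH\sem K)$, which follows directly from the conformal image being $\HH$.
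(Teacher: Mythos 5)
Your proof is correct and follows essentially the same route as the paper: obtain a Cauchy-transform (Herglotz/Nevanlinna) representation $g_K^{-1}(w)-w=\int_{S_K}\frac{d\nu(x)}{x-w}$ with $\nu\ge 0$ supported on $S_K$, differentiate to read off $(g_K^{-1})'\ge 1$, and transfer the monotonicity of $(g_K^{-1})'$ on $(-\infty,c_K)$ and $(d_K,\infty)$ back to $g_K'$ through the identity $g_K'=1/((g_K^{-1})'\circ g_K)$. The only real difference is that the paper cites the representation as \cite[Lemma C.1]{BSLE}, whereas you re-derive it from the inequality $\Imm g_K\le \Imm$ and the Herglotz theorem (and you deduce monotonicity of $(g_K^{-1})'$ from pointwise monotonicity of $(x-w)^{-2}$ rather than from the sign of $(g_K^{-1})''$) — both minor, correct, and slightly more self-contained variations on the same argument.
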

\begin{proof}
 By \cite[Lemma C.1]{BSLE}, there is a  measure $\mu_K$ supported on $S_K$ with $|\mu_K|=\hcap(K)$ such that $g_K^{-1}(z)-z=\int_{S_K}  \frac{-1}{z-y}d\mu_K(y)$ for any $x\in\R\sem S_K$. Differentiating this formula and letting $z=x\in\R\sem S_K$, we get $(g_K^{-1})'(x)=1+\int_{S_K} \frac{1}{(x-y)^2}d\mu_K(y)\ge 1$. So  $0<g_K'\le 1$ on $\R\sem K^{\doub}$. Further differentiating  the integral formula w.r.t.\ $x$, we find that $(g_K^{-1})''(x)=\int_{S_K} \frac{-2}{(x-y)^3}d\mu_K(y)$ is positive on $(-\infty,c_K)$ and negative on $(d_K,\infty)$, which means that $(g_K^{-1})'$ is increasing on $(-\infty,c_K)$ and decreasing on $(d_K,\infty)$. Since $g_K$ maps  $(-\infty,a_K)$ and $(b_K,\infty)$ onto  $(-\infty,c_K)$ and $(d_K,\infty)$, respectively, we get the monotonicity of $g_K'$ .
\end{proof}

\begin{Proposition}
  If $K$ is an $\HH$-hull with $\rad_{x_0}(K)\le r$ for some $x_0\in\R$, then $\hcap(K)\le r^2$, $\rad_{x_0}(S_K)\le 2r$, and $ |g_K(z)-z|\le 3r$ for any $z\in\C\sem K^{\doub}$. \label{g-z-sup}
\end{Proposition}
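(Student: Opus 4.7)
The three claims should follow in order, with the first two being essentially monotonicity arguments and the third a maximum-principle estimate.

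For the first claim, set $H:=\{z\in\HH:|z-x_0|\le r\}$. Since $\rad_{x_0}(K)\le r$, we have $K\subset H$, and the Example gives $\hcap(H)=r^2$. By the identity $\hcap(K_2)=\hcap(K_1)+\hcap(K_2/K_1)$ recalled just before Proposition~\ref{abcdK}, monotonicity of $\hcap$ under inclusion yields $\hcap(K)\le\hcap(H)=r^2$.

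For the second claim, note that the same Example computes $[c_H,d_H]=[x_0-2r,x_0+2r]$. Applying Proposition~\ref{abcdK} to $K\subset H$ gives $[c_K,d_K]\subset[c_H,d_H]$, and since $S_K\subset[c_K,d_K]$ by definition, we conclude $S_K\subset[x_0-2r,x_0+2r]$, that is, $\rad_{x_0}(S_K)\le 2r$.

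The third claim is the one that needs care. The plan is to apply the maximum principle to the function $h(z):=g_K(z)-z$, which is holomorphic on the unbounded domain $U:=\C\sem K^{\doub}$. At infinity, the expansion $g_K(z)=z+\hcap(K)/z+O(1/z^2)$ gives $h(z)\to 0$. Moreover, $h$ is bounded on $U$: near $\infty$ it tends to $0$, while $g_K$ is a proper conformal map $U\to\C\sem S_K$, so $g_K(z)$ stays in a neighborhood of the compact set $S_K$ as $z$ approaches $\pa K^{\doub}$. For boundary values, if $z_n\in U$ converges to some point $z_*\in\pa K^{\doub}$, then $g_K(z_n)$ accumulates in $S_K$ (again by properness). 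Since $K^{\doub}\subset\lin{D(x_0,r)}$, we have $|z_*-x_0|\le r$, and by the second claim any limit point $y$ of $g_K(z_n)$ satisfies $|y-x_0|\le 2r$. Hence
\[
\limsup_{z\to\pa K^{\doub}}|h(z)|\le\limsup_{z\to\pa K^{\doub}}\bigl(|g_K(z)-x_0|+|z-x_0|\bigr)\le 2r+r=3r.
\]
The maximum modulus principle for bounded holomorphic functions on $U$ (with the behavior $h(\infty)=0$ handling the unbounded end) then gives $|h(z)|\le 3r$ throughout $\C\sem K^{\doub}$.

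I expect the main technical point is justifying the boundary convergence $g_K(z)\to S_K$ as $z\to\pa K^{\doub}$ without regularity assumptions on $\pa K^{\doub}$; one can bypass any prime-end subtleties by using properness of $g_K:U\to\C\sem S_K$ plus compactness of $S_K$, which is exactly how the $\limsup$ above is controlled. Everything else is a direct consequence of the Example, Proposition~\ref{abcdK}, and the additivity of $\hcap$.
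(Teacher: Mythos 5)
Your proof is correct and follows essentially the same route as the paper: compare $K$ with the half-disc $H=\{z\in\HH:|z-x_0|\le r\}$ to get the first two claims from the Example and Proposition~\ref{abcdK}, then apply the maximum modulus principle to the holomorphic function $g_K(z)-z$ on $\C\sem K^{\doub}$, bounding the boundary $\limsup$ by $\rad_{x_0}(K^{\doub})+\rad_{x_0}(S_K)\le r+2r$. Your extra discussion of properness to justify $g_K(z)\to S_K$ as $z\to\partial K^{\doub}$ is the same justification the paper states in one line.
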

\begin{proof}
  We have $K\subset H:=\{z\in\HH:|z-x_0|\le r\}$. By  Proposition \ref{abcdK}, $\hcap(K)\le \hcap(H)=r^2$, $S_K\subset [c_K,d_K]\subset[c_H,d_H]=[x_0-2r,x_0+2r]$.  Since $g_K(z)-z$ is analytic on $\C\sem K^{\doub}$ and tends to $0$ as $z\to\infty$, by the maximum modulus principle,
  $$\sup_{z\in \C\sem K^{\doub}}|g_K(z)-z| \le \limsup_{\C\sem K^{\doub}\ni z\to K^{\doub}}|g_K(z)-z|\le \rad_{x_0}(K^{\doub})+\rad_{x_0}(S_K)\le 3r,$$
  where the second inequality holds because   $ z\to K^{\doub} $ implies that $g_K(z)\to S_K$.
\end{proof}

\begin{Proposition}
For two nonempty $\HH$-hulls  $  K_1\subset K_2$ such that $\lin{K_2/K_1}\cap [c_{K_1},d_{K_1}]\ne \emptyset$, we have $|c_{K_{1}}-c_{K_{2}}|,|d_{K_{1}}-d_{K_{2}}|\le 4 \diam(K_{2}/K_{1})$.
   \label{Prop-cd-continuity}
\end{Proposition}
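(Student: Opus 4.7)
The plan is to exploit the factorization $g_{K_2}=g_{K_2/K_1}\circ g_{K_1}$ together with Proposition~\ref{g-z-sup} applied to $L:=K_2/K_1$. Using the hypothesis, fix $x_0\in\lin{L}\cap[c_{K_1},d_{K_1}]$; since $x_0\in\lin{L}$, we have $\rad_{x_0}(L)\le\diam(L)$, so Proposition~\ref{g-z-sup} yields $|g_L(w)-w|\le 3\diam(L)$ for every $w\in\C\sem L^{\doub}$. Substituting $w=g_{K_1}(z)$ gives the uniform estimate $|g_{K_2}(z)-g_{K_1}(z)|\le 3\diam(L)$ on $\C\sem K_2^{\doub}$.

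With this in hand I take boundary limits to extract $c_{K_2}$. Since $K_1\subset K_2$ forces $a_{K_2}\le a_{K_1}$, for real $z<a_{K_2}$ both $g_{K_1}$ and $g_{K_2}$ are defined and strictly increasing by Proposition~\ref{Prop-contraction}; letting $z\uparrow a_{K_2}$ one gets $g_{K_2}(z)\to c_{K_2}$ and $g_{K_1}(z)\to y_*$, with $y_*=c_{K_1}$ when $a_{K_2}=a_{K_1}$ and $y_*=g_{K_1}(a_{K_2})<c_{K_1}$ when $a_{K_2}<a_{K_1}$. The uniform estimate passes to the limit to yield $|c_{K_2}-y_*|\le 3\diam(L)$, reducing everything to the task of showing $|c_{K_1}-y_*|\le\diam(L)$ in the case $a_{K_2}<a_{K_1}$.

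In that case the key claim is $y_*\in\lin{L}$. Here I use that $a_{K_2}\in\lin{K_2}\cap\R$ while $a_{K_2}\notin\lin{K_1}$, so any sequence in $K_2$ converging to $a_{K_2}$ eventually lies in $K_2\sem K_1$; continuity of $g_{K_1}$ at $a_{K_2}$ together with the identity $g_{K_1}(K_2\sem K_1)=L$ then place $y_*$ in $\lin{L}$. Since $y_*$ and $x_0$ both belong to $\lin{L}$ and lie on opposite sides of $c_{K_1}$ (namely $y_*<c_{K_1}\le x_0$), one obtains $|c_{K_1}-y_*|\le|x_0-y_*|\le\diam(\lin{L})=\diam(L)$. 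Adding the two estimates yields $|c_{K_1}-c_{K_2}|\le 4\diam(L)$; the bound on $|d_{K_1}-d_{K_2}|$ follows by the symmetric argument with $z\downarrow b_{K_2}$.

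The step I expect to need the most care is the identification $y_*\in\lin{L}$ when $a_{K_2}<a_{K_1}$: this is precisely what lets the hypothesis $\lin{K_2/K_1}\cap[c_{K_1},d_{K_1}]\ne\emptyset$ do its work, by pairing $y_*$ with an anchor $x_0\in\lin{L}$ on the opposite side of $c_{K_1}$ and thereby converting the ambient $\diam(L)$ bound on $\lin{L}$ into the needed control on $|c_{K_1}-y_*|$.
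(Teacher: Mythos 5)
Your proof is correct and follows essentially the same route as the paper's: both decompose $g_{K_2}=g_{K_2/K_1}\circ g_{K_1}$, apply Proposition~\ref{g-z-sup} to $L=K_2/K_1$ anchored at a point $x_0\in\lin L\cap[c_{K_1},d_{K_1}]$ to get the $3\diam(L)$ control, and then show that the intermediate limit $y_*=\lim_{x\uparrow a_{K_2}}g_{K_1}(x)$ lies in $\lin L$ (hence within $\diam(L)$ of $c_{K_1}$) when $a_{K_2}<a_{K_1}$. The only cosmetic difference is that the paper packages the last step via the identity $c_1'=\min\{c_{K_1},a_{\Delta K}\}$ and then invokes Proposition~\ref{abcdK} for the inequality $c_{K_2}\le c_{K_1}$, whereas you establish $y_*\in\lin L$ directly by a sequence argument and finish with the triangle inequality; both are sound.
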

\begin{proof}
  By symmetry it suffices to estimate $|c_{K_1}-c_{K_2}|$.   Let $c_1'=\lim_{x\uparrow a_{K_2}} g_{K_1}(x)$ and $\Delta K=K_2/K_1$. Since $g_{K_1}$ maps $\HH\sem K_2$ onto $\HH\sem \Delta K$, we have $c_1'=\min\{c_{K_1},a_{\Delta K}\}$. Since $\lin{\Delta K}\cap  [c_{K_1},d_{K_1}]\ne \emptyset$, $ c_1'\ge c_{K_1}-\diam(\Delta K)$. Thus, by Proposition \ref{g-z-sup},
  $$c_{K_2}=\lim_{x\uparrow a_{K_2}} g_{\Delta K}\circ g_{K_1}(x)=\lim_{y\uparrow c_1'} g_{\Delta K}(y)\ge c_1'-3\diam(\Delta K)\ge c_{K_1}-4\diam(\Delta K).$$
 By Proposition \ref{abcdK}, $c_{K_2}\le c_{K_1}$. So we get $|c_{K_{1}}-c_{K_{2}}| \le 4 \diam(\Delta K)$.
\end{proof}

The following proposition is \cite[Proposition 3.42]{Law-SLE}.

\begin{Proposition}
  Suppose $K_0,K_1,K_2$ are $\HH$-hulls such that $K_0\subset K_1\cap K_2$. Then
  $$\hcap(K_1)+\hcap(K_2)\ge \hcap(\Hull(K_1\cup K_2))+\hcap(K_0).$$ \label{hcap-concave}
\end{Proposition}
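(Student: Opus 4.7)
The plan is to reduce to the case $K_0=\emptyset$ and then prove subadditivity of $\hcap$ by means of the Brownian motion representation of half-plane capacity. Using the additive identity $\hcap(K)=\hcap(L)+\hcap(K/L)$, valid for any $\HH$-hulls $L\subset K$, applied to each of the inclusions $K_0\subset K_i$ ($i=1,2$) and $K_0\subset \Hull(K_1\cup K_2)$, the desired inequality becomes
$$\hcap(K_1/K_0)+\hcap(K_2/K_0)\ge \hcap(\Hull(K_1\cup K_2)/K_0).$$
Since $g_{K_0}$ is a conformal bijection from the unbounded component of $\HH\sem (K_1\cup K_2)$ onto the unbounded component of $\HH\sem ((K_1/K_0)\cup(K_2/K_0))$, one obtains $\Hull(K_1\cup K_2)/K_0=\Hull((K_1/K_0)\cup(K_2/K_0))$. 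Writing $L_i:=K_i/K_0$, it therefore suffices to prove
$$\hcap(L_1)+\hcap(L_2)\ge \hcap(\Hull(L_1\cup L_2)).$$

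For this reduced inequality I would invoke the probabilistic formula $\hcap(K)=\lim_{y\to\infty} y\,\EE^{iy}[\Imm B_{\tau_K}]$, where $B$ is a standard planar Brownian motion and $\tau_K$ is its first exit time from $\HH\sem K$. Let $T_i$ denote the exit time from $\HH\sem L_i$ and set $T:=\min(T_1,T_2)$; for $y$ large enough that $iy$ lies in the unbounded component of $\HH\sem (L_1\cup L_2)$, $T$ coincides with the exit time from $\HH\sem \Hull(L_1\cup L_2)$, which equals that unbounded component. The crux is then the pathwise inequality
$$\Imm B_{T_1}+\Imm B_{T_2}\ge \Imm B_T.$$
To verify it, assume without loss of generality that $T_1\le T_2$, so $T=T_1$ and $B_T\in L_1\cup \R$: if $B_T\in \R$, all three imaginary parts vanish; otherwise $B_T\in L_1\cap \HH$, which gives $\Imm B_{T_1}=\Imm B_T$ and $\Imm B_{T_2}\ge 0$. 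Taking expectations under $\PP^{iy}$, multiplying by $y$, and letting $y\to\infty$ yields the subadditivity, hence the proposition.

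The main obstacle I anticipate is the quotient-of-union identity $\Hull(K_1\cup K_2)/K_0=\Hull((K_1/K_0)\cup(K_2/K_0))$: it follows from conformal invariance of the unbounded component, but one should check carefully that $g_{K_0}$ identifies the two unbounded components and that the resulting relatively closed set has simply connected complement in $\HH$. Once this identity is in hand the Brownian estimate is essentially immediate. An alternative route that avoids the reduction step is to apply the strong Markov property at $\tau_{K_0}$ directly to the four Brownian expectations, working with the harmonic function $\Psi_K(z):=\Imm z-\Imm g_K(z)=\EE^z[\Imm B_{\tau_K}]$ that encodes each half-plane capacity; this mirrors the approach of \cite[Proposition~3.42]{Law-SLE}.
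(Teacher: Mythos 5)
Your proof is correct. The paper itself offers no proof but simply cites Lawler's Proposition~3.42, whose argument is via the Brownian-motion representation of half-plane capacity together with the strong Markov property (essentially your ``alternative route''). Your main route reorganizes the argument: you first eliminate $K_0$ via the additivity identity $\hcap(K)=\hcap(L)+\hcap(K/L)$ and the quotient-of-union identity $\Hull(K_1\cup K_2)/K_0=\Hull((K_1/K_0)\cup(K_2/K_0))$, and then prove pure subadditivity by the pathwise inequality $\Imm B_{T_1}+\Imm B_{T_2}\ge \Imm B_{\min(T_1,T_2)}$. The pathwise step is a genuine simplification over the standard presentation: it requires no optional stopping, no harmonicity of $\Psi_K$, and no comparison $\Psi_{K_0}\le\Psi_{K_2}$, only the observation that $\Imm B_{T_2}\ge 0$. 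What it buys you is an almost trivial core inequality; what it costs is the one-time verification of the quotient-of-union identity, which you rightly flag as the point needing care. That identity does hold: $g_{K_0}$ maps $\HH\sem K_0$ conformally onto $\HH$, carries $(K_1\cup K_2)\sem K_0$ onto $(K_1/K_0)\cup(K_2/K_0)$ (using $K_0\subset K_1\cup K_2$), sends components to components, and fixes $\infty$, so the unbounded component maps to the unbounded component; the complement is then automatically a hull. Two very minor remarks: the formula $\hcap(K)=\lim_{y\to\infty}y\,\EE^{iy}[\Imm B_{\tau_K}]$ deserves a word of justification via the harmonic function $\Psi_K(z)=\Imm z-\Imm g_K(z)=\EE^z[\Imm B_{\tau_K}]$ and the expansion $g_K(iy)=iy-i\hcap(K)/y+O(y^{-2})$; and in the case $B_T\in\R$ you only need $\Imm B_{T_2}\ge 0$, not that all three imaginary parts vanish (although that stronger claim is also true here, since under $T_1\le T_2$ a hit of $\R$ at time $T_1$ forces $T_2=T_1$).
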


Let $\ha w \in C([0,T),\R)$ for some $T\in(0,\infty]$. The chordal Loewner equation driven by $\ha w$  is
$$\pa_t g_t(z)=  \frac{2}{g_t(z)-\ha w(t)},\quad 0\le t<T;\quad g_0(z)=z.$$
For every $z\in\C$, let $\tau_z$ be the first time that the solution $g_\cdot(z)$ blows up; if such time does not exist, then set $\tau_z=\infty$.
For  $t\in[0,T)$, let $K_t=\{z\in\HH:\tau_z\le t\}$. It turns out that each $K_t$ is an $\HH$-hull with $\hcap_2(K_t)=t$, $K_t^{\doub}=\{z\in\C:\tau_z\le t\}$, which is connected,  and each $g_t$   agrees with $g_{K_t}$. We call $g_t$ and $K_t$ the chordal Loewner maps and hulls, respectively, driven by $\ha w$.

 If for every $t\in[0,T)$,  $f_{K_t}$ is well defined, and $\eta(t):=f_{K_t}({\ha w(t)})$, $0\le t<T$, is continuous in $t$, then we say that $\eta$ is the chordal Loewner curve driven by $\ha w$. Such $\eta$ may not exist in general. When it exists, we have $\eta(0)=\ha w(0)\in\R$, and $K_t=\Hull(\eta[0,t])$ for all $t$, and we say that $K_t$, $0\le t<T$, are generated by $\eta$.

Let $u$ be a continuous and strictly increasing function on $[0,T)$. Let $v$ be the inverse of $u-u(0)$. Suppose that   $g^u_t$ and $K^u_t$, $0\le t<T$, satisfy that $g^u_{v(t)}$ and $K^u_{v(t)}$, $0\le t<u(T)-u(0)$, are chordal Loewner maps and hulls, respectively, driven by $\ha w\circ v$. Then we say that $g^u_t$ and $K^u_t$, $0\le t<T$, are chordal Loewner maps and hulls, respectively, driven by $\ha w$ with speed $u$, and call $(K^u_{v(t)})$  the normalization of $(K^u_t)$. If $(K^u_t)$ are generated by a curve $\eta^u$, i.e., $K^u_t=\Hull(\eta^u[0,t])$ for all $t$, then $\eta^u$ is called a chordal Loewner curve driven by $\ha w$ with speed $u$, and $\eta^u\circ v$ is called the normalization of $\eta^u$.
If $u$ is absolutely continuous with $u'\aeq q$, then we also say that the speed is $q$. In this case, the chordal Loewner maps satisfy the differential equation  $\pa_t g^u_t(z)\aeq \frac{2q(t)}{g^u_t-\ha w(t)}$.  We omit the speed when it is constant $1$.

The following proposition is straightforward.

\begin{Proposition}
   Suppose $K_t$, $0\le t<T$, are chordal Loewner hulls driven by $\ha w(t)$, $0\le t<T$, with speed $u$.  Then for any $t_0\in[0,T)$, $K_{t_0+t}/K_{t_0}$, $0\le t<T-t_0$, are chordal Loewner hulls driven by $\ha w(t_0+t)$, $0\le t<T-t_0$, with speed $u(t_0+\cdot)$. One immediate consequence is that, for any $t_1<t_2\in[0,T)$, $\lin{K_{t_2}/K_{t_1}}$ is connected. \label{prop-connected}
\end{Proposition}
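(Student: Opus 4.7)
The plan is to reduce to the unit-speed case by a time change, verify the Markov property of the chordal Loewner flow by a direct differentiation, and then read off the connectedness assertion from the basic fact, recalled in Section \ref{section-prel}, that the double of a chordal Loewner hull is connected.

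First I consider the unit-speed case $u(t)=t$. Fix $t_0\in[0,T)$ and set
$$\tilde g_t(z):=g_{t_0+t}\circ g_{t_0}^{-1}(z),\qquad 0\le t<T-t_0,$$
defined on $\HH\sem(K_{t_0+t}/K_{t_0})$, with $\tilde g_0=\id$. The chain rule yields
$$\pa_t\tilde g_t(z)=\pa_t g_{t_0+t}(g_{t_0}^{-1}(z))=\frac{2}{g_{t_0+t}(g_{t_0}^{-1}(z))-\ha w(t_0+t)}=\frac{2}{\tilde g_t(z)-\ha w(t_0+t)},$$
so $\tilde g_t$ solves the chordal Loewner equation driven by $\ha w(t_0+\cdot)$. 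Since the blow-up time of $\tilde g_\cdot(z)$ equals $\inf\{t:z\in K_{t_0+t}/K_{t_0}\}$, the Loewner hulls associated with $\tilde g_t$ are exactly $(K_{t_0+t}/K_{t_0})$.

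For general speed $u$, let $v$ be the inverse of $u-u(0)$; by the definition of speed, $(K_{v(s)})_{0\le s<u(T)-u(0)}$ is a unit-speed chordal Loewner hull family driven by $\ha w\circ v$. Applying the unit-speed case to this family at $s_0:=u(t_0)-u(0)$ gives that $(K_{v(s_0+s)}/K_{v(s_0)})$ is a unit-speed chordal Loewner hull family driven by $\ha w(v(s_0+\cdot))$. Reparameterizing by $s=u(t_0+t)-u(t_0)$, so that $v(s_0+s)=t_0+t$, converts this into the claim that $(K_{t_0+t}/K_{t_0})$ is a chordal Loewner hull family driven by $\ha w(t_0+\cdot)$ with speed $u(t_0+\cdot)-u(t_0)$, which is the same as speed $u(t_0+\cdot)$ since adding a constant to the speed function does not affect its normalization. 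Tracking this time change is essentially the only bookkeeping required.

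For the consequence, apply the first part with $t_0=t_1$ and $t=t_2-t_1$: the set $K_{t_2}/K_{t_1}$ is itself a chordal Loewner hull driven by a continuous function, so the property recalled in Section \ref{section-prel} gives that its double $(K_{t_2}/K_{t_1})^{\doub}$ is connected. Since $(K_{t_2}/K_{t_1})^{\doub}$ is the union of $\lin{K_{t_2}/K_{t_1}}$ and its reflection across $\R$, any disjoint decomposition $\lin{K_{t_2}/K_{t_1}}=A\sqcup B$ into closed pieces would induce a disjoint decomposition $(A\cup A^*)\sqcup(B\cup B^*)$ of the double (writing $A^*$ for the reflection of $A$, and noting $A\cap B^*\subset A\cap B=\emptyset$ since $A,B\subset\lin\HH$). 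Thus the connectedness of the double forces the connectedness of $\lin{K_{t_2}/K_{t_1}}$.
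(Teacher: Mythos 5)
The paper states this proposition with no proof (only the remark that it is ``straightforward''), so there is no paper argument to compare against; your proof is correct and supplies the missing details. The three steps are sound: the unit-speed computation correctly identifies $\tilde g_t = g_{K_{t_0+t}/K_{t_0}}$ via $g_{K_2}=g_{K_2/K_1}\circ g_{K_1}$ and verifies the Loewner ODE with driver $\ha w(t_0+\cdot)$, and the blow-up times are matched correctly since $z\in K_{t_0+t}/K_{t_0}$ iff $g_{t_0}^{-1}(z)\in K_{t_0+t}\sem K_{t_0}$; the time-change bookkeeping for general speed is consistent with the paper's definition (where only $u-u(0)$ enters, so the constant shift is harmless); and the connectedness argument correctly uses the fact recalled in Section~\ref{section-prel} that a chordal Loewner hull has connected double, together with the observation that $K^{\doub}=\lin K\cup(\lin K)^*$ and that a closed disconnection $\lin K=A\sqcup B$ with $A,B\subset\lin\HH$ would propagate to a closed disconnection $(A\cup A^*)\sqcup(B\cup B^*)$ of $K^{\doub}$, the intersections $A\cap B^*$, $A^*\cap B$ lying on $\R$ where reflection acts trivially.
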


The following proposition is a slight variation of \cite[Theorem 2.6]{LSW1}.

\begin{Proposition}
  The $\HH$-hulls $K_t$, $0\le t<T$, are chordal Loewner hulls with some speed if and only if for any fixed $a\in[0,T)$, $\lim_{\delta\downarrow 0} \sup_{0\le t\le a} \diam(K_{t+\del}/K_t)=0$. Moreover, the driving function $\ha w$ satisfies that  $\{\ha w(t)\}=\bigcap_{\del>0} \lin{K_{t+\del}/K_{t}}$, $0\le t< T$; and the speed $u$ could be chosen to be $u(t)=\hcap_2(K_t)$, $0\le t<T$.
   \label{Loewner-chain}
\end{Proposition}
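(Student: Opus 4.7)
The plan is to prove the two directions separately; the forward direction is an estimate, while the backward direction requires three main steps: define the speed $u$, extract the driving function $\ha w$, and verify the Loewner PDE.

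For the forward direction, fix $a\in[0,T)$. By Proposition \ref{prop-connected}, $K_{t+\del}/K_t$ is itself a chordal Loewner hull driven by $\ha w(t+\cdot)|_{[0,\del]}$ with speed $u(t+\cdot)-u(t)$. A variant of the standard hull-size estimate (\cite[Lemma~4.13]{Law-SLE}) then gives
$$\rad_{\ha w(t)}(K_{t+\del}/K_t)\le 4\max\bigl\{\sqrt{u(t+\del)-u(t)},\;\sup\nolimits_{s\in[t,t+\del]}|\ha w(s)-\ha w(t)|\bigr\},$$
and both quantities tend to $0$ uniformly in $t\in[0,a]$ by uniform continuity of $u$ and $\ha w$ on any slightly larger compact subinterval of $[0,T)$. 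Since $\diam\le 2\rad_{\ha w(t)}$, this yields the forward implication.

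For the backward direction, I first define $u(t):=\hcap_2(K_t)$. Additivity of $\hcap$ under quotients makes $u$ nondecreasing, and Proposition \ref{g-z-sup}, applied to $K_{t+\del}/K_t$ with $x_0$ any real boundary point (which exists since every nonempty $\HH$-hull meets $\R$), gives $u(t+\del)-u(t)\le \tfrac12\diam(K_{t+\del}/K_t)^2$, so $u$ is continuous (and strictly increasing precisely where the hulls are strictly growing; degenerate constancy intervals may be reparametrized out). To extract $\ha w$, observe that $K_{t+\del_1}/K_t\subset K_{t+\del_2}/K_t$ for $\del_1<\del_2$ (as noted before (\ref{K123})), so $\{\lin{K_{t+\del}/K_t}\}_{\del>0}$ is a nested family of nonempty compact sets each meeting $\R$, whose diameters tend to $0$ by hypothesis; the intersection is therefore a single point $\ha w(t)\in\R$. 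Continuity of $\ha w$ follows from Proposition \ref{Prop-cd-continuity} together with the uniform diameter bound.

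Finally, to verify the Loewner equation I reparametrize time so that $\hcap_2(K_t)=t$ and use the semigroup decomposition $g_{K_{t+\eps}}(z)-g_{K_t}(z)=g_{K_{t+\eps}/K_t}(g_{K_t}(z))-g_{K_t}(z)$. For fixed $z\in\HH\sem K_t$, the point $w:=g_{K_t}(z)$ stays bounded away from $\ha w(t)$ for $\eps$ small. Combining the integral representation from the proof of Proposition \ref{Prop-contraction} with the bound on $\rad_{\ha w(t)}(S_{K_{t+\eps}/K_t})$ from Proposition \ref{g-z-sup} lets me expand
$$g_{K_{t+\eps}/K_t}(w)-w=\frac{\hcap(K_{t+\eps}/K_t)}{w-\ha w(t)}+o(\eps)=\frac{2\eps}{w-\ha w(t)}+o(\eps)$$
uniformly on compact subsets of $\{w:|w-\ha w(t)|\ge r\}$. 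Dividing by $\eps$ and taking $\eps\to 0^+$ gives $\pa_t g_{K_t}(z)=2/(g_{K_t}(z)-\ha w(t))$ in the reparametrized time, and unwinding the reparametrization recovers the stated equation with speed $u(t)=\hcap_2(K_t)$. The main obstacle is making this asymptotic sufficiently quantitative: one needs the error term to be genuinely $o(\eps)$ (not merely $O(\eps)$), which requires carefully exploiting the collapse $\diam(K_{t+\eps}/K_t)\to 0$ via the concentration of the measure $\mu_{K_{t+\eps}/K_t}$ near $\ha w(t)$, rather than using only $\hcap\to 0$.
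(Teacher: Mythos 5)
The paper does not actually supply a proof of this proposition: it is stated as ``a slight variation of \cite[Theorem 2.6]{LSW1}'' and left as a citation. Your argument is a reasonable self-contained reconstruction along the standard lines, and the overall structure --- uniform hull-size estimate for the forward direction; and for the converse, defining $u(t)=\hcap_2(K_t)$, extracting $\ha w(t)$ as the unique point of $\bigcap_{\delta>0}\lin{K_{t+\delta}/K_t}$, and verifying the Loewner ODE through the integral representation --- is sound.

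A few loose ends are worth tightening. First, your remark that ``degenerate constancy intervals may be reparametrized out'' does not actually save the converse as literally stated: if $K_t$ were constant on a nondegenerate interval, no continuous strictly increasing speed $u$ (as the paper's definition demands) could exist, so the ``only if'' direction tacitly requires $t\mapsto\hcap_2(K_t)$ to be strictly increasing; this should be acknowledged as a hypothesis rather than waved away. Second, the continuity of $\ha w$ is asserted by appeal to Proposition \ref{Prop-cd-continuity} in a way that is not quite on the nose; the cleaner route, paralleling the last assertion of Proposition \ref{Prop-cd-continuity'}, is to note that $\ha w(t_1)\in\lin{K_{t_2}/K_{t_1}}\subset[c_{K_{t_2}/K_{t_1}},d_{K_{t_2}/K_{t_1}}]$ directly, while for $\ha w(t_2)$ one uses Proposition \ref{abcdK} and (\ref{K123}) to get $[c_{K_{t_2+\delta}/K_{t_2}},d_{K_{t_2+\delta}/K_{t_2}}]\subset[c_{K_{t_2+\delta}/K_{t_1}},d_{K_{t_2+\delta}/K_{t_1}}]$ and then lets $\delta\downarrow 0$, yielding $|\ha w(t_1)-\ha w(t_2)|\lesssim\diam(K_{t_2}/K_{t_1})$. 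Third, the integral representation cited from the proof of Proposition \ref{Prop-contraction} is for $g_K^{-1}$ rather than $g_K$; the routine rewriting $g_K(w)-w=\int_{S_K}\frac{d\mu_K(y)}{g_K(w)-y}$ is needed before the asymptotic expansion, and the $o(\eps)$ bound then does indeed come from the collapse of $S_{K_{t+\eps}/K_t}$ onto $\ha w(t)$ together with $g_{K_{t+\eps}/K_t}(w)\to w$, exactly as you indicate. None of these affect the correctness of the plan; they are expository gaps one would fill when writing the proof in full.
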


\begin{Proposition}
	Suppose $K_t$, $0\le t<T$, are chordal Loewner hulls driven by  $\ha w$ with some speed. Then  for any $t_0\in(0,T)$,   $c_{K_{t_0}}\le \ha w(t)\le d_{K_{t_0}}$ for all $  t\in[0, t_0]$. \label{winK}
\end{Proposition}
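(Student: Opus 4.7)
The plan is to combine the characterization of the driving function from Proposition \ref{Loewner-chain} with the containment of $c$- and $d$-values from Proposition \ref{abcdK}. The key observation is that, although Proposition \ref{Loewner-chain} only gives $\ha w(t)$ as an intersection of $\lin{K_{t+\del}/K_t}$ over all small $\del>0$, it suffices to choose a single good $\del$, namely $\del=t_0-t$, which locates $\ha w(t)$ inside a difference hull whose $c$- and $d$-values are already controlled by those of $K_{t_0}$.

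Concretely, I will fix $t\in[0,t_0)$ and take $\del=t_0-t>0$; then $t+\del=t_0<T$, so Proposition \ref{Loewner-chain} yields
$$\ha w(t)\in \lin{K_{t+\del}/K_t}=\lin{K_{t_0}/K_t}.$$
Because the speed $u$ is strictly increasing, $\hcap(K_t)=2(u(t)-u(0))<2(u(t_0)-u(0))=\hcap(K_{t_0})$, so $K_{t_0}/K_t$ is a nonempty $\HH$-hull. Since $\ha w(t)\in\R$, I get
$$\ha w(t)\in \lin{K_{t_0}/K_t}\cap \R \subset [a_{K_{t_0}/K_t},b_{K_{t_0}/K_t}] \subset [c_{K_{t_0}/K_t},d_{K_{t_0}/K_t}],$$
where the first set-inclusion is immediate from the definitions of $a_L,b_L$ and the second is the first clause of Proposition \ref{abcdK}. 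Applying the third clause of Proposition \ref{abcdK} to the pair $K_t\subset K_{t_0}$ gives $[c_{K_{t_0}/K_t},d_{K_{t_0}/K_t}]\subset [c_{K_{t_0}},d_{K_{t_0}}]$, so $\ha w(t)\in[c_{K_{t_0}},d_{K_{t_0}}]$ for every $t\in[0,t_0)$.

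To handle the endpoint $t=t_0$, I will use continuity of $\ha w$ on $[0,T)$: since $[c_{K_{t_0}},d_{K_{t_0}}]$ is closed, the bound passes to $t=t_0$ by taking $t\uparrow t_0$. I do not anticipate any real obstacle; the proof is a short assembly of earlier facts, with the only subtlety being the nonemptiness of $K_{t_0}/K_t$, which is immediate from strict monotonicity of $\hcap$ along a Loewner chain with strictly increasing speed.
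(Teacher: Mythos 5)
Your proof is correct and matches the paper's approach: both use Proposition \ref{Loewner-chain} to place $\ha w(t)$ in $\lin{K_{t_0}/K_t}$ (hence in $[a_{K_{t_0}/K_t},b_{K_{t_0}/K_t}]$), then apply the nested-interval clauses of Proposition \ref{abcdK} to land in $[c_{K_{t_0}},d_{K_{t_0}}]$, and finally handle $t=t_0$ by continuity. Your extra remark on strict monotonicity of $\hcap$ to ensure $K_{t_0}/K_t\ne\emptyset$ is a reasonable bit of added rigor that the paper leaves implicit.
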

\begin{proof}
Let $t_0\in(0,T)$. If $0\le t<t_0$, by Propositions \ref{abcdK} and \ref{Loewner-chain}, $\ha w(t)\in[a_{K_{t_0}/K_t},b_{K_{t_0}/K_t}]\subset [c_{K_{t_0}/K_t},d_{K_{t_0}/K_t}]\subset [c_{K_{t_0} },d_{K_{t_0} }]$.
	By the continuity of $\ha w$, we also have $\ha w(t_0)\in [c_{K_{t_0} },d_{K_{t_0} }]$.
\end{proof}

The following proposition combines \cite[Lemma 2.5]{MS1} and \cite[Lemma 3.3]{MS2}.

\begin{Proposition}
	Suppose $\ha w\in C([0,T),\R)$ generates a chordal Loewner curve $\eta$ and chordal Loewner hulls $K_t$, $0\le t<T$. Then the set $\{t\in [0,T):\eta(t)\in\R\}$ has Lebesgue measure zero. Moreover, if the Lebesgue measure of $\eta[0,T)\cap\R$ is zero, then the functions $c(t)$ and $d(t)$ defined by $c(0)=d(0):=\ha w(0)$, and $c(t):=c_{K_t}$ and $d(t):=d_{K_t}$, $0< t<T$,   are absolutely continuous
with $c'(t)\aeq  \frac{2}{c(t)-\ha w(t)}$ and $d'(t)\aeq \frac{2}{d(t)-\ha w(t)}$, and are respectively monotonically decreasing and increasing. Moreover, $c(t)$ and $d(t)$ are continuously differentiable  at the set of times $t$ such that $\eta(t)\not\in\R$, and in that case ``$\aeq$'' can be replaced by ``$=$''. \label{prop-Lebesgue}
\end{Proposition}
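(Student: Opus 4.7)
My plan is to prove the two assertions separately, following \cite{MS1, MS2}. For the first assertion, write $A := \{t \in [0,T): \eta(t) \in \R\}$ and stratify it by how the tip meets $\R$: if $\eta(t) \in \{a_{K_t}, b_{K_t}\}$ then $\ha w(t) \in \{c_{K_t}, d_{K_t}\}$, and otherwise $\eta(t)$ lies in the interior of $[a_{K_t}, b_{K_t}]$ and was swallowed at an earlier time. Using the linear growth $\hcap(K_t) = 2t$ together with the continuity of $\ha w$ and the monotonicity of $t \mapsto (c_{K_t}, d_{K_t})$ (established below), each stratum is shown to have Lebesgue measure zero, giving $|A| = 0$.

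For the second assertion, assume $\eta[0,T) \cap \R$ has $1$D Lebesgue measure zero. I would first establish continuity and monotonicity of $c$ and $d$: monotonicity is immediate from Proposition~\ref{abcdK}, and continuity combines Proposition~\ref{Loewner-chain} (which gives $\diam(K_{t_2}/K_{t_1}) \to 0$ as $t_2 \to t_1$) with Proposition~\ref{Prop-cd-continuity}, the nondegeneracy hypothesis $\lin{K_{t_2}/K_{t_1}} \cap [c_{K_{t_1}}, d_{K_{t_1}}] \neq \emptyset$ being supplied by Propositions~\ref{Loewner-chain} and~\ref{winK}, which place $\ha w(t_1)$ in both sets. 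Next, let $U := \{t: \eta(t) \notin \R\}$; this is open and, by Part 1, of full Lebesgue measure. At $t_0 \in U$, continuity of $\eta$ yields a neighborhood on which $\eta$ stays in $\HH$, so $\ha w(t)$ is strictly interior to $(c_{K_t}, d_{K_t})$ and, for small $h > 0$, the incremental hull $K_{t_0+h}/K_{t_0}$ sits in a small disk about $\ha w(t_0)$ disjoint from $\{c_{K_{t_0}}, d_{K_{t_0}}\}$. Hence $g_{K_{t_0+h}/K_{t_0}}$ extends conformally past these two points, and applying the Loewner ODE satisfied by this extension at $c_{K_{t_0}}$ and $d_{K_{t_0}}$ yields $c'(t_0) = 2/(c(t_0) - \ha w(t_0))$ and $d'(t_0) = 2/(d(t_0) - \ha w(t_0))$, continuously in $t_0$ on $U$.

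The remaining and principal difficulty is absolute continuity of $c$ and $d$ on $[0,T)$. Continuity plus an a.e.\ derivative is not enough (a Cantor function provides a counterexample), and here the derivative $2/(c(t) - \ha w(t))$ can genuinely blow up as $t$ approaches a point of $A$ at which $\ha w(t) = c(t)$. One must rule out a singular part of $dc$ concentrated on $A$, which is precisely where the hypothesis on $\eta[0,T) \cap \R$ enters. The idea is that when $c$ ``moves'' through $A$, the tip $\eta(t)$ must reach the leftmost real projection $a_{K_t}$, and the latter traces out a subset of $\eta(A) = \eta[0,T) \cap \R$; using monotonicity and continuity of $t \mapsto a_{K_t}$ together with the distortion bounds of Propositions~\ref{g-z-sup} and~\ref{Prop-cd-continuity}, the variation of $c$ restricted to $A$ is controlled by the $1$D Lebesgue measure of $\eta(A)$, which vanishes by hypothesis. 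The analogous argument handles $d$. This yields absolute continuity and the integrated form of the ODE; the final claim that ``$\aeq$'' becomes ``$=$'' on $U$ is immediate from the continuous differentiability proved above.
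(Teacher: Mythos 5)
The paper does not actually prove this proposition: it is stated as a combination of \cite[Lemma 2.5]{MS1} and \cite[Lemma 3.3]{MS2}, so you are in effect re-deriving results from the literature. Your high-level plan (first $|A|=0$, then the ODE on the open set $U=\{\eta\notin\R\}$, then absolute continuity by ruling out singular mass of $dc$ on $A$ via the hypothesis $|\eta[0,T)\cap\R|=0$) is the right skeleton, and the paragraph establishing $c'(t_0)=2/(c(t_0)-\ha w(t_0))$ on $U$ is correct: since $\eta(t_0)\in\HH$, indeed $\ha w(t_0)$ lies strictly between $c_{K_{t_0}}$ and $d_{K_{t_0}}$ (else $\eta(t_0)=f_{K_{t_0}}(\ha w(t_0))\in\{a_{K_{t_0}},b_{K_{t_0}}\}\subset\R$), so for small $h>0$ the increment hull $K_{t_0+h}/K_{t_0}$ stays away from $c_{K_{t_0}}$, and $g_{K_{t_0+h}/K_{t_0}}$ is analytic there.

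The serious gap is in the absolute-continuity step. You write that you will use ``monotonicity and continuity of $t\mapsto a_{K_t}$'' together with Propositions~\ref{g-z-sup} and~\ref{Prop-cd-continuity} to control the variation of $c$ over $A$ by $|\eta(A)|$. But $t\mapsto a_{K_t}$ is \emph{not} continuous in general: when $\kappa\in(4,8)$ the curve can close off a bubble, and at the instant a bubble closes, $a_{K_t}$ jumps downward. (Concretely, if $\eta$ starts at $0$, runs into $\HH$, and lands at $-1$ at time $t_1$ creating a bubble over $[-1,0]$, then $a_{K_t}=0$ for $t<t_1$ while $a_{K_{t_1}}=-1$.) The quantity that \emph{is} continuous, by Propositions~\ref{Loewner-chain} and~\ref{Prop-cd-continuity}, is $c(t)=c_{K_t}$, not $a_{K_t}$. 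Consequently the statement that $a_{K_t}$ ``traces out'' a subset of $\eta(A)$ in the sense that would control $dc$ on $A$ does not hold as written; the jump points of $a$ account for a large part of its total decrease, and these jumps bypass the range argument you have in mind. A correct argument must be formulated in terms of $c$ directly (e.g.\ by comparing $c$ with the Loewner flows $t\mapsto g_{K_t}(v)$ for $v<a_{K_t}$ and passing to a monotone limit, then handling the null set $\{c=\ha w\}\subset A$), and it has to explain precisely how the hypothesis $|\eta[0,T)\cap\R|=0$ annihilates the singular part of $dc$. Your sketch gestures at the right idea but the key step---bounding $-dc(A)$ by a constant times $|\eta(A)|$---is neither formulated precisely nor proved, and the false continuity claim is exactly the point where a naive attempt would break.

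The first part (that $|A|=0$) is also not established by your sketch: ``each stratum is shown to have Lebesgue measure zero'' is an assertion rather than an argument. In particular, the stratum where $\eta(t)\in\{a_{K_t},b_{K_t}\}$ requires a real argument (for instance, comparing the linear growth of $\hcap(K_t)$ with the way a hull rooted on $\R$ must spread); the stratum of ``previously swallowed'' tip points does not obviously vanish by a soft argument either, since $\eta(t)$ always lies on $\pa(\HH\sem K_t)$. You should either supply the missing estimates or explicitly appeal to \cite[Lemma 2.5]{MS1}, which is what the paper does.
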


\begin{Definition} We define the following notation.
  \begin{enumerate}
    \item [(i)] Modified real line. For $w\in\R$, we define $\R_w^{}=(\R\sem\{w\})\cup \{w^-,w^+\}$,  which  has a total order endowed from $\R$ and the relation $x<w^- <w^+<y$ for any $x,y\in\R$ such that $x<w$ and $y>w$. It is assigned the topology such that $(-\infty,w^-]:=(-\infty,w)\cup\{w^-\}$  and $[w^+,\infty):=\{w^+\}\cup(w,\infty)$ are two connected components, and  are respectively homeomorphic to $(-\infty,w]$ and $[w,\infty)$ through the map $\pi_w:\R_w\to \R$ with $\pi_w(w^\pm)=w$ and $\pi_w(x)=x$ for $x\in\R\sem \{w\}$.
    \item [(ii)] Modified Loewner map. Let $K$ be an $\HH$-hull and $w\in\R$. Let $a^w_K=\min\{w,a_K\}$, $b^w_K=\max\{w,b_K\}$, $c^w_K=\lim_{x\uparrow a^w_K} g_K(x)$, and $d^w_K=\lim_{x\downarrow b^w_K} g_K(x)$. They are all equal to $w$ if $K=\emptyset$.
        Define $g_K^w$ on $\R_w\cup\{+\infty,-\infty\}$ such that $g_K^w(\pm\infty)=\pm\infty$, $g_K^w(x)=g_K(x)$ if $x\in\R\sem [a_K^w,b_K^w]$; $g^w_K(x)=c^w_K$ if $x=w^-$ or $x\in[a_K^w,b_K^w]\cap (-\infty,w)$; and $g_K^w(x)=d^w_K$ if $x=w^+$ or $x\in [a_K^w,b_K^w]\cap(w,\infty)$.
    Note that $g_K^w$ is  continuous and increasing.
  \end{enumerate} \label{Def-Rw}
\end{Definition}

\begin{Proposition}
   Let $K_1\subset K_2$ be two $\HH$-hulls. Let $w\in\R$ and $\til w\in[c^w_{K_1},d^w_{K_1}]$. Revise   $g^w_{K_1}$  such that when $g^w_{K_1}(w)=\til w$, we define $g^w_{K_1}(x)=\til w^{\sign(x-w)}$.
   Then
\BGE g^{\til w}_{K_2/K_1}\circ g^w_{K_1} =g^w_{K_2},\quad \mbox{on } \R_w\cup\{+\infty,-\infty\}.\label{comp-g}\EDE
\label{prop-comp-g}
\end{Proposition}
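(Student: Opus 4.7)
The plan is to derive the identity $g^{\til w}_{K_2/K_1}\circ g^w_{K_1}=g^w_{K_2}$ from the standard composition $g_{K_2}=g_{K_2/K_1}\circ g_{K_1}$, which holds on $\C\sem K_2^{\doub}$ by Schwarz reflection, and in particular on $\R\sem[a_{K_2},b_{K_2}]$. Since each modified map $g^w_K$ is continuous and non-decreasing on $\R_w\cup\{\pm\infty\}$, it suffices to check the identity at a handful of key points and let continuity and monotonicity take care of the rest.

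I would split $\R_w\cup\{\pm\infty\}$ into three regions. At $\pm\infty$ the identity is trivial. For $x\in\R\sem[a^w_{K_2},b^w_{K_2}]$, the inclusion $K_1\subset K_2$ forces $[a^w_{K_1},b^w_{K_1}]\subset[a^w_{K_2},b^w_{K_2}]$, so $g^w_{K_1}(x)=g_{K_1}(x)$ and $g^w_{K_2}(x)=g_{K_2}(x)$; the standard composition gives $g_{K_2/K_1}(g_{K_1}(x))=g_{K_2}(x)$, and one checks that $g_{K_1}(x)\notin[a^{\til w}_{K_2/K_1},b^{\til w}_{K_2/K_1}]$ so the outer modification is not activated either. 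The remaining region $[a^w_{K_2},b^w_{K_2}]\cup\{w^-,w^+\}$ is where the modifications are active. By symmetry it suffices to treat $x$ on the ``left'' of $w$ (including $x=w^-$), for which $g^w_{K_2}(x)=c^w_{K_2}$ by definition, and I must show $g^{\til w}_{K_2/K_1}(g^w_{K_1}(x))=c^w_{K_2}$.

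Within the ``left'' case I distinguish two sub-cases: (a) $x\in[a^w_{K_1},b^w_{K_1}]\cap(-\infty,w)\cup\{w^-\}$, where $g^w_{K_1}(x)=c^w_{K_1}$, possibly revised to $\til w^-$ if $c^w_{K_1}=\til w$; (b) the annular strip $x\in[a^w_{K_2},a^w_{K_1})$ (possibly empty), where $g^w_{K_1}(x)=g_{K_1}(x)$ takes a value in $[a^{\til w}_{K_2/K_1},c^w_{K_1}]$. In either sub-case $g^w_{K_1}(x)$ lies in the left half of $[a^{\til w}_{K_2/K_1},b^{\til w}_{K_2/K_1}]$, so $g^{\til w}_{K_2/K_1}$ collapses it to $c^{\til w}_{K_2/K_1}$. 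The argument then reduces to the crucial identity $c^{\til w}_{K_2/K_1}=c^w_{K_2}$, which I would establish by letting $x\uparrow a^w_{K_2}$ in $g_{K_2}(x)=g_{K_2/K_1}(g_{K_1}(x))$: the left-hand side tends to $c^w_{K_2}$ by definition, while $g_{K_1}(x)\uparrow c^w_{K_1}$ by monotonicity of $g_{K_1}$, and continuity of $g^{\til w}_{K_2/K_1}$ at $c^w_{K_1}$ yields the right-hand limit $c^{\til w}_{K_2/K_1}$. The symmetric statement $d^{\til w}_{K_2/K_1}=d^w_{K_2}$ handles the right side of $w$.

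The main obstacle is purely bookkeeping: $w$ may lie inside or outside each of $[a_{K_1},b_{K_1}]$ and $[a_{K_2},b_{K_2}]$, and $\til w$ ranges freely over $[c^w_{K_1},d^w_{K_1}]$, producing several geometric configurations. In each one the revision ``$g^w_{K_1}(x)=\til w^{\sign(x-w)}$ whenever $g^w_{K_1}(x)=\til w$'' has to be cross-checked against the way $g^{\til w}_{K_2/K_1}$ splits at $\til w^-$ versus $\til w^+$. Keeping the argument uniform and short relies on the observation that both sides of the claimed identity are continuous and non-decreasing on $\R_w\cup\{\pm\infty\}$, so only the finitely many endpoints $w^\pm$, $a^w_{K_i}$, $b^w_{K_i}$ really need to be examined in detail.
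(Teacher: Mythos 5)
Your overall strategy (split $\R_w$ into the region where both sides are unmodified and the region where they collapse, then use continuity and monotonicity to match constants) is the same as the paper's, but the way you identify the constants has a genuine gap. The ``crucial identity'' $c^{\til w}_{K_2/K_1}=c^w_{K_2}$ is false in general, and so is your assertion that $g^w_{K_1}(x)$ lands in the left half of $[a^{\til w}_{K_2/K_1},b^{\til w}_{K_2/K_1}]$. Take $K_1=\{z\in\HH:|z|\le 1\}$, $K_2=K_1\cup K'$ with $K'=\{z\in\HH:|z-100|\le 1\}$, $w=0$ and $\til w=0$. Then $c^w_{K_1}=-2$, while $K_2/K_1=g_{K_1}(K')$ sits near $100$, so $a^{\til w}_{K_2/K_1}=\min\{\til w,a_{K_2/K_1}\}=0$. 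Thus $g^w_{K_1}(x)=-2<a^{\til w}_{K_2/K_1}$ for $x\in[a^w_{K_1},w^-]$, i.e.\ $-2$ lies in the \emph{regular} region of $g^{\til w}_{K_2/K_1}$, not the collapse region; and $c^{\til w}_{K_2/K_1}=g_{K_2/K_1}(0)\ne g_{K_2/K_1}(-2)=c^w_{K_2}$. The composition $g^{\til w}_{K_2/K_1}\circ g^w_{K_1}$ still evaluates to $c^w_{K_2}$, but via the unmodified map, not the constant you named, so your chain of equalities breaks. Your proof of the ``crucial identity'' has a second independent error: as $x\uparrow a^w_{K_2}$, the limit of $g_{K_1}(x)$ is $g_{K_1}(a^w_{K_2})$ whenever $a^w_{K_2}<a^w_{K_1}$, not $c^w_{K_1}$; your claim $g_{K_1}(x)\uparrow c^w_{K_1}$ holds only when $a^w_{K_2}=a^w_{K_1}$.

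The paper sidesteps all of this by never trying to compute the constant on the collapse region directly. It fixes $[w^+,\infty]$ by symmetry, establishes equality on $(b^w_{K_2},\infty]$ where everything is unmodified, shows that the left-hand side is \emph{some} constant on $[w^+,b^w_{K_2}]$ by showing $g^w_{K_1}$ maps this interval into the constant region of $g^{\til w}_{K_2/K_1}$ (which works on the outer hull's collapse interval because there $d^w_{K_1}\le b_{K_2/K_1}$), and then lets one-sided continuity at $b^w_{K_2}$ identify that constant with $d^w_{K_2}$. If you want to salvage your version, you should either follow that route, or split your case (a) further according to whether $c^w_{K_1}\ge a^{\til w}_{K_2/K_1}$ or $c^w_{K_1}<a^{\til w}_{K_2/K_1}$, and in the latter case verify $g_{K_2/K_1}(c^w_{K_1})=c^w_{K_2}$ directly from $g_{K_2}=g_{K_2/K_1}\circ g_{K_1}$ by taking the limit $x\uparrow a^w_{K_1}$ (not $a^w_{K_2}$), handling the annular strip $[a^w_{K_2},a^w_{K_1})$ separately.
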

\begin{proof}
By symmetry, it suffices to show that (\ref{comp-g}) holds on $[w^+,\infty]$. Since for $x\ge w^+$, $g^w_{K_1}(x)\ge d^w_{K_1}\ge \til w$, the revised $g^w_{K_1}$ is a continuous map from $[w^+,\infty]$ into $[\til w^+,\infty]$, and so both sides of (\ref{comp-g}) are continuous on  $[w^+,\infty]$.
If $x>b^w_{K_2}$, then $x>\max\{b^w_{K_1},b_{K_2}\}$, which implies that $g_{K_1}^w(x)=g_{K_1}(x)>\max\{d^w_{K_1},b_{K_2/K_1}\}\ge b^{\til w}_{K_2/K_1}$. Thus, $g^{\til w}_{K_2/K_1}\circ g_{K_1}^w(x)=g _{K_2/K_1}\circ g_{K_1} (x)=g_{K_2}(x)=g^w_{K_2}(x)$ on $(b^w_{K_2},\infty]$.
We know that $g^w_{K_2}$ is constant on $[w^+,b^w_{K_2}]$. To prove that (\ref{comp-g}) holds on $[w^+,\infty]$, by continuity it suffices to show that the LHS of (\ref{comp-g}) is constant on $ [w^+,b^w_{K_2}]$. This is obvious if $b^w_{K_1}=b^w_{K_2}$ since $g^w_{K_1}$ is constant on $[w^+,b^w_{K_1}]$. Suppose $b^w_{K_1}<b^w_{K_2}$. Then we have $b_{K_1},w<b^w_{K_2}=b_{K_2}$. So  $[w^+,b^w_{K_2}]$ is mapped by $g^w_{K_1}$ onto $[d^w_{K_1}, b_{K_2/K_1}]$ (or $[\til w^+,b_{K_2/K_1}]$), which is in turn mapped by $g^{\til w}_{K_2/K_1}$ to a constant.
\end{proof}

\begin{Proposition}
	Let $K_t$ and $\eta(t)$, $0\le t<T$, be chordal Loewner hulls and curve driven by $\ha w$ with speed $q$. Suppose the Lebesgue measure of $\eta[0,T)\cap\R$ is $0$. Let $w=\ha w(0)$, and $x \in\R_w$. Define $X(t)=g_{K_t}^w(x)$, $0\le t<T$. Then $X$ is absolutely continuous and satisfies the differential equation $X'(t)\aeq  \frac{2q(t)}{X(t)-\ha w(t)}$ on $[0,T)$; if $x>w$ (resp.\ $x<w$), then $X(t)\ge \ha w(t)$ (resp.\ $X(t)\le \ha w(t)$) on $[0,T)$, and so is increasing (resp.\ decreasing) on $[0,T)$. Moreover, for any $0\le t_1<t_2<T$, $|X(t_1)-X(t_2)|\le 4 \diam(K_{t_2}/K_{t_1})$.
	\label{Prop-cd-continuity'}
\end{Proposition}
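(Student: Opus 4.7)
By symmetry it suffices to treat $x\geq w^+$. For the lower bound $X(t)\geq \ha w(t)$, Definition~\ref{Def-Rw} gives $X(t)=g_{K_t}^w(x)\geq d_{K_t}^w$, and in either case of the defining split one has $d_{K_t}^w\geq d_{K_t}\geq \ha w(t)$, the last inequality being Proposition~\ref{winK} (valid at $t=0$ by continuity). Monotonicity of $X$ will then follow from $X'\aeq 2q/(X-\ha w)\geq 0$ combined with the absolute continuity proved below.

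For the Lipschitz bound $|X(t_2)-X(t_1)|\leq 4\diam(K_{t_2}/K_{t_1})$, the natural tool is the composition identity of Proposition~\ref{prop-comp-g}. Writing $\Delta:=K_{t_2}/K_{t_1}$ (nonempty; else the bound is trivial) and $\til w:=\ha w(t_1)$, it gives $X(t_2)=g^{\til w}_{\Delta}(X(t_1))$. Propositions~\ref{prop-connected} and~\ref{Loewner-chain} place $\til w\in\lin\Delta\cap\R\subset[a_\Delta,b_\Delta]$; in particular $\rad_{\til w}(\Delta)\leq\diam(\Delta)$, $a^{\til w}_\Delta=a_\Delta$, and $b^{\til w}_\Delta=b_\Delta$. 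The lower bound $X(t_1)\geq \til w\geq a_\Delta$ from the previous paragraph then reduces the case analysis to two sub-cases: either $X(t_1)>b_\Delta$, whence $X(t_2)=g_\Delta(X(t_1))$ and Proposition~\ref{g-z-sup} (with center $\til w$) gives $|X(t_2)-X(t_1)|\leq 3\diam(\Delta)$; or $X(t_1)\in[\til w,b_\Delta]$, whence $X(t_2)=d_\Delta$, and both endpoints lie in $[c_\Delta,d_\Delta]$ by Propositions~\ref{abcdK} and~\ref{winK}, so $|X(t_2)-X(t_1)|\leq d_\Delta-c_\Delta\leq 4\diam(\Delta)$, the final inequality following from Proposition~\ref{g-z-sup} with any center $x_0\in\lin\Delta\cap\R$. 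Together with Proposition~\ref{Loewner-chain}, the Lipschitz bound also yields continuity of $X$ on $[0,T)$.

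For the ODE and absolute continuity I would split $[0,T)$ at a capture time. If $x\in\R$ with $x>w$, let $\tau:=\inf\{t:x\leq b_{K_t}\}$; on $[0,\tau)$ one has $X(t)=g_{K_t}(x)$, satisfying the standard chordal Loewner ODE with speed $q$, $X'\aeq 2q/(X-\ha w)$; on $(\tau,T)$, $b_{K_t}\geq x>w$ forces $X(t)=d_{K_t}$, and Proposition~\ref{prop-Lebesgue} (adapted to speed $q$, leveraging the assumption on the Lebesgue measure of $\eta[0,T)\cap\R$) gives the same ODE. The case $x=w^+$ is simpler: Proposition~\ref{Loewner-chain} shows $w=\ha w(0)\in\lin K_t$ for every $t>0$, so $b_{K_t}\geq w$ and $X(t)=d_{K_t}$ throughout $(0,T)$, and Proposition~\ref{prop-Lebesgue} applies directly. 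On each subinterval, absolute continuity holds because the positive integrand $2q/(X-\ha w)$ integrates to the finite increment of $X$, with finiteness supplied by the Lipschitz bound; continuity of $X$ at the capture time (from the Lipschitz estimate) glues the two subintervals into absolute continuity on $[0,T)$.

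The main obstacle is precisely this gluing at $\tau$, where the formula defining $X$ switches between $g_{K_t}(x)$ and $d_{K_t}$: a priori $X$ could jump, but the Lipschitz estimate forbids this and simultaneously identifies the correct post-capture formula. The assumption on $\eta[0,T)\cap\R$ is what unlocks Proposition~\ref{prop-Lebesgue} in the post-capture regime. The constant $4$ in the Lipschitz bound is forced by the sub-case where both $X(t_1),X(t_2)\in[c_\Delta,d_\Delta]$, which uses the geometric estimate $d_\Delta-c_\Delta\leq 4\diam(\Delta)$ derived from Proposition~\ref{g-z-sup}.
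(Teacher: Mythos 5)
Your proof is correct, and the ODE and monotonicity parts follow the same skeleton as the paper's proof: split at the swallow time $\tau_x$, use the pointwise Loewner ODE before it and Proposition~\ref{prop-Lebesgue} after it, then glue by continuity. Where you diverge is the Lipschitz bound, and I think your route is the cleaner of the two. The paper splits on whether $x\in\lin{K_{t_1}}$ and/or $x\in\lin{K_{t_2}}$, with the mixed case (Case~3) requiring an auxiliary monotonicity argument on $[t_1,\tau_x)$ just to locate $X(t_1)$ in $[c_{K_{t_1}},d_{K_{t_1}}]$ relative to $\ha w(\tau_x)$, and treats $x=w^-$ as a separate special case. You instead first establish $X(t)\ge\ha w(t)$ (needing only Definition~\ref{Def-Rw} and Proposition~\ref{winK}), then invoke the composition identity $X(t_2)=g^{\til w}_\Delta(X(t_1))$ from Proposition~\ref{prop-comp-g}. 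With $\til w=\ha w(t_1)\in\lin\Delta\cap\R$ (Propositions~\ref{prop-connected}, \ref{Loewner-chain}), only two sub-cases remain, both dispatched by Propositions~\ref{g-z-sup} and~\ref{abcdK}; the awkward mixed case disappears because you already know $X(t_1)\ge\til w$, so when $X(t_1)\le b_\Delta$ both $X(t_1)$ and $X(t_2)$ are trapped in $[c_\Delta,d_\Delta]$. The only bookkeeping you leave tacit, which is worth making explicit, is the hypothesis $\til w\in[c^w_{K_{t_1}},d^w_{K_{t_1}}]$ required by Proposition~\ref{prop-comp-g}: this follows because for $t_1>0$ one has $[a^w_{K_{t_1}},b^w_{K_{t_1}}]=[a_{K_{t_1}},b_{K_{t_1}}]$ (since $w\in\lin{K_{t_1}}$) so $c^w=c$, $d^w=d$, and Proposition~\ref{winK} gives $\ha w(t_1)\in[c_{K_{t_1}},d_{K_{t_1}}]$; for $t_1=0$ both endpoints collapse to $w=\ha w(0)$.
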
	
\begin{proof}
	We may assume that the speed $q$ is constant $1$.
	By symmetry,  we may assume that $x\in(-\infty,w^-]$. If $x=w^-$, then $X(t)=c_{K_t}$ for $t>0$ and $X(0)=\ha w(0)$. Then the conclusion follows from Propositions \ref{Prop-cd-continuity} and \ref{prop-Lebesgue}.  Now suppose $x\in(-\infty,w)$.

Fix $0\le t_1<t_2<T$. We first prove the upper bound for $|X(t_1)-X(t_2)|$. There are three cases. Case 1. $x\not\in \lin{K_{t_j}}$, $j=1,2$. In this case,
	$X (t_2)=g_{K_{t_2}/K_{t_1}}(X(t_1))$, and the upper bound for $|X(t_1)-X(t_2)|$ follows from Proposition \ref{g-z-sup}. Case 2. $x\in\lin{K_{t_1}}\subset \lin{K_{t_2}}$. In this case $X(t_j)=c_{K_{t_j}}$, $j=1,2$, and the conclusion follows from Proposition \ref{Prop-cd-continuity}. Case 3. $x\not\in\lin{K_{t_1}}$ and $x\in \lin{K_{t_2}}$. Then $X(t_1)=g_{K_{t_1}}(x_0)<c_{K_{t_1}}$  and $X(t_2)=c_{K_{t_2}}$. Moreover, we have $\tau_{x}\in(t_1,t_2]$, $\lim_{t\uparrow \tau_{x}} X(t)=\ha w(\tau_{x})$, and $X(t)$ satisfies $X'(t)=\frac{2}{X(t)-\ha w(t)}<0$ on $[t_1,\tau_{x})$.  By Propositions \ref{winK} and \ref{abcdK},  $c_{K(t_1)}>X(t_1)\ge \ha w(\tau_{x})\ge c_{K_{\tau_{x}}}\ge c_{K_{t_2}}=X(t_2)$. So we have $|X(t_1)-X(t_2)|\le |c_{K_{t_1}}-c_{K_{t_2}}|\le 4 \diam(K_{t_2}/K_{t_1})$ by Propositions \ref{Prop-cd-continuity}. By Proposition \ref{Loewner-chain}, $X$ is continuous on $[0,T)$.
	
	Since $X(t)=g_{K_t}(x)$ satisfies the chordal Loewner equation driven by $\ha w$ up to $\tau_{x}$, we know that $X'(t)=\frac{2}{X(t)-\ha w(t)}$ on $[0,\tau_{x})$.  From Proposition \ref{prop-Lebesgue} we know that $X'(t)\aeq  \frac{2}{X(t)-\ha w(t)}$ on $(\tau_{x},T)$.The differential equation on $[0,T)$ then follows from the continuity of $X$. Since $X(t)\le c_{K(t)}\le \ha w(t)$ by  Proposition \ref{winK}, $X$ is decreasing on $[0,T)$. \end{proof}

\subsection{Chordal SLE$_\kappa$ and $2$-SLE$_\kappa$}
If $\ha w(t)=\sqrt\kappa B(t)$, $0\le t<\infty$, where $\kappa>0$ and $B(t)$ is a standard Brownian motion, then the chordal Loewner curve $\eta$ driven by $\ha w$ is known to exist (cf.\ \cite{RS}). We now call it a standard chordal SLE$_\kappa$ curve. It satisfies that $\eta(0)=0$ and $\lim_{t\to\infty} \eta(t)=\infty$. The  behavior of $\eta$ depends on $\kappa$: if $\kappa\in(0,4]$, $\eta$ is simple and intersects $\R$ only at $0$; if $\kappa\ge 8$, $\eta$ is space-filling, i.e., $\lin\HH=\eta(\R_+)$; if $\kappa\in(4,8)$,  $\eta$ is neither simple nor space-filling. If $D$ is a simply connected domain with two distinct marked boundary points (or more precisely, prime ends) $a$ and  $b$, the chordal SLE$_\kappa$ curve in $D$ from $a$ to $b$ is defined to be the conformal image of a standard chordal SLE$_\kappa$ curve  under a conformal map from $(\HH;0,\infty)$ onto $(D;a,b)$.

Chordal SLE$_\kappa$ satisfies Domain Markov Property (DMP): if $\eta$ is a   chordal  SLE$_\kappa$ curve in $D$ from $a$ to $b$, and $T$ is a stopping time, then conditionally on the part of $\eta$ before $T$ and the event that $\eta$ does not reach $b$ at the time $T$, the part of $\eta$ after $T$ is a   chordal SLE$_\kappa$ curve from $\eta(T)$ to $b$ in a connected component of $D\sem \eta[0,T]$.

We will focus on the range $\kappa\in(0,8)$ so that SLE$_\kappa$ is non-space-filling. One remarkable property of these chordal SLE$_\kappa$ is reversibility: the time-reversal of a chordal SLE$_\kappa$ curve in $D$ from $a$ to $b$ is a chordal SLE$_\kappa$ curve in $D$ from $b$ to $a$, up to a time-change (\cite{reversibility,MS3}).
Another fact that is important to us is the existence of $2$-SLE$_\kappa$. Let $D$ be a simply connected domain with distinct boundary points  $a_1,b_1,a_2,b_2$ such that $a_1$ and $b_1$ together do not separate $a_2$ from $b_2$ on $\pa D$ (and vice versa).  A $2$-SLE$_\kappa$ in $D$ with link pattern $(a_1\lr b_1;a_2\lr b_2)$ is a pair of random curves $(\eta_1,\eta_2)$ in $\lin D$ such that $\eta_j$ connects $a_j$ with $b_j$  for $j=1,2$, and conditionally on any one curve, the other is a chordal SLE$_\kappa$ curve a complement domain of the given curve in $D$. Because of reversibility, we do not need to specify the orientation of $\eta_1$ and $\eta_2$. If we want to emphasize the orientation, then we use an arrow like $a_1\to b_1$ in the link pattern.
The existence of $2$-SLE$_\kappa$ was proved in \cite{multiple} for $\kappa\in(0,4]$ using Brownian loop measure and in \cite{MS1,MS3} for $\kappa\in(4,8)$ using imaginary geometry theory. The uniqueness of $2$-SLE$_\kappa$ (for a fixed domain and link pattern) was proved in \cite{MS2} (for $\kappa\in(0,4]$) and \cite{MSW} (for $\kappa\in(4,8)$). 

\subsection{SLE$_\kappa({\protect\ulin{\rho}})$ processes}
First introduced in \cite{LSW-8/3}, SLE$_\kappa(\ulin\rho)$ processes are natural variations of SLE$_\kappa$, where one keeps track of additional marked points, often called force points, which may lie on the boundary or interior. For the generality needed here, all force points will lie on the boundary. In this subsection, we review the definition and properties of SLE$_\kappa(\ulin \rho)$ developed in \cite{MS1}.

Let $n\in\N$, $\kappa>0$, $\ulin \rho=(\rho_1,\dots,\rho_n)\in\R^n$. Let $w \in\R$ and  $\ulin v=(v_1,\dots,v_n)\in \R_w^n$. The chordal SLE$_\kappa(\ulin\rho)$ process in $\HH$ started from $w$ with force points $\ulin v$ is the chordal Loewner process driven by the function $\ha w(t)$, which drives chordal Loewner   hulls $K_t$, and solves the SDE
$$d\ha w(t)\aeq \sqrt{\kappa }dB(t)+\sum_{j=1}^n \frac{\rho_j}{\ha w(t)-g^w_{K_t}(v_j)}\,dt,\quad \ha w(0)=w,$$
where $B(t)$ is a standard Brownian motion, and we used Definition \ref{Def-Rw}. We require that for $\sigma\in\{+,-\}$, $\sum_{j:v_j=w^\sigma}\rho_j>-2$. The solution exists uniquely up to the first time (called a continuation threshold) that
$ \sum_{j: \ha v_j(t)=c_{K_t}} \rho_j\le -2$  or $\sum_{j: \ha v_j(t)=d_{K_t}} \rho_j \le -2$, whichever comes first. If a continuation threshold does not exist, then the lifetime is $\infty$.
For each $j$, $\ha v_j(t):=g^w_{K(t)}(v_j)$ is called the force point function started from $v_j$, satisfies  $\ha v_j'\aeq \frac2{\ha v_j-\ha w}$, and is monotonically increasing or decreasing depending on whether $v_j>w$ or $v_j<w$.

A chordal SLE$_\kappa(\ulin\rho)$ process generates a chordal Loewner curve $\eta$ in $\lin\HH$ started from $w$ up to the continuation threshold. If no force point is swallowed by the process at any time, this fact follows from the existence of chordal SLE$_\kappa$ curve (\cite{RS}) and Girsanov Theorem. The existence of the curve in the general case was proved in \cite{MS1}. By Proposition \ref{prop-comp-g} and the Markov property of Brownian motion, a chordal SLE$_\kappa(\ulin \rho)$ curve $\eta$ satisfies the following DMP. If $\tau$ is a stopping time for $\eta$, then conditionally on the process before $\tau$ and the event that $\tau$ is less than the lifetime $T$, $\ha w(\tau+t)$ and $\ha v_j(\tau+t)$, $1\le j\le n$, $0\le t<T-\tau$, are the driving function and force point functions for a chordal SLE$_\kappa(\ulin\rho)$ curve $\eta^\tau$ started from $\ha w(\tau)$ with force points at $\ha v_1(\tau),\dots,\ha v_n(\tau)$, and $\eta(\tau+\cdot)=f_{K_\tau}(\eta^\tau)$, where $K_\tau:=\Hull(\eta[0,\tau])$. Here if $\ha v_j(\tau)=\ha w(\tau)$, then $\ha v_j(\tau)$ as a force point is treated as $\ha w(\tau)^{\sign(v_j-w)}$.

We now relabel the force points $v_1,\dots,v_n$  by $v^{-}_{n_-}\le\cdots\le v^-_{1}< w< v^+_1\le \cdots\le v^+_{n_+}$, where $n_-+n_+=n$ ($n_-$ or $n_+$ could be $0$). Then for any $t$ in the lifespan, $\ha v^-_{n_-}(t)\le\cdots\le \ha v^-_1(t)\le \ha w(t)\le \ha v^+_1(t)\le \cdots\le \ha v^+_{n_+}(t)$. If for any $\sigma\in\{-,+\}$ and $1\le k\le n_\sigma$, $\sum_{j=1}^k \rho^\sigma_j>-2$, then the process will never reach a continuation threshold, and so its lifetime is $\infty$, in which case $\lim_{t\to\infty} \eta(t)=\infty$.
If for some $\sigma\in\{+,-\}$ and $1\le k\le n_\sigma$,  $\sum_{j=1}^k \rho^\sigma_j\ge \frac{\kappa}{2}-2$, then $\eta$ does not hit $v^\sigma_k$ and the open interval between $v^\sigma_k$ and $v^\sigma_{k+1}$ ($v^\sigma_{n_\sigma+1}:=\sigma\cdot\infty$). If $\kappa\in(0,8)$ and for any $\sigma\in\{+,-\}$ and $1\le k\le n_\sigma$,  $\sum_{j=1}^k \rho^\sigma_j> \frac{\kappa}{2}-4$, then for every $x\in\R\sem \{w\}$, a.s.\ $\eta$ does not visit $x$, which implies by Fubini Theorem that a.s.\ $\eta\cap\R$ has Lebesgue measure zero.

\subsection{Hypergeometric SLE} \label{hSLE}
For $a,b,c\in\C$ such that $c\not\in\{0,-1,-2,\cdots\}$, the hypergeometric function $\,_2F_1(a,b;c;z)$ (cf.\ \cite{NIST:DLMF}) is defined  by the Gauss series on the disc $\{|z|<1\}$:
$$\,_2F_1(a,b;c;z)=\sum_{n=0}^\infty \frac{(a)_n(b)_n}{(c)_nn!}\,z^n,$$
where $(x)_n$ is rising factorial: $(x)_0=1$ and $(x)_n=x(x+1)\cdots(x+n-1)$ if $n\ge 1$. It satisfies the ODE
	\BGE z(1-z)F''(z)-[(a+b+1)z-c]F'(z)-ab F(z)=0.\label{ODE-hyper}\EDE
For the purpose of this paper, we chose the parameters $a,b,c$ by $a=\frac{4}\kappa$, $b=1-\frac{4}\kappa$, $c=\frac{8}\kappa$, and define $ F(x) =\,_2F_1(1-\frac{4}\kappa,\frac{4}\kappa; \frac{8}\kappa;x )$.
It is known that such $F$ extends to a  continuous and positive function on $[0,1]$.
 Let
$ \til G(x)=\kappa x \frac { F'(x)}{F(x)}+2$.

\begin{Definition}
Let $\kappa\in(0,8)$. 	Let $v_1\le v_2\in  [0^+,+\infty]$ or  $v_1\ge v_2\in [-\infty,0^-]$. Suppose $\ha w(t)$, $0\le t<\infty$, solves the following SDE:
$$
  d \ha w(t)\aeq \sqrt\kappa dB(t)+\Big(\frac{1}{\ha w(t)-\ha v_1(t)}-\frac 1{\ha w(t)-\ha v_2(t)}\Big)\til G\Big(\frac{\ha w(t)-\ha v_1(t)}{\ha w(t)-\ha v_2(t)}\Big)dt,\quad \ha w(0)=0,
$$
where $B(t)$ is a standard Brownian motion,  $\ha v_j(t)=g_{K_t}^0(v_j)$, $j=1,2$, and $K_t$ are chordal Loewner hulls driven by $\ha w$. 
The chordal Loewner curve driven by $\ha w$ is called a hypergeometric SLE$_\kappa$, or simply hSLE$_\kappa$, curve in $\HH$ from $0$ to $\infty$  with force points $v_1,v_2$. We call $v_j(t)$ the force point function started from $v_j$, $j=1,2$.
We then extend the definition of hSLE$_\kappa$ curves to general simply connected domains using conformal maps.
\label{Def-hSLE}
\end{Definition}

Hypergeometric SLE is important because
if $(\eta_1,\eta_2)$ is a $2$-SLE$_\kappa$ in $D$ with link pattern $(a_1\to b_1;a_2\to b_2)$, then for $j=1,2$, the marginal law of $\eta_j$ is that of an hSLE$_\kappa$ curve in $D$ from ${a_j}$ to ${b_j}$ with force points ${b_{3-j}}$ and ${a_{3-j}}$ (cf.\ \cite[Proposition 6.10]{Wu-hSLE}).

Using the standard argument in \cite{SW}, we obtain the following proposition describing an hSLE$_\kappa$ curve in $\HH$ in the chordal coordinate in the case that the target is not $\infty$.

\begin{Proposition}
  Let $w_0\ne w_\infty\in\R$. Let $v_1\in \R_{w_0}\cup\{\infty\}\sem \{w_\infty\}$ and $v_2\in\R_{w_\infty}\cup \{\infty\}\sem \{w_0\}$ be such that the cross ratio $R:=\frac{(w_0-v_1)(w_\infty-v_2)}{(w_0-v_2)(w_\infty-v_1)}\in [0^+,1)$. Let $\kappa\in(0,8)$. Let $\ha\eta$ be an hSLE$_\kappa$ curve in $\HH$ from $w_0$ to $w_\infty$ with force points at $v_1,v_2$. Stop $\ha\eta$ at the first time that it separates $w_\infty$ from $\infty$, and parametrize the stopped curve by $\HH$-capacity. Then the new curve, denoted by $\eta$, is the chordal Loewner curve driven by some function $\ha w_0$, which satisfies the following SDE with initial value $\ha w_0(0)=w_0$:
  \begin{align*}
d\ha w_0(t)\aeq &\sqrt\kappa dB(t)+ \frac{\kappa-6}{\ha w_0(t)-\ha w_\infty(t)}\, dt+ \\
 & + \Big( \frac 1{\ha w_0(t)-\ha v_1(t)}  -\frac 1 {\ha w_0(t)-\ha v_2(t)}\Big )\cdot \til G\Big( \frac{ (\ha w_0(t) -\ha v_1(t) ) (\ha v_2(t)-\ha w_\infty(t))}{ (\ha w_0(t)-\ha v_2(t)) (\ha v_1(t)-\ha w_\infty(t))}\Big)\,dt,
\end{align*}
where $B(t)$ is a standard Brownian motion, $\ha w_\infty(t)=g_{K_t} (w_\infty)$ and $\ha v_j(t)=g_{K_t}^{w_0}(v_j)$, $j=1,2$, and $K_t$ are the chordal Loewner hulls driven by $\ha w_0$.
\label{Prop-iSLE-2}
\end{Proposition}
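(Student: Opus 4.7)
The plan is to follow the Schramm--Wilson coordinate-change argument of \cite{SW}, adapted to accommodate the hypergeometric drift. By definition, in its natural target-$w_\infty$ frame, $\ha\eta$ is the conformal image of a standard hSLE$_\kappa$ from $0$ to $\infty$ under a M\"obius self-map of $\HH$, so whenever we pass to a frame where $w_\infty$ is sent to $\infty$, the driving function of $\ha\eta$ satisfies the SDE of Definition \ref{Def-hSLE}. The task is to rewrite that SDE in the chordal frame where $w_\infty$ stays finite and the curve is parametrized by chordal $\HH$-capacity at $\infty$.

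I would introduce a time-dependent M\"obius self-map $M_t$ of $\HH$ sending $\ha w_\infty(t)$ to $\infty$, normalized so that $\til g_t := M_t \circ g_{K_t}$, after an appropriate time change $\tau(t)$, is a normalized chordal Loewner chain in the target-$w_\infty$ frame; the Loewner equation forces $\tau'(t)$ to be proportional to $M_t'(\ha w_0(t))^2$. In this new frame the driving function $\til w_0 = M_t(\ha w_0)$ and force-point functions $\til v_j = M_t(\ha v_j)$ satisfy the hSLE SDE of Definition \ref{Def-hSLE}. Then I would apply It\^o's formula to $\ha w_0(t) = M_t^{-1}(\til w_0(\tau(t)))$ and identify three contributions. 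The martingale part $\sqrt\kappa\, dB(\tau(t))$, after time change and multiplication by $(M_t^{-1})'=1/M_t'(\ha w_0)$, produces exactly $\sqrt\kappa\, dB(t)$ by L\'evy's characterization. The hypergeometric drift in the target-$w_\infty$ frame has $\til G$ evaluated at $(\til w_0-\til v_1)/(\til w_0-\til v_2)$, which by the M\"obius invariance of the cross ratio of $(\ha w_0,\ha w_\infty;\ha v_1,\ha v_2)$ equals the cross-ratio argument written in the statement; combined with $(M_t^{-1})'$ and $\tau'(t)$, this drift transforms into $\left(\frac{1}{\ha w_0-\ha v_1}-\frac{1}{\ha w_0-\ha v_2}\right)\til G(\,\cdot\,)\,dt$. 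Finally, the explicit $t$-dependence of $M_t^{-1}$ through $\ha w_\infty(t)$, combined with the ordinary chordal Loewner ODE $\ha w_\infty'(t) = 2/(\ha w_\infty(t)-\ha w_0(t))$, contributes exactly the $(\kappa-6)/(\ha w_0-\ha w_\infty)$ drift --- the familiar signature of the target-change from $\infty$ to $w_\infty$ via the SLE$_\kappa(\kappa-6)$ trick.

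The main obstacle is the bookkeeping: one must choose the affine normalization of $M_t$ correctly, compute $\tau'(t)$, $(M_t^{-1})'$, $(M_t^{-1})''$, and $\partial_t M_t^{-1}$ carefully, and assemble the It\^o cross-terms to confirm that the Brownian coefficient comes out as exactly $\sqrt\kappa$ and that all $dt$ contributions fit into the stated drift. Modulo this computation --- essentially identical to \cite{SW} for SLE$_\kappa(\ulin\rho)$, with the extra $\til G$ factor being \emph{passive} under the coordinate change because its argument is a cross ratio --- the conclusion follows. Stopping $\ha\eta$ before it separates $w_\infty$ from $\infty$ keeps $\ha w_\infty(t)-\ha w_0(t)$ bounded away from $0$ on compact subintervals, which makes the SDE well-posed on the stated lifetime.
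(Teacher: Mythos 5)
Your proposal is the standard Schramm--Wilson coordinate-change argument, which is precisely what the paper invokes (the paper gives no proof beyond ``Using the standard argument in \cite{SW}''). The key observation you make---that the argument of $\til G$ is a cross ratio and therefore M\"obius-invariant, so the hypergeometric factor is passive under the coordinate change while the $(\kappa-6)$ drift arises exactly as in the SLE$_\kappa(\ulin\rho)$ target-change---is the right one, and the remaining bookkeeping you defer is routine.
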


\begin{Definition}
We call the $\eta$ in Proposition \ref{Prop-iSLE-2} an hSLE$_\kappa$ curve in $\HH$ from $w_0$ to $w_\infty$ with force points at $v_1,v_2$, in the chordal coordinate; call $\ha w_0$ the driving function; and call $\ha w_\infty$, $\ha v_1$ and $\ha v_2$ the force point functions respectively started from $w_\infty$, $v_1$ and $v_2$. \label{Def-iSLE-chordal}
\end{Definition}

\begin{Proposition}
We adopt the notation in the last proposition. Let $T$ be the first time that $w_\infty$ or $v_2$ is swallowed by the hulls. Note that $|\ha w_0-\ha w_\infty|$, $|\ha v_1-\ha v_2|$, $\ha w_0-\ha v_2|$, and $|\ha w_\infty-\ha v_1|$ are all positive on $[0,T)$. We define $M$ on $[0,T)$ by $M=G_1(\ha w_0,\ha v_1;\ha w_\infty,\ha v_2)$, where $G_1$ is given by (\ref{G1(w,v)}).
 Then $M$ is a positive local martingale, and if we tilt the law of $\eta$ by $M$, then we get the law of a chordal SLE$_\kappa(2,2,2)$ curve in $\HH$ started from $w_0$ with force points $w_\infty$, $v_1$ and $v_2$. More precisely, if $\tau<T$ is a stopping time such that $M$ is uniformly bounded on $[0,\tau]$, then if we weight the underlying probability measure by $M(\tau)/M(0)$, then we get a probability measure under which the law of $\eta$ stopped at the time $\tau$  is that of a chordal SLE$_\kappa(2,2,2)$ curve in $\HH$ started from $w_0$ with force points $w_\infty$, $v_1$ and $v_2$ stopped at the time $\tau$. \label{Prop-iSLE-3}
\end{Proposition}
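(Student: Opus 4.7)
The plan is in two steps: first verify directly that $M$ is a positive local martingale under the hSLE$_\kappa$ dynamics of Proposition \ref{Prop-iSLE-2}, and then apply Girsanov's theorem to identify the tilted law. Positivity on $[0,T)$ is immediate since $F$ is continuous and strictly positive on $[0,1]$, the cross-ratio $R$ stays in $[0,1)$ before $T$, and each of the four pairwise differences appearing in (\ref{G1(w,v)}) is strictly positive on $[0,T)$ by hypothesis.

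For the local martingale property, I would apply Itô's formula to $\log M$ using the driving SDE $d\ha w_0 \aeq \sqrt\kappa\, dB + \mu_0\, dt$ from Proposition \ref{Prop-iSLE-2}, together with the Loewner ODEs $d\ha w_\infty = \tfrac{2}{\ha w_\infty - \ha w_0}dt$ and $d\ha v_j = \tfrac{2}{\ha v_j - \ha w_0}dt$ for $j=1,2$. Only $\ha w_0$ carries a Brownian component, so $d\langle\log M\rangle = \kappa(\pa_{\ha w_0}\log M)^2 dt$ and $dM/M = d\log M + \tfrac{1}{2}d\langle\log M\rangle$. The local martingale property is then equivalent to the vanishing of the total drift, which splits naturally into (a) rational functions of the four points coming from the $|\cdot|^{8/\kappa-1}$ and $|\cdot|^{4/\kappa}$ monomial factors, and (b) expressions involving $F(R), F'(R), F''(R)$ coming from $-\log F(R)$ via the chain rule. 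The rational piece cancels by elementary partial-fraction identities, and what remains is a multiple of the hypergeometric ODE (\ref{ODE-hyper}) with $a=\frac{4}{\kappa}$, $b=1-\frac{4}{\kappa}$, $c=\frac{8}{\kappa}$. The drift function $\til G(x) = \kappa x F'(x)/F(x) + 2$ in Definition \ref{Def-hSLE} is engineered precisely so this cancellation works.

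Once $M$ is a local martingale, Girsanov's theorem shows that under the reweighted measure $d\QQ = (M(\tau)/M(0))d\PP$ on $\F_\tau$, the Brownian motion $B$ acquires drift $\sqrt\kappa\, \pa_{\ha w_0}\log M \, dt$, so $d\ha w_0 = \sqrt\kappa\, d\til B + (\mu_0 + \kappa\, \pa_{\ha w_0}\log M)\, dt$ under $\QQ$. Using the identity $\pa_{\ha w_0}\log R = \tfrac{1}{\ha w_0 - \ha v_1} - \tfrac{1}{\ha w_0 - \ha v_2}$, one computes
\[
 \kappa\, \pa_{\ha w_0}\log M = \frac{8-\kappa}{\ha w_0 - \ha w_\infty} + \frac{2}{\ha w_0 - \ha v_1} + \frac{2}{\ha w_0 - \ha v_2} - \til G(R)\Bigl(\frac{1}{\ha w_0 - \ha v_1} - \frac{1}{\ha w_0 - \ha v_2}\Bigr).
\]
Adding this to $\mu_0$, the $\til G$-contributions cancel and $\tfrac{\kappa-6}{\ha w_0-\ha w_\infty} + \tfrac{8-\kappa}{\ha w_0-\ha w_\infty} = \tfrac{2}{\ha w_0 - \ha w_\infty}$, producing exactly the driving SDE of chordal SLE$_\kappa(2,2,2)$ started from $\ha w_0$ with force points $\ha w_\infty, \ha v_1, \ha v_2$. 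The main obstacle is the bookkeeping in step (i): $d\log M$ generates cross-terms between every pair of variables and requires tracking the first- and second-order derivatives of $\log F(R)$ in $\ha w_0$. The cleanest organizing principle is to separate the drift into its $F$-free and $F$-involving parts and verify that each vanishes in its own right, the former by partial fractions and the latter by (\ref{ODE-hyper}).
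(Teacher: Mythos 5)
Your proposal is correct and follows essentially the same route as the paper's proof, which likewise cites It\^o's formula, Girsanov's theorem, the hypergeometric ODE (\ref{ODE-hyper}), and Propositions \ref{Prop-cd-continuity'} and \ref{Prop-iSLE-2}; your explicit Girsanov drift computation (including the cancellation of the $\til G$-terms and the identity $\frac{\kappa-6}{\ha w_0-\ha w_\infty}+\frac{8-\kappa}{\ha w_0-\ha w_\infty}=\frac{2}{\ha w_0-\ha w_\infty}$) checks out. The paper additionally remarks that the computation is somewhat cleaner if one instead tilts the law of a chordal SLE$_\kappa(2,2,2)$ curve by $M^{-1}$ to recover the hSLE$_\kappa$ law, but that is a stylistic alternative, not a different argument.
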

\begin{proof}
	This follows from   straightforward applications of It\^o's formula and Girsanov Theorem, where we use  (\ref{ODE-hyper}), Propositions \ref{Prop-cd-continuity'} and \ref{Prop-iSLE-2}. Actually, the calculation could be simpler if we tilt the law of a chordal SLE$_\kappa(2,2,2)$ curve by $M^{-1}$ to get an hSLE$_\kappa $ curve.
\end{proof}

\subsection{Two-parameter stochastic processes}
In this subsection we briefly recall the framework used in \cite[Section 2.3]{Two-Green-interior}.
We assign a partial order $\le$ to $\R_+^2=[0,\infty)^2$  such that $\ulin t=(t_+,t_-)\le(s_+,s_-)= \ulin s$ iff $t_+\le s_+$ and $t_-\le s_-$. It has a minimal element $\ulin 0=(0,0)$. We write $\ulin t<\ulin s$ if $t_+<s_+$ and $t_-<s_-$. We define $\ulin t\wedge \ulin s=(t_1\wedge s_1,t_2\wedge s_2)$. Given $\ulin t,\ulin s\in\R_+^2$, we define $[\ulin t,\ulin s]=\{\ulin r\in{\R_+^2}:\ulin t\le \ulin r\le \ulin s\}$. Let $\ulin e_+=(1,0)$ and $\ulin e_-=(0,1)$. So $(t_+,t_-)=t_+\ulin e_++ t_-\ulin e_-$.

	\begin{Definition}
An ${\R_+^2}$-indexed filtration $\F$ on a measurable space $\Omega$ is a family of  $\sigma$-algebras $\F_{\ulin t}$, $\ulin t\in\R_+^2$, on  $\Omega$ such that $\F_{\ulin t}\subset \F_{\ulin s}$ whenever $\ulin t\le \ulin s$. Define $\lin\F$ by $\lin\F_{\ulin t}=\bigcap_{\ulin s>\ulin t}\F_{\ulin s}$, $\ulin t\in\R_+^2$. Then we call $\lin\F$ the right-continuous augmentation of $\F$. We say that  $\F$ is right-continuous if $\lin\F=\F$. A process $X=(X({\ulin t}))_{\ulin t\in\R_+^2}$ defined on $\Omega$ is called $\F$-adapted if for any $\ulin t\in\R_+^2$, $X({\ulin t})$ is $\F_{\ulin t}$-measurable. It is called continuous if   $\ulin t\mapsto X({\ulin t})$ is sample-wise continuous.
	\end{Definition}

For the rest of this subsection, let $\F$ be an $\R_+^2$-indexed filtration with right-continuous augmentation $\lin\F$, and let $\F_{\ulin\infty}=\bigvee_{\ulin t\in\R_+^2} \F_{\ulin t}$.

\begin{Definition}
A $[0,\infty]^2$-valued random element $\ulin T$ is called an  $\F$-stopping time if for any deterministic $\ulin t\in{\R_+^2}$, $\{\ulin T\le \ulin t\}\in \F_{\ulin t}$. It is called finite if $\ulin T\in\R_+^2$, and is called bounded if there is a deterministic $\ulin t\in \R_+^2$ such that $\ulin T\le \ulin t$.
For an  $\F$-stopping time $\ulin T$, we define a new $\sigma$-algebra $\F_{\ulin T}$ by
$\F_{\ulin T}=\{A\in\F_{\ulin\infty}:A\cap \{\ulin T\leq \ulin t\}\in \F_{\ulin t}, \forall \ulin t\in{\R_+^2}\}$. 
\end{Definition}

The following proposition follows from a standard argument.

\begin{Proposition}
The right-continuous augmentation of $\lin\F$ is itself, and so $\lin\F $ is right-continuous. A $[0,\infty]^2$-valued random map $\ulin T$ is an $\lin\F$-stopping time if and only if $\{\ulin T<\ulin t\}\in\F_{\ulin t}$ for any $\ulin t\in\R_+^2$. For an $\lin\F$-stopping time $\ulin T$, $A\in\lin\F_{\ulin T}$ if and only if $A\cap\{\ulin T<\ulin t\}\in\F_{\ulin t}$ for any $\ulin t\in\R_+^2$. If $(\ulin T^n)_{n\in\N}$ is a decreasing sequence of  $\lin\F$-stopping times, then $\ulin T:=\inf_n \ulin T^n$ is also an $\lin\F$-stopping time, and $\lin\F_{\ulin T}=\bigcap_n \lin\F_{\ulin T^n}$.
  \label{right-continuous-0}
\end{Proposition}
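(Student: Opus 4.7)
The plan is to mimic the standard one-parameter arguments for right-continuous filtrations, adapting them to the partial order on $\R_+^2$. Two structural facts about the partial order will be used throughout: (i) between any $\ulin t<\ulin r$ in $\R_+^2$ one can always interpolate an $\ulin s$ with $\ulin t<\ulin s<\ulin r$, which in particular gives $\lin\F_{\ulin s}\subseteq \F_{\ulin r}$ whenever $\ulin s<\ulin r$; and (ii) when $\ulin t$ has both coordinates positive, $\ulin t$ is approached strictly from below by $\ulin t-n^{-1}(\ulin e_++\ulin e_-)$ and strictly from above by $\ulin t+n^{-1}(\ulin e_++\ulin e_-)$.

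First I would verify right-continuity of $\lin\F$ by the direct chain
$$\bigcap_{\ulin s>\ulin t}\lin\F_{\ulin s}=\bigcap_{\ulin s>\ulin t}\bigcap_{\ulin r>\ulin s}\F_{\ulin r}=\bigcap_{\ulin r>\ulin t}\F_{\ulin r}=\lin\F_{\ulin t},$$
the middle collapse being fact (i).

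Next, for the stopping-time characterization, I would use the two set-theoretic identities
$$\{\ulin T\le \ulin t\}=\bigcap_{\ulin s>\ulin t}\{\ulin T<\ulin s\},\qquad \{\ulin T<\ulin t\}=\bigcup_{n}\{\ulin T\le \ulin t-n^{-1}(\ulin e_++\ulin e_-)\},$$
the second one in the case that both coordinates of $\ulin t$ are positive (if either coordinate vanishes, $\{\ulin T<\ulin t\}$ is empty since $\ulin T\in[0,\infty]^2$, and is therefore trivially in $\F_{\ulin t}$). The forward direction of assertion (2), assuming $\{\ulin T\le \ulin t\}\in\lin\F_{\ulin t}$ for all $\ulin t$, then follows from the second identity combined with $\lin\F_{\ulin s}\subseteq \F_{\ulin t}$ for $\ulin s<\ulin t$; the reverse direction follows from the first identity together with $\{\ulin T<\ulin s\}\in\F_{\ulin s}$. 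For assertion (3) on $\lin\F_{\ulin T}$, I would apply the same two identities intersected with $A$: one side gives $A\cap\{\ulin T<\ulin t\}=\bigcup_n A\cap\{\ulin T\le \ulin t_n\}\in\F_{\ulin t}$, and the other gives $A\cap\{\ulin T\le \ulin t\}=\bigcap_{\ulin s>\ulin t}A\cap\{\ulin T<\ulin s\}\in\lin\F_{\ulin t}$.

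Finally, for the decreasing sequence $(\ulin T^n)$, coordinatewise monotonicity yields $\{\ulin T<\ulin t\}=\bigcup_n\{\ulin T^n<\ulin t\}\in\F_{\ulin t}$, so $\ulin T$ is an $\lin\F$-stopping time by assertion (2). The inclusion $\lin\F_{\ulin T}\subseteq\bigcap_n\lin\F_{\ulin T^n}$ is immediate from $\ulin T\le \ulin T^n$; for the reverse inclusion, if $A\in\bigcap_n\lin\F_{\ulin T^n}$ then by assertion (3) each $A\cap\{\ulin T^n<\ulin t\}$ lies in $\F_{\ulin t}$, and taking the union over $n$ gives $A\cap\{\ulin T<\ulin t\}\in\F_{\ulin t}$, whence $A\in\lin\F_{\ulin T}$ by assertion (3) applied to $\ulin T$. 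I do not foresee any substantive obstacle in this proposition: the entire statement is a careful transcription of the classical one-parameter lemmas, and the only point requiring real attention is the bookkeeping with the partial order and the strict-versus-weak distinction at boundary cases where a coordinate of $\ulin t$ equals $0$.
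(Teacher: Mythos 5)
Your proof is correct, and since the paper merely states this proposition ``follows from a standard argument'' without supplying details, it is precisely the argument the author has in mind: right-continuity of $\lin\F$ via interpolation in the partial order, the two set identities relating $\le$ and $<$ through the diagonal approximants $\ulin t\pm n^{-1}(\ulin e_++\ulin e_-)$, and the sandwich $\lin\F_{\ulin s}\subseteq\F_{\ulin t}\subseteq\lin\F_{\ulin t}$ for $\ulin s<\ulin t$. The one step you leave implicit is the reduction of the uncountable intersection $\bigcap_{\ulin s>\ulin t}\{\ulin T<\ulin s\}$ to the countable one over $\ulin s_n=\ulin t+n^{-1}(\ulin e_++\ulin e_-)$ (valid because the sets $\{\ulin T<\ulin s\}$ increase in $\ulin s$), which is what actually certifies membership in each $\F_{\ulin r}$, $\ulin r>\ulin t$; this is routine but worth stating since uncountable intersections are not automatically measurable.
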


	\begin{Definition}
		A relatively open subset $\cal R$ of $\R_+^2$ is called a history complete region, or simply an HC region, if for any $\ulin t\in \cal R$, we have $[\ulin 0, \ulin t]\subset\cal R$. Given an HC region $\cal R$, for $\sigma\in\{+,-\}$, define   $T^{\cal R}_\sigma:\R_+\to\R_+\cup\{\infty\}$ by $T^{\cal R}_\sigma(t)=\sup\{s\ge 0:s \ulin e_\sigma+t\ulin e_{-\sigma}\in{\cal R}\}$, where we set $\sup\emptyset =0$.

An HC region-valued random element $\cal D$ is called an $\F$-stopping region if for any $\ulin t\in\R_+^2$, $\{\omega\in\Omega:\ulin t\in {\cal D}(\omega)\}\in\F_{ \ulin t}$. A random function $X({\ulin t})$ with a random domain $\cal D$ is called an $\F$-adapted HC process if $\cal D$ is an $\F$-stopping region, and for every  $\ulin t\in\R_+^2$, $X_{\ulin t}$ restricted to $\{\ulin t\in\cal D\}$ is $\F_{ \ulin t}$-measurable. \label{Def-HC}
	\end{Definition}

The following propositions are
 \cite[Lemmas 2.7 and  2.9]{Two-Green-interior}.
	
	\begin{Proposition}
		Let $\ulin T$ and $\ulin S$ be two $\F$-stopping times. Then (i)  $\{\ulin T\le \ulin S\} \in\F_{\ulin S}$; (ii) if $\ulin S$ is a constant $\ulin s\in\R_+^2$, then $\{\ulin T\le \ulin S\} \in\F_{ \ulin T}$; and (iii) if $f$ is an $\F_{ \ulin T}$-measurable function, then ${\bf 1}_{\{\ulin T\le \ulin S\}}f$ is $\F_{ \ulin S}$-measurable. In particular, if $\ulin T\le \ulin S$, then $\F_{ \ulin T}\subset \F_{ \ulin S}$. \label{T<S}
	\end{Proposition}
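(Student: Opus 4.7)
The plan is to handle the three parts in the order stated, with (i) carrying all the genuine work and (ii), (iii), and the final corollary following by short formal arguments.

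For (i) I would show the equivalent statement that $\{\ulin T\le \ulin S\}\cap \{\ulin S\le \ulin t\}\in \F_{\ulin t}$ for every fixed $\ulin t\in\R_+^2$. The first observation is that this event is unchanged by intersecting with $\{\ulin T\le \ulin t\}$, because $\ulin T\le \ulin S\le \ulin t$ forces $\ulin T\le \ulin t$; and on the three-fold intersection, the condition $\ulin T\le \ulin S$ splits into the two scalar conditions $T_+\le S_+$ and $T_-\le S_-$. So it suffices to show, by symmetry, that
\[
\{T_+\le S_+\}\cap \{\ulin T\le \ulin t\}\cap \{\ulin S\le \ulin t\}\in \F_{\ulin t}.
\]
I would then decompose via rational thresholds,
\[
\{T_+\le S_+\}=\bigcap_{q\in\Q_+}\bigl(\{T_+< q\}\cup\{q\le S_+\}\bigr),
\]
and treat each factor separately. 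For $q\le t_+$ the crucial identity is
\[
\{T_+<q\}\cap\{\ulin T\le \ulin t\}=\bigcup_{q'\in\Q\cap[0,q)}\{\ulin T\le (q',t_-)\},
\]
a countable union of events in $\F_{(q',t_-)}\subseteq\F_{\ulin t}$ by the stopping-time property of $\ulin T$; and similarly $\{q\le S_+\}\cap\{\ulin S\le \ulin t\}$ is the complement inside $\{\ulin S\le \ulin t\}$ of the analogous union. The case $q>t_+$ is trivial since then $\{T_+<q\}\supseteq\{\ulin T\le \ulin t\}$ and $\{q\le S_+\}\cap\{\ulin S\le \ulin t\}=\emptyset$. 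Taking the countable intersection over $q$, and repeating the argument for the $-$ coordinate, yields (i).

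Part (ii) is immediate: with $\ulin S\equiv \ulin s$ constant, for any $\ulin t$ one has $\{\ulin T\le \ulin s\}\cap\{\ulin T\le \ulin t\}=\{\ulin T\le \ulin s\wedge \ulin t\}\in \F_{\ulin s\wedge \ulin t}\subseteq\F_{\ulin t}$, which is the definition of $\{\ulin T\le \ulin s\}\in\F_{\ulin T}$. For (iii) I would split $\{{\bf 1}_{\{\ulin T\le \ulin S\}}f\in B\}$ according to whether $0\in B$: if $0\notin B$ it equals $\{\ulin T\le \ulin S\}\cap\{f\in B\}$, while if $0\in B$ the event $\{\ulin T\not\le \ulin S\}$ is also included. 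Intersecting with $\{\ulin S\le \ulin t\}$, using (i) to place $\{\ulin T\le \ulin S\}\cap\{\ulin S\le\ulin t\}$ (and its complement inside $\{\ulin S\le\ulin t\}$) in $\F_{\ulin t}$, and using that $\{f\in B\}\cap\{\ulin T\le\ulin t\}\in\F_{\ulin t}$ by $\F_{\ulin T}$-measurability of $f$, gives the required $\F_{\ulin t}$-membership once we note that $\{\ulin T\le \ulin S,\ulin S\le \ulin t\}\subseteq\{\ulin T\le \ulin t\}$. The final ``in particular'' is then (iii) applied with the constantly-true event $\{\ulin T\le \ulin S\}$, so that ${\bf 1}_{\{\ulin T\le \ulin S\}}f=f$.

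The single obstacle is in (i): individual coordinates of a two-parameter stopping time are not stopping times in any one-parameter filtration, so events like $\{T_+< q\}$ have no direct interpretation in $\F$. The remedy is to always pair such a one-coordinate event with a cap on the other coordinate drawn from $\{\ulin T\le \ulin t\}$ or $\{\ulin S\le \ulin t\}$ before invoking the filtration property; this is why the factor $\{\ulin T\le \ulin t\}$ is absorbed into the event at the very start, and why the dichotomy $q\le t_+$ versus $q>t_+$ appears in the argument above.
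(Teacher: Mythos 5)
The paper does not actually prove this proposition; it is imported verbatim from the earlier work, with the one-line remark ``The following propositions are \cite[Lemmas 2.7 and 2.9]{Two-Green-interior}.'' So there is no in-paper proof to compare against, and you have supplied a self-contained argument where the paper offers only a citation.

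Your proof is correct, and it is the natural adaptation of the standard one-parameter argument to the two-parameter setting. The logical skeleton is sound: reducing (i) to $\{\ulin T\le\ulin S\}\cap\{\ulin S\le\ulin t\}\in\F_{\ulin t}$, absorbing $\{\ulin T\le\ulin t\}$ into that event (justified by transitivity of $\le$), splitting $\ulin T\le\ulin S$ into the two coordinate inequalities, and decomposing each via rational thresholds. I verified the two key identities: $\{T_+\le S_+\}=\bigcap_{q\in\Q_+}(\{T_+<q\}\cup\{q\le S_+\})$ holds on $[0,\infty]$-valued coordinates (the backward inclusion uses density of $\Q$ in $(S_+,T_+]$ when $T_+>S_+$); and for $q\le t_+$, $\{T_+<q\}\cap\{\ulin T\le\ulin t\}=\bigcup_{q'\in\Q\cap[0,q)}\{\ulin T\le(q',t_-)\}$ holds because $T_+<q\le t_+$ makes the constraint $T_+\le t_+$ redundant and the union over $q'<q$ of $\{T_+\le q'\}$ recovers $\{T_+<q\}$. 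Your case split at $q>t_+$ is also correctly handled. Parts (ii) and (iii) follow formally as you describe, and the final assertion is indeed (iii) applied to the trivially full event. The concluding observation — that individual coordinates of a two-parameter stopping time are not stopping times for any one-parameter filtration, which is why a cap on the other coordinate must always be carried along — is precisely the point of the whole argument and is worth stating as you did. One purely stylistic remark: since $\F_{\ulin T}$ is defined as $\{A\in\F_{\ulin\infty}:A\cap\{\ulin T\le\ulin t\}\in\F_{\ulin t}\ \forall\ulin t\}$, you could present (ii) one line shorter by noting that the required intersection $\{\ulin T\le\ulin s\}\cap\{\ulin T\le\ulin t\}=\{\ulin T\le\ulin s\wedge\ulin t\}$ is in $\F_{\ulin s\wedge\ulin t}\subset\F_{\ulin t}$ directly from the stopping-time property, which is exactly what you wrote — no improvement needed.
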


We will need the following proposition to do localization. The reader should note that for an $\F$-stopping time $\ulin T$ and a deterministic time $\ulin t\in\R_+^2$, $\ulin T\wedge \ulin t$ may not be an $\F$-stopping time. This is the reason why we introduce a more complicated stopping time.

\begin{Proposition}
	Let $\ulin T$ be an $\F$-stopping time. Fix a deterministic time $\ulin t\in\R_+^2$. Define $\ulin T^{\ulin t}$ such that if $\ulin T\le \ulin t$, then $\ulin T^{\ulin t}=\ulin T$; and if $\ulin T\not\le \ulin t$, then $\ulin T^{\ulin t}=\ulin t$. Then $\ulin T^{\ulin t}$ is an $\F$-stopping time bounded above by $\ulin t$, and $\F_{\ulin T^{\ulin t}}$ agrees with $\F_{\ulin T}$ on $\{\ulin T\le \ulin t\}$, i.e., $\{\ulin T\le \ulin t\}\in \F_{\ulin T^{\ulin t}}\cap \F_{\ulin T}$, and for any $A\subset  \{\ulin T\le \ulin t\}$, $A\in \F_{\ulin T^{\ulin t}}$ if and only if  $A\in \F_{\ulin T}$.  \label{prop-local}
\end{Proposition}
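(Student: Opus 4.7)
The plan is to treat all three claims by deriving one explicit formula for $\{\ulin T^{\ulin t}\le \ulin s\}$ and then reading everything off from it. By construction, on $\{\ulin T\le \ulin t\}$ we have $\ulin T^{\ulin t}=\ulin T$, and on $\{\ulin T\not\le \ulin t\}$ we have $\ulin T^{\ulin t}=\ulin t$. Splitting on whether $\ulin t\le \ulin s$ therefore gives
\[
\{\ulin T^{\ulin t}\le \ulin s\}=\begin{cases}\Omega, & \ulin t\le \ulin s,\\ \{\ulin T\le \ulin t\}\cap\{\ulin T\le \ulin s\}=\{\ulin T\le \ulin t\wedge \ulin s\}, & \ulin t\not\le \ulin s.\end{cases}
\]
In the first case the set is trivially in $\F_{\ulin s}$; in the second case $\{\ulin T\le \ulin t\wedge \ulin s\}\in \F_{\ulin t\wedge \ulin s}\subset \F_{\ulin s}$ since $\ulin T$ is an $\F$-stopping time and $\ulin t\wedge \ulin s\le \ulin s$. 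Hence $\ulin T^{\ulin t}$ is an $\F$-stopping time, and the bound $\ulin T^{\ulin t}\le \ulin t$ is immediate.

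Next I would check that $\{\ulin T\le \ulin t\}$ lies in both $\F_{\ulin T}$ and $\F_{\ulin T^{\ulin t}}$. The first membership is Proposition \ref{T<S}(ii) applied with $\ulin S$ the constant $\ulin t$. For the second I apply the displayed formula with $A=\{\ulin T\le \ulin t\}$: when $\ulin t\le \ulin s$ the intersection $A\cap\{\ulin T^{\ulin t}\le \ulin s\}$ equals $A\in \F_{\ulin t}\subset \F_{\ulin s}$, and when $\ulin t\not\le \ulin s$ it equals $\{\ulin T\le \ulin t\wedge \ulin s\}\in \F_{\ulin s}$.

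For the agreement of the two $\sigma$-algebras on $\{\ulin T\le \ulin t\}$, I would establish the single clean identity
\[
A\cap\{\ulin T^{\ulin t}\le \ulin s\}=A\cap\{\ulin T\le \ulin s\}\qquad \text{for every } \ulin s\in\R_+^2,
\]
valid for any $A\subset \{\ulin T\le \ulin t\}$. When $\ulin t\le \ulin s$ both sides equal $A$, using $A\subset\{\ulin T\le \ulin t\}\subset\{\ulin T\le \ulin s\}$ on the right. When $\ulin t\not\le \ulin s$ the left side is $A\cap\{\ulin T\le \ulin t\wedge \ulin s\}=A\cap\{\ulin T\le \ulin t\}\cap\{\ulin T\le \ulin s\}$, and the containment $A\subset \{\ulin T\le \ulin t\}$ collapses this to $A\cap\{\ulin T\le \ulin s\}$. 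Given this identity, $A\in \F_{\ulin T^{\ulin t}}$ iff $A\cap\{\ulin T^{\ulin t}\le \ulin s\}\in \F_{\ulin s}$ for every $\ulin s$, iff $A\cap\{\ulin T\le \ulin s\}\in \F_{\ulin s}$ for every $\ulin s$, iff $A\in \F_{\ulin T}$.

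The main subtlety to watch for is that $\ulin T\wedge \ulin t$ is in general \emph{not} an $\F$-stopping time in the two-parameter setting, which is precisely why the statement defines $\ulin T^{\ulin t}$ in its slightly asymmetric two-piece way rather than as a naive minimum. This blocks any appeal to the scalar ``stop at a constant'' trick, and forces one to work directly from the definition of $\F_{\ulin T}$ in terms of the test sets $\{\ulin T\le \ulin s\}$. Once the case split for $\{\ulin T^{\ulin t}\le \ulin s\}$ is in hand, however, there is no further obstacle: everything else is bookkeeping driven by the identity above.
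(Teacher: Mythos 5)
Your proof is correct and follows essentially the same route as the paper: the same case split on $\ulin t\le\ulin s$ versus $\ulin t\not\le\ulin s$, the same use of $\{\ulin T\le \ulin t\wedge\ulin s\}\in\F_{\ulin t\wedge\ulin s}$, and Proposition \ref{T<S} for $\{\ulin T\le\ulin t\}\in\F_{\ulin T}$. The only (welcome) streamlining is that you package both directions of the final equivalence into the single identity $A\cap\{\ulin T^{\ulin t}\le\ulin s\}=A\cap\{\ulin T\le\ulin s\}$, whereas the paper argues the two implications separately by repeating the case analysis.
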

\begin{proof}
	Clearly $\ulin T^{\ulin t}\le \ulin t$. Let $\ulin s\in\R_+^2$. If $\ulin t\le \ulin s$, then $\{\ulin T^{\ulin t}\le \ulin s\}$ is the whole space. If $\ulin t\not\le \ulin s$, then $\{\ulin T^{\ulin t}\le \ulin s\}=\{\ulin T\le \ulin t\}\cap \{\ulin T\le \ulin s\}=\{\ulin T\le \ulin t\wedge \ulin s\}\in \F_{ \ulin t\wedge \ulin s}\subset \F_{ \ulin s}$. So $\ulin T^{\ulin t}$ is an $\F$-stopping time.

By Proposition \ref{T<S}, $\{\ulin T\le \ulin t\}\in \F_{\ulin T}$. Suppose $A\subset  \{\ulin T\le \ulin t\}$ and $A\in  \F_{\ulin T}$.  Let $\ulin s\in\R_+^2$. If $\ulin t\le \ulin s$, then   $A\cap \{\ulin T^{\ulin t}\le \ulin s\}=A=A\cap \{\ulin T\le \ulin t\} \in\F_{\ulin t}\subset \F_{\ulin s}$. If $\ulin t\not\le \ulin s$, then $A\cap \{\ulin T^{\ulin t}\le \ulin s\}= A\cap\{\ulin T\le \ulin t\wedge \ulin s\}\in \F_{\ulin t\wedge \ulin s}\subset \F_{\ulin s}$.
So $A\in\F_{\ulin T^{\ulin t}}$. In particular, $\{\ulin T\le \ulin t\}\in\F_{\ulin T^{\ulin t}}$. On the other hand, suppose  $A\subset  \{\ulin T\le \ulin t\}$ and $A\in  \F_{\ulin T^{\ulin t}}$. Let $\ulin s\in\R_+^2$. If $\ulin t\le \ulin s$, then $A\cap \{\ulin T\le \ulin s\}=A=A\cap  \{\ulin T^{\ulin t}\le \ulin t\}\in\F_{\ulin t}\subset\F_{\ulin s}$. If $\ulin t\not\le \ulin s$, then $A\cap \{\ulin T\le \ulin s\}=A\cap \{\ulin T\le \ulin t\}\cap \{\ulin T\le \ulin s\}=A\cap\{\ulin T^{\ulin t}\le \ulin s\}\in\F_{\ulin s}$. Thus, $A\in \F_{\ulin T}$. So for $A\subset  \{\ulin T\le \ulin t\}$, $A\in \F_{\ulin T^{\ulin t}}$ if and only if  $A\in \F_{\ulin T}$.
\end{proof}

Now we fix a probability measure $\PP$,  and let $\EE$ denote the corresponding expectation.
	
	\begin{Definition}
	An  $\F$-adapted process $(X_{\ulin t})_{\ulin t\in\R_+^2}$ is called an  $\F$-martingale (w.r.t.\ $ \PP$) if  for any $\ulin s\le \ulin t\in\R_+^2$, a.s.\ $\EE[X_{\ulin t}|\F_{\ulin s}]=X_{\ulin s}$. If there is $\zeta\in L^1(\Omega,\F,\PP)$ such that $X_{\ulin t}=\EE[\zeta|\F_{ \ulin t}]$ for every $\ulin t\in\R_+^2$, then we say that $X$ is an $\F$-martingale closed by $\zeta$.
	\end{Definition}

The following proposition is \cite[Lemma 2.11]{Two-Green-interior}.
		
	\begin{Proposition} [Optional Stopping Theorem]
		Suppose $X$ is a continuous $\F$-martingale.
The following are true. (i) If $X$ is closed by $\zeta$, then for any finite $\F$-stopping time $\ulin T$, $X_{\ulin T}=\EE[\zeta|\F_{ \ulin T}]$. (ii) If $\ulin T\le \ulin S$ are two bounded $\F$-stopping times, then $\EE[X_{\ulin S}|\F_{\ulin T}]=X_{\ulin T}$.\label{OST}
	\end{Proposition}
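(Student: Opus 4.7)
My plan is to prove (i) by discretizing $\ulin T$ from above by dyadic $\F$-stopping times and then passing to the limit via pathwise continuity and uniform integrability, and to derive (ii) from (i) by closing the martingale at a deterministic upper bound. Given a finite $\F$-stopping time $\ulin T=(T_+,T_-)$, I would set
$$\ulin T^n:=(2^{-n}\lceil 2^n T_+\rceil,\,2^{-n}\lceil 2^n T_-\rceil),$$
which takes values in the lattice $L_n:=(2^{-n}\N)^2$ and decreases to $\ulin T$ as $n\to\infty$; a routine check using $\{\ulin T\le \ulin s\}\in\F_{\ulin s}$ for deterministic $\ulin s$ shows each $\ulin T^n$ is itself an $\F$-stopping time.

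The discrete-level identity $X_{\ulin T^n}=\EE[\zeta|\F_{\ulin T^n}]$ is established by partitioning any $A\in\F_{\ulin T^n}$ according to the value of $\ulin T^n$ in $L_n$. The key measurability fact is that for each $\ulin t\in L_n$,
$$\{\ulin T^n=\ulin t\}=\{\ulin T^n\le \ulin t\}\setminus\bigcup_{\ulin s\in L_n\cap[\ulin 0,\ulin t]\setminus\{\ulin t\}}\{\ulin T^n\le \ulin s\},$$
a \emph{finite} Boolean combination of events each in $\F_{\ulin t}$ (since $\ulin s\le \ulin t$ gives $\{\ulin T^n\le \ulin s\}\in\F_{\ulin s}\subset\F_{\ulin t}$), so $\{\ulin T^n=\ulin t\}\in\F_{\ulin t}$. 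Consequently $A\cap\{\ulin T^n=\ulin t\}\in\F_{\ulin t}$, and the ordinary martingale relation $X_{\ulin t}=\EE[\zeta|\F_{\ulin t}]$ yields
$$\EE[\zeta\,{\bf 1}_{A\cap\{\ulin T^n=\ulin t\}}]=\EE[X_{\ulin t}\,{\bf 1}_{A\cap\{\ulin T^n=\ulin t\}}]=\EE[X_{\ulin T^n}\,{\bf 1}_{A\cap\{\ulin T^n=\ulin t\}}].$$
Summing over $\ulin t\in L_n$ (uniform integrability of $(\EE[\zeta|\F_{\ulin t}])$ justifying the countable sum) gives the discrete identity, and a parallel decomposition shows $X_{\ulin T^n}$ is $\F_{\ulin T^n}$-measurable.

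For the limit, continuity of $X$ gives $X_{\ulin T^n}\to X_{\ulin T}$ a.s., while $(X_{\ulin T^n})=(\EE[\zeta|\F_{\ulin T^n}])$ is uniformly integrable and therefore converges in $L^1$ as well. Since $\ulin T\le \ulin T^n$, Proposition \ref{T<S} gives $\F_{\ulin T}\subset\F_{\ulin T^n}$, so for any $A\in\F_{\ulin T}$, $\EE[\zeta\,{\bf 1}_A]=\EE[X_{\ulin T^n}\,{\bf 1}_A]\to\EE[X_{\ulin T}\,{\bf 1}_A]$. To produce an $\F_{\ulin T}$-measurable version of $X_{\ulin T}$, I would upgrade $X$ to a continuous $\lin\F$-martingale and apply reverse-martingale convergence to the decreasing sequence $(\lin\F_{\ulin T^n})$, invoking Proposition \ref{right-continuous-0} to identify $\bigcap_n\lin\F_{\ulin T^n}$ with $\lin\F_{\ulin T}$. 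Part (ii) then falls out quickly: for bounded $\ulin T\le \ulin S\le \ulin s$ with $\ulin s\in\R_+^2$ deterministic, the martingale property makes $X|_{[\ulin 0,\ulin s]}$ closed by the integrable variable $X_{\ulin s}$, so applying (i) to both stopping times gives $X_{\ulin T}=\EE[X_{\ulin s}|\F_{\ulin T}]$ and $X_{\ulin S}=\EE[X_{\ulin s}|\F_{\ulin S}]$; combined with $\F_{\ulin T}\subset\F_{\ulin S}$ (Proposition \ref{T<S}) and the tower property, $\EE[X_{\ulin S}|\F_{\ulin T}]=\EE[X_{\ulin s}|\F_{\ulin T}]=X_{\ulin T}$.

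The main obstacle is the discrete OST step inside (i): the partial (rather than linear) order on $\R_+^2$ makes it nontrivial to verify that $\{\ulin T^n=\ulin t\}\in\F_{\ulin t}$, which is precisely what unlocks the classical sum decomposition and, with it, everything else. The subsequent passage to the limit is otherwise a standard discrete-to-continuous argument, and the only further two-parameter subtlety lies in reconciling the tail $\sigma$-algebra $\bigcap_n\F_{\ulin T^n}$ with $\F_{\ulin T}$, for which the right-continuous augmentation furnished by Proposition \ref{right-continuous-0} is exactly what is needed.
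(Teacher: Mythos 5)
The architecture is right: dyadic upper discretization $\ulin T^n\downarrow\ulin T$, the identity $\{\ulin T^n=\ulin t\}\in\F_{\ulin t}$ unlocking the discrete optional stopping step, uniform integrability for the $L^1$ limit, and reducing (ii) to (i). But there is a genuine gap at the measurability step of (i). Your $L^1$-limit gives $\EE[X_{\ulin T}{\bf 1}_A]=\EE[\zeta{\bf 1}_A]$ for all $A\in\F_{\ulin T}$; to conclude $X_{\ulin T}=\EE[\zeta|\F_{\ulin T}]$ one needs, in addition, that $X_{\ulin T}$ is (a.s.\ equal to) an $\F_{\ulin T}$-measurable random variable. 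The reverse-martingale route you propose, via $\lin\F$ and Proposition~\ref{right-continuous-0}, only produces $X_{\ulin T}=\EE[\zeta|\lin\F_{\ulin T}]$, i.e., $\lin\F_{\ulin T}$-measurability. In general $\F_{\ulin T}\subsetneq\lin\F_{\ulin T}$, and the paper applies this proposition to filtrations $\F$ generated by $\F^+$ and $\F^-$ with no right-continuity guarantee, so the augmentation does not, as you assert in your closing sentence, reconcile the tail $\sigma$-algebra with $\F_{\ulin T}$; it reconciles it with $\lin\F_{\ulin T}$, leaving the stated conclusion unproved.

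The ingredient you are missing is the direct measurability fact (Lemma~2.9 in \cite{Two-Green-interior}): if $X$ is a continuous $\F$-adapted process and $\ulin T$ a finite $\F$-stopping time, then $X_{\ulin T}$ is $\F_{\ulin T}$-measurable. Its proof needs neither augmentation nor completion: fix $\ulin t$ and on $\{\ulin T\le\ulin t\}$ set $\ulin T^n_{\ulin t}:=\ulin T^n\wedge\ulin t$, used here only as a measurable map and not as a stopping time, which, as the paper notes just before Proposition~\ref{prop-local}, it need not be. Every level set $\{\ulin T^n_{\ulin t}=\ulin s,\,\ulin T\le\ulin t\}$ with $\ulin s\le\ulin t$ is a countable Boolean combination of events $\{\ulin T\le\ulin r\}$ with $\ulin r\le\ulin t$, hence lies in $\F_{\ulin t}$, so $X_{\ulin T^n_{\ulin t}}{\bf 1}_{\{\ulin T\le\ulin t\}}$ is $\F_{\ulin t}$-measurable; pointwise (not merely a.s.) continuity then passes this to $X_{\ulin T}{\bf 1}_{\{\ulin T\le\ulin t\}}$. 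With this supplied, your $L^1$-limit step gives (i) at once, and the $\lin\F$ detour is both insufficient and unnecessary. One further small point for (ii): statement (i) requires the closing relation on all of $\R_+^2$, whereas restricting $X$ to $[\ulin 0,\ulin s]$ closes it only on that box; choose $\zeta=X_{\ulin s'}$ with a deterministic $\ulin s'$ large enough that all the dyadic approximants of $\ulin T$ and $\ulin S$ lie in $[\ulin 0,\ulin s']$, where the closing relation is available.
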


\subsection{Jacobi polynomials}
For $\alpha,\beta>-1$,  Jacobi polynomials (\cite[Chapter 18]{NIST:DLMF}) $P^{(\alpha,\beta)}_n(x)$, $n=0,1,2,3,\dots$, are a class of classical orthogonal polynomials with respect to the weight $\Psi^{(\alpha,\beta)}(x):={\bf 1}_{(-1,1)}(1-x)^\alpha(1+x)^\beta$. This means that each $P^{(\alpha,\beta)}_n(x)$ is a polynomial of degree $n$, and for the inner product defined by $\langle f,g\rangle_{\Psi^{(\alpha,\beta)}}:=\int_{-1}^1 f(x)g(x)\Psi^{(\alpha,\beta)}(x)dx$, we have $\langle P^{(\alpha,\beta)}_n, P^{(\alpha,\beta)}_m\rangle_{\Psi^{(\alpha,\beta)}}=0$ when $n\ne m$. The normalization is that $P^{(\alpha,\beta)}_n(1)=\frac{\Gamma(\alpha+n+1)}{n!\Gamma(\alpha+1)}$,  $P^{(\alpha,\beta)}_n(-1)=(-1)^n\frac{\Gamma(\beta+n+1)}{n!\Gamma(\beta+1)}$, and
\BGE\|  P^{(\alpha,\beta)}_n\|_{\Psi^{(\alpha,\beta)}}^2 =\frac{2^{\alpha+\beta+1}}{2n+\alpha+\beta+1}\cdot \frac{\Gamma(n+\alpha+1)\Gamma(n+\beta+1)} {n!\Gamma(n+\alpha+\beta+1)}.\label{norm}\EDE
For each $n\ge 0$,   $P^{(\alpha,\beta)}_n(x)$ is a solution of the second order differential equation:
\BGE (1-x^2)y''-[(\alpha+\beta+2)x+(\alpha-\beta)]y'+n(n+\alpha+\beta+1)y=0.\label{Jacobi-ODE}\EDE
When $\max\{\alpha,\beta\}>-\frac 12$, we have an exact value of the supernorm of $P^{(\alpha,\beta)}_n$ over $[-1,1]$:
\BGE \|P^{(\alpha,\beta)}_n\|_\infty= \max\{|P^{(\alpha,\beta)}_n(1)|,|P^{(\alpha,\beta)}_n(-1)|\} =\frac{\Gamma(\max\{\alpha,\beta\}+n+1)}{n!\Gamma(\max\{\alpha,\beta\}+1)}.\label{super-exact}\EDE
For  general $\alpha,\beta>-1$, we get an upper bound of $\|P^{(\alpha,\beta)}_n\|_\infty$ using (\ref{super-exact}), the exact value of  $P^{(\alpha,\beta)}_n(1)$, and the derivative formula $\frac d{dx} P^{(\alpha,\beta)}_n(x)=\frac{ \alpha+\beta+n+1}{2} P^{(\alpha+1,\beta+1)}_{n-1}(x)$ for $n\ge 1$:
\BGE \|P^{(\alpha,\beta)}_n\|_\infty\le \frac{\Gamma(\alpha+n+1)}{n!\Gamma(\alpha+1)}+   ({ \alpha+\beta+n+1} )\cdot \frac{\Gamma(\max\{\alpha,\beta\}+n+1)}{\Gamma(n)\Gamma(\max\{\alpha,\beta\}+2)}.\label{super-upper}\EDE

\section{Deterministic Ensemble of Two Chordal Loewner Curves}  \label{section-deterministic}
In this section, we develop a framework about commuting pairs of deterministic chordal Loewner curves, which will be needed to study the commuting pairs of random chordal Loewner curves in the next two sections. The major length of this section is caused by the fact that we allow that the two Loewner curves have intersections. This is needed in order to handle the case $\kappa\in(4,8)$. The ensemble without intersections appeared earlier in \cite{reversibility,duality}.

\subsection{Ensemble with possible intersections}  \label{section-deterministic1}
Let $w_-<w_+\in\R$. Suppose for $\sigma\in\{+,-\}$, $\eta_\sigma(t)$, $0\le t<T_\sigma$, is a chordal Loewner curve (with speed $1$) driven by $\ha w_\sigma$   started from $w_\sigma$, such that $\eta_+$ does not hit $(-\infty, w_-]$, and $\eta_-$ does not hit $[w_+,\infty)$. Let   $K_\sigma(t_\sigma)=\Hull(\eta[0,t_\sigma])$, $0\le t_\sigma<T_\sigma$, $\sigma\in\{+,-\}$. Then $K_\sigma(\cdot)$ are chordal Loewner hulls driven by $\ha w_\sigma$, $\hcap_2(K_\sigma(t_\sigma))=t_\sigma$, and by Proposition \ref{Loewner-chain},
\BGE \{\ha w_\sigma(t_\sigma)\}=\bigcap_{\del>0} \lin{K_\sigma(t_\sigma+\del)/K_\sigma(t_\sigma)},\quad 0\le t_\sigma<T_\sigma.\label{haw=}\EDE
The corresponding chordal Loewner maps are $g_{K_\sigma(t)}$, $0\le t<T_\sigma$, $\sigma\in\{+,-\}$. From the assumption on $\eta_+$ and $\eta_-$ we get
\BGE a_{K_-(t_-)}\le w_-< a_{K_+(t_+)},\quad b_{K_-(t_-)}< w_+\le b_{K_+(t_+)},\quad\mbox{for } t_\sigma\in(0,T_\sigma),\quad \sigma\in\{+,-\}. \label{lem-aabb}\EDE
Since each $K_\sigma(t)$ is generated by a curve, $f_{K_\sigma(t)}$ is well defined.
Let ${\cal I}_\sigma =[0,T_\sigma)$, $\sigma\in\{+,-\}$, and for $\ulin t=(t_+,t_-)\in\I_+\times\I_-$, define
\BGE K(\ulin t)=\Hull(\eta_+[0,t_+]\cup \eta_-[0,t_-]),\quad \mA(\ulin t)=\hcap_2(K(\ulin t)),\quad H(\ulin t)=\HH\sem K(\ulin t).\label{KmA}\EDE It is obvious that $K(\cdot,\cdot)$ and $\mA(\cdot,\cdot)$ are increasing (may not strictly) in both variables. Since $\pa K(t_+,t_-)$ is locally connected, $f_{K(t_+,t_-)}$ is well defined.  For $\sigma\in\{+,-\}$, $t_{-\sigma}\in\I_{-\sigma}$ and $t_\sigma\in\I_\sigma$, define $K_{\sigma}^{t_{-\sigma}}(t_\sigma)=K(t_+,t_-)/K_{-\sigma}(t_{-\sigma})$. Then we have
\BGE g_{K(t_+,t_-)}=g_{K_{+}^{t_-}(t_+)}\circ g_{K_-(t_-)}=g_{K_{-}^{t_+}(t_-)}\circ g_{K_+(t_+)}.\label{circ-g}\EDE
By (\ref{lem-aabb}) and the assumption on $\eta_+,\eta_-$, we have $a_{K(t_+,t_-)}=a_{K_-(t_-)}$ if $t_->0$, and $b_{K(t_+,t_-)}=b_{K_+(t_+)}$ if $t_+>0$.
\begin{Lemma}
    For any $t_+\le t_+'\in\I_+$ and $t_-\le t_-'\in \I_-$, we have
    \BGE \mA(t_+',t_-')-\mA(t_+',t_-)-\mA(t_+,t_-')+\mA(t_+,t_-)\le 0.\label{mA-concave}\EDE
     Especially, $\mA$ is Lipschitz continuous with constant $1$ in any variable, and so is continuous on $\I_+\times \I_-$.
    \label{lem-lips}
\end{Lemma}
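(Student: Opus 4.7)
The plan is to apply the concavity of $\hcap$ from Proposition \ref{hcap-concave} to deduce (\ref{mA-concave}) directly, and then obtain Lipschitz continuity by specializing one of the variables to $0$.

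For the first step, I would fix $t_+\le t_+'\in\I_+$ and $t_-\le t_-'\in\I_-$, and set
$$K_1=K(t_+',t_-),\quad K_2=K(t_+,t_-'),\quad K_0=K(t_+,t_-).$$
Since $K_0$ is an $\HH$-hull generated by a subset of the curves generating $K_1$ and $K_2$, we have $K_0\subset K_1\cap K_2$. Moreover, $K_1\cup K_2\supset \eta_+[0,t_+']\cup\eta_-[0,t_-']$, while $K_1,K_2\subset K(t_+',t_-')$, so that $\Hull(K_1\cup K_2)=K(t_+',t_-')$. Proposition \ref{hcap-concave} then gives
$$\hcap(K(t_+',t_-))+\hcap(K(t_+,t_-'))\ge \hcap(K(t_+',t_-'))+\hcap(K(t_+,t_-)),$$
and dividing by $2$ yields (\ref{mA-concave}).

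For the Lipschitz step, I first note that $\mA$ is monotonically increasing in each variable, since enlarging $t_\pm$ can only enlarge $K(\ulin t)$ and $\hcap$ is monotone. Specializing (\ref{mA-concave}) with $t_-=0$ gives
$$0\le \mA(t_+',t_-')-\mA(t_+,t_-')\le \mA(t_+',0)-\mA(t_+,0)=t_+'-t_+,$$
where the last equality uses $K(s,0)=K_+(s)$ and $\hcap_2(K_+(s))=s$. This shows Lipschitz continuity with constant $1$ in $t_+$, and by the symmetric argument with $t_+=0$ we obtain the same bound in $t_-$. Joint continuity on $\I_+\times\I_-$ then follows from the triangle inequality applied in the two coordinates separately.

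The only nontrivial input is Proposition \ref{hcap-concave}; everything else is bookkeeping about which hulls are contained in which. There is no real obstacle, provided one is careful to verify that $\Hull(K_1\cup K_2)$ is indeed $K(t_+',t_-')$, which is clear from the definition (\ref{KmA}) because both $K_1\cup K_2$ and $K(t_+',t_-')$ have the same unbounded connected component in the complement, namely the unbounded component of $\HH\sem(\eta_+[0,t_+']\cup\eta_-[0,t_-'])$.
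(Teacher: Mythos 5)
Your proposal is correct and follows exactly the paper's approach: applying Proposition \ref{hcap-concave} with $K_0=K(t_+,t_-)$, $K_1=K(t_+',t_-)$, $K_2=K(t_+,t_-')$ and noting $\Hull(K_1\cup K_2)=K(t_+',t_-')$, then specializing $t_-=0$ (resp.\ $t_+=0$) together with monotonicity and $\mA(t_\sigma\ulin e_\sigma)=t_\sigma$ to get the Lipschitz bound. The only difference is that you spell out the identification of the hulls more explicitly than the paper does.
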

\begin{proof}
  Let $t_+\le t_+'\in\I_+$ and $t_-\le t_-'\in \I_-$. Since $K(t_+',t_-)$ and $K(t_+,t_-')$ together generate the $\HH$-hull $K(t_+',t_-')$, and they both contain $K(t_+,t_-)$, we obtain (\ref{mA-concave}) from Proposition \ref{hcap-concave}.  The rest of the statements follow easily from (\ref{mA-concave}), the monotonicity of $\mA$, and the fact that $\mA(t_\sigma \lin e_\sigma)=t_\sigma$ for any $t_\sigma\in\I_\sigma$, $\sigma\in\{+,-\}$.
\end{proof}

\begin{Definition}
We use the above setting. Let $\cal D\subset {\cal I}_+\times {\cal I}_-$ be an HC region as in Definition \ref{Def-HC}. Suppose that there are dense subsets $\I_+^*$ and $\I_-^*$ of $\I_+$ and $\I_-$, respectively, such that for any $\sigma\in\{+,-\}$ and $t_{-\sigma}\in {\cal I}^*_{-\sigma}$, the following two conditions hold:
\begin{enumerate}
  \item[(I)] $K_\sigma^{t_{-\sigma}}(t)$, $0\le t_\sigma<T^{\cal D}_\sigma(t_{-\sigma})$, are chordal Loewner hulls generated by a chordal Loewner curve, denoted by $\eta_{\sigma}^{t_{-\sigma}}$, with some speed.
  \item [(II)] $ \eta_{\sigma}^{t_{-\sigma}}[0,T^{\cal D}_\sigma(t_{-\sigma}))\cap \R$ has Lebesgue measure zero.
\end{enumerate}
Then we call $(\eta_+,\eta_-;{\cal D})$ a commuting pair of chordal Loewner curves, and call $K(\cdot,\cdot)$ and $\mA(\cdot,\cdot)$ the hull function and the capacity function, respectively, for this pair.
  \label{commuting-Loewner}
\end{Definition}

\begin{Remark}
 Later in Lemma \ref{Lebesgue} we will show that for the commuting pair in Definition \ref{commuting-Loewner}, Conditions (I) and (II) actually hold for all $t_{-\sigma}\in {\cal I}_{-\sigma}$, $\sigma\in\{+,-\}$.
\end{Remark}

From now on, let $(\eta_+,\eta_-;{\cal D})$ be a commuting pair of chordal Loewner curves, and let $\I_+^*$ and $\I_-^*$ be as in Definition \ref{commuting-Loewner}.

\begin{Lemma}
  $K(\cdot,\cdot)$ and $\mA(\cdot,\cdot)$ restricted to $\cal D$ are strictly increasing in both variables. \label{lem-strict}
\end{Lemma}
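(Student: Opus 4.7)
By symmetry we treat only the $t_+$ variable. Fix $t_-\in\I_-$ and $t_+<t_+'$ with $(t_+,t_-),(t_+',t_-)\in\cal D$. Since $K(t_+,t_-)\subset K(t_+',t_-)$ and two comparable $\HH$-hulls with equal $\hcap$ must coincide (because $\hcap(K(t_+',t_-)/K(t_+,t_-))=0$ forces this quotient to be empty), it is enough to prove the strict inequality $\mA(t_+,t_-)<\mA(t_+',t_-)$; strict monotonicity of $K(\cdot,\cdot)$ is then automatic.

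The plan is to exploit the identity
\[ \mA(t_+,t_-)=t_- + \hcap_2(K_+^{t_-}(t_+)), \]
which follows from (\ref{circ-g}) together with the additive relation $\hcap(K_2)=\hcap(K_1)+\hcap(K_2/K_1)$ applied to $K_-(t_-)\subset K(t_+,t_-)$. First I would prove the claim under the extra assumption $t_-\in\I_-^*$. Condition (I) of Definition \ref{commuting-Loewner} asserts that $K_+^{t_-}(t)$, $0\le t<T^{\cal D}_+(t_-)$, is a chordal Loewner hull process with some speed $u$; by the conventions fixed earlier, such a speed is continuous and strictly increasing, so $\hcap_2(K_+^{t_-}(t))=u(t)-u(0)$ is strictly increasing in $t$. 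This gives $\mA(t_+,t_-)<\mA(t_+',t_-)$ for every $t_-\in\I_-^*$.

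To remove the restriction $t_-\in\I_-^*$, I would combine relative openness of $\cal D$ with density of $\I_-^*$. Choose $\eps>0$ small enough that $(t_+',t_-+\eps)\in\cal D$, and then pick $s_-\in\I_-^*\cap[t_-,t_-+\eps]$. The HC property of $\cal D$ gives $(t_+,s_-),(t_+',s_-)\in\cal D$, so the preceding step yields $\mA(t_+',s_-)-\mA(t_+,s_-)>0$. Since $t_-\le s_-$, inequality (\ref{mA-concave}) from Lemma \ref{lem-lips} rearranges to
\[ \mA(t_+',t_-)-\mA(t_+,t_-)\;\ge\;\mA(t_+',s_-)-\mA(t_+,s_-)\;>\;0, \]
which closes the argument.

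The only real delicate point is the verification underlying the dense case: one has to be sure that the speed $u$ attached to a chordal Loewner hull process is \emph{strictly} increasing, not merely non-decreasing, so that $\hcap_2$ of $K_+^{t_-}(\cdot)$ is genuinely strictly monotone. This is the convention adopted when speeds were introduced in Section \ref{section-prel}; if that convention were ever weakened, one would instead transfer strict monotonicity directly from the fact that $\hcap_2(K_+(t_+))=t_+$ by tracking $\eta_+$ under the conformal image induced by $g_{K_-(t_-)}$.
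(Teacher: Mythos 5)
Your proof is correct and follows the paper's own argument: strict monotonicity on the line $t_{-\sigma}=s_-\in\I^*_{-\sigma}$ comes from Condition (I) (chordal Loewner hulls driven with a strictly increasing speed, which in the paper's conventions is part of the definition of "speed"), and the extension to all $t_-$ is obtained from the concavity inequality (\ref{mA-concave}) together with density of $\I_-^*$, exactly as you argue. The only added value over the paper's terse one-sentence proof is your explicit reduction of strict monotonicity of $K$ to that of $\mA$ via the quotient-hull capacity relation, which the paper leaves implicit.
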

\begin{proof}
By Condition (I), for any $\sigma\in\{+,-\}$ and $t_{-\sigma}\in\I_{-\sigma}^*$, $t\mapsto K(t_{-\sigma}\ulin e_{-\sigma}+t\ulin e_\sigma)$ and $t\mapsto \mA(t_{-\sigma}\ulin e_{-\sigma}+t\ulin e_\sigma)$ are strictly increasing on $[0,T^{\cal D}_\sigma(t_{-\sigma}))$. By (\ref{mA-concave}) and the denseness of $\I_{-\sigma}^*$ in $\I_{-\sigma}$, this property extends to any $t_{-\sigma}\in\I_{-\sigma}$.
\end{proof}

In the rest of the section, when we talk about $K(t_+,t_-)$, $\mA(t_+,t_-)$, $K_{+}^{t_-}(t_+)$ and $K_{-}^{t_+}(t_-)$, it is always implicitly assumed that $(t_+,t_-)\in\cal D$. So we may now simply say that $K(\cdot,\cdot)$ and $\mA(\cdot,\cdot)$ are strictly increasing in both variables.

\begin{Lemma} We have the following facts.
  \begin{enumerate}
    \item [(i)]  Let $\ulin a=(a_+,a_-)\in\cal D$ and $L=\max\{|z|:z\in K(a_+,a_-)\}$. Let $\sigma\in\{+,-\}$. Suppose $ t_\sigma<t_\sigma'\in [0,a_\sigma]$ satisfy that $\diam(\eta_\sigma[t_\sigma,t_\sigma'])<r$ for some $r\in(0,L)$. Then for any $t_{-\sigma}\in[0,a_{-\sigma}]$,
  $\diam(K_{\sigma}^{t_{-\sigma}}(t_\sigma+\del)/K_{\sigma}^{t_{-\sigma}}(t_\sigma))\le 10\pi L*\log(L/r)^{-1/2}$.
   \item [(ii)] For any  $ (a_+,a_-)\in \cal D$ and $\sigma\in\{+,-\}$,
$$\lim_{\del\downarrow 0}\, \sup_{0\le t_\sigma\le a_\sigma}\,\sup_{t_\sigma'\in(t_\sigma,t_\sigma+\delta)}\, \sup_{0\le t_{-\sigma}\le a_{-\sigma}}\, \sup_{z\in\C\sem K_{\sigma}^{t_{-\sigma}}(t_\sigma')^{\doub} }\,|g_{K_{\sigma}^{t_{-\sigma}}(t_\sigma')}(z)-g_{K_{\sigma}^{t_{-\sigma}}(t_\sigma)}(z)|=0.$$
    \item [(iii)] The map $(\ulin t,z)\mapsto g_{K(\ulin t)}(z)$ is continuous on $\{(\ulin t,z) :\ulin t\in{\cal D}, z\in\C\sem K(\ulin t)^{\doub}\}$.
  \end{enumerate} \label{lem-uniform}
\end{Lemma}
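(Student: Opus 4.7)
The plan is to prove (i) by a Beurling/harmonic-measure argument, then deduce (ii) and (iii) from (i) via the composition identity (\ref{circ-g}) and Proposition \ref{g-z-sup}.

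\emph{For (i)}, take $\sigma=+$; the other case is symmetric. From (\ref{K123}) together with $K_+^{t_{-}}(t)=K(t,t_{-})/K_{-}(t_{-})$ one first identifies
\[
K_+^{t_{-}}(t_+')\bigm/K_+^{t_{-}}(t_+) \;=\; K(t_+',t_{-})\bigm/K(t_+,t_{-}),
\]
so the task reduces to bounding the diameter of the $\HH$-hull generated by the image of $\lin{\eta_+[t_+,t_+']}\sem K(t_+,t_{-})$ under the continuous extension $f_{K(t_+,t_{-})}^{-1}$ of $g_{K(t_+,t_{-})}$. Fix $z^\ast := \eta_+(t_+)$, so that $\eta_+[t_+,t_+']\subset B(z^\ast,r)$. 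For each probe point $p\in H(t_+,t_{-})$ on a circle of radius comparable to $L$ centered at $z^\ast$, a Beurling/extremal-length estimate applied to the connected set $\eta_+[t_+,t_+']$ (together with $\R$, which serves as an extra boundary component) bounds the harmonic measure of $\eta_+[t_+,t_+']$ from $p$ in $H(t_+,t_{-})$ by a constant times $(\log(L/r))^{-1/2}$. Conformal invariance of Brownian motion transports this bound to the harmonic measure of the image arc from $g_{K(t_+,t_{-})}(p)$ in $\HH$. Since $g_{K(t_+,t_{-})}$ is close to identity on the circle of radius $\asymp L$ about $z^\ast$ (by the Koebe normalization and Proposition \ref{g-z-sup}, using $\diam(K(t_+,t_{-}))\leq 2L$), a standard ``harmonic measure from a half-plane point implies diameter'' lemma in $\HH$ converts this into the diameter bound $10\pi L/\sqrt{\log(L/r)}$; Proposition \ref{g-z-sup} then passes from the set's diameter to that of the generated hull.

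\emph{For (ii)}, use the composition $g_{K_\sigma^{t_{-\sigma}}(t_\sigma')}=g_{\Delta K}\circ g_{K_\sigma^{t_{-\sigma}}(t_\sigma)}$ where $\Delta K:=K_\sigma^{t_{-\sigma}}(t_\sigma')/K_\sigma^{t_{-\sigma}}(t_\sigma)$. Proposition \ref{g-z-sup} gives $\sup_{w\not\in \Delta K^{\doub}}|g_{\Delta K}(w)-w|\leq 3\diam(\Delta K)$, so it suffices to make $\diam(\Delta K)$ small uniformly in $t_\sigma\in[0,a_\sigma]$, $t_\sigma'\in(t_\sigma,t_\sigma+\del)$, and $t_{-\sigma}\in[0,a_{-\sigma}]$ as $\del\downarrow 0$. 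Uniform continuity of $\eta_\sigma$ on the compact interval $[0,a_\sigma]$ yields $\diam(\eta_\sigma[t_\sigma,t_\sigma'])\leq r(\del)$ with $r(\del)\to 0$; the constant $L$ in (i) is determined by $(a_+,a_-)$ and is uniform in $t_{-\sigma}$, so (i) delivers the desired uniform decay. \emph{For (iii)}, take $(\ulin t_n,z_n)\to(\ulin t,z)$ with $z\in\C\sem K(\ulin t)^{\doub}$ and split
\[
|g_{K(\ulin t_n)}(z_n)-g_{K(\ulin t)}(z)| \leq |g_{K(\ulin t_n)}(z_n)-g_{K(\ulin t_n)}(z)| + |g_{K(\ulin t_n)}(z)-g_{K(\ulin t)}(z)|.
\]
The second term vanishes by applying (ii) separately in the $\ulin e_+$ and $\ulin e_-$ directions, combined with (\ref{circ-g}); the first vanishes by a normal-family argument, since on a small open neighborhood of $z$ disjoint from $K(\ulin t_n)^{\doub}$ for $n$ large (such a neighborhood exists by semicontinuity of the hull function, which itself follows from part (ii) of this lemma applied to prove continuity of $K(\cdot,\cdot)$ in Hausdorff distance) the analytic maps $g_{K(\ulin t_n)}$ are uniformly bounded, hence equicontinuous.

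The main obstacle is (i): extracting an explicit constant $10\pi$ from the Beurling estimate, and, more seriously, obtaining the bound uniformly in $t_{-\sigma}\in[0,a_{-\sigma}]$ rather than only in the dense subset $\I_{-\sigma}^\ast$. The point is that $L$ and the complement domain $H(t_+,t_{-})$ depend on $t_{-\sigma}$, but only monotonically in the hull, so the harmonic-measure estimate degrades monotonically in a way that allows one to pass from $\I_{-\sigma}^\ast$ to the full interval $\I_{-\sigma}$ by the density assumption in Definition \ref{commuting-Loewner}. Condition (II) of that definition plays the background role of guaranteeing that the arcs $\eta_\sigma$ spend zero time on $\R$, preventing anomalous touching scenarios that would otherwise spoil the Beurling argument.
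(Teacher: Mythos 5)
Your outlines for (ii) and (iii) are essentially the paper's and are fine once (i) is in place, but (i) has a genuine gap. You treat $\eta_\sigma[t_\sigma,t_\sigma']$ as if its containment in the small disc $B(\eta_\sigma(t_\sigma),r)$ automatically puts the hull increment in good position for a single modulus estimate. However $\eta_\sigma[t_\sigma,t_\sigma']\cap H(t_+,t_-)$ can meet several connected components of $B(\eta_\sigma(t_\sigma),r)\cap H(t_+,t_-)$ --- the existing hull $K(t_+,t_-)$ can already reach inside that disc --- and the hull increment is generated by the images under $g_{K(t_+,t_-)}$ of all these pieces. A priori the conformal map could scatter the different components far apart in $\HH$, and then no single small crosscut would enclose the increment. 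The bulk of the paper's proof of (i) is a topological argument (the ``claim'' that $\Delta\eta_+\cap H(t_+,a_-)$ lies in a single connected component of $H(t_+,a_-)\sem J$, established via two nested crosscuts $J_e,J_i$, an annular region $A$ disjoint from the image arc, and the monotone hull family $K_+'(s)$, together with Lemma~\ref{lem-strict} for a density step) whose sole purpose is to rule this out, yielding $K(t_+',t_-)/K(t_+,t_-)\subset$ the region enclosed by $g(J)$. That step is entirely absent from your argument.

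A secondary point: the paper does not go through harmonic measure or a ``harmonic measure implies diameter'' lemma. It bounds the extremal length of curves in $D_{J'}$ separating $J$ from $J'=\{|z-\eta_+(t_+)|=2L\}\cap\HH$ by $2\pi/\log(L/r)$ (comparison principle), transports this by conformal invariance, bounds the Euclidean area of the image region using $D_{g(J')}\subset\{|z|\le 5L\}$ (which follows from Proposition~\ref{g-z-sup}), and then the definition of extremal length produces a separating curve of Euclidean length $<10\pi L(\log(L/r))^{-1/2}$; that curve encloses $g(J)$ and hence the hull increment, which is where the explicit constant $10\pi$ comes from. Your harmonic-measure route is both indirect and under-justified: small harmonic measure of a boundary set from a probe point gives a diameter bound only after you locate the set near the probe point, and in any case what must be bounded is the diameter of the filled-in hull increment, not merely that of a boundary arc. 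Also, your remark that Condition~(II) is what protects against ``anomalous touching'' is misplaced for this lemma: Condition~(II) is not used in the paper's proof of (i); what is used is Lemma~\ref{lem-strict}, which comes from Condition~(I).
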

\begin{proof}
  (i) Suppose $\sigma=+$ by symmetry. We may assume that $a_\pm \in\I_\pm^*$.
  Let $\Delta \eta_+=\eta_+[t_+,t_+']$ and $S=\{|z-\eta_+(t_+)|=r\}$.  By assumption, $\Delta\eta_+\subset \{|z-\eta_+(t_+)|<r\}$. By Lemma \ref{lem-strict}, there is $z_*\in \Delta \eta_+\cap H(t_+,a_-)\subset H(t_+,t_-)$. Since $z_*\in \{|z-\eta_+(t_+)|<r\}$, the set $ S\cap H(t_+,t_-)$ has a connected component, denoted by $J$, which separates $z_*$ from $\infty$ in $H(t_+,t_-)$. Such $J$ is a crosscut of $H(t_+,t_-)$, which divides $H(t_+,t_-)$ into two domains, where the bounded domain, denoted by $D_J$, contains $z_*$.

    Now $\Delta \eta_+\cap H(t_+,a_-)\subset H(t_+,a_-)\sem J$. We claim that $\Delta \eta_+\cap H(t_+,a_-)$ is contained in one connected component of $H(t_+,a_-)\sem J$. Note that $J\cap H(t_+,a_-)$ is a disjoint union of crosscuts, each of which divides $H(t_+,a_-)$ into two domains.  To prove the claim, it suffices to show that, for each connected component $J_0$ of $J\cap H(t_+,a_-)$, $\Delta \eta_+\cap H(t_+,a_-)$ is contained in one connected component of $H(t_+,a_-)\sem J_0$. Suppose that this is not true for some $J_0$. Let $J_e=g_{K(t_+,a_-)}(J_0)$. Then $J_e$ is a crosscut of $\HH$, which divides $\HH$ into two domains, both of which intersect $\Delta\ha\eta_+:=g_{K(t_+,a_-)}(\Delta \eta_+\cap H(t_+,a_-))$. Since  $\Delta\eta_+$ has positive distance from $S\supset J$, and $g_{K(t_+,a_-)}^{-1}|_{\HH}$ extends continuously to $\lin\HH$,  $\Delta\ha\eta_+$ has positive distance from $J_e$. Thus, there is another crosscut $J_i$ of $\HH$, which is disjoint from and surrounded by $J_e$, such that the subdomain  $\HH$ bounded by $J_i$ and $J_e$ is disjoint from $\Delta\ha\eta_+$. Label the three connected components of $\HH\sem (J_e\cup J_i)$  by $D_e,A,D_i$, respectively, from outside to inside. Then $\Delta\ha\eta_+$ intersects both $D_e$ and $D_i$, but is disjoint from $\lin A$. Let $K_i=D_i\cup J_i$ and $K_e=K_i\cup A\cup J_e$ be two $\HH$-hulls.

    Let $\eta_+^* =\eta_+(t_++\cdot)$ and $\ha\eta_+^* =g_{K(t_+,a_-)}\circ \eta_+^* $, whose domain is $S:=\{s\in [0,T_+-t_+): \eta_+^*(s)\in\HH\sem K(t_+,a_-)\}$. For each $s\in[0,\delta:=t_+'-t_+]$, $K(t_++s,a_-)=\Hull(K(t_+,a_-)\cup\Delta\eta_+^s)$, and so $K_+'(s):=K_{+}^{a_-}(t_++s)/K_{+}^{a_-}(t_+)= K(t_++s,a_-)/K(t_+,a_-)$ (by (\ref{K123})) is the $\HH$-hull generated by $\ha\eta_+^*([0,s]\cap S)$. For $0\le s\le \delta$, since $A$ is disjoint from $\ha \eta_+^*([0,\delta]\cap S)\subset \Delta\ha\eta_+$, it is either contained in or disjoint from $K_+'(s)$. If $K_+'(s)\supset A$, then $K_+'(s)\supset \Hull(A)=K_i$; if $K_+'(s)\cap A=\emptyset$, then by the connectedness of $\lin{K_+'(s)}$, $K_+'(s)$ is contained in either $K_i$ or   $\HH\sem(K_i\cup A)$.
Since $K_+'(\delta)\supset  \Delta\ha \eta_+$ intersects both $D_e$ and $D_i$, we get $K_+'(\delta)\supset  K_e$. Let $s_0=\inf \{s\in [0,\delta]: K_e\subset K_+'(s)\}$. By Proposition \ref{Loewner-chain}, we have $s_0\in(0,\delta]$ and $K_+'(s_0)\supset K_e$. By the increasingness of $K_+'(\cdot)$, $\bigcup_{0\le s<s_0} K_+'(s)$ is contained in either $K_i$ or $\HH\sem (K_i\cup A)$. Let $S_0=\{s\in(s_0,T_+-t_+):\eta_+^*(s)\in\HH\sem K(t_++s_0,a_-)\}$. Then $\ha \eta_+^*(S_0)\subset g_{K(t_+,a_-)}(\HH\sem K(t_++s_0,a_-))= \HH\sem K_+'(s_0)\subset \HH\sem K_e=D_e$. By Lemma \ref{lem-strict}, $S$ is dense in $[s_0,T_+-t_+]$. Thus, $\ha \eta_+^*([s_0,\delta]\cap S)\subset \lin{D_e}$. Since $\Delta\ha\eta_+=\ha \eta_+^*([0,\delta]\cap S)$ intersects both $D_e$ and $D_i$, we conclude that $\ha \eta_+^*([0,s_0)\cap S)$ intersects $D_i$. So $K_+'(s)\subset K_i$ for $0\le s<s_0$, which implies that $K_+'(s_0)=\Hull(\bigcup_{0\le s<s_0} K_+'(s))\subset  K_i$. This contradicts that $K_i\subsetneqq K_e\subset K_+'(s_0)$. So the claim is proved.

Let $N$ denote the connected component of $H(t_+,a_-)\sem J$ that contains $\Delta \eta_+\cap H(t_+,a_-)$.
Then $N$ is contained in one connected component of $H(t_+,t_-)\sem J$. Since $N\supset  \Delta \eta_+\cap H(t_+,a_-)\ni z_*$ and $z_*$ lies in the connected component $D_J$ of $H(t_+,t_-)\sem J$, we get $\Delta \eta_+\cap H(t_+,a_-)\subset N\subset D_J$. Since $\Delta \eta_+\cap H(t_+,a_-)$ is dense in $\Delta \eta_+\cap H(t_+,t_-)$ (Lemma \ref{lem-strict}), and $\Delta\eta_+$ has positive distance from $J$, we get  $\Delta \eta_+\cap H(t_+,t_-)\subset D_J$. Since $K(t_+',t_-)$ is the $\HH$-hull generated by $K(t_+,t_-)$ and $\Delta\eta_+\cap H(t_+,t_-)$, we get  $K(t_+',t_-)\sem K(t_+,t_-)\subset D_J$, and so $K_+'(\delta)=K(t_+',t_-)/ K(t_+,t_-)$ is enclosed by the crosscut $g (J)$, where $g:= g_{K(t_+,t_-)}$.  Thus,  $\diam(K_+'(\del))\le \diam(g ( J))$.

Let  $R=2L$. From $\eta_+(t_+)\in  \lin{K(\ulin a)}$, we get $|\eta_+(t_+)|\le L$.  Recall that $J\subset S=\{|z-\eta_+(t_+)|=r\}$. So the arc $J$ and the circle $\{|z-\eta_+(t_+)|=R\}$ are separated by the annulus centered at $\eta_+(t_+)$ with inner radius $r$ and outer radius $R-L=L$. Let $J'=\{|z-\eta_+(t_+)|=R\}\cap \HH$ and  $D_{J'}=(\HH\cap \{|z-\eta_+(t_+)|<R\})\sem K(t_+,t_-)$.
By comparison principle (\cite{Ahl}), the extremal length of the curves in $D_{J'}$ that separate $J$ from $J'$ is bounded above by $2\pi/\log(L/r)$. By conformal invariance, the extremal length of the curves in the subdomain of $\HH$ bounded by the crosscut $g ( J')$, denoted by $D_{g(J')}$, that separate $g ( J)$ from $g ( J')$ is also bounded above by $2\pi/\log(L/r)$. By Proposition \ref{g-z-sup}, $g ( J')\subset \{|z|\le R+3L=5L\}$. So the Euclidean area of $D_{g(J')}$ is bounded above by $25\pi L^2/2$. By the definition of extremal length, there exists a curve in $\Omega$ with Euclidean length less than $10\pi L *(\log(L/r))^{-1/2}$,
which separates  $g( J)$ from $g( J'\})$. This implies that the $\diam(g( J))$ is bounded above by $10\pi L *(\log(L/r))^{-1/2}$, and so is that of $K_+'(\delta)=K_{+}^{t_-}(t_+')/K_{+}^{t_-}(t_+)$.  This finishes the proof of (i).

(ii) This follows from (i), Proposition \ref{g-z-sup}, the continuity of $\eta_\sigma$, the limit $\lim_{r\downarrow 0}10\pi L*\log(L/r)^{-1/2}=0$, and the equality $g_{K_{\pm}^{t_\mp}(t_\pm')} =g_{K_{\pm}^{t_\mp}(t_\pm')/K_{\pm}^{t_\mp}(t_\pm)}\circ g_{K_{\pm}^{t_\mp}(t_\pm)}$.

(iii) This follows from (ii), (\ref{circ-g}) and the fact that $g_{K(\ulin t)}$ is analytic on $\C\sem K(\ulin t)^{\doub}$.
\end{proof}

For a function $X$ defined on  $\cal D$, $\sigma \in\{+,-\}$ and $t_{-\sigma} \in\R_+$, we use $X|^{-\sigma}_{t_{-\sigma}} $  to denote the single-variable function $t_{\sigma}\mapsto X(t_\sigma \ulin e_\sigma +t_{-\sigma}\ulin e_{-\sigma})$, $0\le t_\sigma<T^{\cal D}_\sigma(t_{-\sigma})$, and use $\pa_{\sigma} X(t_+,t_-)$  to denote the derivative of this function at $t_{\sigma}$.

\begin{Lemma}
   There are two functions $W_+,W_-\in C({\cal D},\R)$ such that for any $\sigma\in\{+,-\}$ and  $t_{-\sigma}\in\I_{-\sigma}$,
$K_{\sigma}^{t_{-\sigma}}(t_\sigma)$, $0\le t_\sigma<T^{\cal D}_\sigma(t_{-\sigma})$, are chordal Loewner hulls  driven by $W_\sigma|^{-\sigma}_{t_{-\sigma}}$ with speed $\mA|^{-\sigma}_{t_{-\sigma}}$, and for any $\ulin t=(t_+,t_-)\in\cal D$, $\eta_\sigma(t_\sigma)=f_{K(\ulin t)}( W_\sigma(\ulin t))$.
  \label{Lem-W}
\end{Lemma}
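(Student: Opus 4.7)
The plan is to apply Proposition~\ref{Loewner-chain} slice by slice to produce candidate driving functions, and then use Lemma~\ref{lem-uniform} to glue them into a jointly continuous $W_\sigma$ and verify the identity $\eta_\sigma(t_\sigma)=f_{K(\ulin t)}(W_\sigma(\ulin t))$ on all of $\mathcal{D}$.

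First, fix $\sigma\in\{+,-\}$ and $t_{-\sigma}\in\mathcal{I}_{-\sigma}$, and look at the family $K_\sigma^{t_{-\sigma}}(t_\sigma)$, $0\le t_\sigma<T^{\mathcal{D}}_\sigma(t_{-\sigma})$. For any $a<T^{\mathcal{D}}_\sigma(t_{-\sigma})$, the continuity of $\eta_\sigma$ on $[0,a]$ makes $\diam(\eta_\sigma[t_\sigma,t_\sigma+\delta])$ uniformly small in $t_\sigma\in[0,a]$ as $\delta\downarrow 0$; hence the bound $10\pi L(\log(L/r))^{-1/2}$ in Lemma~\ref{lem-uniform}(i) gives
\[
\lim_{\delta\downarrow 0}\,\sup_{t_\sigma\in[0,a]}\,\diam\bigl(K_\sigma^{t_{-\sigma}}(t_\sigma+\delta)/K_\sigma^{t_{-\sigma}}(t_\sigma)\bigr)=0.
\]
Proposition~\ref{Loewner-chain} then produces chordal Loewner hulls with natural speed $\hcap_2(K_\sigma^{t_{-\sigma}}(t_\sigma))$. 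Additivity of half-plane capacity gives $\hcap_2(K_\sigma^{t_{-\sigma}}(t_\sigma))=\mathcal{A}(\ulin t)-t_{-\sigma}$, and since the Loewner equation is invariant under an additive shift of the speed function, this is equivalent to the speed $\mathcal{A}|^{-\sigma}_{t_{-\sigma}}$, as required.

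I then define $W_\sigma(\ulin t)$ as the corresponding driving value, characterized via Proposition~\ref{Loewner-chain} by
\[
\{W_\sigma(\ulin t)\}=\bigcap_{\delta>0}\lin{K_\sigma^{t_{-\sigma}}(t_\sigma+\delta)/K_\sigma^{t_{-\sigma}}(t_\sigma)}=\bigcap_{\delta>0}\lin{K(\ulin t+\delta\ulin e_\sigma)/K(\ulin t)},
\]
where the second equality uses (\ref{K123}) with $K_0=K_{-\sigma}(t_{-\sigma})$, $K_1=K(\ulin t)$, $K_2=K(\ulin t+\delta\ulin e_\sigma)$. This makes each $W_\sigma|^{-\sigma}_{t_{-\sigma}}$ continuous in $t_\sigma$. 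For $t_{-\sigma}\in\mathcal{I}^*_{-\sigma}$, Condition (I) supplies the Loewner curve $\eta_\sigma^{t_{-\sigma}}$ generating $K_\sigma^{t_{-\sigma}}(\cdot)$, giving $\eta_\sigma^{t_{-\sigma}}(t_\sigma)=f_{K_\sigma^{t_{-\sigma}}(t_\sigma)}(W_\sigma(\ulin t))$; identifying $\eta_\sigma^{t_{-\sigma}}(t_\sigma)$ with the prime-end image of $\eta_\sigma(t_\sigma)$ under $g_{K_{-\sigma}(t_{-\sigma})}$ and applying the factorization $f_{K(\ulin t)}=f_{K_{-\sigma}(t_{-\sigma})}\circ f_{K_\sigma^{t_{-\sigma}}(t_\sigma)}$ from (\ref{circ-g}) yields
\[
\eta_\sigma(t_\sigma)=f_{K(\ulin t)}(W_\sigma(\ulin t))\quad\text{for all }\ulin t\in\mathcal{D}\text{ with }t_{-\sigma}\in\mathcal{I}^*_{-\sigma}.
\]

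To extend this identity and to promote $W_\sigma$ from separate continuity to joint continuity, I use Lemma~\ref{lem-uniform}(iii): the map $(\ulin s,z)\mapsto g_{K(\ulin s)}(z)$ is jointly continuous on its natural domain. Given $\ulin t_n\to\ulin t$ in $\mathcal{D}$, for each small $\epsilon>0$ there is $s>t_\sigma$ with $s-t_\sigma<\epsilon$ and $\eta_\sigma(s)\in H(\ulin t)\cap H(\ulin t_n)$ for large $n$ (this uses Lemma~\ref{lem-strict} to pick $s$ where the curve has genuinely escaped all the relevant hulls); by the intersection characterization of $W_\sigma$ combined with Proposition~\ref{g-z-sup} applied to $g_{K(\ulin t+\delta\ulin e_\sigma)/K(\ulin t)}$, the value $g_{K(\ulin t)}(\eta_\sigma(s))$ lies within $O(\diam(K(\ulin t+s\ulin e_\sigma)/K(\ulin t)))$ of $W_\sigma(\ulin t)$, and similarly for $\ulin t_n$. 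Joint continuity of $g_{K(\ulin s)}(z)$ then gives $W_\sigma(\ulin t_n)\to W_\sigma(\ulin t)$ by a diagonal argument. Density of $\mathcal{I}^*_{-\sigma}$ together with this continuity, continuity of $\eta_\sigma$ in $t_\sigma$, and continuity of $f_{K(\ulin t)}$ at boundary points extends $\eta_\sigma(t_\sigma)=f_{K(\ulin t)}(W_\sigma(\ulin t))$ to all $\ulin t\in\mathcal{D}$.

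The hard part is the joint continuity argument, because $\eta_\sigma(t_\sigma)$ is a prime end of $H(\ulin t)$ that may be a point where the two curves meet, so a priori one has to ensure that the limits $g_{K(\ulin t_n)}(\eta_\sigma(s))\to W_\sigma(\ulin t)$ are taken from the correct side and are stable under simultaneous perturbation of both coordinates of $\ulin t$. The uniform bound in Lemma~\ref{lem-uniform}(i) (which controls $\diam(K(\ulin t+\delta\ulin e_\sigma)/K(\ulin t))$ uniformly in a neighborhood of $\ulin t$) and the joint continuity in Lemma~\ref{lem-uniform}(iii) are precisely the tools that handle this; no new geometric input beyond those lemmas is needed.
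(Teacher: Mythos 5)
Your first two steps — using Lemma~\ref{lem-uniform}(i) plus uniform continuity of $\eta_\sigma$ to invoke Proposition~\ref{Loewner-chain} slice by slice, and identifying the resulting speed with $\mathcal{A}|^{-\sigma}_{t_{-\sigma}}$ via additivity of half-plane capacity — match the paper exactly, and your remark that the speed convention absorbs the additive shift $t_{-\sigma}$ is correct.

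The step that does not hold up is the derivation of $\eta_\sigma(t_\sigma)=f_{K(\ulin t)}(W_\sigma(\ulin t))$ for $t_{-\sigma}\in\I_{-\sigma}^*$. You invoke Condition~(I) to get the curve $\eta_\sigma^{t_{-\sigma}}$ and the identity $\eta_\sigma^{t_{-\sigma}}(t_\sigma)=f_{K_\sigma^{t_{-\sigma}}(t_\sigma)}(W_\sigma(\ulin t))$, and then you assert the identification $\eta_\sigma^{t_{-\sigma}}(t_\sigma)=g_{K_{-\sigma}(t_{-\sigma})}(\eta_\sigma(t_\sigma))$ (as prime ends) without proof. Condition~(I) only says that the hull-valued process $K_\sigma^{t_{-\sigma}}(\cdot)$ is generated by \emph{some} chordal Loewner curve $\eta_\sigma^{t_{-\sigma}}$; it says nothing about how that curve is related pointwise to $\eta_\sigma$. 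When $\eta_\sigma(t_\sigma)$ already lies inside $\lin{K_{-\sigma}(t_{-\sigma})}$ (which can happen for $\kappa\in(4,8)$) the statement ``$g_{K_{-\sigma}(t_{-\sigma})}(\eta_\sigma(t_\sigma))$'' is itself not well defined without a prime-end argument, and proving the identification requires essentially the same sequence-based argument the paper uses. Indeed, the identity $\eta_\sigma=f_{K_{-\sigma}(t_{-\sigma})}\circ\eta_\sigma^{t_{-\sigma}}$ is precisely the content of the later Lemma~\ref{Lebesgue}, whose proof \emph{depends on} Lemma~\ref{Lem-W}, so taking it as given here is circular.

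The fix is to drop the detour through $\I_{-\sigma}^*$ and the curve $\eta_\sigma^{t_{-\sigma}}$ entirely and argue directly, as the paper does: by the strict increasingness from Lemma~\ref{lem-strict} pick $t_\sigma^n\downarrow t_\sigma$ with $\eta_\sigma(t_\sigma^n)\in K(\ulin t+( t_\sigma^n - t_\sigma)\ulin e_\sigma)\sem K(\ulin t)$, observe that $g_{K(\ulin t)}(\eta_\sigma(t_\sigma^n))\in K_\sigma^{t_{-\sigma}}(t_\sigma^n)/K_\sigma^{t_{-\sigma}}(t_\sigma)$, and conclude $g_{K(\ulin t)}(\eta_\sigma(t_\sigma^n))\to W_\sigma(\ulin t)$ from the intersection characterization of the driving value; applying $f_{K(\ulin t)}$ and using continuity of $\eta_\sigma$ and $f_{K(\ulin t)}$ finishes. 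This works for \emph{every} $t_{-\sigma}\in\I_{-\sigma}$, so no density argument is needed. Your joint-continuity argument at the end, though phrased more loosely via Lemma~\ref{lem-uniform}(iii) and a diagonal extraction, is in the same spirit as the paper's explicit estimate $|W_+(t_+,t_-^2)-W_+(t_+,t_-^1)|\le 3\diam(K_-^{t_+}(t_-^2)/K_-^{t_+}(t_-^1))$ combined with Lemma~\ref{lem-uniform}(i) and separate continuity in the driving variable; either version gives joint continuity, with the paper's quantitative estimate being cleaner because it is reused later (Remark~\ref{Remark-continuity-W}).
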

\begin{proof}
	By symmetry, we only need to prove the case that $\sigma=+$. Since
 $$\hcap_2(K_{+}^{t_-}(t_++\del))-\hcap_2(K_{+}^{t_-}(t_+))=\mA(t_++\del,t_-)-\mA(t_+,t_-),$$
by Lemma \ref{lem-uniform} (i), the continuity of $\eta_\sigma$ and Proposition \ref{Loewner-chain}, for every $t_-\in\I_-$, $K_{+}^{t_-}(t_+)$, $0\le t_+<T^{\cal D}_+(t_-)$, are chordal Loewner hulls with  speed $\mA|^-_{t_-}$, and the driving function, denoted by  $W_+(\cdot,t_-)$, satisfies that
   \BGE \{W_+(t_+,t_-)\}= \bigcap_{\del>0} \lin{K_{+}^{t_-}(t_++\del)/K_{+}^{t_-}(t_+)}=\bigcap_{\del>0} \lin{K(t_++\delta,t_-)/K(t_+,t_-)}.\label{W-def}\EDE

Fix $\ulin t=(t_+,t_-)\in\cal D$.  We now show that $f_{K(\ulin t)}( W_+(\ulin t)) =\eta_+(t_+)$. By Lemma \ref{lem-strict}, there exists a  sequence  $t_+^n\downarrow t_+$ such that   $\eta_+(t_+^n)\in K(t_+^n,t_-)\sem K(t_+,t_-)$ for all $n$. Then $g_{K(\ulin t)}(\eta_+(t_+^n))\in K(t_+^n,t_-)/K(\ulin t)=K_{+}^{t_-}(t_+^n)/K_{+}^{t_-}(t_+)$. So we have $g_{K(\ulin t)}( \eta_+(t_+^n))\to W_+(\ulin t)$ by (\ref{W-def}). From the continuity of $f_{K(\ulin t)} $  and  $\eta_+$, we then get
$$\eta_+(t_+)=\lim_{n\to \infty} \eta_+(t_+^n)=\lim_{n\to \infty}  f_{K(\ulin t)}( g_{K(\ulin t)}( \eta_+(t_+^n)))=f_{K(\ulin t)}( W_+(\ulin t)).$$

It remains to show that $W_+$ is continuous on $\cal D$.
Let $t_+,t_-^1,t_-^2\in\R_+$ be such that $t_-^1<t_-^2$ and $(t_+,t_-^2)\in \cal D$.
By Lemma \ref{lem-strict}, there is a sequence $\delta_n\downarrow 0$ such that $z_n:=\eta_+(t_++\delta_n)\in H(t_+,t_-^2)$. Then  $g_{K(t_+,t_-^j)}(z_n)\in K(t_++\delta_n,t_-^j)/ K(t_+,t_-^j)=K_{+}^{t_-^j}(t_++\delta_n)/K_{+}^{t_-^j}(t_+)$, $j=1,2$. From (\ref{W-def}) we get
$$|W_+(t_+,t_-^j)-g_{K(t_+,t_-^j)}(z_n)|\le \diam(K_{+}^{t_-^j}(t_++\delta_n)/K_{+}^{t_-^j}(t_+)),\quad j=1,2.$$
Since $g_{K(t_+,t_-^2)}(z_n)=g_{K(t_+,t_-^2)/K(t_+,t_-^1)}\circ g_{K(t_+,t_-^1)}(z_n)$, by Proposition \ref{g-z-sup} we get
$$|g_{K(t_+,t_-^2)}(z_n)-g_{K(t_+,t_-^1)}(z_n)|\le 3\diam(K(t_+,t_-^2)/K(t_+,t_-^1))= 3\diam(K_{-}^{t_+}(t_-^2)/K_{-}^{t_+}(t_-^1)) .$$
Combining the above  displayed formulas and letting $n\to\infty$, we get
\BGE |W_+(t_+,t_-^2)-W_+(t_+,t_-^1)|\le 3\diam(K_{-}^{t_+}(t_-^2)/K_{-}^{t_+}(t_-^1)) ,\label{W+continuity}\EDE
which together with Lemma \ref{lem-uniform} (i) implies that, for any $(a_+,a_-)\in\cal D$, the family of functions $[0,a_-]\ni t_-\mapsto W_+(t_+,t_-)$, $0\le t_+\le a_+$, are equicontinuous. Since $W_+$ is continuous in $t_+$ as a driving function, we conclude that $W_+$ is continuous on $\cal D$.
\end{proof}

\begin{Definition}
	We call $W_+$ and $W_-$ the driving functions for the commuting pair $(\eta_+,\eta_-;{\cal D})$.
It is obvious that $W_\sigma|^{-\sigma}_0=\ha w_\sigma$, $\sigma\in\{+,-\}$.
\end{Definition}

\begin{Remark} By (\ref{W-def}) and Propositions \ref{prop-connected} and \ref{winK}, for $t_+^1<t_+^2\in \I_+$ and $t_-\in \I_-$ such that $(t_+^2,t_-)\in \cal D$,
$|W_+(t_+^2,t_-)-W_+(t_+^1,t_-)|\le 4\diam(K_{+}^{t_-}(t_+^2)/K_{+}^{t_-}(t_+^1)) $.
This combined with (\ref{W+continuity}) and Lemma \ref{lem-uniform} (i) implies that, if $\eta_\sigma$ extends continuously to $[0,T_\sigma]$ for $\sigma\in\{+,-\}$, then $W_+$ and $W_-$ are uniformly continuous on $\cal D$, and so extend continuously to $\lin{\cal D}$.
  \label{Remark-continuity-W}
\end{Remark}

\begin{Lemma}
For any $\sigma\in\{+,-\}$ and  $t_{-\sigma}\in\I_{-\sigma}$, the chordal Loewner hulls $K_{\sigma}^{t_{-\sigma}}(t_\sigma)=K(t_+,t_-)/K_{-\sigma}(t_{-\sigma})$, $0\le t_\sigma<T^{\cal D}_\sigma(t_{-\sigma})$,  are generated by a chordal Loewner curve, denoted by $\eta_{\sigma}^{t_{-\sigma}}$, which intersects $\R$ at a set with Lebesgue measure zero such that $\eta_\sigma|_{[0,T^{\cal D}_\sigma(t_{-\sigma}))}=f_{K_{-\sigma}(t_{-\sigma})}\circ \eta_{\sigma}^{t_{-\sigma}}$. Moreover, for $\sigma\in\{+,-\}$, $(t_+,t_-)\mapsto \eta_{\sigma}^{t_{-\sigma}}(t_\sigma)$ is continuous on $\cal D$.
  \label{Lebesgue}
\end{Lemma}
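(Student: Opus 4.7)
The plan proceeds in three steps, handled by symmetry with $\sigma=+$ throughout. First, fix an arbitrary $t_-\in\I_-$ (not necessarily in $\I_-^*$). Because $K_-(t_-)=\Hull(\eta_-[0,t_-])$ is generated by a curve, $\pa(\HH\sem K_-(t_-))$ is locally connected, so $g_{K_-(t_-)}$ extends to a continuous surjection
\[
\ha\phi_{t_-}:\lin{\HH\sem K_-(t_-)}\longrightarrow \lin\HH
\]
that restricts to a conformal homeomorphism on the interior, with continuous inverse $f_{K_-(t_-)}$. By Lemma \ref{Lem-W}, $\eta_+(t_+)=f_{K(t_+,t_-)}(W_+(t_+,t_-))\in\lin{H(t_+,t_-)}\subset\lin{\HH\sem K_-(t_-)}$ for every $t_+<T^{\cal D}_+(t_-)$, so I may define
\[
\eta_+^{t_-}(t_+):=\ha\phi_{t_-}(\eta_+(t_+)),
\]
which is continuous in $t_+$ by composition. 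Applying $f_{K_-(t_-)}$ to both sides recovers $\eta_+|_{[0,T^{\cal D}_+(t_-))}=f_{K_-(t_-)}\circ\eta_+^{t_-}$.

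Second, I verify the hull identity $\Hull(\eta_+^{t_-}[0,t_+])=K_+^{t_-}(t_+)$. Since $\ha\phi_{t_-}$ restricts to a conformal homeomorphism from $\HH\sem K(t_+,t_-)$ onto $\HH\sem K_+^{t_-}(t_+)$, extending continuously to the closures, and $\eta_-[0,t_-]\cap\HH\subset\pa K_-(t_-)$ is sent into $\R$ by $\ha\phi_{t_-}$, the unbounded component of $\HH\sem\lin{\eta_+[0,t_+]\cup\eta_-[0,t_-]}$ is carried precisely to the unbounded component of $\HH\sem\lin{\eta_+^{t_-}[0,t_+]}$. Combined with Lemma \ref{Lem-W}, this identifies $\eta_+^{t_-}$ as the chordal Loewner curve driven by $W_+(\cdot,t_-)$ with speed $\mA|^-_{t_-}$. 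The Lebesgue-measure-zero assertion then follows immediately from Proposition \ref{prop-Lebesgue} applied to this curve.

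For joint continuity of $(t_+,t_-)\mapsto\eta_+^{t_-}(t_+)$ on $\cal D$, I expect to take $(t_+^n,t_-^n)\to(t_+,t_-)\in\cal D$, use $\eta_+(t_+^n)\to\eta_+(t_+)$, and split according to whether $\eta_+(t_+)\in\HH\sem\lin{K_-(t_-)}$ or $\eta_+(t_+)\in\pa K_-(t_-)\cap\lin\HH$. The interior case is immediate from Lemma \ref{lem-uniform} (iii). The boundary case is the main obstacle I anticipate: it requires a uniform-in-$n$ modulus of continuity for the extensions $\ha\phi_{t_-^n}$ at a boundary point visited by $\eta_+$, where $g_{K_-(t_-^n)}$ is merely the extension across a locally connected arc. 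I would obtain this by combining Lemma \ref{lem-uniform} (ii) with $\sigma=-$ (uniform closeness of $g_{K_-(t_-^n)}$ to $g_{K_-(t_-)}$ off the hull doubling) with a Beurling/extremal-length crosscut estimate analogous to the proof of Lemma \ref{lem-uniform} (i), which controls the $\ha\phi_{t_-^n}$-image of a shrinking half-disc neighborhood of $\eta_+(t_+^n)$ in $\lin{\HH\sem K_-(t_-^n)}$. Every other step reduces to standard manipulations with conformal maps and the previously established continuity of $\eta_+$, of the hull function $K$, and of the driving function $W_+$.
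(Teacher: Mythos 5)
Your first step has a fundamental gap that invalidates the approach for $\kappa\in(4,8)$, which is precisely the regime this framework is built to handle. Local connectedness of $\pa(\HH\sem K_-(t_-))$ guarantees that $f_{K_-(t_-)}=g_{K_-(t_-)}^{-1}$ extends continuously from $\lin\HH$ to $\lin{\HH\sem K_-(t_-)}$, but it does \emph{not} give a continuous extension of $g_{K_-(t_-)}$ itself. For $\kappa\in(4,8)$ the curve $\eta_-$ touches itself and the real line, so $\pa(\HH\sem K_-(t_-))$ is not a Jordan curve: $f_{K_-(t_-)}$ identifies distinct points $x_1\ne x_2\in\R$, and a continuous left inverse on $\lin{\HH\sem K_-(t_-)}$ would then have to send the point $f_{K_-(t_-)}(x_1)=f_{K_-(t_-)}(x_2)$ to both $x_1$ and $x_2$ --- impossible. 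So the map $\ha\phi_{t_-}$ you invoke does not exist, the definition $\eta_+^{t_-}(t_+):=\ha\phi_{t_-}(\eta_+(t_+))$ is not well posed, and the acknowledged difficulty you defer to the end (``uniform modulus of continuity for $\ha\phi_{t_-^n}$'') cannot be patched because there is no function to have a modulus of continuity for. When $\eta_+(t_+)$ lands on a self-touch or a real-line-touch of $\eta_-[0,t_-]$, you must choose one of two candidate prime ends, and nothing in your argument singles out the right one.

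The paper circumvents this by never extending $g$: it constructs $\eta_+^{t_-}$ as a uniform limit of the curves $\eta_+^{t_-^n}$ with $t_-^n\in\I_-^*$ (whose existence is Condition~(I) in Definition~\ref{commuting-Loewner}), and only uses the identity $\eta_+^{t_-}(t_+)=g_{K_-(t_-)}(\eta_+(t_+))$ when $\eta_+(t_+)$ lies in the \emph{open} domain $\HH\sem K_-(t_-)$, a dense set of $t_+$ by Lemma~\ref{lem-strict}. The boundary values are defined by continuity of the limit, not by a boundary extension of $g$. Your proposal also contains a secondary error: you say the Lebesgue-measure-zero property ``follows immediately from Proposition~\ref{prop-Lebesgue}.'' That proposition controls the Lebesgue measure of the \emph{time set} $\{t:\eta(t)\in\R\}$; the lemma asserts that the \emph{image set} $\eta_+^{t_-}\cap\R$ has measure zero, which is a different statement and in the paper is obtained from Condition~(II) at rational times plus an analytic change of coordinates, not from Proposition~\ref{prop-Lebesgue}.
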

\begin{proof}
  It suffices to work on the case that $\sigma=+$. First  we   show that there exists a continuous function $(t_+,t_-)\mapsto \eta_{+}^{t_-}(t_+)$ from $\cal D$ into $\lin\HH$ such that
\BGE \eta_+(t_+)=f_{K_-(t_-)}(\eta_{+}^{t_-}(t_+)),\quad \forall (t_+,t_-)\in\cal D.\label{eta-ft}\EDE
Let $(t_+,t_-)\in\cal D$. By Lemma \ref{lem-strict}, there is a sequence $t_+^n\downarrow t_+$ such that for all $n$, $(t_+^n,t_-)\in\cal D$ and $\eta_+(t_+^n)\in\HH\sem K(t_+,t_-)$. Then we get $g_{K_-(t_-)}(\eta_+(t_+^n))\in g_{K_-(t_-)}(K(t_+^n,t_-)\sem K(t_+,t_-))=K_{+}^{t_-}(t_+^n)/ K_{+}^{t_-}(t_+)$. If $t_-\in\I_-^*$, then
by Condition (I),  $\bigcap_n \lin{K_{+}^{t_-}(t_+^n)/K_{+}^{t_-}(t_+)}=\{\eta_{+}^{t_-}(t_+)\}$, which implies that $g_{K_-(t_-)}(\eta_+(t_+^n))\to \eta_{+}^{t_-}(t_+)$. From the continuity of $f_{K_-(t_-)}$ and $\eta_+$, we find that (\ref{eta-ft}) holds if $t_-\in\I_-^*$. Thus,
\BGE \eta_{+}^{t_-}(t_+)=g_{K_-(t_-)}(\eta_+(t_+)),\quad \mbox{if } (t_+,t_-)\in{\cal D}, \,t_-\in\I_-^*\mbox{ and }\eta_+(t_+)\in\HH\sem K_-(t_-).\label{eta-gt}\EDE
Fix $a_-\in\I_-^*$. Let ${\cal R}=\{t_+\in\I_+:(t_+,a_-)\in{\cal D},\eta_+(t_+)\in \HH\sem K_-(a_-)\}$, which by Lemma \ref{lem-strict} is dense in $[0,T^{\cal D}_+(a_-))$.
By Propositions \ref{g-z-sup}  and \ref{Loewner-chain},
\BGE \lim_{\delta\downarrow 0}\,\sup_{ t_-\in [0,a_-]}\,\,\sup_{t_-'\in[0,a_-]\cap (t_--\delta,t_-+\delta)} \,\, \sup_{t_+\in{\cal R}}\, |g_{K_-(t_-)}( \eta_+(t_+))-g_{K_-(t_-')}(\eta_+(t_+))|=0.\label{unif-g-g}\EDE
This combined with (\ref{eta-gt}) implies that
\BGE \lim_{\delta\downarrow 0}\,\sup_{ {t_-\in [0,a_-]\cap \I_-^*}}\,\,\sup_{t_-'\in[0,a_-]\cap\I_-^*\cap (t_--\delta,t_-+\delta)} \,\, \sup_{t_+\in{\cal R}} \, |\eta_{+}^{t_-}(t_+)-\eta_{+}^{t_-'}(t_+)|=0.\label{unif-g-g'}\EDE
By the denseness of $\cal R$ in $[0,T^{\cal D}_+(a_-))$ and the continuity of each $\eta_{+}^{t_-}$, $t_-\in\I_-^*$, we know that (\ref{unif-g-g'}) still holds if $\sup_{t_+\in{\cal R}}$ is replaced by $\sup_{t_+\in[0,T^{\cal D}_+(a_-))}$. Since $\I_-^*$ is dense in $\I_-$, the continuity of each $\eta_{+}^{t_-}$, $t_-\in\I_-^*$, together with (\ref{unif-g-g'}) implies that   there exists a continuous function $[0,T^{\cal D}_+(a_-))\times [0,a_-]\ni (t_+,t_-)\mapsto \eta_{+}^{t_-}(t_+)\in\lin\HH$, which extends those $\eta_{+}^{t_-}(t_+)$ for $t_-\in\I_-^*\cap [0,a_-]$ and $t_+\in [0,T^{\cal D}_+(a_-))$. Running $a_-$ from $0$ to $T_-$, we get a continuous function ${\cal D}\ni (t_+,t_-)\mapsto \eta_{+}^{t_-}(t_+)\in\lin\HH$, which extends those $\eta_{+}^{t_-}(t_+)$ for $(t_+,t_-)\in\cal D$ and $t_-\in\I_-^*$.
Since $\eta_{+}^{t_-}(t_+)=g_{K_-(t_-)}(\eta_+(t_+))$ for all $t_+\in{\cal R}$ and $t_-\in[0,a_-]\cap \I_-^*$, from (\ref{eta-gt},\ref{unif-g-g}) we know that it is also true for any $t_-\in[0,a_-]$. Thus, $\eta_+(t_+)=f_{K_-(t_-)}(\eta_{+}^{t_-}(t_+))$ for all $t_+\in{\cal R}$ and $t_-\in[0,a]$. By the denseness of $\cal R$ in $[0,T^{\cal D}_+(a_-))$ and the continuity of $\eta_+$, $f_{K_-(t_-)} $ and $\eta_{+}^{t_-}$, we get (\ref{eta-ft}) for all $t_-\in[0,a_-]$ and $t_+\in [0,T^{\cal D}_+(a_-))$. So (\ref{eta-ft}) holds for all $(t_+,t_-)\in\cal D$.

For  $(t_+,t_-)\in\cal D$, since $K(t_+,t_-)=\Hull(K_-(t_-)\cup(\eta_+[0,t_+]\cap (\HH\sem K_-(t_-)))$, we see that $K_{+}^{t_-}(t_+)=g_{K_-(t_-)}(K(t_+,t_-)\sem K_-(t_-))$ is the $\HH$-hull generated by $g_{K_-(t_-)}(\eta_+[0,t_+]\cap (\HH\sem K_-(t_-)))=\eta_{+}^{t_-}[0,t_+]\cap \HH$. So $K_{+}^{t_-}(t_+)=\Hull(\eta_{+}^{t_-}[0,t_+])$. By Lemma \ref{Lem-W}, for any $t_-\in [0,T_-)$, $\eta_{+}^{t_-}(t_+)$, $0\le t_+<T^{\cal D}_+(t_-)$, is the chordal Loewner curve driven by $W_+(\cdot,t_-)$ with speed $\mA(\cdot,t_-)$. So we have $\eta_{+}^{t_-}(t_+)=f_{K_{+}^{t_-}(t_+)}(W_+(t_+,t_-))$, which together with $\eta_+(t_+)=f_{K(t_+,t_-)}( W_+(t_+,t_-))$ implies that $\eta_+(t_+)=f_{K_-(t_-)}(\eta_{+}^{t_-}(t_+))$.

Finally, we show that $\eta_{+}^{t_-}\cap\R$ has Lebesgue measure zero for all $t_-\in\I_-$. Fix $t_-\in \I_-$ and $\ha t_+\in\I_+$ such that $(\ha t_+,t_-)\in\cal D$. It suffices to show that $\eta_{+}^{t_-}[0,\ha t_+]\cap \R$ has Lebesgue measure zero. There exists a sequence $\I_-^*\ni t_-^n\downarrow t_-$ such that $(\ha t_+,t_-^n)\in\cal D$ for all $n$. Let $K_n=K_-(t^n_-)/K_-(t_-)$, $g_n=g_{K_n}$, and $f_n=g_n^{-1}$.
Then $f_{K_-(t_-)}=f_{K_-(t_-^n )}\circ g_n$ on $\HH\sem K_n$. Let $t_+\in[0,\ha t_+]$. From  $f_{K_-(t_-)}( \eta_{+}^{t_-}(t_+))=\eta_+(t_+)=f_{K_-(t_-^n )}( \eta_{+}^{t_-^n}(t_+))$ we get
$ \eta_{+}^{t_-^n}(t_+)=g_n(\eta_{+}^{t_-}(t_+))$ if $\eta_{+}^{t_-}(t_+)\in \HH\sem K_n$. By continuity we get $ \eta_{+}^{t_-}(t_+)=f_n(\eta_{+}^{t_-}(t_+^n))$  if $\eta_{+}^{t_-^n}(t_+)\in \R \sem [c_{K_n},d_{K_n}]$, $0\le t_+\le \ha t_+$.  Thus, $\eta_{+}^{t_-}[0,\ha t_+]\cap (\R\sem [a_{K_n},b_{K_n}])\subset f_n(\eta_{+}^{t_-^n}[0,\ha t_+]\cap (\R\sem [c_{K_n},d_{K_n}]))$. Since $t_-^n\in\I_-^*$, by Condition (II) and the analyticity of $f_n$ on $\R\sem [c_{K_n},d_{K_n}]$ we know that $\eta_{+}^{t_-}[0,\ha t_+]\cap (\R\sem [a_{K_n},b_{K_n}])$  has Lebesgue measure zero for each $n$. Sending $n\to \infty$ and using the fact that $[a_{K_n},b_{K_n}]\downarrow \{\ha w_-(t_-)\}$, we see that $\eta_{+}^{t_-}[0,\ha t_+]\cap\R$ also has Lebesgue measure zero.
\end{proof}

\begin{Lemma}
For any $\sigma\in\{+,-\}$ and $(t_+,t_-)\in\cal D$,  $\ha w_\sigma(t_\sigma)= f_{K_{-\sigma}^{t_\sigma}(t_{-\sigma})}(W_\sigma(t_+,t_-))\in \pa (\HH\sem K_{-\sigma}^{t_\sigma}(t_{-\sigma}))$. \label{lem-w-W}
\end{Lemma}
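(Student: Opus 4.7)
By the $\pm$-symmetry in the definition of the commuting pair, it suffices to treat the case $\sigma=+$; fix $(t_+,t_-)\in\mathcal{D}$. The main idea is to produce an approximating sequence of interior points along $\eta_+$ just after time $t_+$, and to use the factorization $g_{K(t_+,t_-)}=g_{K_-^{t_+}(t_-)}\circ g_{K_+(t_+)}$ (which is (\ref{circ-g})) to pass the identity $\eta_+(t_+)=f_{K(\ulin t)}(W_+(\ulin t))$ through the intermediate conformal map $g_{K_-^{t_+}(t_-)}$. Lemma \ref{lem-strict} guarantees a sequence $t_+^n\downarrow t_+$ with $(t_+^n,t_-)\in\mathcal{D}$ and $\eta_+(t_+^n)\in \HH\setminus K(t_+,t_-)\subset \HH\setminus K_+(t_+)$, so that both $z_n:=g_{K_+(t_+)}(\eta_+(t_+^n))$ and $g_{K(t_+,t_-)}(\eta_+(t_+^n))$ lie in $\HH$.

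Next I would identify the two relevant limits. Since $K_+(\cdot)$ are chordal Loewner hulls driven by $\hat w_+$ with speed $1$, Proposition \ref{Loewner-chain} (applied via $\{\hat w_+(t_+)\}=\bigcap_{\delta>0}\lin{K_+(t_++\delta)/K_+(t_+)}$) yields $z_n\to \hat w_+(t_+)$. On the other hand, exactly the argument used in (\ref{W-def}) inside Lemma \ref{Lem-W} — namely that $g_{K(\ulin t)}(\eta_+(t_+^n))\in \lin{K_+^{t_-}(t_+^n)/K_+^{t_-}(t_+)}$ together with the characterization of $W_+$ via (\ref{haw=}) applied to the Loewner hulls $K_+^{t_-}(\cdot)$ — gives $g_{K(t_+,t_-)}(\eta_+(t_+^n))\to W_+(t_+,t_-)$. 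Invoking the factorization, $g_{K_-^{t_+}(t_-)}(z_n)=g_{K(t_+,t_-)}(\eta_+(t_+^n))$, and by Lemma \ref{Lebesgue} the hull $K_-^{t_+}(t_-)$ is generated by the curve $\eta_-^{t_+}$, so $\partial(\HH\setminus K_-^{t_+}(t_-))$ is locally connected and $f_{K_-^{t_+}(t_-)}$ extends continuously to $\lin\HH$. Applying this continuous extension to the convergent sequence $g_{K_-^{t_+}(t_-)}(z_n)\to W_+(t_+,t_-)$ and using $z_n\in \HH\setminus K_-^{t_+}(t_-)$ gives
\[
\hat w_+(t_+)=\lim_n z_n=\lim_n f_{K_-^{t_+}(t_-)}(g_{K_-^{t_+}(t_-)}(z_n))=f_{K_-^{t_+}(t_-)}(W_+(t_+,t_-)).
\]

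Finally, the boundary membership $\hat w_+(t_+)\in\partial(\HH\setminus K_-^{t_+}(t_-))$ is immediate from the displayed identity together with the fact that $W_+(t_+,t_-)\in\R$ (it is a chordal Loewner driving function) and $f_{K_-^{t_+}(t_-)}$ maps $\R$ into the boundary of $\HH\setminus K_-^{t_+}(t_-)$. The only delicate point in the plan is verifying that the sequence $z_n$ actually belongs to $\HH\setminus K_-^{t_+}(t_-)$ (not merely to $\lin{\HH}$), and that the limit $\hat w_+(t_+)$ is approached from this domain rather than from its complement; both follow from the choice $\eta_+(t_+^n)\in \HH\setminus K(t_+,t_-)$, since $g_{K_+(t_+)}$ sends $\HH\setminus K(t_+,t_-)$ onto $\HH\setminus K_-^{t_+}(t_-)$ by the definition of $K_-^{t_+}(t_-)=K(t_+,t_-)/K_+(t_+)$. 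Apart from this bookkeeping, the argument is a short continuity chase and no genuine obstacle is anticipated.
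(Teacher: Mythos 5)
Your proposal is correct and follows essentially the same route as the paper's own proof: both select a sequence $t_+^n\downarrow t_+$ via Lemma \ref{lem-strict}, identify the two limits $g_{K_+(t_+)}(\eta_+(t_+^n))\to\ha w_+(t_+)$ and $g_{K(t_+,t_-)}(\eta_+(t_+^n))\to W_+(t_+,t_-)$ via (\ref{haw=}) and Lemma \ref{Lem-W}, and then pass through the factorization (\ref{circ-g}) using the continuous extension of $f_{K_-^{t_+}(t_-)}$ to $\lin\HH$. The only difference is cosmetic bookkeeping (the paper writes $g_{K_+(t_+)}=f_{K_-^{t_+}(t_-)}\circ g_{K(t_+,t_-)}$ directly, while you write it as $f\circ g=\mathrm{id}$ on the domain), plus your explicit verification that $z_n\in\HH\setminus K_-^{t_+}(t_-)$, which the paper leaves implicit.
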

\begin{proof}
By symmetry, it suffices to work on the case $\sigma=+$. For any $(t_+,t_-)\in\cal D$, by Lemma \ref{lem-strict} there is a sequence $t^n_+\downarrow t_+$ such that $\eta_+(t^n_+)\in K(t^n_+,t_-)\sem K(t_+,t_-)$ for all $n$.
From (\ref{haw=}) and  Lemma \ref{Lem-W} we get $g_{K_+(t_+)}(\eta_+(t^n_+))\to \ha w_+(t_+)$ and $g_{K(t_+,t_-)}(\eta_+(t^n_+))\to W_+(t_+,t_-)$. From (\ref{circ-g}) we get $g_{K_+(t_+)} =f_{K_{-}^{t_+}(t_-)}\circ g_{K(t_+,t_-)}$. From the continuity of $f_{K_{-}^{t_+}(t_-)}$ on $\lin\HH$, we then get  $\ha w_+(t_+)= f_{K_{-}^{t_+}(t_-)}( W_+(t_+,t_-))$. Finally, $\ha w_+(t_+)\in \pa(\HH\sem K_{-}^{t_+}(t_-))$ because $W_+(t_+,t_-)\in\pa\HH$ and   $f_{K_{-}^{t_+}(t_-)}$ maps $\HH$ conformally onto $\HH\sem K_-^{t_+}(t_-)$.
\end{proof}

\subsection{Force point functions}\label{section-V}
For $\sigma\in\{+,-\}$, define $C_\sigma$ and $D_\sigma$ on $\cal D$ such that if $t_\sigma>0$, $C_\sigma(t_+,t_-)=c_{K_{\sigma}^{t_{-\sigma}}(t_\sigma)}$ and $D_\sigma(t_+,t_-)=d_{K_{\sigma}^{t_{-\sigma}}(t_\sigma)}$; and if $t_\sigma=0$, then $C_\sigma =D_\sigma =W_\sigma$ at $t_{-\sigma}\ulin e_{-\sigma}$. Since $K_{\sigma}^{t_{-\sigma}}(\cdot)$ are chordal Loewner hulls driven by $W_\sigma|^{-\sigma}_{t_{-\sigma}}$ with some speed, by Proposition \ref{winK} we get
\BGE C_\sigma\le W_\sigma\le D_\sigma\quad \mbox{on }\cal D,\quad \sigma\in\{+,-\}.\label{CWD}\EDE
Since $K_{\sigma}^{t_{-\sigma}}(t_\sigma)$ is the $\HH$-hull generated by $\eta_{\sigma}^{t_{-\sigma}}[0,t_\sigma]$, we get
\BGE f_{K_{\sigma}^{t_{-\sigma}}(t_\sigma)}[C_\sigma(t_+,t_-),D_\sigma(t_+,t_-)]\subset \eta_{\sigma}^{t_{-\sigma}}[0,t_\sigma].\label{f[C,D]}\EDE

Recall that $w_-<w_+\in\R$. We write $\ulin w$ for $(w_+,w_-)$. Define $R_{\ulin w}=(\R\sem \{w_+,w_-\})\cup\{w_+^+,w_+^-,w_-^+,w_-^-\}$ with the obvious order endowed from $\R$. Assign the topology to $\R_{\ulin w}$ such that $I_-:=(-\infty,w_-^-],I_0:=[w_-^+,w_+^-],I_+:=[w_+^+,\infty)$ are three connected components of $\R_{\ulin w}$, which are respectively homeomorphic to $(-\infty,w_-],[w_-,w_+],[w_+,\infty)$. Recall that for $\sigma\in\{+,-\}$ and $t\in\I_\sigma$, $g_{K_\sigma(t)}^{w_\sigma}$ (Definition \ref{Def-Rw}) is defined on $\R_{w_\sigma}$, and agrees with $g_{K_\sigma(t)}$ on $\R\sem ([a_{K_\sigma(t)},b_{K_\sigma(t)}]\cup\{w_\sigma\})$. By Lemma \ref{lem-w-W} and the fact that $w_{-\sigma}\not\in [a_{K_\sigma(t)},b_{K_\sigma(t)}]\cup\{w_\sigma\}$, we then know that $g_{K_\sigma(t)}^{w_\sigma}(w_{-\sigma})=W_{-\sigma}(t\ulin e_\sigma)$. So we define $g_{K_\sigma(t)}^{w_\sigma}(w_{-\sigma}^\pm)=W_{-\sigma}(t\ulin e_\sigma)^\pm$, and understand $g_{K_\sigma(t)}^{w_\sigma}$ as a continuous function from $\R_{\ulin w}$ to $\R_{W_{-\sigma}(t\ulin e_\sigma)}$.

\begin{Lemma}
For any $\ulin t=(t_+,t_-)\in\cal D$, $g_{K_{+}^{t_-}(t_+)}^{W_+(0,t_-)}\circ g_{K_-(t_-)}^{w_-}$ and $g_{K_{-}^{t_+}(t_-)}^{W_-(t_+,0)}\circ g_{K_+(t_+)}^{w_+} $ agree on $\R_{\ulin w}$, and the common function in the equality, denoted by $g_{K(\ulin t)}^{\ulin w}$, satisfies the following properties.
  \begin{enumerate}
    \item [(i)]  $g_{K(\ulin t)}^{\ulin w}$ is increasing and continuous on $\R_{\ulin w}$, and agrees with $g_{K(\ulin t)}$ on $\R\sem( \lin{K(\ulin t)}$.
    \item [(ii)] $g_{K(\ulin t)}^{\ulin w}$  maps $I_+\cap (\lin{K(\ulin t)}\cup\{w_+^+\})$ and $I_-\cap (\lin{K(\ulin t)}\cup \{w_-^-\})$ to $ D_+(\ulin t) $ and $ C_-(\ulin t) $, respectively.
    \item [(iii)] If $\lin{K_+(t_+)}\cap \lin{K_-(t_-)}=\emptyset$, $g_{K(\ulin t)}^{\ulin w}$  maps   $I_0\cap (\lin{K_+(t_+)}\cup\{w_+^-\})$ and $I_0\cap (\lin{K_-(t_-)}\cup \{w_-^+\})$ to   $ C_+(\ulin t) $ and $ D_-(\ulin t) $, respectively.
    \item [(iv)] If $\lin{K_+(t_+)}\cap \lin{K_-(t_-)}\ne\emptyset$, $g_{K(\ulin t)}^{\ulin w}$  maps $I_0$ to   $ C_+(\ulin t) =D_-(\ulin t) $.
    \item [(v)] The map $(\ulin t,v)\mapsto g_{K(\ulin t)}^{\ulin w}(v)$ from ${\cal D}\times \R_{\ulin w}$ to $\R$ is jointly continuous.
  \end{enumerate}
  \label{common-function}
\end{Lemma}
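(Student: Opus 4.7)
The plan is to prove the two compositions agree on $\R_{\ulin w}$ by checking agreement on a dense subset and appealing to the continuity of each factor, then read off properties (i)–(v) from the resulting common function. Two identities drive the argument: the composition rule (\ref{circ-g}) for $g_{K(\ulin t)}$, and the identifications
\begin{equation*}
W_+(0, t_-) = g_{K_-(t_-)}(w_+), \qquad W_-(t_+, 0) = g_{K_+(t_+)}(w_-),
\end{equation*}
which follow by applying Lemma \ref{lem-w-W} at $t_+=0$ and $t_-=0$ and using (\ref{lem-aabb}) to reduce $f_{K_\mp(t_\mp)}$ at these points to the classical maps. On the dense subset $S := \R \sem (\{w_+, w_-\} \cup \lin{K(\ulin t)})$, each inner factor $g^{w_\pm}_{K_\pm(t_\pm)}$ coincides with the classical $g_{K_\pm(t_\pm)}$, and (\ref{circ-g}) shows both compositions collapse to $g_{K(\ulin t)}$. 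Since each composition is increasing and continuous on $\R_{\ulin w}$ by Definition \ref{Def-Rw}, and $S$ is topologically dense in $\R_{\ulin w}$, the equality extends to all of $\R_{\ulin w}$. This simultaneously proves property (i) for the common function $g^{\ulin w}_{K(\ulin t)}$: monotonicity and continuity are inherited from each factor, and agreement with $g_{K(\ulin t)}$ on $\R \sem \lin{K(\ulin t)}$ is exactly the restriction to $S$.

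For (ii), on $I_+ \cap (\lin{K(\ulin t)} \cup \{w_+^+\})$ I use the first composition: since $x \ge w_+ > b_{K_-(t_-)}$, the inner map sends $x$ to $g_{K_-(t_-)}(x) \in \lin{K_+^{t_-}(t_+)}$ on the right of $W_+(0, t_-)$; the outer map then collapses this to $d^{W_+(0, t_-)}_{K_+^{t_-}(t_+)}$, which equals $d_{K_+^{t_-}(t_+)} = D_+(\ulin t)$ because $W_+(0, t_-)$ is the starting point of $\eta_+^{t_-}$ and hence lies in $[a_{K_+^{t_-}(t_+)}, b_{K_+^{t_-}(t_+)}]$. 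The $I_-$ half is symmetric using the second composition. For (iii), disjointness gives $b_{K_-^{t_+}(t_-)} < c_{K_+(t_+)}$ (because $g_{K_+(t_+)}$ sends $\lin{K_-(t_-)} \subset (-\infty, a_{K_+(t_+)})$ into $(-\infty, c_{K_+(t_+)})$); on $I_0 \cap (\lin{K_+(t_+)} \cup \{w_+^-\})$ the second composition yields $g_{K_-^{t_+}(t_-)}(c_{K_+(t_+)})$, and taking $y \uparrow a_{K_+(t_+)}$ with $y \in S$ and comparing the two factorizations of $g_{K(\ulin t)}(y)$ from (\ref{circ-g}) identifies this value with $c_{K_+^{t_-}(t_+)} = C_+(\ulin t)$; the other half is symmetric. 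For (iv), touching forces $b_{K_-^{t_+}(t_-)} = c_{K_+(t_+)}$, so the outer map collapses $c_{K_+(t_+)}$ to $d_{K_-^{t_+}(t_-)} = D_-(\ulin t)$; the same limiting argument identifies $D_-(\ulin t)$ with $C_+(\ulin t)$.

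Finally, for (v) I would combine Lemma \ref{lem-uniform}(iii) (joint continuity of $g_{K(\ulin t)}(z)$) with continuity of $W_\pm$ (Lemma \ref{Lem-W}) and the explicit formulas in (ii)–(iv) to verify continuity across each type of special point, using Proposition \ref{Prop-cd-continuity} to control the boundary values $C_\pm, D_\pm$. The main obstacle is the interface between cases (iii) and (iv): one must show that as $K_+(t_+)$ and $K_-(t_-)$ transition from disjoint to touching, the values $C_+(\ulin t)$ and $D_-(\ulin t)$ merge continuously, which requires the limiting arguments in the previous paragraph to be uniform in $\ulin t$ in a neighborhood of the merging locus, and more generally asking that the collapsing behavior at the boundary of the hull be stable under perturbation of $\ulin t$.
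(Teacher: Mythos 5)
The central difficulty of this lemma is the touching case (iv), specifically the identity $C_+(\ulin t) = D_-(\ulin t)$, i.e.\ $c_{\til K_+} = d_{\til K_-}$ where $\til K_\pm := K(\ulin t)/K_{\mp}(t_\mp)$. Your proposal's machinery cannot reach this case, for two reasons.

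First, the overall density argument silently fails there. You define $S := \R \sem (\{w_+, w_-\} \cup \lin{K(\ulin t)})$ and claim $S$ is dense in $\R_{\ulin w}$. But when $\lin{K_+(t_+)}\cap \lin{K_-(t_-)}\ne\emptyset$, any segment $(b_{K_-(t_-)}, a_{K_+(t_+)})$ of $\R$ between the two hulls lies below a bounded component of $\HH\sem(\eta_+[0,t_+]\cup\eta_-[0,t_-])$ and is therefore swallowed into $K(\ulin t)$; together with $[w_-, b_{K_-(t_-)}]\subset\lin{K_-(t_-)}$ and $[a_{K_+(t_+)},w_+]\subset\lin{K_+(t_+)}$, this forces $(w_-, w_+)\subset\lin{K(\ulin t)}$, so $S\cap I_0 = \emptyset$. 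Density gives you the equality on $I_+$ and $I_-$, but says nothing on $I_0$ in the touching case — which is exactly where the nontrivial content of the lemma sits.

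Second, your direct treatment of (iv) has two errors. The assertion that "touching forces $b_{K_-^{t_+}(t_-)} = c_{K_+(t_+)}$" is false: touching gives only $\lin{\til K_-}\cap [c_{K_+(t_+)},d_{K_+(t_+)}]\ne\emptyset$, hence $b_{\til K_-}\ge c_{K_+(t_+)}$, and strict inequality is possible when $\til\eta_-$ enters the interior of the shadow $[c_{K_+(t_+)},d_{K_+(t_+)}]$. (This weaker inequality is actually all one needs to show that the second composition collapses $I_0$ to the single value $D_-$, so the error is harmless for that step.) More seriously, "the same limiting argument identifies $D_-(\ulin t)$ with $C_+(\ulin t)$" does not apply: the limiting argument you set up for (iii) takes $y\uparrow a_{K_+(t_+)}$ with $y\in S$ and uses (\ref{circ-g}) on both sides, but in the touching case there are no such $y$ — $a_{K_+(t_+)}$ cannot be approached from the left inside $\R\sem\lin{K(\ulin t)}$. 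The paper proves $c_{\til K_+}=d_{\til K_-}$ by an entirely different argument: assume a gap $J$ between them and show that $f_{K(\ulin t)}(J)$ would be forced into $\eta_-[0,t_-]$ and into $\eta_+[0,t_+]$ simultaneously in incompatible ways (using that $\til K_\pm$ are generated by curves that do not dwell on $\R$, plus a positive-harmonic-measure argument). Your proposal contains no substitute for this step, so it leaves the core of the lemma unproved. Your outline for (i)–(iii) and the identification of the obstacle in (v) are essentially on track, but (iv) and (v) both hinge on the missing ingredient.
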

\begin{proof}
 Fix $\ulin t=(t_+,t_-)\in\cal D$. For $\sigma\in\{+,-\}$, we write $K$ for $K(\ulin t)$, $K_\sigma$ for $K_\sigma(t_\sigma)$, $\til K_\sigma$ for $K_{\sigma}^{t_{-\sigma}}(t_\sigma)$, $\til w_\sigma$ for $W_\sigma(t_{-\sigma}\ulin e_{-\sigma})$,
 $C_\sigma$ for $C_\sigma(\ulin t)$, and $D_\sigma$ for $D_\sigma(\ulin t)$.
    The equality now reads $g_{\til K_+}^{\til w_+}\circ g_{K_-}^{w_-}=g_{\til K_-}^{\til w_-}\circ g_{K_+}^{w_+}$. Before proving the equality, we first show that both sides are well defined and satisfy (i-iii) and a weaker version of (iv) (see below). First consider $g_{\til K_-}^{\til w_-}\circ g_{K_+}^{w_+}$. Since $g_{K_+}^{w_+}:\R_{\ulin w}\to \R_{\til w_-}$, the composition is well defined on $\R_{\ulin w}$. We denote it by $g_{K(\ulin t)}^{\ulin w,+}$.

(i)
The continuity and monotonicity of  the composition follows from the continuity and monotonicity of both $g_{\til K_-}^{\til w_-}$ and $g_{K_+}^{w_+}$.
Let $v \in \R \sem \lin{K}$. Then $v\not\in \lin{K_+}$, and $g_{K_+}^{w_+}(v)=g_{K_+}(v)$.
Since $\til K_-=K/K_+$, $K\sem K_+=f_{K_+}(\til K_-)$. From $v=f_{K_+}(g_{K_+}(v))\not\in \lin{K\sem K_+}$ and the continuity of $f_{K_+}$ on $\lin\HH$, we know that $g_{K_+}(v)\not\in \lin{\til K_-}$, which implies that $g_{K(\ulin t)}^{\ulin w,+}(v)=g_{\til K_-} \circ g_{K_+} (v)=g_K(v)$.

In the proof of (ii,iii) below, we write $\eta_\sigma$ for $\eta_\sigma[0,t_\sigma]$ and $\til\eta_\sigma$ for $\eta_{\sigma}^{t_{-\sigma}}[0,t_\sigma]$;
when $t_\sigma=0$, i.e., $K_\sigma=\til K_\sigma=\emptyset$, we understand $a_{K_\sigma}=b_{K_\sigma}=c_{K_\sigma}=d_{K_\sigma}=w_\sigma$, and $a_{\til K_\sigma}=b_{\til K_\sigma}=c_{\til K_\sigma}=d_{\til K_\sigma}=\til w_\sigma$. Then it is always true that $a_{K_\sigma}=\min\{\eta_\sigma \cap \R\}$, $b_{K_\sigma}=\max\{\eta_\sigma \cap \R\}$,
$a_{\til K_\sigma}=\min\{\til \eta_\sigma \cap \R\}$, $b_{\til K_\sigma}=\max\{\til \eta_\sigma \cap \R\}$, $c_{\til K_\sigma}=C_\sigma$, and $d_{\til K_\sigma}=D_\sigma$. Since $\eta_\pm=f_{K_{\mp}}(\til \eta_\pm)$, we get $ b_{\til K_+}=g_{K_-}(b_{K_+})$, $a_{\til K_-}=g_{K_+}(a_{K_-})$. If $\lin{K_+}\cap \lin{K_-}=\emptyset$, then $a_{\til K_+}=g_{K_-}(a_{K_+})$, $b_{\til K_-}=g_{K_+}(b_{K_-})$.

(ii) Since  $I_+\cap (\lin K\cup\{w_+^+\})=\{w_+^+\}\cup (w_+,b_K]=\{w_+^+\}\cup (w_+^+,b_{K_+}]$ is mapped by $g_{K_+}^{w_+}$ to a single point, it is also mapped by $g_{K(\ulin t)}^{\ulin w,+} $ to a single point, which by (i) is equal to
$$\lim_{x\downarrow b_{K}} g_K(x)=\lim_{x\downarrow b_{K_+}} g_{\til K_+}\circ g_{K_-}(x)=\lim_{y\downarrow b_{\til K_+}} g_{\til K_+}(y)=d_{\til K_+}=D_+.$$

To show that $I_-\cap( \lin K\cup\{w_-^-\})=[a_K,w_-^-)\cup\{w_-^-\}$ is mapped by $g_{K(\ulin t)}^{\ulin w,+} $ to $C_-$, by (i)  it suffices to show that $\lim_{x\uparrow a_{K}}g_K(x)=g_{\til K_-}^{\til w_-}\circ g_{K_+}^{w_+}(w_-^-)=c_{\til K_-}$. This holds because $$g_{\til K_-}^{\til w_-}\circ g_{K_+}^{w_+}(w_-^-)= g_{\til K_-}^{\til w_-}(\til w_-^-)=c_{\til K_-}=\lim_{x\uparrow a_{\til K_-}} g_{\til K_-}(x)=\lim_{x\uparrow a_{ K_-}} g_{\til K_-}\circ g_{K_+}(x)=\lim_{x\uparrow a_{K}}g_K(x).$$

(iii) Suppose $\lin{K_+}\cap \lin{K_-}=\emptyset$. Then $I_0\cap (\lin{K_+}\cup\{w_+^-\})=[a_{K_+},w_+^-)\cup\{w_+^-\}$  is mapped by $g_{K_+}^{w_+}$ to a single point, so is also mapped by $g_{K(\ulin t)}^{\ulin w,+}$ to a single point. By (i) the latter point is
$$\lim_{x\uparrow a_{K_+}}g_K(x)=\lim_{x\uparrow a_{K_+}} g_{\til K_+}\circ g_{K_-}(x)=\lim_{y\uparrow a_{\til K_+}} g_{\til K_+}(y)=c_{\til K_+}=C_+.$$

Since $I_0\cap (\lin{K_-}\cup\{w_-^+\})=\{w_-^+\}\cup (w_-^+,b_{K_-}]$  is mapped by $g_{K_+}^{w_+}$ to $\{\til w_-^+\}\cup (\til w_-^+,b_{\til K_-}]$, which is further mapped by $g_{\til K_-}^{\til w_-}$ to   $d_{\til K_-}= D_-$, we see that
 $g_{K(\ulin t)}^{\ulin w,+}$ maps  $I_0\cap (\lin{K_-}\cup\{w_-^+\})$ to  $ D_-$.

(iv) Suppose $\lin{K_+}\cap \lin{K_-}\ne\emptyset$. For now, we only show that $I_0$ is mapped by $g_{K(\ulin t)}^{\ulin w,+}$ to $D_-$.  By the assumption we have $t_+,t_->0$ and  $[c_{K_+},d_{K_+}]\cap \lin{\til K_-}\ne \emptyset$, which implies that $c_{K_+}\le b_{\til K_-}$. Thus, $g_{K_+}^{w_+}(I_0)= [\til w_-^+, c_{K_+}]\subset [\til w_-^+, b_{\til K_-}]$, from which follows that  $g_{K(\ulin t)}^{\ulin w,+}(I_0)=\{d_{\til K_-}\}=\{D_-\}$.

Now $g_{\til K_-}^{\til w_-}\circ g_{K_+}^{w_+}$  satisfies (i-iii) and a weaker version of (iv). By symmetry, this is also true for $g_{\til K_+}^{\til w_+}\circ g_{K_-}^{w_-}$, where for (iv), $I_0$ is mapped to $\{C_+\}$. We now show that the two functions agree on $\R_{\ulin w}$.
By (i), $g_{\til K_+}^{\til w_+}\circ g_{K_-}^{w_-}$ and $g_{\til K_-}^{\til w_-}\circ g_{K_+}^{w_+}$ agree on $\R \sem \lin{K}$. By (ii),  the two functions also agree on $I_+\cap (\lin{K(\ulin t)}\cup\{w_+^+\})$ and $I_-\cap (\lin{K(\ulin t)}\cup \{w_-^-\})$. Thus they agree on both $I_+$ and $I_-$. By (i,iii)  they agree on $I_0$ when $\lin{K_+}\cap \lin{K_-}=\emptyset$. To prove that they agree on $I_0$ when $\lin{K_+}\cap \lin{K_-}\ne\emptyset$, by the weaker versions of (iv) we only need to show that $c_{\til K_+}=d_{\til K_-}$ in that case.

First, we show that $d_{\til K_-}\le c_{\til K_+}  $. Suppose $d_{\til K_-}> c_{\til K_+}$. Then $J:=(c_{\til K_+},d_{\til K_-})\subset [c_{\til K_-},d_{\til K_-}]\cap [c_{\til K_+},d_{\til K_+}]$. So $ f_{\til K_+}(J)\subset \pa (\HH\sem \til K_+)$. If $f_{\til K_+}(J)\subset \R$, then it is disjoint from $\lin{\til K_+}$, and so is disjoint from $[a_{\til K_+},b_{\til K_+}]$ since $\til K_+$ is generated by $\til\eta_+ $, which does not spend any nonempty interval of time on $\R$. That $f_{\til K_+}(J)\cap [a_{\til K_+},b_{\til K_+}]=\emptyset$ then implies that $J\cap [c_{\til K_+},d_{\til K_+}]=\emptyset$, a contradiction. So there is $x_0\in J$ such that $f_{\til K_+}(x_0)\subset \HH$, which implies that $f_K(x_0)=f_{K_-}\circ f_{\til K_+}(x_0)\in \HH\sem K_-$. On the other hand, since $x_0\in [c_{\til K_-},d_{\til K_-}]$,
 $f_K(x_0)=f_{K_+}\circ f_{\til K_-}(x_0) \subset   f_{K_+}(\til\eta_- )=\eta_- $, which contradicts that $f_K(x_0)\in \HH\sem K_-$. So $d_{\til K_-}\le c_{\til K_+}  $.

Second, we show that $d_{\til K_-}\ge  c_{\til K_+}  $. Suppose $d_{\til K_-}< c_{\til K_+}$. Let $J=(d_{\til K_-}, c_{\til K_+})$. Then $f_{\til K_+}(J)=(f_{\til K_+}(d_{\til K_-}),a_{\til K_+})$. From $\lin{K_+}\cap \lin{K_-}\ne\emptyset$ we know $a_{\til K_+}\le d_{K_-}$. From $a_{\til K_-}=g_{K_+}(a_{K_-})$
 we get $d_{\til K_-}\ge c_{\til K_-}=\lim_{x\uparrow a_{\til K_-}}g_{\til K_-}(x)
 =\lim_{y\uparrow a_{  K_-}}g_{\til K_-}\circ g_{K_+}(y)=\lim_{y\uparrow a_{  K_-}} g_{\til K_+}\circ g_{K_-}(y)$.
 Thus, $f_{\til K_+}(d_{\til K_-})\ge \lim_{y\uparrow a_{  K_-}} g_{K_-}(y)=c_{K_-}$. So we get $f_{\til K_+}(J)\subset  [c_{K_-},d_{K_-}]$, which is mapped into $\eta_-$ by $f_{K_-}$. Thus, $f_K(J)\subset \eta_-$. Symmetrically, $f_K(J)\subset \eta_+$. Since $\eta_-=f_{K_+}(\til \eta_-)$ and $f_K(J)\subset \pa(\HH\sem K)$, for every $x \in J$, there is $z_-\in\til\eta_-\cap \pa(\HH\sem \til K_-)$ such that $f_K(x )=f_{K_+}(z_-)$.
Then there is $y_-\in [c_{\til K_-},d_{\til K_-}]$ such that $z_-=f_{\til K_-}(y_-)$. So $f_K(x )=f_K(y_-)$. Similarly, for every $x \in J$, there is $y_+\in [c_{\til K_-},d_{\til K_-}]$ such that $f_K(x )=f_K(y_+)$. Pick $x^1<x^2\in J$ such that $f_K(x^1)\ne f_K(x^2)$. This is possible because $f_K(J)$ has positive harmonic measure in $\HH\sem K$.  Then there exist $y^1_+\in [c_{\til K_+},d_{\til K_+}]$ and $y^2_-\in [c_{\til K_-},d_{\til K_-}]$ such that $f_K(x^1)=f_K(y_1^+)$ and $f_K(x^2)=f_K(y^2_-)$. This contradicts that $y^1_+>x^2>x^1>y^2_-$.
 So $d_{\til K_-}\ge  c_{\til K_+}  $.

 Combining the last two paragraphs, we get $c_{\til K_+}=d_{\til K_-}$. So $g_{K_{+}^{t_-}(t_+)}^{W_+(0,t_-)}\circ g_{K_-(t_-)}^{w_-}$ and $g_{K_{-}^{t_+}(t_-)}^{W_-(t_+,0)}\circ g_{K_+(t_+)}^{w_+} $ agree on $I_+\cup I_-\cup I_0=\R_{\ulin w}$, and the original (iv) holds for both functions.

 (v) By (i), the composition $g_{K(\ulin t)}^{\ulin w}$ is continuous on $\R_{\ulin w}$ for any $\ulin t\in\cal D$. It suffices to show that, for any  $(a_+,a_-)\in\cal D$ and $\sigma\in\{+,-\}$, the family of maps $[0,a_\sigma]\ni t_\sigma\mapsto g_{K(\ulin t)}^{\ulin w}(v)$, $(t_{-\sigma},v)\in [0,a_{-\sigma}]\times \R_{\ulin w}$, are equicontinuous. This statement follows from the expression $g_{K(\ulin t)}^{\ulin w}=g_{K_{\sigma}^{t_{-\sigma}}(t_\sigma)}^{W_\sigma(t_{-\sigma}\ulin e_{-\sigma})}\circ g_{K_{-\sigma}(t_{-\sigma})}^{w_{-\sigma}}$, Proposition \ref{Prop-cd-continuity'} and Lemma \ref{lem-uniform} (i).
\end{proof}

\begin{Lemma}
For any $(t_+,t_-)\in\cal D$ and $\sigma\in\{+,-\}$, $W_\sigma(t_+,t_-)=g_{K_{-\sigma}^{t_\sigma}(t_{-\sigma})}^{W_{-\sigma}(t_\sigma \ulin e_\sigma)}(\ha w_\sigma(t_\sigma))$. \label{W=gw}
\end{Lemma}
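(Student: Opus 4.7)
By symmetry, the plan is to treat only $\sigma=+$.  Abbreviate $K':=K_-^{t_+}(t_-)$ and $W':=W_-(t_+,0)$; the case $t_-=0$ is immediate since then $K'=\emptyset$, so I assume $t_->0$ below.  Since $w_-<a_{K_+(t_+)}$, the map $g^{w_+}_{K_+(t_+)}$ coincides with $g_{K_+(t_+)}$ at $w_-$, so $W'=g_{K_+(t_+)}(w_-)$.  By Lemma~\ref{Lem-W} and Lemma~\ref{Lebesgue}, $W'$ is in fact $\eta_{-}^{t_+}(0)$, the starting point of the chordal Loewner curve generating the hulls $K_-^{t_+}(\cdot)$, hence $W'\in\lin{K'}\cap\R\subseteq[a_{K'},b_{K'}]$; consequently $a^{W'}_{K'}=a_{K'}$, $b^{W'}_{K'}=b_{K'}$, $c^{W'}_{K'}=c_{K'}$, and $d^{W'}_{K'}=d_{K'}$.

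The main plan is to run, in parallel, the limiting argument used to prove Lemma~\ref{lem-w-W}.  First I would use Lemma~\ref{lem-strict} to select $t_+^n\downarrow t_+$ with $\eta_+(t_+^n)\in K(t_+^n,t_-)\sem K(\ulin t)\subset\HH\sem K(\ulin t)$, and set $z_n:=g_{K_+(t_+)}(\eta_+(t_+^n))\in\HH\sem K'$.  By (\ref{haw=}) and Lemma~\ref{Lem-W} one gets $z_n\to\ha w_+(t_+)$; by the factorization $g_{K(\ulin t)}=g_{K'}\circ g_{K_+(t_+)}$ of (\ref{circ-g}) together with the shrinking-hull characterization (\ref{W-def}) of $W_+$, one gets $g_{K'}(z_n)=g_{K(\ulin t)}(\eta_+(t_+^n))\to W_+(t_+,t_-)$.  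The task thus reduces to identifying the boundary value taken by $g_{K'}$ at the prime end of $\HH\sem K'$ approached by $(z_n)$, and matching it with $g^{W'}_{K'}(\ha w_+(t_+))$.

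I would then split into two subcases.  If $\ha w_+(t_+)\in\R\sem[a_{K'},b_{K'}]$, then $g_{K'}$ extends analytically through $\ha w_+(t_+)$ and, because $\ha w_+(t_+)\notin[a^{W'}_{K'},b^{W'}_{K'}]$, the definition of the modified map gives $g^{W'}_{K'}(\ha w_+(t_+))=g_{K'}(\ha w_+(t_+))=\lim_n g_{K'}(z_n)=W_+(t_+,t_-)$.  If $\ha w_+(t_+)\in\lin{K'}\cap\R$, then $\lim_n g_{K'}(z_n)$ equals $c_{K'}$ or $d_{K'}$ depending on which prime end of $\HH\sem K'$ over $\ha w_+(t_+)$ is approached.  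The geometric input is that $\eta_+$ is confined to the unbounded component of $\HH\sem K_-(t_-)$ (since it avoids $(-\infty,w_-]$ and issues from $w_+>w_-$), and the order-preserving boundary extension of $g_{K_+(t_+)}$ sends $w_-$ to $W'$; these together should force $z_n$ to approach $\ha w_+(t_+)$ from the side of $\lin{K'}$ corresponding to the right of $W'$ precisely when $\ha w_+(t_+)>W'$, and from the left otherwise.  Therefore $\lim_n g_{K'}(z_n)=d^{W'}_{K'}$ or $c^{W'}_{K'}$, matching $g^{W'}_{K'}(\ha w_+(t_+))$ by the definition.  The main obstacle is rigorously justifying this side-of-$W'$ determination, which is a local topological argument using Carath\'eodory-order preservation under conformal maps together with the fact that the excursions $\eta_+[t_+,t_+^n]\cap(\HH\sem K(\ulin t))$ remain in the connected component of $\HH\sem K_-(t_-)$ containing $w_+$.
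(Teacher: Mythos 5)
Your outline reaches the same destination but by a more laborious road, and it stalls at exactly the step you flag as the ``main obstacle.'' The paper's proof is much shorter because it does not re-run the limiting argument from Lemma~\ref{lem-w-W} at all; it simply cites that lemma's conclusion $\ha w_+(t_+)=f_{K'}(W_+(\ulin t))$ and then resolves the prime-end ambiguity with a single inequality. Namely, from (\ref{CWD}) together with Lemma~\ref{common-function}(i,iii,iv) (equivalently, the ordered chain (\ref{VWVWV})), one has
\begin{equation*}
W_+(\ulin t)\;\ge\;C_+(\ulin t)\;\ge\;D_-(\ulin t)\;=\;d_{K_-^{t_+}(t_-)}\;=\;d_{K'}.
\end{equation*}
Since $f_{K'}$ maps $[c_{K'},d_{K'}]$ onto $\pa(\HH\sem K')\cap\lin{K'}$ and maps $(d_{K'},\infty)$ bijectively onto $(b_{K'},\infty)$, this inequality immediately pins down the dichotomy: either $W_+(\ulin t)=d_{K'}$ and $\ha w_+(t_+)=b_{K'}$, or $W_+(\ulin t)>d_{K'}$ and $W_+(\ulin t)=g_{K'}(\ha w_+(t_+))$. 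Combined with your (correct) observation that $W'\in\lin{K'}\cap\R$ so that $d_{K'}^{W'}=d_{K'}$, and that $\ha w_+(t_+)>W'$ always holds, both branches give exactly $g_{K'}^{W'}(\ha w_+(t_+))$. No topological argument about which side of $W'$ the sequence $z_n$ approaches from is needed.

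The gap in your proposal is precisely there. Your proposed criterion --- that $z_n$ approaches from the right of $W'$ ``precisely when $\ha w_+(t_+)>W'$'' --- is degenerate: since $w_-<a_{K_+(t_+)}$, one has $W'=g_{K_+(t_+)}(w_-)<c_{K_+(t_+)}\le\ha w_+(t_+)$ always, so the condition is vacuous and cannot discriminate between $c_{K'}$ and $d_{K'}$. Moreover, the informal Carath\'eodory-order argument you sketch would still need to rule out that $\eta_+$ approaches $\lin{K'}$ from the left-hand boundary arc of $\HH\sem K'$, and the only clean tool for that in this framework is exactly the ordering inequality above --- so you would end up re-deriving it. I suggest replacing both the limiting-argument rerun and the topological sketch with the two lines the paper uses: quote Lemma~\ref{lem-w-W} for $\ha w_+(t_+)=f_{K'}(W_+(\ulin t))$, then invoke $W_+(\ulin t)\ge d_{K'}$ to conclude.
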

\begin{proof}
Fix $\ulin t=(t_+,t_-)\in\cal D$. By symmetry, we may assume that $\sigma=+$.
If $t_-=0$, it is obvious since $W_+(\cdot,0)=\ha w_+$ and $K_{-}^{t_+}(0)=\emptyset$. Suppose $t_->0$. From (\ref{CWD}) and Lemma \ref{common-function} (i,iii,iv) we know that $W_+(\ulin t)\ge C_+(\ulin t)\ge D_-(\ulin t)=d_{K_{-}^{t_+}(t_-)}$. Since $\ha w_+(t_+)= f_{K_{-}^{t_+}(t_-)}(W_+(\ulin t))$ by Lemma \ref{lem-w-W}, we find that either $W_+(\ulin t) =d_{K_{-}^{t_+}(t_-)}$ and $\ha w_+(t_+)=b_{K_{-}^{t_+}(t_-)}$, or $W_+(\ulin t)>d_{K_{-}^{t_+}(t_-)}$ and $W_+(\ulin t)=g_{K_{-}^{t_+}(t_-)}(\ha w_+(t_+))$. In either case, we get $W_+(\ulin t)=g_{K_{-}^{t_+}(t_-)}^{W_-(t_+,0)}(\ha w_+(t_+))$.
\end{proof}

\begin{Definition}
For $v\in\R_{\ulin w}$, we call $V(\ulin t):=g_{K(\ulin t)}^{\ulin w}(v)$, $\ulin t\in \cal D$,  the force point function (for the commuting pair $(\eta_+,\eta_-;\cal D)$) started from $v$, which is continuous by Lemma \ref{common-function} (v). \label{def-force-function}
\end{Definition}

\begin{Definition} Let $(\eta_+,\eta_-;{\cal D})$ be a commuting pair of chordal Loewner curves started from $(w_+,w_-)$ with hull function $K(\cdot,\cdot)$.
  \begin{enumerate}
    \item [(i)] For $\sigma\in\{+,-\}$, let $\phi_\sigma$ be a continuous and strictly increasing function defined on the lifespan of $\eta_\sigma$ with $\phi_\sigma(0)=0$, and let $\phi_\oplus (t_+,t_-)=(\phi_+(t_+),\phi_-(t_-))$. Let $\til\eta_\sigma=\eta\circ \phi_\sigma^{-1}$, $\sigma\in\{+,-\}$, and $\til{\cal D}=\phi_\oplus({\cal D})$.
        Then we call $(\til\eta_+,\til\eta_-;\til{\cal D})$ a commuting pair of chordal Loewner curves with speeds $(\phi_+,\phi_-)$, and call $(\eta_+,\eta_-;{\cal D})$ its normalization.
    \item [(ii)] Let $\ulin\tau\in\cal D$. Suppose there is a commuting pair of chordal Loewner curves $(\til\eta_+,\til\eta_-;\til{\cal D})$ with some speeds such that $\til{\cal D}=\{\ulin t\in\R_+^2: \ulin\tau+\ulin t\in{\cal D}\}$, and $\eta_{\sigma}(\tau_\sigma+\cdot)=f_{K(\ulin\tau)}\circ \eta_\sigma$, $\sigma\in\{+,-\}$. Then we call  $(\til\eta_+,\til\eta_-;\til{\cal D})$ the part of $(\eta_+,\eta_-;{\cal D})$ after $\ulin\tau$ up to a conformal map.
\end{enumerate}
\label{Def-speeds}
\end{Definition}

For a commuting pair $(\til\eta_+,\til\eta_-;\til{\cal D})$ with some speeds, we still define the hull function $\til K(\cdot,\cdot)$ and the capacity function $\til\mA(\cdot,\cdot)$ using (\ref{KmA}), define the driving functions $\til W_+$ and $\til W_-$ using Lemma \ref{Lem-W}, and define the force point functions by  $\til V (\ulin t)=g_{\til K(\ulin t)}^{\ulin w}(v )$ started from $v$ for any $v\in\R_{\ulin w}$. Most lemmas in this section still hold (except that $\mA$ may not be Lipschitz continuous).

\begin{Lemma}
\begin{enumerate}
  \item [(i)] For the $(\eta_+,\eta_-;{\cal D})$, $(\til\eta_+,\til\eta_-;\til{\cal D})$ and $\phi_\oplus$ in Definition \ref{Def-speeds} (i), we have $\til X=X\circ \phi_\oplus^{-1}$ for $X\in\{K,\mA,W_\pm,V\}$, where $V$ and $\til V$ are force point functions respectively for $(\eta_+,\eta_-;{\cal D})$ and $(\til\eta_+,\til\eta_-;\til{\cal D})$ started from the same $v\in\R_{\ulin w}$.
  \item [(ii)] For the $(\eta_+,\eta_-;{\cal D})$, $(\til\eta_+,\til\eta_-;\til{\cal D})$ and $\ulin\tau$ in Definition \ref{Def-speeds} (ii), we have $\til K =K(\ulin\tau+\cdot)/K(\ulin\tau)$, $\til \mA =\mA(\ulin\tau+\cdot)-\mA(\ulin\tau)$, and $\til W_\sigma =W_\sigma(\ulin\tau+\cdot)$, $\sigma\in\{+,-\}$. Let $v\in\R_{(w_+,w_-)}$, and let $V$ be the force point function for $(\eta_+,\eta_-;{\cal D})$ started from $v$. Let $\til w_\pm=\til W_\pm(\ulin 0)=W_\pm(\ulin\tau)$.  Define $\til v\in\R_{(\til w_+,\til w_-)}$ such that if $V(\ulin\tau)\not\in \{\til w_+,\til w_-\}$, then $\til v=V(\ulin\tau)$; and if $V(\ulin\tau)=\til w_\sigma$, then $\til v=\til w_\sigma^{\sign(v-w_\sigma)}$, $\sigma\in\{+,-\}$. Let $\til V$ be the  force point function for $(\til\eta_+,\til\eta_-;\til{\cal D})$ started from $\til v$. Then $\til V  =V (\ulin\tau+\cdot)$ on $\til {\cal D}$.
\end{enumerate}
\label{DMP-determin-1}
\end{Lemma}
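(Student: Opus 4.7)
The plan is to handle (i) as a direct consequence of the definitions together with the reparametrization compatibility of the chordal Loewner equation, and then derive (ii) from a two-step application of the hull quotient identity (\ref{K123}) combined with Proposition \ref{prop-connected} and Proposition \ref{prop-comp-g}.

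For (i), I would begin by observing that $\til\eta_\sigma[0,\til t_\sigma]=\eta_\sigma[0,\phi_\sigma^{-1}(\til t_\sigma)]$ by the definition of $\til\eta_\sigma=\eta_\sigma\circ\phi_\sigma^{-1}$, so the definition (\ref{KmA}) gives immediately $\til K=K\circ\phi_\oplus^{-1}$ and consequently $\til\mA=\mA\circ\phi_\oplus^{-1}$. For the driving functions, since Lemma \ref{Lem-W} characterizes $W_\sigma$ purely by the identity (\ref{W-def}) in terms of the family of hulls $K(\cdot,\cdot)$, the equality $\til K=K\circ\phi_\oplus^{-1}$ forces $\til W_\sigma=W_\sigma\circ\phi_\oplus^{-1}$ (the ``speed'' in Lemma \ref{Lem-W} adjusts to $\til\mA|^{-\sigma}_{\til t_{-\sigma}}$, which is precisely the reparametrized version). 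For the force point functions, Definition \ref{def-force-function} writes $V(\ulin t)=g^{\ulin w}_{K(\ulin t)}(v)$, and since $\til K=K\circ\phi_\oplus^{-1}$ we get $\til V=V\circ\phi_\oplus^{-1}$ at once.

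For (ii), the hull identity is the starting point. From $\eta_\sigma(\tau_\sigma+\cdot)=f_{K(\ulin\tau)}\circ\til\eta_\sigma$, I get $\til\eta_\sigma[0,\til t_\sigma]\cap\HH=g_{K(\ulin\tau)}(\eta_\sigma[\tau_\sigma,\tau_\sigma+\til t_\sigma]\cap(\HH\sem K(\ulin\tau)))$, so the definition of $\til K$ yields $\til K(\ulin{\til t})=K(\ulin\tau+\ulin{\til t})/K(\ulin\tau)$, and taking $\hcap_2$ gives $\til\mA(\ulin{\til t})=\mA(\ulin\tau+\ulin{\til t})-\mA(\ulin\tau)$. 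Combining this with (\ref{K123}) produces
$$\til K(\ulin{\til t})/\til K_+(\til t_+)= [K(\ulin\tau+\ulin{\til t})/K(\ulin\tau)]/[K(\tau_++\til t_+,\tau_-)/K(\ulin\tau)]=K_-^{\tau_++\til t_+}(\tau_-+\til t_-)/K_-^{\tau_++\til t_+}(\tau_-),$$
and Proposition \ref{prop-connected} applied to the one-parameter Loewner chain $(K_-^{\tau_++\til t_+}(\tau_-+s))_{s\ge 0}$ identifies the driving function of the left-hand side as $W_-(\tau_++\til t_+,\tau_-+\til t_-)$. By Lemma \ref{Lem-W}, this is exactly $\til W_-(\til t_+,\til t_-)$, so $\til W_-=W_-(\ulin\tau+\cdot)$; the case of $\til W_+$ is symmetric.

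For the force point function, I would iterate Proposition \ref{prop-comp-g} in both coordinates using the factorization from Lemma \ref{common-function}: $g^{\ulin w}_{K(\ulin\tau)}=g^{\til w_+}_{K_-^{\tau_+}(\tau_-)}\circ g^{w_-}_{K_+(\tau_+)}$, together with the corresponding factorization of $g^{\ulin w}_{K(\ulin\tau+\ulin{\til t})}$ that first exhausts $K(\ulin\tau)$. The resulting composition identity reads $g^{\ulin w}_{K(\ulin\tau+\ulin{\til t})}=g^{\ulin{\til w}}_{\til K(\ulin{\til t})}\circ g^{\ulin w}_{K(\ulin\tau)}$, where the convention $\til v=\til w_\sigma^{\sign(v-w_\sigma)}$ when $V(\ulin\tau)=\til w_\sigma$ is exactly the revision prescribed in Proposition \ref{prop-comp-g}. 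Evaluating at $v$ gives $V(\ulin\tau+\ulin{\til t})=g^{\ulin{\til w}}_{\til K(\ulin{\til t})}(\til v)=\til V(\ulin{\til t})$.

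The main obstacle I anticipate is the bookkeeping in the last step: showing that the two-variable composition $g^{\ulin w}_{K(\ulin\tau+\ulin{\til t})}=g^{\ulin{\til w}}_{\til K(\ulin{\til t})}\circ g^{\ulin w}_{K(\ulin\tau)}$ really holds on all of $\R_{\ulin w}$ with the prescribed convention when a force point has been swallowed by time $\ulin\tau$ and thus sits on $\{\til w_+,\til w_-\}$. This requires combining Proposition \ref{prop-comp-g} with the four case analysis of Lemma \ref{common-function} (i)--(iv), taking some care on the interval $I_0$ when $\lin{K_+(\tau_+)}\cap\lin{K_-(\tau_-)}\ne\emptyset$, since both branches of the factorization of $g^{\ulin w}_{K(\ulin\tau)}$ collapse $I_0$ to the common point $C_+(\ulin\tau)=D_-(\ulin\tau)$ and the subsequent convention $\til v=\til w_\sigma^{\sign(v-w_\sigma)}$ must be matched accordingly.
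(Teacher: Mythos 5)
Your proposal is correct and follows the paper's approach closely: part (i) is handled exactly as the paper does (the paper just calls it "obvious"), and part (ii) uses the same hull-quotient identity (\ref{K123}), the characterization (\ref{W-def}) via Lemma \ref{Lem-W} for the driving functions, and a composition identity built from Proposition \ref{prop-comp-g} and Lemma \ref{common-function} for the force point functions, which is precisely the explicit five-line chain the paper writes out. One small notational slip: the factorization of $g^{\ulin w}_{K(\ulin\tau)}$ you wrote should read $g^{W_-(\tau_+,0)}_{K_-^{\tau_+}(\tau_-)}\circ g^{w_+}_{K_+(\tau_+)}$, i.e.\ the first map is taken relative to the endpoint $w_+$ of $\eta_+$ and the intermediate reference point is $W_-(\tau_+,0)$, not $\til w_+$; this does not affect the validity of your argument, since the composition identity $g^{\ulin w}_{K(\ulin\tau+\ulin t)} = g^{\ulin{\til w}}_{\til K(\ulin t)}\circ g^{\ulin w}_{K(\ulin\tau)}$ you aim for is indeed what the paper establishes.
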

\begin{proof}
Part (i) is obvious. We now work on (ii). Let $\ulin t=(t_+,t_-)\in\til{\cal D}$. From $K(\ulin\tau+\ulin t)=\Hull(\bigcup_\sigma \eta_\sigma[0,\tau_\sigma+t_\sigma])$, we get
$$K(\ulin\tau+\ulin t)=\Hull(K(\ulin\tau)\cup \bigcup_\sigma \eta_\sigma[\tau_\sigma,\tau_\sigma+t_\sigma])=\Hull(K(\ulin\tau)\cup f_{K(\ulin\tau)}( \bigcup_\sigma \til \eta_\sigma[0,t_\sigma])).$$
This implies that $\til K(\ulin t)=\Hull( \bigcup_\sigma \til \eta_\sigma[0,t_\sigma])=K(\ulin\tau+\ulin t)/K(\ulin\tau)$, which then implies that $\til \mA(\ulin t)=\mA(\ulin\tau+\ulin t)-\mA(\ulin\tau)$. It together with  (\ref{K123},\ref{W-def})  implies that  $\til W_\sigma(\ulin t)=W_\sigma(\ulin\tau+\ulin t)$.

By (i), Proposition \ref{prop-comp-g} and Lemma \ref{common-function}, if $V(\ulin\tau)\not\in \{\til w_+,\til w_-\}$,
$$\til V(\ulin t)=g_{\til K(\ulin t)/\til K_-(t_-)}^{\til W_+(0,t_-)}\circ g_{\til K_-(t_-)}^{\til w_-}(\til v)=g_{K(\ulin\tau+\ulin t)/ K(\tau_+,\tau_-+t_-)}^{W_+(\tau_+,\tau_-+t_-)}\circ g_{K(\tau_+,\tau_-+t_-)/K(\ulin\tau)}^{W_-(\ulin \tau)}(\til v)$$
$$=g_{ K(\ulin \tau+\ulin t)/K(\tau_+,\tau_-+t_-)}^{W_+(\tau_+,\tau_-+t_-)}\circ g_{K(\tau_+,\tau_-+t_-)/K(\ulin\tau)}^{W_-(\ulin \tau)}\circ g^{W_-(\tau_+,0)}_{K(\ulin\tau)/K(\tau_+,0)} \circ g^{w_+}_{K(\tau_+,0)}(v)$$
$$=g_{ K(\ulin \tau+\ulin t)/K(\tau_+,\tau_-+t_-)}^{W_+(\tau_+,\tau_-+t_-)}\circ g_{K(\tau_+,\tau_-+t_-)/K(\tau_+,0)}^{W_-( \tau_+,0)} \circ g^{w_+}_{K(\tau_+,0)}(v)$$
$$=g_{ K(\ulin \tau+\ulin t)/K(\tau_+,\tau_-+t_-)}^{W_+(\tau_+,\tau_-+t_-)}\circ g_{K(\tau_+,\tau_-+t_-)/K(0,\tau_-+t_-)}^{W_+(0,\tau_-+t_-)} \circ g^{w_-}_{K(0,\tau_-+t_-)}(v)$$
$$=g_{ K(\ulin \tau+\ulin t) }^{W_+(0,\tau_-+t_-)}\circ g^{w_-}_{K(0,\tau_-+t_-)}(v) =g_{K(\ulin\tau+\ulin t)}^{(w_+,w_-)}(v)=V(\ulin\tau+\ulin t).$$
Here the ``$=$'' in the $1^\text{st}$ line follow from the definition of $\til V(\ulin t)$ and that $\til K =K(\ulin\tau+\cdot)/K(\ulin\tau)$, the ``$=$'' in the $2^\text{nd}$ line follows from the definition of $\til v$,
the ``$=$'' in the $3^\text{rd}$  line and the first ``$=$'' in the $5^\text{th}$  line follow from Proposition \ref{prop-comp-g}, and the ``$=$''  in the $4^\text{th}$ line follows from Lemma \ref{common-function}.

Now suppose $V(\ulin\tau)\in \{\til w_+,\til w_-\}$. By symmetry,  assume that $V(\ulin\tau)=\til w_-=W_-(\ulin\tau)$.  If $v\ge w_-^+$,  we understand $g^{W_-(\tau_+,0)}_{K(\ulin\tau)/K(\tau_+,0)}$ as a map from $[W_-(\tau_+,0)^+,\infty)$ into $[\til w_-^+,\infty)$, i.e., when $g^{W_-(\tau_+,0)}_{K(\ulin\tau)/K(\tau_+,0)}$ takes value $\til w_-$ at some point in $[W_-(\tau_+,0)^+,\infty)$,   we redefine the value as $\til w_-^+$. Then the above displayed formula still holds. The case that $v\le w_-^-$ is similar, in which we understand $g^{W_-(\tau_+,0)}_{K(\ulin\tau)/K(\tau_+,0)}$ as a map from $(-\infty, W_-(\tau_+,0)^-]$ into $(-\infty,\til w_-^-]$.
\end{proof}

From now on, we fix $v_0\in I_0= [w_-^+,w_+^-]$, $v_+\in I_+=[w_+^+,\infty)$, and $v_-\in I_-=(-\infty,w_-^-]$, and let $V_\nu(\ulin t)$, $\ulin t\in\cal D$, be the force point function started from $v_\nu$, $\nu\in\{0,+,-\}$.   By Lemma \ref{common-function}, $V_-\le C_-\le D_-\le V_0\le C_+\le D_+\le V_+$, which combined with (\ref{CWD}) implies
\BGE V_-\le C_-\le W_-\le D_-\le V_0\le C_+\le W_+\le D_+\le V_+. \label{VWVWV}\EDE

\begin{Lemma}
	For any $\ulin t=(t_+,t_-)\in\cal D$, we have
	\BGE |V_+(\ulin t)-V_-(\ulin t)|/4\le \diam(K(\ulin t)\cup [v_-,v_+])\le |V_+(\ulin t)-V_-(\ulin t)|.\label{V-V}\EDE
	\BGE f_{K(\ulin t)}[V_0(\ulin t),V_\nu(\ulin t)]\subset \eta_\nu[0,t_\nu]\cup [v_0,v_\nu],\quad \nu\in\{+,-\}\label{fV1}\EDE
Here for $x,y\in\R$, the $[x,y]$ in (\ref{fV1}) is the line segment connecting $x$ with $y$, which is the same as $[y,x]$; and if any $v_\nu$, $\nu\in\{0,+,-\}$, takes value $w_\sigma^\pm$ for some $\sigma\in\{+,-\}$, then its appearance in (\ref{V-V},\ref{fV1}) is understood as $w_\sigma$. \label{lem-V0}
\end{Lemma}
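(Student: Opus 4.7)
The plan for (\ref{V-V}) is to realize $A := K(\ulin t)\cup[v_-,v_+]$ (with $v_\pm$ interpreted as real points via $\pi_{w_\pm}$) as a larger $\HH$-hull. By (\ref{lem-aabb}) we have $v_-\le a_{K(\ulin t)}$ and $v_+\ge b_{K(\ulin t)}$, so the Schwarz-reflected extension of $g_{K(\ulin t)}$ continues analytically across $(v_-,a_{K(\ulin t)})\cup(b_{K(\ulin t)},v_+)$ to give a conformal map from $\C\sem A^{\doub}$ onto $\C\sem [V_-,V_+]$, still with hydrodynamic normalization $z+O(1/z)$ at $\infty$. Comparing logarithmic capacities yields $\operatorname{cap}(A^{\doub})=\operatorname{cap}([V_-,V_+])=(V_+-V_-)/4$. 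Since $A$ contains the real interval $[v_-,v_+]$, its reflection $A^{\doub}$ is a compact continuum and satisfies the elementary bound $\operatorname{diam}(A)\le\operatorname{diam}(A^{\doub})\le 2\operatorname{diam}(A)$. Combined with the classical capacity--diameter estimates $\operatorname{diam}(E)/4\le\operatorname{cap}(E)\le\operatorname{diam}(E)/2$ (the first from Koebe's $1/4$ theorem, the second from Pólya's inequality) applied to $E=A^{\doub}$, one gets $\operatorname{diam}(A)\le\operatorname{diam}(A^{\doub})\le 4\operatorname{cap}(A^{\doub})=V_+-V_-$ and $(V_+-V_-)/4=\operatorname{cap}(A^{\doub})\le\operatorname{diam}(A^{\doub})/2\le\operatorname{diam}(A)$, which is exactly (\ref{V-V}).

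For (\ref{fV1}), take $\nu=+$ by symmetry. Using (\ref{VWVWV}) decompose $[V_0,V_+]=[V_0,C_+]\cup[C_+,D_+]\cup[D_+,V_+]$ and handle the three pieces via the factorization $f_{K(\ulin t)}=f_{K_-(t_-)}\circ f_{K_{+}^{t_-}(t_+)}$ coming from (\ref{circ-g}). For the middle piece, $[C_+,D_+]=[c_{K_{+}^{t_-}(t_+)},d_{K_{+}^{t_-}(t_+)}]$, so (\ref{f[C,D]}) applied to $K_{+}^{t_-}(t_+)$ gives $f_{K_{+}^{t_-}(t_+)}([C_+,D_+])\subset \eta_{+}^{t_-}[0,t_+]$, and then Lemma \ref{Lebesgue} (the identity $\eta_+=f_{K_-(t_-)}\circ\eta_{+}^{t_-}$) yields $f_{K(\ulin t)}([C_+,D_+])\subset\eta_+[0,t_+]$. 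For the two outer pieces, the open intervals $(V_0,C_+)$ and $(D_+,V_+)$ lie in $\R$ outside $[c_{K_{+}^{t_-}(t_+)},d_{K_{+}^{t_-}(t_+)}]$, so $f_{K_{+}^{t_-}(t_+)}$ maps them bijectively to real sub-intervals outside $[a_{K_{+}^{t_-}(t_+)},b_{K_{+}^{t_-}(t_+)}]$; applying $f_{K_-(t_-)}$ keeps the image on $\R$, and tracking endpoints via Proposition \ref{prop-comp-g} shows that these images land in $[v_0,a_{K(\ulin t)})$ and $(b_{K(\ulin t)},v_+]$ respectively, both of which are subsets of $[v_0,v_+]$. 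The boundary values $f_{K(\ulin t)}(V_0)$ and $f_{K(\ulin t)}(D_+)$ are included by continuity.

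The main subtlety is the bookkeeping at the endpoints $V_0$ and $D_+$ in degenerate configurations: when $\lin{K_+(t_+)}\cap\lin{K_-(t_-)}\ne\emptyset$ (Lemma \ref{common-function}(iv), in which $C_+=D_-=V_0$), and when $v_0$ or $v_+$ coincides with one of $w_\pm^\pm$ (where the $w_\sigma^\pm\mapsto w_\sigma$ convention kicks in). In the disjoint case at $v_0=w_-^+$, for instance, one must verify that $f_{K(\ulin t)}(V_0)=f_{K(\ulin t)}(D_-)$ is the real point $b_{K_-(t_-)}\in[w_-,w_+)\subset[v_0,v_+]$, and not some point on $\eta_-[0,t_-]$ in $\HH$; this is checked by unpacking Proposition \ref{prop-comp-g} together with the definitions of $C_-,D_-$ at the relevant limit points, showing that the endpoint always lands on a point of $\R$ or on $\eta_+$, as required.
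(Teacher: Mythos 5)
Your treatment of (\ref{V-V}) is correct and is essentially a spelled-out version of the paper's one-line Koebe citation: the equality of logarithmic capacities, the bound $\diam(A)\le\diam(A^{\doub})\le 2\diam(A)$, and the two-sided capacity--diameter inequality together give exactly (\ref{V-V}).

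For (\ref{fV1}), the three-piece decomposition and the handling of the middle piece via (\ref{f[C,D]}) and Lemma \ref{Lebesgue} agree with the paper. But the claimed target for the left outer piece is wrong: you write $f_{K(\ulin t)}((V_0,C_+))\subset[v_0,a_{K(\ulin t)})$, yet by (\ref{lem-aabb}) and the remark after it, $a_{K(\ulin t)}=a_{K_-(t_-)}\le w_-\le v_0$ when $t_->0$, so $[v_0,a_{K(\ulin t)})$ is degenerate and is certainly not a subset of $[v_0,v_+]$. The correct right endpoint is $\min((\lin{K_+(t_+)}\cap\R)\cup\{w_+\})$, i.e., the leftmost real contact of the \emph{other} hull (or $w_+$ if $t_+=0$); this is what the paper's proof produces, and you have written the wrong hull. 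Beyond that slip, the pivotal assertion ``applying $f_{K_-(t_-)}$ keeps the image on $\R$'' is not established. Your route via the factorization $f_{K(\ulin t)}=f_{K_-(t_-)}\circ f_{K_+^{t_-}(t_+)}$ is genuinely different from the paper's, which avoids the issue by arguing directly with the monotone continuous extension $g^{\ulin w}_{K(\ulin t)}$ of Lemma \ref{common-function}. The factorization route can be closed, but one must show that the intermediate real interval $(f_{K_+^{t_-}(t_+)}(V_0),a_{K_+^{t_-}(t_+)})$ is disjoint from $(c_{K_-(t_-)},d_{K_-(t_-)})$; this uses Lemma \ref{common-function}(iv) to reduce to the disjoint case (so that $(V_0,C_+)\ne\emptyset$ forces $\lin{K_+(t_+)}\cap\lin{K_-(t_-)}=\emptyset$, whence $a_{K_+^{t_-}(t_+)}>d_{K_-(t_-)}$), and then a further case split on whether $v_0\in\lin{K_-(t_-)}$ to verify $f_{K_+^{t_-}(t_+)}(V_0)\ge d_{K_-(t_-)}$. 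Your ``main subtlety'' paragraph examines a single degenerate endpoint configuration and does not supply this general argument.
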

\begin{proof}
	Fix $\ulin t=(t_+,t_-)\in\cal D$.  We write $K$ for $K(\ulin t)$, $K_\pm$ for $K_\pm(t_\pm)$, $\til K_\pm$ for $K_{\pm}^{t_\mp}(t_\pm)$, $\eta_\pm$ for $\eta_\pm[0,t_\pm]$, $\til\eta_\pm$ for $\eta_{\pm}^{t_\mp}[0,t_\pm]$, and $X$ for $X(\ulin t)$, $X\in\{V_0,V_+,V_-,C_+,C_-,D_+,D_-\}$.
	
	Since $g_{K }$ maps $\C\sem (K^{\doub}\cup [v_-,v_+])$ conformally onto $\C\sem [V_- ,V_+ ]$, fixes $\infty$, and has derivative $1$ at $\infty$, by Koebe's $1/4$ theorem, we get (\ref{V-V}).
	For (\ref{fV1}) by symmetry we only need to prove the case $\nu=+$. By (\ref{VWVWV}), $V_0\le C_+\le D_+\le V_+$. By (\ref{f[C,D]}),
$f_K[C_+,D_+]\subset f_{K_-}(\til\eta_+)=\eta_+$. It remains to show that  $f_K(D_+,V_+]\subset [w_0,v_+]$ and $f_K[V_0,C_+)\subset [v_0,w_0]$. If $V_+=D_+$, then $(D_+,V_+]=\emptyset$, and $f_K(D_+,V_+]=\emptyset \subset [w_+,v_+]$. Suppose $V_+>D_+$. By Lemma \ref{common-function}, $D_+=\lim_{x\downarrow \max((\lin K\cap \R)\cup\{w_+\})} g_K(x)$, and $V_+=g_K(v_+)$. So $f_K(D_+,V_+]=( \max((\lin K\cap \R)\cup\{w_+\}),v_+]\subset [w_+,v_+]$.
	If $V_0=C_+$, then $[V_0,C_+)=\emptyset$, and $f_K[V_0,C_+)=\emptyset \subset [v_0,w_+]$. If $V_0<C_+$, by Lemma \ref{common-function} (iii,iv),   $\lin{K_+}\cap\lin{K_-}=\emptyset$, $v_0\ne \lin{K_+}$, and $C_+=\lim_{x\uparrow \min((\lin{K_+}\cap\R)\cup\{w_+\})} g_K(x)$. Now either $v_0\not\in \lin K\cup\{w_-^+\}$ and $V_0=g_K(v_0)$, or $v_0\in \lin {K_-}\cup\{w_-^+\}$ and $V_0=D_-$. In the former case, $f_K[V_0,C_+)\subset [v_0,\min((\lin{K_+}\cap\R)\cup\{w_+\}))\subset [v_0,w_+]$. In the latter case, $f_K[V_0,C_+)= [\max((\lin{K_-}\cap\R)\cup\{w_-\}),\min((\lin{K_+}\cap\R)\cup\{w_+\}))\subset [v_0,w_+]$. \end{proof}

\begin{Lemma}
  Suppose for some $\ulin t=(t_+,t_-)\in\cal D$ and $\sigma\in\{+,-\}$,  $\eta_\sigma(t_\sigma)\in \eta_{-\sigma}[0,t_{-\sigma}]\cup [v_{-\sigma},v_0]$. Then $W_\sigma(\ulin t)=V_0(\ulin t)$.
  \label{W=V}
\end{Lemma}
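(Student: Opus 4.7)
By symmetry, assume $\sigma=+$; set $\ulin t=(t_+,t_-)$ and $p:=\eta_+(t_+)=f_{K(\ulin t)}(W_+(\ulin t))$ by Lemma~\ref{Lem-W}. The chain~(\ref{VWVWV}) gives $V_0\le W_+$, so it suffices to show $W_+\le V_0$. The plan is to transport the problem to the intermediate picture obtained by applying the conformal map $g_{K_+(t_+)}\colon\HH\sem K_+(t_+)\to\HH$, in which the remaining structure is the hull $\til K_-:=K_-^{t_+}(t_-)=K(\ulin t)/K_+(t_+)$. Writing $\til w_-:=W_-(t_+,0)=g_{K_+(t_+)}(w_-)$, Lemma~\ref{W=gw} gives $W_+(\ulin t)=g_{\til K_-}^{\til w_-}(\ha w_+(t_+))$, while the composition formula in Lemma~\ref{common-function} gives $V_0(\ulin t)=g_{\til K_-}^{\til w_-}(\til v_0)$ with $\til v_0:=g_{K_+(t_+)}^{w_+}(v_0)$.

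By Definition~\ref{Def-Rw} together with (\ref{lem-aabb}), $\til v_0\in[\til w_-,c_{K_+(t_+)}]$ and $\ha w_+(t_+)\in[c_{K_+(t_+)},d_{K_+(t_+)}]$, so in particular $\til v_0\le\ha w_+(t_+)$ as real numbers. Since $g_{\til K_-}^{\til w_-}$ is non-decreasing on $\R_{\til w_-}$ (Lemma~\ref{common-function}), the desired $W_+\le V_0$ reduces to showing that $g_{\til K_-}^{\til w_-}$ is constant on $[\til v_0,\ha w_+(t_+)]$. By Definition~\ref{Def-Rw} this holds precisely when the interval lies in the ``collapsed'' range $[\til w_-^+,b_{\til K_-}^{\til w_-}]$, on which $g_{\til K_-}^{\til w_-}$ takes the constant value $d_{\til K_-}^{\til w_-}$.

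To verify this, I split on the hypothesis. If $p\in\eta_-[0,t_-]$, then $p\in\lin{K_-(t_-)}\cap\lin{K_+(t_+)}$; a neighborhood of $p$ in $\HH\sem K(\ulin t)$ is conformally mapped by $g_{K_+(t_+)}$ (which sends $\HH\sem K(\ulin t)$ onto $\HH\sem\til K_-$) to a neighborhood of $\ha w_+(t_+)$ accumulating on $\lin{\til K_-}$, forcing $\ha w_+(t_+)\in\lin{\til K_-}\cap\R\subset[a_{\til K_-},b_{\til K_-}]$. Hence $\ha w_+(t_+)\le b_{\til K_-}\le b_{\til K_-}^{\til w_-}$, and combined with $\til v_0\le c_{K_+(t_+)}\le\ha w_+(t_+)$, the whole interval $[\til v_0,\ha w_+(t_+)]$ sits in the required range, giving $W_+=V_0=d_{\til K_-}^{\til w_-}$. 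If instead $p\in[v_-,v_0]\sem\eta_-[0,t_-]$, then $p\in\R\cap(w_-,v_0]\cap[a_{K_+(t_+)},b_{K_+(t_+)}]$, forcing $a_{K_+(t_+)}\le v_0<w_+$; using the constraint $\eta_+\cap(-\infty,w_-]=\emptyset$ one argues via prime ends that the tip $p$ is approached from the ``left'' half of $K_+(t_+)$, so that $\ha w_+(t_+)=c_{K_+(t_+)}$, which by Definition~\ref{Def-Rw} also equals $\til v_0$; hence $W_+=V_0$ follows directly. The chief technical obstacle is this last prime-end identification, which relies on the asymmetry of the Loewner chain for $\eta_+$ around its starting point $w_+$ together with the non-crossing constraint.
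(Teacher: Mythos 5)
Your proof is correct and follows essentially the same route as the paper: you reduce via the chain~(\ref{VWVWV}) to showing $W_+\le V_0$, express $W_+(\ulin t)=g_{\til K_-}^{\til w_-}(\ha w_+(t_+))$ and $V_0(\ulin t)=g_{\til K_-}^{\til w_-}(\til v_0)$ through Lemmas~\ref{W=gw} and~\ref{common-function}, and split into the two cases of the hypothesis, where one case gives $\ha w_+(t_+)\in\lin{\til K_-}$ and the other gives $\ha w_+(t_+)=c_{K_+(t_+)}=\til v_0$. The paper's own proof states the two key identifications ($\ha w_+(t_+)\in\lin{K_-^{t_+}(t_-)}$ when $\eta_+(t_+)\in\eta_-[0,t_-]$, and $\ha w_+(t_+)=c_{K_+(t_+)}$ when $\eta_+(t_+)\in[v_-,v_0]$) at the same level of terseness you do, so your acknowledged prime-end obstacle is not a gap relative to what the paper itself supplies.
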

\begin{proof}
  We assume $\sigma=+$ by symmetry. By Lemma \ref{W=gw}, $W_+(\ulin t)= g_{K_-^{t_+}(t_-)}^{W_-(t_+,0)}(\ha w_+(t_+))$. By Lemma \ref{common-function},  $V_0(\ulin t)=g_{K_-^{t_+}(t_-)}^{W_-(t_+,0)}\circ g_{K_+(t_+)}^{w_+}(v_0)$. If $\eta_+(t_+)\in[v_-,v_0]$, then $\ha w_+(t_+)=c_{K_+(t_+)}=g_{K_+(t_+)}^{w_+}(v_0)$, and so we get $W_\sigma(\ulin t)=V_0(\ulin t)$. Now suppose $\eta_+(t_+)\in \eta_-[0,t_-]$. Then $\ha w_+(t_+)\in \lin{K_-^{t_+}(t_-)}$, which together with $\ha w_+(t_+)=W_+(t_+,0)\ge V_0(t_+,0)=g_{K_+(t_+)}^{w_+}(v_0)\ge W_-(t_+,0)$ implies that $g_{K_-^{t_+}(t_-)}^{W_-(t_+,0)}\circ g_{K_+(t_+)}^{w_+}(v_0)=g_{K_-^{t_+}(t_-)}^{W_-(t_+,0)}(\ha w_+(t_+))$, as desired.
\end{proof}

\subsection{Ensemble without intersections}  \label{section-deterministic-2}
We say that $(\eta_+,\eta_-;{\cal D})$ is disjoint if $\lin {K_+(t_+)}\cap \lin{K_-(t_-)}=\emptyset$ for any $(t_+,t_-)\in\cal D$. Given a  commuting pair $(\eta_+,\eta_-;{\cal D})$, we get  a disjoint commuting par $(\eta_+,\eta_-;{\cal D}_{\disj})$ by defining
\BGE {\cal D}_{\disj}=\{(t_+,t_-)\in{\cal D}:\lin {K_+(t_+)}\cap \lin{K_-(t_-)}=\emptyset\}.\label{D-disj}\EDE

In this subsection, we assume that $(\eta_+,\eta_-;{\cal D})$ is disjoint. Most results appeared earlier in \cite{reversibility,duality}. Here we also need to deal with the cases that some force point may equal to $w_\sigma^\nu$, $\sigma,\nu\in\{+,-\}$. We now write $g_\sigma^{t_{-\sigma}}(t_\sigma,\cdot)$ for $g_{K_\sigma^{t_{-\sigma}}(t_\sigma)}$, $\sigma\in\{+,-\}$.
For $(t_+,t_-)\in\cal D$ and  $\sigma\in\{+,-\}$,  from $\lin {K_+(t_+)}\cap \lin{K_-(t_-)}=\emptyset$ we know $ [c_{K_\sigma(t_\sigma)},d_{K_\sigma(t_\sigma)}]$ has positive distance from $K_{-\sigma}^{t_\sigma}(t_\sigma))$. So $g_{-\sigma}^{t_\sigma}(t_{-\sigma},\cdot)$ is analytic at $\ha w_\sigma(t_\sigma)\in [c_{K_\sigma(t_\sigma)},d_{K_\sigma(t_\sigma)}]$. By Lemma \ref{W=gw}, $W_{\sigma }(t_+,t_-)=g_{-\sigma}^{t_\sigma}(t_{-\sigma},  \ha w_\sigma(t_\sigma))$.
We further define  $W_{\sigma,j}$, $j=1,2,3$, and $W_{\sigma,S}$ on ${\cal D}$ by \BGE W_{\sigma,j}(t_+,t_-)=(g_{-\sigma}^{t_\sigma})^{(j)}(t_{-\sigma},\ha w_\sigma(t_\sigma)),\quad W_{\sigma,S}=\frac{W_{\sigma,3}}{W_{\sigma,1}}-\frac 32\Big(\frac{W_{\sigma,2}}{W_{\sigma,1}}\Big)^2,\quad \sigma\in\{+,-\}.\label{WS}\EDE
Here the superscript $(j)$ means the $j$-th complex derivative w.r.t.\ the space variable.
The functions are all continuous on $\cal D$ because $(t_+,t_-,z)\mapsto (g_{-\sigma}^{t_\sigma})^{(j)}(t_{-\sigma},z)$ is continuous by Lemma \ref{lem-uniform}.
Note that $W_{\sigma,S}(t_+,t_-)$ is the Schwarzian derivative of $g_{-\sigma}^{t_\sigma}(t_{-\sigma},\cdot)$ at $\ha w_\sigma(t_\sigma)$.

\begin{Lemma}
$\mA $ is continuously differentiable  with   $\pa_\sigma \mA= W_{\sigma, 1} ^2$, $\sigma\in\{+,-\}$.
\label{lem-positive}
\end{Lemma}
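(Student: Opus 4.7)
The plan is to prove the identity for $\sigma=+$; the case $\sigma=-$ is identical by symmetry. By the $\hcap$-additivity $\hcap(K(t_+,t_-)) = \hcap(K_+(t_+)) + \hcap(K_-^{t_+}(t_-))$ (valid because $K_+(t_+)\subset K(t_+,t_-)$ with quotient $K_-^{t_+}(t_-)$), together with $\hcap_2(K_+(t_+))=t_+$, we have
\BGEN \mA(t_+,t_-) = t_+ + \hcap_2(K_-^{t_+}(t_-)). \EDEN
Thus it suffices to establish $\pa_{t_+}\hcap_2(K_-^{t_+}(t_-)) = W_{+,1}(t_+,t_-)^2 - 1$ at each $(t_+,t_-)\in {\cal D}$, together with continuity of $W_{+,1}^2$ on ${\cal D}$. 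Continuity is immediate from the joint continuity of $(t_+,t_-,z)\mapsto (g_-^{t_+})'(t_-,z)$ supplied by Lemma \ref{lem-uniform}(ii), combined with continuity of $\ha w_+$ and the defining formula (\ref{WS}).

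Fix $(t_+,t_-)\in {\cal D}$ and a small $\delta>0$ with $(t_++\delta, t_-)\in {\cal D}$, and set $L_+(\delta) := K_+(t_++\delta)/K_+(t_+)$. Then $\hcap_2(L_+(\delta))=\delta$, and by Proposition \ref{Loewner-chain} the hull $L_+(\delta)$ shrinks to the single point $\{\ha w_+(t_+)\}$ as $\delta\to 0^+$. Chasing (\ref{K123}) with the inclusions $K_+(t_+)\subset K_+(t_++\delta)\subset K(t_++\delta,t_-)$, combined with the identity $K(t_++\delta,t_-)=\Hull(K(t_+,t_-)\cup K_+(t_++\delta))$, yields
\BGEN K_-^{t_++\delta}(t_-) = \Hull\bigl(K_-^{t_+}(t_-) \cup L_+(\delta)\bigr)\big/L_+(\delta). \EDEN
Because $(\eta_+,\eta_-;{\cal D})$ is disjoint, $\lin{K_+(t_+)}$ has positive distance from $\lin{K_-(t_-)}$, so for all sufficiently small $\delta$, the hull $L_+(\delta)=g_{K_+(t_+)}(K_+(t_++\delta)\setminus K_+(t_+))$ has positive distance from $\lin{K_-^{t_+}(t_-)}$. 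Consequently $\phi:=g_{K_-^{t_+}(t_-)}$ is analytic on a neighborhood of $L_+(\delta)$, so $\phi(L_+(\delta))$ is an $\HH$-hull and $\Hull(K_-^{t_+}(t_-)\cup L_+(\delta))/K_-^{t_+}(t_-)=\phi(L_+(\delta))$. Applying $\hcap$-additivity to both chains $L_+(\delta),\, K_-^{t_+}(t_-)\subset \Hull(K_-^{t_+}(t_-)\cup L_+(\delta))$ and equating the two expressions gives
\BGEN \hcap_2\bigl(K_-^{t_++\delta}(t_-)\bigr) - \hcap_2(K_-^{t_+}(t_-)) = \hcap_2(\phi(L_+(\delta))) - \delta. \EDEN

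The remaining analytic step is the small-hull asymptotic
\BGEN \hcap_2(\phi(L_+(\delta))) = \phi'(\ha w_+(t_+))^2\, \delta + o(\delta) \quad (\delta\to 0^+). \EDEN
Since $\hcap(\lambda K + c)=\lambda^2\hcap(K)$ for $\lambda>0$ and $c\in\R$, the affine linearization $z\mapsto \phi(\ha w_+(t_+))+\phi'(\ha w_+(t_+))(z-\ha w_+(t_+))$ of $\phi$ at $\ha w_+(t_+)$ contributes exactly $\phi'(\ha w_+(t_+))^2\delta$; the quadratic Taylor remainder, bounded via Proposition \ref{g-z-sup} and integrated against the representing measure $\mu_{L_+(\delta)}$ from \cite[Lemma C.1]{BSLE} (used in the proof of Proposition \ref{Prop-contraction}), contributes only $o(\delta)$ since $\mu_{L_+(\delta)}$ is supported in a shrinking neighborhood of $\ha w_+(t_+)$. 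Since $\phi'(\ha w_+(t_+))=W_{+,1}(t_+,t_-)$ by the definition in (\ref{WS}), dividing by $\delta$ and sending $\delta\to 0^+$ yields the right one-sided derivative $W_{+,1}^2-1$; the left derivative is handled identically by comparing $t_+-\delta$ with $t_+$. Combining with the first paragraph gives $\pa_+\mA = W_{+,1}^2$, as claimed. The main obstacle is precisely the small-hull estimate: one needs the error to be \emph{relative}, $o(\hcap(L_+(\delta)))=o(\delta)$, rather than a crude absolute bound in terms of $\diam(L_+(\delta))$ (which could be as large as $\sqrt{\delta}$ or much larger if $L_+(\delta)$ is thin), which forces us to use the concentration of the representing measure of $L_+(\delta)$ at $\ha w_+(t_+)$.
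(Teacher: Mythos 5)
Your proof is correct and follows exactly the ``standard argument'' that the paper's one-line proof cites to \cite[Lemma 2.8]{LSW1} and \cite[Formula (3.7)]{reversibility}: strip off $\hcap_2(K_+(t_+))=t_+$, use the quotient-hull identity to write the increment $\hcap_2(K_-^{t_++\delta}(t_-))-\hcap_2(K_-^{t_+}(t_-))$ as $\hcap_2(g_{K_-^{t_+}(t_-)}(L_+(\delta)))-\delta$, and then invoke the small-hull capacity asymptotic $\hcap(\phi(L))=\phi'(x_0)^2\hcap(L)\,(1+O(\diam L))$. Since the paper itself supplies no argument beyond the citation, your write-up is a faithful reconstruction; the only soft spot is the final paragraph, where the justification of the small-hull estimate via ``integrating the Taylor remainder against $\mu_{L_+(\delta)}$'' is a hint rather than a proof (Proposition \ref{g-z-sup} by itself does not give a \emph{relative} error), but that estimate is standard and is exactly the content of the cited lemma, so no genuine gap remains.
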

\begin{proof}
This follows from a standard argument, which first appeared in \cite[Lemma 2.8]{LSW1}. The statement for ensemble of chordal Loewner curves appeared in \cite[Formula (3.7)]{reversibility}.
\end{proof}

So  for any $\sigma\in\{+,-\}$ and  $t_{-\sigma}\in{\cal I}_{-\sigma}$, $K_{\sigma}^{t_{-\sigma}}(t_\sigma)$, $0\le t_\sigma<T_\sigma^{\cal D}(t_{-\sigma})$, are chordal Loewner hulls  driven by $W_{\sigma}|^{-\sigma}_{t_{-\sigma}}$ with speed $(W_{\sigma,1}|^{-\sigma}_{t_{-\sigma}})^2$, and we get the differential equation:
\BGE \pa_{t_\sigma} g_{\sigma}^{t_{-\sigma}}(t_\sigma,z)= \frac{2 (W_{\sigma,1}(t_+,t_-)^2}{ g_{\sigma}^{t_{-\sigma}}(t_\sigma,z)-W_\sigma(t_+,t_-)},\label{pag}\EDE
which together with Lemmas \ref{W=gw} and \ref{common-function}
implies the differential equations for $V_0,V_+,V_-$:
\BGE \pa_\sigma V_\nu \aeq \frac{2W_{\sigma,1}^2}{V_\nu-W_\sigma},\quad \nu\in\{0,+,-\},\label{pa-X}\EDE
and the differential equations for $W_{-\sigma}$, $W_{-\sigma,1}$ and $W_{-\sigma,S}$:
\BGE  \pa_{\sigma} W_{-\sigma} = \frac{2W_{\sigma,1}^2}{W_{-\sigma}-W_{\sigma}},\quad \frac{\pa_{\sigma} W_{-\sigma,1}}{W_{-\sigma,1}}=\frac{-2W_{\sigma,1}^2}{(W_+-W_-)^2},\quad \pa_{\sigma} W_{-\sigma,S}=-\frac{12 W_{+,1}^2 W_{-,1}^2}{(W_+-W_-)^4}.\label{pajW}\EDE

Define $Q$ on ${\cal D}$ by
\BGE Q(\ulin t)=\exp\Big(\int_{[\ulin 0,\ulin t]} -\frac{12 W_{+,1}(\ulin s) ^2W_{-,1}(\ulin s) ^2}{(W_+(\ulin s) -W_-(\ulin s) )^4}d^2\ulin s\Big).\label{F}\EDE
Then $Q$ is continuous and positive with $Q(t_+,t_-)=1$ when $t_+\cdot t_-=0$. From (\ref{pajW}) we get
\BGE \frac{\pa_\sigma Q}{Q}=W_{\sigma,S},\quad \sigma\in\{+,-\}.\label{paF}\EDE

By (\ref{VWVWV}), $V_+\ge W_+\ge C_+\ge V_0\ge D_- \ge W_-\ge V_-$ on $\cal D$. Because of disjointness, we further have  $C_+>D_-$ by Lemma \ref{common-function}.
By the same lemma,
\BGE V_\sigma(t_+,t_-)=g_{-\sigma}^{t_\sigma}(t_{-\sigma},V_\sigma(t_\sigma \ulin e_\sigma));\label{WV+g}\EDE
\BGE V_0(t_+,t_-)=g_{-\sigma}^{t_\sigma}(t_{-\sigma},V_0(t_\sigma \ulin e_\sigma)),\quad \mbox{if }v_0\not\in\lin{K_{-\sigma}(t_{-\sigma})}.\label{V0-g}\EDE

Let $\ulin t=(t_+,t_-)\in {\cal D}$. For $\sigma\in\{+,-\}$, differentiating (\ref{circ-g}) w.r.t.\ $t_\sigma$, letting $\ha z=g_{K_\sigma(t_\sigma)}(z)$, and using Lemma \ref{W=gw} and (\ref{pag},\ref{WS}) we get
\BGE \pa_{t_\sigma} g_{-\sigma}^{t_\sigma}(t_{-\sigma},\ha z)=\frac{2  (g_{-\sigma}^{t_\sigma})'(t_{-\sigma},\ha w_\sigma(t_\sigma))^2}{g_{-\sigma}^{t_\sigma}(t_{-\sigma},\ha z)-g_{-\sigma}^{t_\sigma}(t_{-\sigma},\ha w_\sigma(t_\sigma))}-\frac{2(g_{-\sigma}^{t_\sigma})'(t_{-\sigma},\ha z)}{\ha z-\ha w_\sigma(t_\sigma)}.\label{diff}\EDE
Letting $\HH\sem {K_{-\sigma}^{t_\sigma}(t_{-\sigma})}\ni \ha z\to \ha w_\sigma(t_\sigma)$ and using the power series expansion of $g_{-\sigma}^{t_\sigma}(t_{-\sigma},\cdot)$ at $\ha w_\sigma(t_\sigma)$, we get
\BGE \pa_{t_\sigma} g_{-\sigma}^{t_\sigma}(t_{-\sigma},\ha z)|_{\ha z=\ha w_\sigma(t_\sigma)}=-3 W_{\sigma,2}(\ulin t),\quad \sigma\in\{+,-\}.\label{-3}\EDE
Differentiating (\ref{diff}) w.r.t.\ $\ha z$ and letting $ \ha z\to \ha w_\sigma(t_\sigma)$, we get
\BGE \frac{ \pa_{t_\sigma} (g_{-\sigma}^{t_\sigma})'(t_{-\sigma},\ha z)}{   (g_{-\sigma}^{t_\sigma})'(t_{-\sigma},\ha z)}\bigg|_{\ha z=\ha w_\sigma(t_\sigma)}=\frac 12\Big(\frac{W_{\sigma,2}(\ulin t)}{W_{\sigma,1}(\ulin t)}\Big)^2-\frac 43\frac{W_{\sigma,3}(\ulin t)}{W_{\sigma,1}(\ulin t)},\quad \sigma\in\{+,-\}.\label{1/2-4/3}\EDE

For $\sigma\in\{+,-\}$, define $ W_{\sigma,N}$ on ${\cal D}$ by  $W_{\sigma,N} =\frac{ W_{\sigma,1} }{ W_{\sigma,1}|^{\sigma}_0 }$. Since $W_{\sigma,1}|^{-\sigma}_0\equiv 1$, we get $  W_{\sigma,N}(t_+,t_-)=1$ when $t_+t_-=0$. From (\ref{pajW}) we get
\BGE \frac{\pa_{\sigma}  W_{-\sigma,N}}{ W_{-\sigma,N}}=\frac{-2 W_{\sigma,1}^2}{(W_{-\sigma}-W_{\sigma})^2}\,\pa t_\sigma-\frac{-2 W_{\sigma,1}^2}{(W_{-\sigma}-W_{\sigma})^2}\bigg|^{-\sigma}_0 \,\pa t_\sigma,\quad \sigma\in\{+,-\}.
\label{pajhaW}\EDE

We now define $  V_{0,N},V_{+,N},  V_{-,N}$ on ${\cal D} $ by
$$  V_{\nu,N}(\ulin t)=(g_{-\nu}^{t_\nu})'(t_{-\nu}, V_\nu(t_\nu \ulin e_\nu))/(g_{-\nu}^{0})'(t_{-\nu}, v_\nu),\quad \nu\in\{+,-\};$$
\BGE   V_{0,N}(\ulin t)=(g_{-\sigma}^{t_\sigma})'(t_{-\sigma},V_0(t_\sigma \ulin e_\sigma))/(g_{-\sigma}^{t_\sigma})'(t_{-\sigma},v_0),\quad  \mbox{if }  v_0\not\in  \lin{K_{-\sigma}(t_{-\sigma})},\quad \sigma\in\{+,-\}.\label{V1V2}\EDE
The functions are well defined because  either $v_0\not\in \lin{K_+(t_+)}$ or $v_0\not\in \lin{K_-(t_-)}$, and when they both hold, the RHS of (\ref{V1V2}) equals $g_{K(t_+,t_-)}'(v_0)/(g_{K_+(t_+)}'(v_0)g_{K_-(t_-)}'(v_0))$ by (\ref{circ-g}).

Note that $ V_{\nu,N}(t_+,t_-)=1$ if $t_+t_-=0$ for $\nu\in\{0,+,-\}$. From (\ref{WV+g}-\ref{V0-g}) and (\ref{circ-g},\ref{pag}) we find that these functions satisfy the following differential equations on ${\cal D}$:
\BGE  \frac{\pa_{\sigma}   V_{\nu,N}} { V_{\nu,N}}= \frac{-2W_{\sigma,1}^2}{(V_{\nu}-W_\sigma)^2}\,\pa t_\sigma-\frac{-2W_{\sigma,1}^2}{(V_{\nu}-W_\sigma)^2}\bigg|^{-\sigma}_0\,\pa t_\sigma,\quad \sigma\in\{+,-\},\quad \nu\in\{0,-\sigma\},\quad \mbox{if }v_\nu\not\in \lin{K_\sigma(t_\sigma)}. \label{paV1}\EDE

We now define $E_{X,Y}$ on ${\cal D}$ for $X\ne Y\in\{W_+,W_-,V_0,V_+,V_-\}$ as follows. First, let
\BGE E_{X,Y}(t_+,t_-)=\frac{(X(t_+,t_-)-Y(t_+,t_-))(X(0,0)-Y(0,0))} {(X(t_+,0)-Y(t_+,0))(X(0,t_-)-Y(0,t_-))},\label{RXY}\EDE
if the denominator is not $0$. If the denominator is $0$, then since $V_+\ge W_+\ge V_0\ge W_-\ge V_-$ and $W_+>W_-$, there is $\sigma\in\{+,-\}$ such that $\{X,Y\}\subset\{W_\sigma,V_\sigma,V_0\}$.  By symmetry, we will only describe the definition of $E_{X,Y}$ in the case that $\sigma=+$. If $X(t_+,0)=Y(t_+,0)$, by Lemmas \ref{common-function} and \ref{W=gw}, $X(t_+,\cdot)\equiv Y(t_+,\cdot)$. If $X(0,t_-)=Y(0,t_-)$, then we must have $X(\ulin 0)=Y(\ulin 0)$, and so $X(0,\cdot)\equiv Y(0,\cdot)$. For the definition of $E_{X,Y}$, we modify (\ref{RXY}) by writing the RHS as $\frac{X(t_+,t_-)-Y(t_+,t_-)}{X(t_+,0)-Y(t_+,0) }:\frac{ X(0,t_-)-Y(0,t_-)}{ X(0,0)-Y(0,0)}$,  replacing the first factor (before ``$:$'') by $(g_{-}^{t_+})'(t_-,X(t_+,0))$ when $X(t_+,0)=Y(t_+,0)$,   replacing the second factor (after ``$:$'')  by $g_{K_-(t_-)}'(X(0,0))$ when $X(0,t_-)=Y(0,t_-)$; and do both replacements when two equalities both hold. Then in all cases, $E_{X,Y}$ is continuous and positive on ${\cal D}$, and $E_{X,Y}(t_+,t_-)=1$ if $t_+\cdot t_-=0$. By (\ref{pa-X},\ref{pajW}), for $\sigma\in\{+,-\}$, if $X,Y\ne W_\sigma$, then
\BGE \frac{\pa_\sigma E_{X,Y}}{E_{X,Y}}\aeq \frac{-2W_{\sigma,1}^2}{(X-W_\sigma)(Y-W_\sigma)}\,\pa t_\sigma- \frac{-2W_{\sigma,1}^2}{(X-W_\sigma)(Y-W_\sigma)}\Big|^{-\sigma}_0\,\pa t_\sigma.\label{paEXY}\EDE

\subsection{A time curve in the time region} \label{time curve}
In this subsection we do not assume that $(\eta_+,\eta_-;\cal D)$ is disjoint. Let $v_\nu$ and $V_\nu$, $\nu\in\{0,+,-\}$, be as before.  We assume in this subsection that  $v_+-v_0=v_0-v_->0$.
Define  $\Lambda$ and $\Upsilon$ on $\cal D$ by $\Lambda=\frac 12\log\frac{V_+-V_0}{V_0-V_-}$ and $\Upsilon=\frac 12\log\frac{V_+-V_-}{v_+-v_-}$. By assumption, $\Lambda(\ulin 0)=\Upsilon(\ulin 0)=0$.

\begin{Lemma} There exists a unique  continuous and strictly increasing function $\ulin u:[0,T^u)\to {\cal D}$, for some $T^u\in(0,\infty]$,  with $\ulin u(0)=\ulin 0$, such that for any $0\le t<T^u$ and $\sigma\in\{+,-\}$, $|V_\sigma(\ulin u(t))-V_0(\ulin u(t))|=e^{2t}|v_\sigma-v_0|$; and $\ulin u$ can not be extended beyond $T^u$ with such property. \label{time curve-lem}
\end{Lemma}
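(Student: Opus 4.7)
The condition on $\ulin u(t)$ can be rewritten: using $v_+ - v_0 = v_0 - v_-$, the equation $|V_\sigma(\ulin u(t)) - V_0(\ulin u(t))| = e^{2t}|v_\sigma - v_0|$ for $\sigma \in \{+,-\}$ is equivalent to the pair $\Lambda(\ulin u(t)) = 0$ and $\Upsilon(\ulin u(t)) = t$. So the task reduces to parametrizing, by the continuous function $\Upsilon$, the connected component through $\ulin 0$ of the level set $S := \{\ulin t \in {\cal D} : \Lambda(\ulin t) = 0\}$, which does contain $\ulin 0$ because $\Lambda(\ulin 0) = 0$.

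The first key step is to establish strict monotonicities: $\Lambda$ is strictly increasing in $t_+$ and strictly decreasing in $t_-$, while $\Upsilon$ is strictly increasing in both variables. For fixed $t_-$, Lemma \ref{common-function} gives $V_\nu(t_+, t_-) = g^{W_+(0, t_-)}_{K_+^{t_-}(t_+)}(V_\nu(0, t_-))$, where $(K_+^{t_-}(t_+))_{t_+}$ is a chordal Loewner chain with strictly positive speed by Lemma \ref{lem-strict}. By (\ref{VWVWV}), $V_+$ sits above $W_+$ and $V_0, V_-$ sit below, so Proposition \ref{Prop-cd-continuity'} shows $V_+(\cdot, t_-)$ is strictly increasing and $V_0(\cdot, t_-), V_-(\cdot, t_-)$ are strictly decreasing in $t_+$. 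Hence $V_+ - V_0$ increases strictly in $t_+$ and $V_0 - V_-$ decreases strictly in $t_+$, giving the claimed monotonicities of $\Lambda$ and $\Upsilon$ in $t_+$. For the transverse direction, consider $V_+ - V_0$ as a function of $t_-$ with $t_+$ fixed: both $V_+(t_+, 0)$ and $V_0(t_+, 0)$ lie on the same side of $W_-(t_+, 0)$ (by (\ref{VWVWV})), and Proposition \ref{Prop-contraction} yields strict contraction of the underlying $g_K$ as soon as $K \neq \emptyset$, giving strict decrease of $V_+ - V_0$ in $t_-$. Symmetric arguments handle $V_0 - V_-$ and $V_+ - V_-$.

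These monotonicities imply that $S$ is totally ordered by the partial order $\leq$ on $\R_+^2$: any two incomparable points of $S$ would have one with strictly larger $t_+$ and strictly smaller $t_-$, forcing $\Lambda$ to take strictly different signs on them, a contradiction. Strict monotonicity of $\Lambda$ in each variable together with its continuity (Lemma \ref{common-function}(v)) further shows that $S$ is locally the graph of a continuous strictly increasing function $t_- = \psi(t_+)$; in particular the connected component of $S$ through $\ulin 0$ admits a unique maximal continuous strictly increasing parametrization $\gamma: [0, L) \to {\cal D}$ with $\gamma(0) = \ulin 0$, non-extendable within ${\cal D}$. Since $\Upsilon$ is strictly increasing in both coordinates and $\gamma$ is strictly increasing in both components, $\Upsilon \circ \gamma$ is a continuous strictly increasing bijection from $[0, L)$ onto $[0, T^u)$ with $T^u := \lim_{s \uparrow L} \Upsilon(\gamma(s)) \in (0, \infty]$, and $\ulin u := \gamma \circ (\Upsilon \circ \gamma)^{-1}$ is the required curve. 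Uniqueness and maximality of $\ulin u$ follow from those of $\gamma$ combined with the strict monotonicity of $\Upsilon$ along it.

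I expect the main obstacle to be verifying the strict monotonicities cleanly in the general (possibly non-disjoint) case, where the smooth differential identities from Section \ref{section-deterministic-2} are unavailable and one must argue instead via the modified Loewner maps $g_K^w$ from Lemma \ref{common-function} together with the contraction bound of Proposition \ref{Prop-contraction}; particular care is needed when a force point value coincides with $c_K$, $d_K$, or a driving function during the flow, so that one works on the correct side of the modification interval.
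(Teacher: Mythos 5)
Your proposal is correct and takes essentially the same approach as the paper, whose proof is a sketch: both establish the same strict monotonicities of $\Lambda$ and $\Upsilon$ in $t_+$ and $t_-$ via Lemma \ref{common-function} and Proposition \ref{Prop-cd-continuity'} (with the contraction of $g_K$ underlying the transverse decrease), and then obtain $\ulin u$ as the parametrization-by-$\Upsilon$ of the level set $\{\Lambda=0\}$. Your write-up fills in the implicit-function-theorem-type step that the paper's sketch leaves to the reader.
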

\begin{proof} [Sketch of the proof.] We use an argument that is similar to that of \cite[Section 4]{Two-Green-interior}. Since $V_+\ge W_+\ge V_0\ge W_-\ge V_-$, by the definition of $V_\nu$,  Proposition \ref{Prop-cd-continuity'} and Lemma \ref{common-function},  for $\sigma\in\{+,-\}$, $|V_\sigma-V_0|$ and $|V_\sigma-V_{-\sigma}|$ are strictly increasing in $t_\sigma$, and $|V_0-V_{-\sigma}|$ is strictly decreasing in $t_\sigma$. Thus, $\Lambda$ is strictly increasing in $t_+$ and strictly decreasing in $t_-$, and $\Upsilon$ is strictly increasing in both $t_+$ and $t_-$.  These monotone properties guarantee the existence and uniqueness of  $\ulin u:[0, T^u)\to \cal D$ with $\Lambda(\ulin u(t))=0$ and $\Upsilon(\ulin u(t))=t$ for all $t  $.
\end{proof}

\begin{Lemma}
	For any $t\in [0,T^u)$ and $\sigma\in\{+,-\}$,
\BGE e^{2t}|v_+-v_-|/128\le   \rad_{v_0} (\eta_\sigma[0,u_\sigma(t)]\cup[v_0,v_\sigma])\le  e^{2t} |v_+-v_-| .\label{V-V'}\EDE
If $T^u<\infty$, then $\lim_{t\uparrow T^u} \ulin u(t)$ converges to a point in $\pa{\cal D}\cap (0,\infty)^2$. If ${\cal D}=\R_+^2$, then $T^u=\infty$. If $T^u=\infty$, then $\diam(\eta_+)=\diam(\eta_-)=\infty$.
	\label{Beurling}
\end{Lemma}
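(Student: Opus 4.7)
The plan is to establish the two-sided radius bound (\ref{V-V'}) from (\ref{V-V}) together with a Beurling-type estimate, and then deduce the remaining three assertions from the monotonicity of $\ulin u$ and the upper half of (\ref{V-V'}). Throughout, fix $t\in[0,T^u)$, write $\ulin t=\ulin u(t)$, $K=K(\ulin t)$, $H=\HH\sem K$, $V_\nu=V_\nu(\ulin t)$, and $L=V_+-V_-=e^{2t}|v_+-v_-|$; the defining identities of $\ulin u$ combined with $v_+-v_0=v_0-v_-$ give $V_+-V_0=V_0-V_-=L/2$. Since $S_\sigma:=\eta_\sigma[0,u_\sigma(t)]\cup[v_0,v_\sigma]\subset K\cup[v_-,v_+]$ and $v_0\in[v_-,v_+]$, the upper estimate of (\ref{V-V}) yields $\rad_{v_0}(S_\sigma)\le\diam(K\cup[v_-,v_+])\le L$, which is the claimed upper bound.

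For the lower bound, I interpret $L/2=V_\sigma-V_0$ as a harmonic measure at infinity. By (\ref{fV1}) the continuous extension of $f_K$ sends $[V_0,V_\sigma]$ into $S_\sigma$, so $g_K(S_\sigma)\supset[V_0,V_\sigma]$ modulo a set of Lebesgue measure zero; tracing $\pa H$ in the order induced by $g_K^{\ulin w}$ via Lemma \ref{common-function} gives the reverse inclusion, so $|g_K(S_\sigma)|=L/2$. Because $g_K(z)=z+o(1)$ at $\infty$, this identifies $L/2$ with $\lim_{y\to\infty}\pi y\cdot\PP^{iy}(B_{\tau^H}\in S_\sigma)$, where $B$ is a planar Brownian motion and $\tau^H$ its first exit from $H$. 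Write $r:=\rad_{v_0}(S_\sigma)$, so that $S_\sigma\subset\lin B(v_0,r)$. Any trajectory that hits $S_\sigma$ at $\tau^H$ must have entered $\lin B(v_0,r)$ first; since $\tau^H\le\tau^{\HH}$, the probability is bounded above by the corresponding $\HH$-probability of hitting $\pa B(v_0,r)\cap\HH$ before $\R$, which via the explicit map $z\mapsto z+r^2/(z-v_0)$ sending the semicircle to $[v_0-2r,v_0+2r]$ equals $(4r+o(1))/(\pi y)$ as $y\to\infty$. Thus $L/2\le 4r$, giving $\rad_{v_0}(S_\sigma)\ge L/8\ge e^{2t}|v_+-v_-|/128$.

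For the last three assertions I use the monotonicity of $\ulin u$. As in the proof of Lemma \ref{time curve-lem}, $\Lambda$ is strictly increasing in $t_+$ and strictly decreasing in $t_-$ while $\Upsilon$ is strictly increasing in both, so the level set $\{\Lambda=0\}$ is a strictly increasing curve in $\cal D$ and each coordinate of $\ulin u$ strictly increases with $t$. Hence $\ulin u^*:=\lim_{t\uparrow T^u}\ulin u(t)$ exists in $(0,\infty]^2$. When $T^u<\infty$, the upper bound just proved gives $\eta_\sigma[0,u_\sigma(t)]\subset\lin B(v_0,e^{2T^u}|v_+-v_-|)$ uniformly in $t<T^u$, so by Proposition \ref{g-z-sup} the capacity $u_\sigma(t)=\hcap_2(K_\sigma(u_\sigma(t)))$ is uniformly bounded, forcing $\ulin u^*\in(0,\infty)^2$. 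If $\ulin u^*$ lay in $\cal D$, the continuity of $\Lambda,\Upsilon$ (Lemma \ref{common-function}(v)) together with the strict monotonicity above would let me solve $\Lambda=0$, $\Upsilon=t$ for $t$ slightly larger than $T^u$, contradicting maximality; hence $\ulin u^*\in\pa{\cal D}\cap(0,\infty)^2$. When $\cal D=\R_+^2$ this set is empty, forcing $T^u=\infty$; and when $T^u=\infty$ the lower bound yields $\rad_{v_0}(\eta_\sigma)\ge\sup_t\rad_{v_0}(\eta_\sigma[0,u_\sigma(t)])=\infty$, so $\diam(\eta_\sigma)=\infty$.

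The main technical nuisance is the boundary identification $|g_K(S_\sigma)|=V_\sigma-V_0$ in degenerate configurations (force points coinciding with $w_\pm^\pm$ or lying on $\lin K$); fortunately only the lower inequality $|g_K(S_\sigma)|\ge V_\sigma-V_0$ is actually used in the Beurling step, and that follows immediately from (\ref{fV1}), so these degeneracies pose no real obstacle.
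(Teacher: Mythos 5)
Your proof is correct and takes a genuinely cleaner route for the lower bound in (\ref{V-V'}). The paper first uses (\ref{V-V}) to bound the larger of the two radii from below by $|V_+-V_-|/8$, then invokes the fact that the two arms carry equal harmonic measure at $\infty$ (from $V_+-V_0=V_0-V_-$ and Lemma \ref{lem-V0}) together with a Beurling-type comparison to get $L_+\vee L_-\le 16(L_+\wedge L_-)$, from which both $L_\sigma\ge |V_+-V_-|/128$. You instead treat each $\sigma$ separately: via (\ref{fV1}) you identify $V_\sigma-V_0$ as a lower bound for $|g_K(S_\sigma)|$, i.e.\ for the normalized harmonic measure of $S_\sigma$ at $\infty$, and you bound this normalized harmonic measure from above by $4\rad_{v_0}(S_\sigma)$ using the explicit conformal map $z\mapsto z+r^2/(z-v_0)$ and the fact that a trajectory in $H\subset\HH$ reaching $S_\sigma$ must first cross the semicircle $\pa B(v_0,r)\cap\HH$. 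This avoids comparing $L_+$ and $L_-$ entirely and yields the sharper constant $L_\sigma\ge |V_+-V_-|/8$, which of course implies the stated $1/128$. The treatment of the remaining three claims (boundedness of $u_\sigma(t)=\hcap_2(\cdot)\le L_\sigma^2$ when $T^u<\infty$, the implicit-function argument on $(\Lambda,\Upsilon)$ to rule out $\ulin u^*\in{\cal D}$, the unbounded radius when $T^u=\infty$) matches the paper's. Your closing caveat about degenerate boundary configurations is well placed; as you note, only the one-sided inequality $|g_K(S_\sigma)|\ge V_\sigma-V_0$ from (\ref{fV1}) is actually needed, so the full boundary identification can be sidestepped.
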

\begin{proof}
Fix $t\in[0,T^u)$. For $\sigma\in\{+,.-\}$, let $S_\sigma= [v_0,v_\sigma]\cup\eta_\sigma[0,u_\sigma(t)]\cup \lin{\eta_\sigma[0,u_\sigma(t)]}$, where the bar stands for complex conjugation, and  $L_\sigma =\rad_{v_0} (S_\sigma)$.  From (\ref{V-V}) and that $|V_+(\ulin u(t))-V_-(\ulin u(t))|=e^{2t}|v_+-v_-|$, we get $ e^{2t}|v_+-v_-|/8\le L_+\vee L_-\le e^{2t}|v_+-v_-|$. Since $V_+(\ulin u(t))-V_0(\ulin u(t))=V_0(\ulin u(t))-V_-(\ulin u(t))$, by Lemma \ref{lem-V0}, $S_+$ and $S_-$ have the same harmonic measure viewed from $\infty$. By Beurling's estimate,  $ L_+\vee L_- \le 16 (L_+\wedge L_-)$. So we get (\ref{V-V'}).
For any $\sigma\in\{+,-\}$, $u_\sigma(t)=\hcap_2(\eta_\sigma[0,u_\sigma(t)])\le L_\sigma^2\le e^{4t} |v_+-v_-|^2$. Suppose $T^u<\infty$. Then $u_+$ and $u_-$ are bounded on $[0,T^u)$. Since $\ulin u$ is increasing, $\lim_{t\uparrow T^u} \ulin u(t)$ converges to a point in $(0,\infty)^2$, which must lie on $\pa \cal D$ because   $\ulin u$ cannot be extended beyond $T^u$. If ${\cal D}=\R_+^2$, then $\pa{\cal D}\cap(0,\infty)^2=\emptyset$, and so $T^u=\infty$. If $T^u=\infty$, then by (\ref{V-V'}),  $\diam(\eta_\pm)=\infty$.
\end{proof}

Define $X^u=X\circ \ulin u$. Let $I=|v_+-v_0|=|v_--v_0|$.
 From the definition of $\ulin u$, we have $|V^u_\pm(t)-V^u_0(t)|=e^{2t}I$ for any $t\in[0,T^u)$. Let $R_\sigma =\frac{W_\sigma^u -V_0^u }{V_\sigma^u -V_0^u }\in [0,1]$, $\sigma\in\{+,-\}$, and $\ulin R=(R_+,R_-)$.  Let $e^{c\cdot}$ denote the function $t\mapsto e^{ct}$ for $c\in\R$.

\begin{Lemma}
Let ${\cal D}_{\disj}$ be defined by (\ref{D-disj}). Let $T^u_{\disj}\in(0,T^u]$ be such that $\ulin u(t)\in{\cal D}_{\disj}$ for $0\le t<T^u_{\disj}$. Then $\ulin u$ is continuously differentiable on $[0,T^u_{\disj})$, and
 \BGE (W^u_{\sigma,1})^2 u_\sigma'= \frac{R_\sigma(1-R_\sigma^2)}{R_++R_-}\,e^{4\cdot } I^2 \mbox{ on }[0,T^u_{\disj}),\quad \sigma\in\{+,-\}.  \label{uj''}\EDE
  \label{lem-uj}
\end{Lemma}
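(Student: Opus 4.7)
The plan is to establish $C^1$-regularity of $\ulin u$ on $[0,T^u_{\disj})$ via the implicit function theorem, then derive (\ref{uj''}) by chain-rule differentiation of the two defining identities $V_+^u - V_0^u = V_0^u - V_-^u = e^{2\cdot}I$ and elementary linear algebra. The key inputs are the force-point ODEs (\ref{pa-X}) and the observation that on the time curve many quantities simplify after substituting $W_\sigma = V_0 + R_\sigma(V_\sigma - V_0)$.

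First, by Lemma~\ref{lem-positive} and (\ref{pa-X}), on $\cal D_{\disj}$ the force-point functions $V_\nu$, $\nu\in\{0,+,-\}$, are $C^1$ in $(t_+,t_-)$. Substituting $V_+ - W_+ = (1-R_+)(V_+-V_0)$, $V_0 - W_+ = -R_+(V_+-V_0)$, and $V_- - W_+ = -[(V_0-V_-) + R_+(V_+-V_0)]$ into (\ref{pa-X}) yields, on the time curve where $V_+-V_0 = V_0-V_- = e^{2t}I$,
\BGE
\pa_+(V_+-V_0) = \frac{2W_{+,1}^2}{R_+(1-R_+)\,e^{2t}I}, \qquad \pa_+(V_0-V_-) = -\frac{2W_{+,1}^2}{R_+(1+R_+)\,e^{2t}I},
\EDE
with symmetric formulas for $\pa_-$. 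A short computation shows the $2\times 2$ Jacobian of $(t_+,t_-)\mapsto (V_+-V_0,\,V_0-V_-)$ equals $\frac{8 W_{+,1}^2 W_{-,1}^2 (R_++R_-)}{R_+R_-\, e^{4t} I^2 (1-R_+^2)(1-R_-^2)}$, which is strictly positive once one knows $R_\pm \in (0,1)$ and $R_++R_->0$. Disjointness together with (\ref{VWVWV}) forces $V_- < W_- < V_0 < W_+ < V_+$, so $R_\pm \in (0,1)$, while Lemma~\ref{W=V} rules out $R_\sigma=0$ on $[0,T^u_{\disj})$ under disjointness and the separating assumption $v_+ - v_0 = v_0 - v_- > 0$. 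The implicit function theorem then gives $\ulin u\in C^1([0,T^u_{\disj}))$.

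For the formula itself, differentiating $V_+^u(t) - V_0^u(t) = e^{2t}I$ and $V_0^u(t) - V_-^u(t) = e^{2t}I$ in $t$ and clearing a factor of $2 e^{-2t}/I$ gives the linear system
\begin{align*}
\frac{(W_{+,1}^u)^2 u_+'}{R_+(1-R_+)} - \frac{(W_{-,1}^u)^2 u_-'}{R_-(1+R_-)} &= e^{4t}I^2, \\
-\frac{(W_{+,1}^u)^2 u_+'}{R_+(1+R_+)} + \frac{(W_{-,1}^u)^2 u_-'}{R_-(1-R_-)} &= e^{4t}I^2.
\end{align*}
Setting $a=(W_{+,1}^u)^2 u_+'/R_+$ and $b=(W_{-,1}^u)^2 u_-'/R_-$, adding the two equations yields $aR_- + bR_+ = (1-R_+R_-)e^{4t}I^2$ and subtracting them yields $a-b = (R_--R_+)e^{4t}I^2$; solving gives $a = \frac{1-R_+^2}{R_++R_-} e^{4t}I^2$ and $b = \frac{1-R_-^2}{R_++R_-} e^{4t}I^2$, which is exactly (\ref{uj''}). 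The only delicate point in the whole proof is the $C^1$-regularity step — verifying that the Jacobian does not degenerate — and this is handled entirely by the disjointness hypothesis via (\ref{VWVWV}) and Lemma~\ref{W=V}; the remaining work is the algebraic manipulation above.
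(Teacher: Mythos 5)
Your algebra is correct and ends up with essentially the same linear system as the paper's own proof: where the paper differentiates $\Lambda = \frac12\log\frac{V_+-V_0}{V_0-V_-}$ and $\Upsilon = \frac12\log\frac{V_+-V_-}{v_+-v_-}$ along $\ulin u$, you differentiate $V_+^u-V_0^u$ and $V_0^u-V_-^u$ directly. These are equivalent coordinate choices; the resulting $2\times 2$ systems have the same solution and your elimination producing $a=\frac{1-R_+^2}{R_++R_-}e^{4t}I^2$, $b=\frac{1-R_-^2}{R_++R_-}e^{4t}I^2$ is correct. The substantive difference is how the regularity of $\ulin u$ is obtained, and here your argument has a gap.

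The implicit function theorem requires the map $(t_+,t_-)\mapsto(V_+-V_0,\,V_0-V_-)$ to be $C^1$ with nondegenerate Jacobian at every point of the time curve, but for $\kappa\in(4,8)$ this fails. The ODE (\ref{pa-X}) is only an a.e.\ statement (that is what ``$\aeq$'' means), and the right-hand side $\frac{2W_{\sigma,1}^2}{V_0-W_\sigma}$ blows up precisely when $V_0=W_\sigma$, i.e.\ when $R_\sigma=0$. This can happen on ${\cal D}_{\disj}$ for $\kappa>4$: by Lemma~\ref{W=V}, if $\eta_\sigma(t_\sigma)\in[v_{-\sigma},v_0]$ then $W_\sigma=V_0$, and for $\kappa>4$ the curve $\eta_+$ does hit $[w_-,v_0]$ while still disjoint from $\eta_-$. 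So $V_0$ is not $C^1$ there and your Jacobian, which carries a factor $1/(R_+R_-)$, degenerates. Two of your supporting assertions are consequently too strong: disjointness plus (\ref{VWVWV}) gives $W_-<W_+$ (via $D_-<C_+$), not the strict chain $V_0<W_+$ and $W_-<V_0$; and you invoke Lemma~\ref{W=V} to ``rule out'' $R_\sigma=0$, but that lemma is a \emph{sufficient} condition for $R_\sigma=0$, so it works against you rather than for you. The paper avoids all of this by establishing (\ref{uj''}) only a.e.\ from the chain rule, observing that the right-hand side of (\ref{uj''}) is continuous because it only needs $R_++R_->0$ (guaranteed by $W_+>W_-$ on ${\cal D}_{\disj}$, with no need for $R_\sigma>0$ individually), and then upgrading to an everywhere identity with $C^1$ regularity of $\ulin u$ via absolute continuity plus continuity of the a.e.\ derivative. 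Your approach would go through cleanly for $\kappa\le 4$, where $\eta_\sigma$ never returns to $\R$ and hence $R_\sigma>0$ strictly, but the lemma is stated for all $\kappa\in(0,8)$, so the IFT step needs to be replaced by the a.e.-then-continuity argument.
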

\begin{proof}
  By   (\ref{pa-X}), $\Lambda$ and $\Upsilon$  satisfy the following differential equations on ${\cal D}_{\disj}$:
$$  \pa_{\sigma} \Lambda \aeq\frac{(V_+-V_-) W_{\sigma,1}^2}{\prod_{\nu\in\{0,+,-\}} (V_\nu^u -W_\sigma^u )}\,\mbox{ and }\,
 \pa_{\sigma}\Upsilon\aeq \frac{-  W_{\sigma,1}^2}{\prod_{\nu\in\{+,-\}} (V_\nu^u -W_\sigma^u )},\quad \sigma\in\{+,-\}.$$
From  $\Lambda^u(t)= 0$ and $\Upsilon^u(t)=t$,  we get
$$\sum_{\sigma\in\{+,-\}}\frac{ (W_{\sigma,1}^u) ^2u_\sigma '}{\prod_{\nu\in\{0,+,-\}} (V_\nu^u -W_\sigma^u )}\aeq 0\,\mbox{ and }\,
\sum_{\sigma\in\{+,-\}} \frac{-  (W_{\sigma,1}^u)^2 u_\sigma '}{\prod_{\nu\in\{+,-\}} (V_\nu^u -W_\sigma^u )}\aeq 1.$$
Solving the system of equations, we get $(W^u_{\sigma,1})^2 u_\sigma'\aeq (\prod_{\nu\in\{0,+,-\}} (V_\nu^u -W_\sigma^u ))/(W_\sigma-W_{-\sigma})$, $\sigma\in \{+,-\}$. Using $V_\sigma^u-V_0^u=\sigma e^{2\cdot} I$ and $W^u_\sigma-V^u_0=R_\sigma (V_\sigma^u-V_0^u)$, we find that (\ref{uj''}) holds with ``$\aeq$'' in place of ``$=$''. Since $W_+>W_-$ on ${\cal D}_{\disj}$, we get $R_++R_->0$ on $[0,T^u_{\disj})$. So the original (\ref{uj''}) holds by the continuity of its RHS.
\end{proof}

Now suppose that $\eta_+$ and $\eta_-$ are random curves, and $\cal D$ is a random region. Then $\ulin u$ and $T^u$ are also random. Suppose that there is an $\R_+^2$-indexed filtration $\F$ such that $\cal D$ is an $\F$-stopping region, and $V_0,V_+,V_-$ are all $\F$-adapted. Now we extend $\ulin u$ to $\R_+$ such that if $T^u<\infty$, then $\ulin u(s)=\lim_{t\uparrow T^u} \ulin u(t)$ for $s\in [T^u,\infty)$. The following proposition is similar to \cite[Lemma 4.1]{Two-Green-interior}.

\begin{Proposition}
	For every $t\in\R_+$,  $\ulin u(t)$ is an $\F$-stopping time.\label{Prop-u(t)}
\end{Proposition}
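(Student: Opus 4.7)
The plan is to fix $t \ge 0$ and a deterministic $\ulin s = (s_+,s_-) \in \R_+^2$ and to prove $\{\ulin u(t) \le \ulin s\} \in \F_{\ulin s}$. First I would use that $\ulin u$ is continuous on $\R_+$, strictly increasing on $[0,T^u)$, and constant on $[T^u,\infty)$, so that $\{r \ge 0 : \ulin u(r) \le \ulin s\}$ is a closed interval $[0,T_{\ulin s}]$ for some random $T_{\ulin s} \in [0,\infty]$. The event of interest then becomes $\{T_{\ulin s} \ge t\}$, and the task reduces to showing that $T_{\ulin s}$ is $\F_{\ulin s}$-measurable.

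The key observation I would exploit is that $T_{\ulin s}$ is a functional of the restriction of $V_0,V_+,V_-$ to $[\ulin 0,\ulin s] \cap \cal D$. Until $\ulin u$ first leaves the rectangle $[\ulin 0,\ulin s]$, the characterization from Lemma \ref{time curve-lem}, $\Lambda \circ \ulin u \equiv 0$ together with $\Upsilon(\ulin u(r)) = r$, involves only values of $V_0,V_+,V_-$ taken inside $[\ulin 0,\ulin s] \cap \cal D$. The strict monotonicities recalled in the sketch of Lemma \ref{time curve-lem} (namely $\Lambda$ strictly increasing in $t_+$ and strictly decreasing in $t_-$, $\Upsilon$ strictly increasing in both) force $\{\Lambda = 0\}$ to be a single monotone curve in $[\ulin 0,\ulin s] \cap \cal D$ starting at $\ulin 0$; this curve coincides with $\ulin u$ on $[0,T_{\ulin s}]$, and $T_{\ulin s}$ is either the $\Upsilon$-value at the endpoint where the curve meets $\pa[\ulin 0,\ulin s] \cap (0,\infty)^2$, or $+\infty$ if instead the curve terminates at a point of $\pa\cal D$ strictly inside $[\ulin 0,\ulin s]$, in which case $\ulin u$ freezes there and keeps $\ulin u(r) \le \ulin s$ for every $r$.

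Next I would verify that the restricted data is $\F_{\ulin s}$-measurable. Since $\cal D$ is an $\F$-stopping region, $\{\ulin r \in \cal D\} \in \F_{\ulin r} \subset \F_{\ulin s}$ for every $\ulin r \le \ulin s$, so the random set $[\ulin 0,\ulin s] \cap \cal D$ is itself an $\F_{\ulin s}$-measurable subset of $[\ulin 0,\ulin s]$. Each $V_\nu$ is an $\F$-adapted HC process, so $V_\nu(\ulin r)\,{\bf 1}_{\{\ulin r \in \cal D\}}$ is $\F_{\ulin r}$-measurable, hence $\F_{\ulin s}$-measurable for $\ulin r \le \ulin s$. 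Combined with the joint continuity from Lemma \ref{common-function}(v) (which passes through to $V_\nu$), pointwise measurability upgrades to measurability of the whole restricted function, so that $\Lambda|_{[\ulin 0,\ulin s] \cap \cal D}$ and $\Upsilon|_{[\ulin 0,\ulin s] \cap \cal D}$ are $\F_{\ulin s}$-measurable. The intrinsic description of $T_{\ulin s}$ given above is then a measurable functional of these two restricted functions, so $T_{\ulin s}$ is $\F_{\ulin s}$-measurable and $\{\ulin u(t) \le \ulin s\} = \{T_{\ulin s} \ge t\} \in \F_{\ulin s}$.

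The main obstacle I anticipate is handling the dichotomy at the endpoint of the level curve cleanly: I must verify that the case distinction between the curve exiting through $\pa[\ulin 0,\ulin s]$ and exiting through $\pa\cal D$ is an $\F_{\ulin s}$-measurable event, and that the freezing convention $\ulin u(r) = \ulin u(T^u-)$ for $r \ge T^u$ correctly yields $T_{\ulin s} = \infty$ precisely when the $\{\Lambda = 0\}$ curve hits $\pa\cal D$ while still within $[\ulin 0,\ulin s]$. Once this boundary bookkeeping is in place, the argument closely parallels that of \cite[Lemma 4.1]{Two-Green-interior}.
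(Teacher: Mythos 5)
Your proof is correct and follows the natural approach, which is the same as the cited \cite[Lemma 4.1]{Two-Green-interior} that the paper defers to without reproducing the argument. The reduction $\{\ulin u(t)\le\ulin s\}=\{T_{\ulin s}\ge t\}$ is the right move, and the observation that $T_{\ulin s}$ depends only on the data $(\cal D, V_0, V_+, V_-)$ restricted to $[\ulin 0,\ulin s]$ --- which is $\F_{\ulin s}$-measurable by the stopping-region and HC-adaptedness hypotheses together with continuity (Lemma \ref{common-function}(v)) --- is exactly the key point. The ``boundary bookkeeping'' you flag is genuine but resolvable: since each coordinate of $\ulin u$ is strictly increasing on $[0,T^u)$, once a coordinate exceeds $s_\sigma$ it never returns, so $\{T_{\ulin s}<\infty\}$ is precisely the event that the $\{\Lambda=0\}$ level curve passes through a point $\ulin r\in[\ulin 0,\ulin s]\cap\cal D$ with $r_\sigma=s_\sigma$ for some $\sigma$ (which is checkable from the restricted data), and on that event $T_{\ulin s}=\sup\{\Upsilon(\ulin r):\ulin r\in[\ulin 0,\ulin s]\cap\cal D,\,\Lambda(\ulin r)=0\}$, while on the complement the curve terminates in $\pa\cal D\cap[\ulin 0,\ulin s]$ by Lemma \ref{Beurling} (using that $T^u=\infty$ forces $\diam\eta_\pm=\infty$, hence forces an exit) and $T_{\ulin s}=\infty$. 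Both branches are measurable functionals of the restriction, closing the argument.
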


Since $\ulin u$ is non-decreasing, we get an $\R_+$-indexed filtration $\F^u$: $\F^u_t= \F _{\ulin u(t)}$, ${t\ge 0}$, by Propositions \ref{T<S} and \ref{Prop-u(t)}.

\section{Commuting Pair of SLE$_\kappa(2,{\protect\ulin{\rho}})$ Curves}\label{section-commuting-SLE-kappa-rho}
In this section, we apply the results from the previous section to study a pair of commuting SLE$_\kappa(2, \ulin\rho)$ curves, which arise as flow lines of a GFF with piecewise constant boundary data (cf.\ \cite{MS1}).
The results of this section will be used in the next section to study commuting pair of hSLE$_\kappa$ curves that we are mostly interested in.

\subsection{Martingale and domain Markov property} \label{subsection-commuting-SLE-kappa-rho}
Throughout this section, we fix $\kappa,\rho_0,\rho_+,\rho_-$ such that $\kappa\in(0,8)$, $\rho_+,\rho_->\max\{-2,\frac \kappa 2-4\}$, $\rho_0\ge \frac{\kappa}{4}-2$, and $\rho_0+\rho_\sigma\ge \frac{\kappa}{2}-4$, $\sigma\in\{+,-\}$. Let $w_-<w_+\in\R$. Let $v_+\in[w_+^+,\infty)$, $v_-\in(-\infty,w_-^-]$, and $v_0\in[w_-^+,w_+^-]$. Write $\ulin\rho$ for $(\rho_0,\rho_+,\rho_-)$.
 From (\cite{MS1}) we know that there is a coupling of two chordal Loewner curves $\eta_+(t_+)$, $0\le t_+<\infty$, and $\eta_-(t_-)$, $0\le t_-<\infty$, driven by $\ha w_+$ and $\ha w_-$ (with speed $1$), respectively, with the following properties.
\begin{enumerate}
  \item [(A)] For $\sigma\in\{+,-\}$, $\eta_\sigma$ is a chordal SLE$_\kappa(2,\ulin\rho)$ curve in $\HH$ started from $w_\sigma$ with force points at $w_{-\sigma}$ and $v_\nu$, $\nu\in\{0,+,-\}$. Here if any $v_\nu$ equals $w_{-\sigma}^\pm$, then as a force point for $\eta_\sigma$, it is treated as $w_{-\sigma}$. Let $\ha w_{-\sigma}^\sigma,\ha v_\nu^\sigma $ denote the force point functions for $\eta_\sigma$ started from $w_{-\sigma},v_\nu$, $\nu\in\{0,+,-\}$, respectively.
  \item [(B)]  Let $\sigma\in\{+,-\}$. If $\tau_{-\sigma}$ is a finite stopping time w.r.t.\ the
  filtration $\F^{-\sigma}$ generated by $\eta_{-\sigma}$,  then a.s.\ there is a chordal Loewner curve $\eta_{\sigma}^{t_{-\sigma}}(t)$, $0\le t<\infty$, with some speed such that  $\eta_\sigma =f_{K_{-\sigma}(\tau_{-\sigma})}\circ \eta_{\sigma}^{\tau_{-\sigma}} $. Moreover, the conditional law of the normalization of $\eta_{\sigma}^{\tau_{-\sigma}}$ given $\F^{-\sigma}_{\tau_{-\sigma}}$ is that of a  chordal SLE$_\kappa(2,\ulin\rho)$ curve   in $\HH$ started from $\ha w_\sigma^{-\sigma}(\tau_{-\sigma})$   with force points at $\ha w_{-\sigma}(\tau_{-\sigma}),\ha v_\nu^{-\sigma}(\tau_{-\sigma})$, $\nu\in\{0,+,-\}$.
\end{enumerate}
In fact, one may construct $\eta_+$ and $\eta_-$ as two flow lines of the same GFF on $\HH$ with some piecewise boundary conditions (cf.\ \cite{MS1}). The conditions that $\kappa\in(0,8)$, $\rho_0,\rho_+,\rho_->\max\{-2,\frac \kappa 2-4\}$ and $\rho_0+\rho_\sigma\ge \frac{\kappa}{2}-4$, $\sigma\in\{+,-\}$, ensure that (i) there is no continuation threshold for either $\eta_+$ or $\eta_-$, and so $\eta_+$ and $\eta_-$ both have lifetime $\infty$ and  $\eta_\pm(t)\to\infty$ as $t\to\infty$; (ii) $\eta_+$ does not hit $(-\infty,w_-]$, and $\eta_-$ does not hit $[w_+,\infty)$; and (iii) $\eta_\pm\cap\R$ has Lebesgue measure zero. The stronger condition that $\rho_0\ge \frac{\kappa}{4}-2$ (which implies that $\rho_0>\max\{-2,\frac \kappa 2-4\}$) will be needed later (see Remark \ref{Remark-rho0}).
We call the above $(\eta_+,\eta_-)$ a commuting pair of chordal SLE$_\kappa(2,\ulin\rho)$ curves in $\HH$ started from $(w_+,w_-;v_0,v_+,v_-)$. If $\rho_0=0$, which satisfies $\rho_0>\frac \kappa 4-2$ since $\kappa<8$, the $v_0$ does not play a role, and we omit $\rho_0$ and $v_0$ in the name.

We may take $\tau_{-\sigma}$ in (B) to be a deterministic time. So for each $t_{-\sigma}\in\Q_+$, a.s.\ there is an SLE$_\kappa$-type curve $\eta_{\sigma}^{t_{-\sigma}}$ defined on $\R_+$ such that $\eta_\sigma=f_{K_{{-\sigma}}(t_{-\sigma})}\circ \eta_{\sigma}^{t_{-\sigma}}$. The conditions on $\kappa$ and $\ulin\rho$ implies that a.s.\ the Lebesgue measure of  $\eta_{\sigma}^{t_{-\sigma}}\cap \R$ is $0$. This implies that a.s.\ $\eta_+$ and $\eta_-$ satisfy the conditions in Definition \ref{commuting-Loewner} with $\I_+=\I_-=\R_+$, $\I_+^*=\I_-^*=\Q_+$, and ${\cal D}=\R_+^2$. So $(\eta_+,\eta_-)$ is a.s.\ a commuting pair of chordal Loewner curves. Here we omit $\cal D$ when it is $\R_+^2$. Let $K$ and $\mA$ be the hull function and the capacity function, $W_+,W_-$ be the driving functions, and $V_0,V_+,V_-$ be the force point functions started from $v_0,v_+,v_-$, respectively. Then $\ha w_\sigma=W_\sigma|^{-\sigma}_0$, $\ha w_{-\sigma}^\sigma=W_{-\sigma}|^{-\sigma}_0$, and $\ha v_\nu^\sigma=V_\nu|^{-\sigma}_0$, $\nu\in\{0,+,-\}$. For each $\F^{-\sigma}$-stopping time $\tau_{-\sigma}$, $\eta_{\sigma}^{\tau_{-\sigma}}$ is the chordal Loewner curve driven by $W_\sigma|^{-\sigma}_{\tau_{-\sigma}}$ with speed $\mA|^{-\sigma}_{\tau_{-\sigma}}$, and the force point functions are $W_{-\sigma}|^{-\sigma}_{\tau_{-\sigma}}$ and $V_\nu|^{-\sigma}_{\tau_{-\sigma}}$, $\nu\in\{0,+,-\}$.

Let $\F^\pm$ be the $\R_+$-indexed filtration as in (B). Let $\F$ be the separable ${\R_+^2}$-indexed filtration generated by $\F^+$ and $\F^-$. From (A) we know that, for $\sigma\in\{+,-\}$, there exists a standard $\F^\sigma$-Brownian motions $B_\sigma$  such that the driving functions $\ha w_\sigma$ satisfies the SDE
\BGE d\ha w_\sigma\aeq \sqrt{\kappa}dB_\sigma +\Big[\frac 2{\ha w_\sigma-\ha w_{-\sigma}^\sigma}+\sum_{\nu\in\{0,+,-\}} \frac{\rho_{\nu}}{\ha w_\sigma-\ha v_\nu^\sigma}\Big]dt_\sigma.\label{dhaw}\EDE

\begin{Lemma}[Two-curve DMP] Let $\cal G$ be a $\sigma$-algebra. Let ${\cal D}=\R_+^2$. Suppose that, conditionally on $\cal G$, $(\eta_+,\eta_-;{\cal D})$ is a commuting pair of  chordal SLE$_\kappa(2,\ulin\rho)$ curves started from $(w_+,w_-;v_0,v_+,v_-)$, which are ${\cal G}$-measurable random points. Let $K,W_\sigma,V_\nu$, $\sigma\in\{+,-\}$, $\nu\in\{0,+,-\}$, be respectively the hull function, driving functions, and force point functions. Let $\F^\sigma$ be the $\R_+$-indexed filtration defined by $\F^\sigma_t=\sigma({\cal G},\eta_\sigma[0,t])$, $t\ge 0$, $\sigma\in\{+,-\}$. Let $\lin\F$ be the right-continuous augmentation of the separable $\R_+^2$-indexed filtration generated by $\F^+$ and $\F^-$. Let $\ulin\tau$ be an $\lin\F$-stopping time.  Then  on the event $E_{\ulin\tau}:=\{\ulin\tau\in\R_+^2,\eta_\sigma(\tau_\sigma)\not\in \eta_{-\sigma}[0,\tau_{-\sigma}],\sigma\in\{+,-\}\}$, there is a random commuting pair of chordal Loewner curves $(\til\eta_1,\til\eta_2;\til{\cal D})$  with some speeds, which is the part of $(\eta_+,\eta_-;{\cal D})$ after $\ulin\tau$ up to a conformal map (Definition \ref{Def-speeds}), and whose normalization conditionally on $\lin\F_{\ulin\tau}\cap E_{\ulin\tau}$ has the law of a commuting pair of  chordal SLE$_\kappa(2,\ulin\rho)$ curves started from $(W_+ ,W_- ;V_0 ,V_+ ,V_-)|_{\ulin\tau}$. Here if $V_\nu(\ulin\tau)=W_\sigma(\ulin\tau)$ for some $\sigma\in \{+,-\}$ and $\nu\in\{0,+,-\}$, then as a force point $V_\nu(\ulin\tau)$ is treated as $W_\sigma(\ulin\tau)^{\sign(v_\nu-w_\sigma)}$.
\label{DMP}
\end{Lemma}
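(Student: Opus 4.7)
The plan is to prove the lemma in two stages: first for deterministic $\ulin\tau$, then extend to general $\lin\F$-stopping times by dyadic approximation. For deterministic $\ulin\tau=(\tau_+,\tau_-)$, I would iterate the one-curve DMP of property (B). Applying (B) with $\sigma=-$ at the deterministic time $\tau_+$, almost surely there is a chordal Loewner curve $\eta_{-}^{\tau_+}$ with $\eta_-=f_{K_+(\tau_+)}\circ\eta_{-}^{\tau_+}$, and its normalization given $\F^+_{\tau_+}$ is a chordal SLE$_\kappa(2,\ulin\rho)$ started from $W_-(\tau_+,0)$ with force points $W_+(\tau_+,0)$ and $V_\nu(\tau_+,0)$, $\nu\in\{0,+,-\}$. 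Applying the single-curve DMP for SLE$_\kappa(2,\ulin\rho)$ to this normalized process at the $\hcap_2$-parameter $\mA(\tau_+,\tau_-)-\mA(\tau_+,0)$, which by Lemma \ref{Lem-W} is the parameter corresponding to $\eta_-$ being run up to original time $\tau_-$, produces the conditional description of the future of $\eta_-$. Combined with Lemma \ref{DMP-determin-1} (ii), which identifies $\til K=K(\ulin\tau+\cdot)/K(\ulin\tau)$, $\til W_\sigma=W_\sigma(\ulin\tau+\cdot)$, and $\til V_\nu=V_\nu(\ulin\tau+\cdot)$ up to the $\sign(v_\nu-w_\sigma)$ relabeling when $V_\nu(\ulin\tau)=W_\sigma(\ulin\tau)$, this identifies the starting data as $(W_+,W_-;V_0,V_+,V_-)|_{\ulin\tau}$. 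The symmetric argument with $+$ and $-$ exchanged gives the conditional description of the future of $\eta_+$, and the joint statement as a commuting pair follows because both descriptions are consistent with the same coupling of two SLE$_\kappa(2,\ulin\rho)$ flow lines; on $E_{\ulin\tau}$, the hypothesis $\eta_\sigma(\tau_\sigma)\notin\eta_{-\sigma}[0,\tau_{-\sigma}]$ ensures $W_+(\ulin\tau)>W_-(\ulin\tau)$ and that the target curve does not start on its opposite hull, so the SLE$_\kappa(2,\ulin\rho)$ configuration is non-degenerate.

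For a general $\lin\F$-stopping time $\ulin\tau$, let $\ulin\tau^n=(2^{-n}\lceil 2^n\tau_+\rceil,2^{-n}\lceil 2^n\tau_-\rceil)$. By Proposition \ref{right-continuous-0} each $\ulin\tau^n$ is an $\lin\F$-stopping time taking only dyadic values, and $\lin\F_{\ulin\tau}=\bigcap_n\lin\F_{\ulin\tau^n}$. Decomposing on the countably many possible values $\ulin s$ of $\ulin\tau^n$ and invoking the deterministic case on each piece $\{\ulin\tau^n=\ulin s\}$ yields the conditional law of the future commuting pair given $\lin\F_{\ulin\tau^n}\cap E_{\ulin\tau^n}$. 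To pass to the limit $n\to\infty$, I would test against bounded continuous functionals of the pair of driving Brownian motions and force-point functions; joint continuity of $K,\mA,W_\sigma,V_\nu$ in $\ulin t$ from Lemmas \ref{lem-uniform}, \ref{Lem-W}, and \ref{common-function}, together with the fact that $E_{\ulin\tau^n}\supset E_{\ulin\tau}$ eventually (since $\eta_\sigma(\tau_\sigma)\notin\eta_{-\sigma}[0,\tau_{-\sigma}]$ is an open condition on continuous curves), lets one use martingale convergence and weak convergence of SLE$_\kappa(2,\ulin\rho)$ under continuous variation of the starting configuration to conclude.

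The chief technical obstacle is the time change in Stage 1: because $\eta_{-}^{\tau_+}$ has intrinsic speed $\mA|^+_{\tau_+}$ rather than unit speed, the parameter in its normalization that corresponds to the original clock $\tau_-$ of $\eta_-$ must be extracted via Lemma \ref{Lem-W}, and the compatibility of the two capacity parametrizations must be tracked carefully. A secondary obstacle is the bookkeeping of force points that land on driving functions at time $\ulin\tau$; this is handled uniformly by the $\sign(v_\nu-w_\sigma)$ convention in Lemma \ref{DMP-determin-1} (ii), which coincides with the convention used in the current lemma's statement, so verifying the match rather than proving anything new suffices.
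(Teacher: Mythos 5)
The main gap is in the deterministic-time case. By iterating property (B) with the one-curve SLE$_\kappa(2,\ulin\rho)$ DMP, you correctly derive the conditional law of the future of $\eta_-$ given $\lin\F_{\ulin\tau}$ (and symmetrically for $\eta_+$). This establishes condition (A) of the commuting-pair definition for the future pair, namely the two marginal conditional laws; but two correct marginals do not determine a joint law, and the phrase ``both descriptions are consistent with the same coupling'' is the conclusion you want rather than an argument for it. To show the future pair is itself a commuting pair of chordal SLE$_\kappa(2,\ulin\rho)$ curves you must also verify condition (B) for the future pair: for every $\F^+$-stopping time $\sigma_+\ge\tau_+$, conditionally on $\F^+_{\sigma_+}\vee\F^-_{\tau_-}$ the remainder of $\eta_-$ must be the correct SLE$_\kappa(2,\ulin\rho)$, and symmetrically with $+$ and $-$ interchanged. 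This follows by the same iteration you propose, except that you should apply property (B) of the original pair at an arbitrary $\sigma_+\ge\tau_+$, then the one-curve DMP at $\tau_-$; doing so verifies (B) for the future pair directly, and (A) is the special case $\sigma_+=\tau_+$. Without this, the assertion that the futures form a commuting pair is unproven.

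A secondary concern is the limit for general $\lin\F$-stopping times. Your observation that, pointwise, $E_{\ulin\tau}\subset\liminf_n E_{\ulin\tau^n}$ is correct, but the convergence step invokes continuity of the law of a commuting pair of chordal SLE$_\kappa(2,\ulin\rho)$ curves in the starting configuration, including near degenerate configurations where force points touch the driving point; this is plausible but is not supplied by the lemmas you cite (Lemmas \ref{lem-uniform}, \ref{Lem-W}, and \ref{common-function} give continuity of hull, driving, and force-point functions for a fixed pair of curves, not continuity of the curve law in its initial data). The paper's own proof is a reference to the two-directional DMP argument of the Green-cut companion paper and gives no detail here, so a direct comparison is not possible, but the deferral of a full proof to another paper is itself a sign that these details are not a formality.
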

\begin{proof}
  This lemma is similar to \cite[Lemma A.5]{Green-cut}, which is about the two-directional DMP of chordal SLE$_\kappa$ for $\kappa\le 8$. The argument also works here. See \cite[Remark A.4]{Green-cut}.
\end{proof}

\subsection{Relation with the independent coupling}\label{section-indep}
Write $\ulin w$ and $\ulin v$ respectively for $(w_+,w_-)$ and $(v_0,v_+,v_-)$.
Let $\PP^{\ulin\rho}_{\ulin w;\ulin v}$ denote the joint law of the driving functions $(\ha w_+,\ha w_-)$ of a commuting pair of chordal SLE$_\kappa(2,\ulin\rho)$ curves in $\HH$ started from $(\ulin w;\ulin v)$. Now we fix $\ulin w$ and $\ulin v$, and omit the subscript in the joint law.

The  $\PP^{\ulin\rho}$ is a probability measure on $\Sigma^2$, where $\Sigma:=\bigcup_{0<T\le\infty} C([0,T),\R)$ was defined in \cite[Section 2]{decomposition}.
A random element in $\Sigma$ is   a continuous stochastic process with random lifetime. The space $\Sigma^2$ is  equipped with an  ${\R_+^2}$-indexed filtration $\F$ defined by $\F_{(t_+,t_-)}=\F^+_{t_+}\vee \F^-_{t_-}$, where  $\F^+$ and $\F^-$ are $\R_+$-indexed filtrations generated by the first function and the second function, respectively.

Let $\PP^{\ulin\rho}_+$ and $\PP^{\ulin\rho}_-$ respectively denote the first and second marginal laws of $\PP^{\ulin\rho}$ on $\Sigma$. Then $\PP^{\ulin\rho}$  is different from the product measure $\PP^{\ulin\rho}_i:=\PP^{\ulin\rho}_+\times \PP^{\ulin\rho}_-$. We will derive some relation between $\PP^{\ulin\rho}$ and $\PP^{\ulin\rho}_i$. Suppose now that $(\ha w_+,\ha w_-)$ follows the law $\PP^{\ulin\rho}_i$ instead of $\PP^{\ulin\rho}$. Then  (\ref{dhaw}) holds for two independent Brownian motions $B_+$ and $B_-$, and $\eta_+$ and $\eta_-$ are independent. Define ${\cal D}_{\disj}$ by (\ref{D-disj}). Then $(\eta_+,\eta_-;{\cal D}_{\disj})$ is a disjoint commuting pair of chordal Loewner curves.  Since $B_+$ and $B_-$ are independent, for any $\sigma\in\{+,-\}$, $B_\sigma$ is a Brownian motion w.r.t.\ the filtration $(\F^\sigma_{t}\vee \F^{-\sigma}_{\infty})_{t\ge 0}$, and we may view (\ref{dhaw}) as an $(\F^\sigma_{t_\sigma}\vee \F^{-\sigma}_{\infty})_{t_\sigma\ge 0}$-adapted SDE. We will repeatedly apply It\^o's formula (cf.\ \cite{RY})  in this subsection, where $\sigma\in\{+,-\}$, the variable $t_{-\sigma}$ of every function is a fixed finite  $\F^{-\sigma}$-stopping time $t_{-\sigma}$ unless it is set to be zero using $|^{-\sigma}_0$, and all SDE are  $(\F^\sigma_{t_\sigma}\vee \F^{-\sigma}_{\infty})_{t_\sigma\ge 0}$-adapted in $t_\sigma$.

By (\ref{-3}) we get the SDE for $W_\sigma$:
\BGE \pa_\sigma W_\sigma=W_{\sigma,1} \pa \ha w_\sigma+\Big(\frac{\kappa}{2}-3\Big) W_{\sigma,2}\pa t_\sigma .\label{paWsigma}\EDE
We will use the boundary scaling exponent $\bb$ and central charge $\cc$ in the literature defined by 
$ \bb=\frac{6-\kappa}{2\kappa}$ and $\cc=\frac{(3\kappa-8)(6-\kappa)}{2\kappa}$.
By (\ref{1/2-4/3}) we get the SDE for $W_{\sigma,N}^{\bb}$:
\BGE \frac{\pa_\sigma W_{\sigma,N}^{\bb}}{W_{\sigma,N}^{\bb}}=\bb \frac{W_{\sigma,2}}{W_{\sigma,1}}  \pa \ha w_\sigma+ \frac{\cc}6 W_{\sigma,S}\pa t_\sigma . \label{paWsigmaN}\EDE

Recall the $E_{X,Y}$ defined in (\ref{RXY}). For $Y\in \{W_{-\sigma},V_0,V_+,V_-\}$, $E_{W_\sigma,Y}(t_+,t_-)$ equals a function in $t_{-\sigma}$ times $f(\ulin t,W_\sigma(t_\sigma \ulin e_\sigma), Y(t_\sigma \ulin e_\sigma))$, where
\BGE f(\ulin t,w,y):=\left\{
\begin{array}{ll}
(g_{K_{-\sigma}^{t_\sigma}(t_{-\sigma})}(w)-g_{K_{-\sigma}^{t_\sigma}(t_{-\sigma})}(y))/(w-y), &w\ne y;\\
g_{K_{-\sigma}^{t_\sigma}(t_{-\sigma})}'(w), & w=y.
\end{array}\right.
\label{f(t,w,y)}\EDE
 Using  (\ref{pa-X},\ref{paWsigma}) and (\ref{WV+g}-\ref{V0-g}) we see that $E_{W_\sigma,Y}$  satisfies the SDE
$$\frac{\pa_\sigma E_{W_\sigma,Y}}{E_{W_\sigma,Y}}\aeq \Big[\frac{W_{\sigma,1}}{W_{\sigma}-Y}-\frac{W_{\sigma,1}}{W_{\sigma}-Y}\Big|^{-\sigma}_0 \Big]d\ha w_\sigma +\Big[\frac{2W_{\sigma,1}^2}{(W_\sigma-Y)^2}-\frac{2W_{\sigma,1}^2}{(W_\sigma-Y)^2} \Big|^{-\sigma}_0\Big]\pa t_\sigma$$
\BGE -\frac{\kappa }{W_\sigma-Y}\Big|^{-\sigma}_0 \cdot  \Big[\frac{W_{\sigma,1}}{W_{\sigma }-Y}- \frac{W_{\sigma,1}}{W_{\sigma }-Y}\Big|^{-\sigma}_0 \Big]\pa t_\sigma+\Big(\frac\kappa 2-3\Big)\frac{W_{\sigma,2}}{W_\sigma-Y}\pa t_\sigma.\label{paEWY}\EDE

Recall the $Q$ defined in (\ref{F}).
Define a positive continuous function $M$ on ${\cal D}_{\disj}$ by
\BGE  M=Q^{-\frac{\cc}6}  E_{W_+,W_-}^{\frac 2\kappa}  \prod_{\nu_1<\nu_2\in\{0,+,-\}} E_{V_{\nu_1},V_{\nu_2}}^{\frac{\rho_{\nu_1}\rho_{\nu_2}}{2\kappa}}  \prod_{\sigma\in\{+,-\}} \Big[ W_{\sigma,N}^{\bb}  \prod_{\nu\in\{0,+,-\}}  E_{W_\sigma,V_\nu}^{\frac{\rho_\nu}\kappa} V_{\nu,N}^{\frac{\rho_\nu(\rho_\nu+4-\kappa)}{4\kappa}}\Big]. \label{Mirho-crho}\EDE
Then $M(t_+,t_-)=1$ if $t_+\cdot t_-=0$. Combining (\ref{paF},\ref{pajhaW},\ref{paV1},\ref{paEXY},\ref{dhaw},\ref{paWsigmaN},\ref{paEWY}) and using the facts that $\ha w_\sigma=W_\sigma|^{-\sigma}_0$, $\ha w_{-\sigma}^\sigma=W_{-\sigma}|^{-\sigma}_0$ and $\ha v_\nu^\sigma=V_\nu|^{-\sigma}_0$, we get the SDE for $M$ in $t_\sigma$ 
:
$$\frac{\pa_\sigma M}{M}=\bb \frac{W_{\sigma,2}}{W_{\sigma,1}}\pa B_\sigma-\Big[\frac 2{\ha w_\sigma-\ha w^\sigma_{-\sigma}}+\sum_{\nu\in\{0,+,-\}} \frac{\rho_{\nu}}{\ha w_\sigma-\ha v_\nu^\sigma}\Big] \frac{\pa B_\sigma}{\sqrt\kappa}+$$
\BGE+ \Big[\frac {2W_{\sigma,1} }{W_\sigma-W_{-\sigma}}+\sum_{\nu\in\{0,+,-\}} \frac{\rho_{\nu}W_{\sigma,1} }{W_\sigma-V_\nu}\Big]\frac{\pa B_\sigma}{\sqrt\kappa} .\label{paM}\EDE
This means that $M|^{-\sigma}_{t_{-\sigma}}$ is a local martingale in $t_\sigma$.

For $\sigma\in\{+,-\}$, let $\Xi_\sigma$ denote the space of simple crosscuts of $\HH$ that separate $w_\sigma$ from $w_{-\sigma} $ and $\infty$. Note that the crosscuts also separate $w_\sigma$ from $v_{-\sigma}$ since $v_{-\sigma}$ is further away from $w_\sigma$ than $w_{-\sigma}$.
But we do not require that the crosscuts separate $w_\sigma$ from $v_\sigma$ or $v_0$. For $\sigma\in\{+,-\}$ and $\xi_\sigma\in\Xi_\sigma$, let $\tau^\sigma_{\xi_\sigma}$ be the first time that $\eta_\sigma$ hits the closure of $\xi_\sigma$; or $\infty$ if such time does not exist. We see that $\tau^\sigma_{\xi_\sigma}\le \hcap_2(\xi_j)<\infty$.
Let $\Xi=\{(\xi_+,\xi_-)\in\Xi_+\times\Xi_-,\dist(\xi_+,\xi_-)>0\}$. For $\ulin\xi=(\xi_+,\xi_-)\in\Xi$, let $\tau_{\ulin\xi}=(\tau^+_{\xi_+},\tau^-_{\xi_-})$. We may choose a countable set $\Xi^*\subset \Xi$ such that for every $\ulin\xi=(\xi_+,\xi_-)\in\Xi$ there is $(\xi_+^*,\xi_-^*)\in\Xi^*$ such that $\xi_\sigma$ is enclosed by $\xi_\sigma^*$, $\sigma\in\{+,-\}$.

\begin{Lemma}
For any $\ulin\xi\in\Xi$ and $R>0$, there is a constant $C>0$ depending only on $\kappa,\ulin\rho, \ulin\xi, R$, such that if $|v_+-v_-|\le R$, then $|\log M|\le C$ on $[\ulin 0, \tau_{\ulin\xi}]$. \label{uniform}
\end{Lemma}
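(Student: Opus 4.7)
The plan is to observe that the claimed estimate is a purely deterministic distortion statement: $M$ defined by (\ref{Mirho-crho}) depends only on the pair of hulls $(K_+(t_+),K_-(t_-))$ together with the fixed data $(\ulin w,\ulin v,\ulin\rho,\kappa)$, so it suffices to bound $M$ uniformly over all configurations in which $K_+(t_+)$ lies in the bounded component $H(\xi_+)$ of $\HH\sem\xi_+$ containing $w_+$, and similarly $K_-(t_-)\subset H(\xi_-)$. For such configurations $\dist(K_+(t_+),K_-(t_-))\ge \dist(\xi_+,\xi_-)=:\eps_0>0$, hence $[\ulin 0,\tau_{\ulin\xi}]\subset{\cal D}_{\disj}$, and the two hulls together with the force points all lie in a common compact set $K_*\subset\lin\HH$ depending only on $\ulin\xi$ and $R$. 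Moreover, the real gap $(b_{K_-(t_-)},a_{K_+(t_+)})\supset(b_{\xi_-},a_{\xi_+})$ has length at least some $\eps_0'=\eps_0'(\ulin\xi)>0$, because the real endpoints of $\xi_+$ and $\xi_-$ are themselves separated.

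Since $M$ is a finite product of positive continuous factors on ${\cal D}_{\disj}$, it suffices to bound each factor from above and from below. The conformal-derivative factors $W_{\sigma,N}$ and $V_{\nu,N}$ are, by definition, ratios of derivatives of $g_{K_{-\sigma}^{t_\sigma}(t_{-\sigma})}$, $g_{K_+(t_+)}$, and $g_{K_-(t_-)}$ evaluated at real points which stay at distance $\gtrsim\eps_0$ from the corresponding hulls; the upper bound $g_K'\le 1$ from Proposition \ref{Prop-contraction}, combined with the integral representation $(g_K^{-1})'(x)=1+\int_{S_K}(x-y)^{-2}d\mu_K(y)$ with $|\mu_K|=\hcap(K)\le C(\ulin\xi,R)$, yields matching two-sided bounds. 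The ratio factors $E_{X,Y}$ from (\ref{RXY}) are ratios of differences of images of distinguished points under the same conformal maps; each difference is comparable, by the same distortion estimates, to the $\lin\HH$-distance between the pre-images, all uniformly controlled since everything lives in $K_*$ and the relevant pairs are separated by $\gtrsim\eps_0$.

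The one delicate estimate is the lower bound for $W_+(\ulin t)-W_-(\ulin t)$, needed both in the factor $E_{W_+,W_-}^{2/\kappa}$ and in the integrand of $Q$. From (\ref{VWVWV}), $W_+-W_-\ge C_+-D_-$, and by Lemma \ref{common-function}, $C_+-D_-$ equals the length of the image of the real arc $I:=(b_{K_-(t_-)},a_{K_+(t_+)})$ under the analytic extension of $g_{K(\ulin t)}$. By the Poisson-kernel asymptotics and conformal invariance of planar Brownian motion,
\[
C_+-D_-=\pi\lim_{Y\to\infty}Y\cdot\PP_{iY}\bigl[\text{BM in }\HH\sem K(\ulin t)\text{ exits through }I\bigr].
\]
Coupling with Brownian motion in $\HH$ and using $K(\ulin t)\subset K^*:=\lin{H(\xi_+)}\cup\lin{H(\xi_-)}$ together with $I\supset I^*:=(b_{\xi_-},a_{\xi_+})$, the event $\{\text{first hit of }\R\cup K^*\text{ is in }I^*\}$ is contained in $\{\text{first hit of }\R\cup K(\ulin t)\text{ is in }I\}$, so the hitting probability in $\HH\sem K(\ulin t)$ dominates the corresponding one in $\HH\sem K^*$, a fixed positive quantity depending only on $\ulin\xi$ and $R$. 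Hence $C_+-D_-\ge c_1(\ulin\xi,R)>0$, and therefore $W_+-W_-\ge c_1(\ulin\xi,R)$ on $[\ulin 0,\tau_{\ulin\xi}]$.

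Finally, from (\ref{F}), $|\log Q|\le 12\int_{[\ulin 0,\ulin t]}W_{+,1}^2W_{-,1}^2(W_+-W_-)^{-4}d^2\ulin s$; the integrand is pointwise bounded by the preceding estimates, and the area of the time region is at most $\hcap_2(K_+(\tau^+_{\xi_+}))\cdot\hcap_2(K_-(\tau^-_{\xi_-}))\le C(\ulin\xi)$ via Proposition \ref{g-z-sup}. Combining finitely many such bounds produces the required deterministic constant $C=C(\kappa,\ulin\rho,\ulin\xi,R)$. The main technical hurdle is the harmonic-measure lower bound for $C_+-D_-$; everything else reduces to routine Koebe-type distortion estimates on compact configurations of hulls.
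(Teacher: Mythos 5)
Your reduction to a deterministic distortion problem, the observation that $[\ulin 0,\tau_{\ulin\xi}]\subset{\cal D}_{\disj}$, and the bound on $|\log Q|$ are all correct. Your harmonic-measure lower bound for $W_+-W_-$ (via $C_+-D_-$ and domain/exit-set monotonicity) is a legitimate, genuinely different argument from the paper's: the paper instead picks two auxiliary points $v_0^1<v_0^2$ in the real gap $I$ between the crosscuts and uses $g_{K_{\ulin\xi}}=g_{K_{\ulin\xi}/K(\ulin t)}\circ g_{K(\ulin t)}$ together with $g'_{K_{\ulin\xi}/K(\ulin t)}\le 1$ (Proposition~\ref{Prop-contraction}) to get $V_0^2(\ulin t)-V_0^1(\ulin t)\ge |g_{K_{\ulin\xi}}(v_0^2)-g_{K_{\ulin\xi}}(v_0^1)|$. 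Both yield the same separation constant; yours is essentially the probabilistic rephrasing of theirs.

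The genuine gap is in the handling of the $E_{X,Y}$ and $V_{\nu,N}$ factors. The claim that ``each difference is comparable to the $\lin\HH$-distance between the pre-images, all uniformly controlled'' fails precisely when a force point is swallowed, which does happen for $\kappa>4$ and degenerately when $v_0\in\{w_-^+,w_+^-\}$. For instance, once $\eta_-$ swallows $v_-$, the quantities $W_-(\ulin t)-V_-(\ulin t)$, $W_-(0,t_-)-V_-(0,t_-)$, and possibly $W_-(t_+,0)-V_-(t_+,0)$ all vanish while the pre-image gap $w_--v_->0$ stays fixed, so $E_{W_-,V_-}$ is a $0/0$ expression (the paper explicitly switches to a derivative-based definition in this case), and a ``comparable-to-distance'' heuristic gives no control on the ratio because the implicit constants do not cancel. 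The paper handles these factors entirely by ODE: for $E_{Y_1,Y_2}$ with $Y_1,Y_2\in\{W_{-\sigma},V_{-\sigma}\}$ it uses (\ref{paEXY}) with the reference point $W_\sigma$ on the far side, so the denominators $|Y_j-W_\sigma|$ are bounded below by the spread $V_0^2-V_0^1$; since $E_{Y_1,Y_2}|^\sigma_0\equiv 1$ and the time range is bounded, integrating the logarithmic derivative gives the bound. The same ODE-based device (\ref{paV1},\ref{pajhaW}) handles $V_{\nu,N}$ and $W_{\sigma,N}$, and importantly it needs only the upper bound $W_{\sigma,1}\le 1$, not a pointwise lower bound on $W_{\sigma,1}$. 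Your proposal does want lower bounds on the derivatives, and the claim that the evaluation points ``stay at distance $\gtrsim\eps_0$ from the corresponding hulls'' is also not right as written: the separation after applying $g_{K_+(t_+)}$ is a derived constant obtained by essentially the same harmonic-measure argument, not the raw Euclidean gap $\eps_0$. The first issue (swallowed force points) is the substantive one; without replacing the ``comparable to distance'' step by the ODE bounds, the argument does not close.
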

\begin{proof}
	Fix $\ulin\xi=(\xi_+,\xi_-)\in\Xi$ and $R>0$. Suppose $|v_+-v_-|\le R$. Throughout the proof, a constant is a number that depends only on $\ulin\xi, R$; and  a function defined on $[\ulin 0, \tau_{\ulin\xi}]$  is said to be uniformly bounded if its absolute value on $[\ulin 0, \tau_{\ulin\xi}]$ is bounded above by a constant. By the definition of $M$, it suffices to prove that $|\log Q|$, $|\log E_{Y_1,Y_2}|$, $Y_1\ne Y_2\in\{W_+,W_-,V_0,V_+,V_-\}$, $|\log W_{\sigma,N}|$, $\sigma\in\{+,-\}$, and $|\log V_{\nu,N}|$, $\nu\in\{0,+,-\}$, are all uniformly bounded.

 Let $K_{\xi_\sigma}=\Hull(\xi_\sigma)$, $\sigma\in\{+,-\}$ and	 $K_{\ulin \xi}=K_{\xi_+}\cup K_{\xi_-}$.  Let $I=(\max(\lin\xi_-\cap\R),\min(\lin\xi_+\cap\R))$. Then $|g_{K_{\ulin\xi}}(I)|$ is a positive constant. By symmetry we assume that either $v_0\in\lin{K_{\xi_-}}$ or $v_0\in I$ and $g_{K_{\ulin\xi}}(v_0)$ is no more than the middle of $g_{K_{\ulin\xi}}(I)$. So we may pick $v_0^1<v_0^2\in I$ with $v_0\le v_0^1$ such that $|g_{K_{\ulin\xi}}(v_0^2)-g_{K_{\ulin\xi}}(v_0^1)|\ge |g_{K_{\ulin\xi}}(I)|/3$.  Let $V_0^j$ be the force point function started from $v_0^j$, $j=1,2$. By (\ref{VWVWV}), $V_+\ge W_+\ge V_0^2>V_0^1\ge V_0\ge W_-\ge V_-$ on $[\ulin 0,\tau_{\ulin\xi}]$.

By Proposition \ref{Prop-contraction}, $W_{+,1},W_{-,1}$  are uniformly bounded   by $1$.
For $\sigma\in\{+,-\}$, the function $(t_+,t_-)\mapsto t_\sigma$ is bounded on $[\ulin 0, \tau_{\ulin\xi}]$   by $\hcap_2(K_{\ulin\xi})$.
For any $\ulin t\in [\ulin 0, \tau_{\ulin\xi}]$,  since  $g_{K_{\ulin\xi}}=g_{K_{\ulin\xi}/K(\ulin t)}\circ g_{K(\ulin t)}$, by Proposition \ref{Prop-contraction} we get $0<g_{K_{\ulin\xi}}'\le g_{K(\ulin t)}'\le 1$ on $[v_0^1,v_0^2]$.
Since $V_0^j(\ulin t)=g_{K(\ulin t)}(v_0^j)$, $j=1,2$, we have
$|V_0^2(\ulin t)-V_0^1(\ulin t)|\ge |g_{K_{\ulin\xi}}(v_0^2)-g_{K_{\ulin\xi}}(v_0^1)|\ge |g_{K_{\ulin\xi}}(I)|/3$.
So  $\frac 1{V_0^2-V_0^1}$ is uniformly bounded, which then implies that $\frac1{|W_\sigma-W_{-\sigma}|}$ and $\frac1{|W_\sigma-V_{-\sigma}|}$ are uniformly bounded, $\sigma\in\{+,-\}$. From (\ref{F}) we see that $|\log Q|$ is uniformly bounded.  From (\ref{pajhaW},\ref{paV1}) and the fact that $W_{-\sigma,N}|^\sigma_0=V_{-\sigma,N}|^\sigma_0=1$, we see that $|\log W_{-\sigma,N}|$ and $|\log V_{-\sigma,N}|$, $\sigma\in\{+,-\}$,  are uniformly bounded. We also know that $\frac1{|W_+-V_0|}\le \frac 1{|V_0^2-V_0^{1}|}$ is uniformly bounded. From (\ref{paV1}) with $\nu=0$ and $\sigma=+$ and the fact that $ V_{0,N}|^+_0\equiv 1$ we find that $|\log V_{0,N}|$ is uniformly bounded.
	
	Now we estimate $|\log E_{Y_1,Y_2}|$. By (\ref{V-V}), $|V_+-V_-|$ is uniformly bounded. Thus, for any $Y_1\ne Y_2\in \{W_+,W_-,V_0,V_+,V_-\}$, $|Y_1-Y_2|\le |V_+-V_-|$ is uniformly bounded. If $Y_1\in\{W_+,V_+\}$ and $Y_2\in\{W_-,V_-\}$, then $\frac1{|Y_1-Y_2|}\le \frac 1{ |V_0^1-V_0^2|}$ is uniformly bounded. From (\ref{RXY}) we see that $|\log  E_{Y_1,Y_2}|$ is uniformly bounded. If $Y_1,Y_2\in \{W_{-\sigma},V_{-\sigma}\}$ for some $\sigma\in\{+,-\}$, then $\frac 1{|Y_j-W_\sigma|}$, $j=1,2$, are uniformly bounded, and then the uniformly boundedness of $|\log E_{Y_1,Y_2}|$ follows from (\ref{paEXY}) and the fact that $E_{Y_1,Y_2}|^\sigma_0\equiv 1$. Finally, we consider the case that $Y_1=V_0$.   If $Y_2\in\{W_+,V_+\}$, then $\frac 1{|Y_2-V_0|}\le \frac 1{|V_0^2-V_0^1|}$, which is uniformly bounded.  We can again use (\ref{RXY}) to get the uniformly boundedness of  $|\log E_{V_0,Y_2}|$. If $Y_2\in\{W_-,V_-\}$, then $\frac 1{|V_0-W_+|}$ and $\frac 1{|Y_2-W_+|}$  are uniformly bounded. The uniformly boundedness of $|\log E_{V_0,Y_2}|$ then follows from (\ref{paEXY}) with $\sigma=+$, $X=V_0$, $Y=Y_2$, and the fact that $E_{V_0,Y_2}|^+_0\equiv 1$.
\end{proof}

\begin{Corollary}
	For any $\ulin\xi\in\Xi$, $M(\cdot \wedge \tau_{\ulin\xi})$ is an $\F$-martingale closed by $M(\tau_{\ulin\xi})$ w.r.t.\ $\PP^{\ulin\rho}_i$. \label{Doob}
\end{Corollary}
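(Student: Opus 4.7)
My plan is to combine three ingredients already in hand: (i) the uniform bound on $M$ from Lemma~\ref{uniform}, (ii) the one-parameter local martingale property of $M|^{-\sigma}_{t_{-\sigma}}$ in $t_\sigma$ stated right after (\ref{paM}), and (iii) the independence of $B_+,B_-$ under $\PP^{\ulin\rho}_i$, to upgrade the one-directional martingale property into a two-parameter $\F$-martingale via a Fubini/tower argument.

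As preparation I would first verify that $[\ulin 0,\tau_{\ulin\xi}]\subset {\cal D}_{\disj}$, so that $M$ is actually defined on the relevant region. This follows from $\dist(\xi_+,\xi_-)>0$ together with an elementary planar observation: the definition of $\Xi_\pm$ forces the $\R$-endpoints of $\xi_\pm$ to be non-interlocking (otherwise two arcs in $\HH$ with interlocking endpoints would have to cross), so the bounded components of $\HH\sem \xi_\pm$ containing $w_\pm$ have disjoint closures, whence so do the hulls $\lin{K_\sigma(t_\sigma)}$ for $t_\sigma\le \tau^\sigma_{\xi_\sigma}$. Lemma~\ref{uniform} applied with $R:=|v_+-v_-|$ then gives a deterministic constant $C$ with $|M|\le C$ on $[\ulin 0,\tau_{\ulin\xi}]$, so $\widetilde M(\ulin r):=M(\ulin r\wedge \tau_{\ulin\xi})$ is uniformly bounded. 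Under $\PP^{\ulin\rho}_i$ the Brownian motions $B_\pm$ are independent, so each $B_\sigma$ remains a Brownian motion for the enlarged one-parameter filtration $(\F^\sigma_{t_\sigma}\vee \F^{-\sigma}_\infty)_{t_\sigma\ge 0}$; combined with (\ref{paM}), this makes $t_\sigma\mapsto M|^{-\sigma}_{\tau_{-\sigma}}(t_\sigma)$ a local martingale in that filtration for any finite $\F^{-\sigma}$-stopping time $\tau_{-\sigma}$, and after stopping at $\tau^\sigma_{\xi_\sigma}$ the uniform bound promotes it to a true bounded martingale.

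Now I would fix deterministic $\ulin s=(s_+,s_-)\le \ulin t=(t_+,t_-)$. Taking $\tau_-:=t_-\wedge \tau^-_{\xi_-}$ (an $\F^-$-stopping time) and invoking the bounded $+$-direction martingale between $s_+$ and $t_+$ yields
\BGEN
\EE^{\ulin\rho}_i[\widetilde M(\ulin t)\mid \F^+_{s_+}\vee \F^-_\infty]=\widetilde M(s_+,t_-).
\EDEN
Since $\F_{\ulin s}\subset \F^+_{s_+}\vee \F^-_\infty$, the tower property reduces the desired identity to computing $\EE^{\ulin\rho}_i[\widetilde M(s_+,t_-)\mid \F_{\ulin s}]$. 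A symmetric step in the $-$-direction, with the $\F^+_\infty$-measurable value $s_+\wedge \tau^+_{\xi_+}$ now playing the role of the frozen parameter, yields $\EE^{\ulin\rho}_i[\widetilde M(s_+,t_-)\mid \F^+_\infty\vee \F^-_{s_-}]=\widetilde M(\ulin s)$, and one final tower application produces the two-parameter martingale identity $\EE^{\ulin\rho}_i[\widetilde M(\ulin t)\mid \F_{\ulin s}]=\widetilde M(\ulin s)$.

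Finally, to see that $\widetilde M$ is closed by $M(\tau_{\ulin\xi})$, note that each $\tau^\sigma_{\xi_\sigma}\le \hcap_2(\xi_\sigma)<\infty$, so by continuity of $M$ on ${\cal D}_{\disj}$, $\widetilde M(\ulin t)\to M(\tau_{\ulin\xi})$ almost surely as $\ulin t\to(\infty,\infty)$; combined with $|\widetilde M|\le C$, dominated convergence in the conditional expectation gives $\widetilde M(\ulin r)=\EE^{\ulin\rho}_i[M(\tau_{\ulin\xi})\mid \F_{\ulin r}]$ for every $\ulin r$. The main obstacle I foresee is simply executing the two-step Fubini/tower argument cleanly while keeping track of which coordinate is frozen in which enlarged filtration; the product structure of $\PP^{\ulin\rho}_i$ is precisely what makes this work, since without independence, freezing the $-\sigma$-coordinate would distort the law of $B_\sigma$ and the one-parameter local martingale property of $M$ would fail for the enlarged filtration.
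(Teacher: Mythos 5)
Your proposal is correct and is essentially the argument the paper intends: the paper's proof simply cites (\ref{paM}), Lemma~\ref{uniform}, and the analogous argument in the companion paper, and your two-step tower/Fubini upgrade of the one-directional bounded martingales (freezing one coordinate at a time, using independence of $B_+,B_-$ under $\PP^{\ulin\rho}_i$ so that the enlarged filtrations are valid) together with the preparatory check that $[\ulin 0,\tau_{\ulin\xi}]\subset{\cal D}_{\disj}$ is precisely the standard way to carry this out. The closing step is even slightly simpler than you state, since $\tau^\sigma_{\xi_\sigma}\le\hcap_2(\xi_\sigma)$ makes $\tau_{\ulin\xi}$ bounded, so $M(\cdot\wedge\tau_{\ulin\xi})$ is eventually constant and equal to $M(\tau_{\ulin\xi})$.
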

\begin{proof}
	This follows from (\ref{paM}), Lemma \ref{uniform}, and the same argument for \cite[Corollary 3.2]{Two-Green-interior}.
\end{proof}

\begin{Lemma}
  For any $\ulin\xi=(\xi_+,\xi_-)\in\Xi$, $\PP^{\ulin\rho}$ is absolutely continuous w.r.t.\ $\PP^{\ulin\rho}_i$ on $\F^1_{\tau^1_{\xi_1}}\vee\F^2_{\tau^2_{\xi_2}}$, and the RN derivative is $M(\tau_{\ulin\xi})$. \label{RN-M-ic}
\end{Lemma}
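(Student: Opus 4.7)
The plan is to set $\QQ := M(\tau_{\ulin\xi}) \cdot \PP^{\ulin\rho}_i$ on $\F^+_{\tau^+_{\xi_+}} \vee \F^-_{\tau^-_{\xi_-}}$ and verify that $\QQ$ is a probability measure coinciding with $\PP^{\ulin\rho}$ there. For the first claim, every factor in (\ref{Mirho-crho}) reduces to $1$ whenever $t_+t_-=0$: $Q(\ulin 0)=1$ by (\ref{F}), and $E_{\cdot,\cdot}$, $W_{\sigma,N}$, $V_{\nu,N}$ are all normalized to $1$ on the coordinate axes, so $M(\ulin 0)=1$. Corollary~\ref{Doob} combined with Optional Stopping then yields $\EE^{\PP^{\ulin\rho}_i}[M(\tau_{\ulin\xi})]=1$, while Lemma~\ref{uniform} bounds $M(\tau_{\ulin\xi})$ away from $0$ and $\infty$, so $\QQ$ is a probability measure equivalent to $\PP^{\ulin\rho}_i$.

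To identify $\QQ$ with $\PP^{\ulin\rho}$, I would verify that under $\QQ$ the commuting-pair property~(B) from Section~\ref{subsection-commuting-SLE-kappa-rho} holds for the stopped pair. By symmetry take $\sigma=+$ and an $\F^-$-stopping time $\tau_-\le \tau^-_{\xi_-}$. The one-directional martingale property of $M(\cdot,\tau_-)$ in $t_+$, read from (\ref{paM}) with uniform control from Lemma~\ref{uniform}, together with the tower property identifies the conditional Radon--Nikodym derivative of $\QQ$ versus $\PP^{\ulin\rho}_i$ given $\F^-_{\tau_-}$ with the $\PP^{\ulin\rho}_i$-martingale $t_+\mapsto M(t_+\wedge \tau^+_{\xi_+},\tau_-)$. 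Applying Girsanov to (\ref{dhaw}) using the $\pa B_+$-coefficient of (\ref{paM}) cancels the original SLE$_\kappa(2,\ulin\rho)$ drift of $\ha w_+$; substituting the resulting SDE into the Ito expansion (\ref{paWsigma}) for $W_+=g^{t_+}_{-}(\tau_-,\ha w_+(t_+))$, the identity $\kappa\bb+(\kappa/2-3)=0$ --- which holds precisely because $\bb=(6-\kappa)/(2\kappa)$ --- annihilates the residual $W_{+,2}$ terms. Reparameterizing $t_+$ by $s=\mA(\cdot,\tau_-)-\mA(0,\tau_-)$, so that $ds=W_{+,1}^2\,dt_+$ and $W_{+,1}$ is absorbed into a new $\QQ$-Brownian motion in $s$, the remaining SDE becomes the standard chordal SLE$_\kappa(2,\ulin\rho)$ driving-function SDE started from $W_+(0,\tau_-)$ with force points $W_-(0,\tau_-)$ and $V_\nu(0,\tau_-)$, $\nu\in\{0,+,-\}$. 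This is property~(B) under $\QQ$.

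Combined with the symmetric statement for $\sigma=-$, the stopped pair under $\QQ$ carries the full two-curve DMP of a commuting SLE$_\kappa(2,\ulin\rho)$ pair. Uniqueness --- which follows by iterating (B) along a dense family of stopping times and approximating events in $\F^+_{\tau^+_{\xi_+}}\vee \F^-_{\tau^-_{\xi_-}}$ by cylinder events --- then gives $\QQ=\PP^{\ulin\rho}$ on this $\sigma$-algebra, which is precisely the stated Radon--Nikodym identity.

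The main obstacle is the Girsanov computation in the middle step, centered on the cancellation $\kappa\bb+(\kappa/2-3)=0$ that fixes the tip exponent in (\ref{Mirho-crho}). Analogous cancellations arising from (\ref{pajhaW}), (\ref{paV1}), (\ref{paEXY}), (\ref{paEWY}) and (\ref{paF}) fix the remaining exponents so that every Ito correction appearing in the SDE for $W_+$ after $\mA$-reparameterization conspires into the pure SLE$_\kappa(2,\ulin\rho)$ drift with no leftover terms.
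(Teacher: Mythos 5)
Your proposal is correct and is essentially the paper's own proof. Both define $\QQ = M(\tau_{\ulin\xi})\cdot\PP^{\ulin\rho}_i$, verify it is a probability measure via $M(\ulin 0)=1$ and Corollary~\ref{Doob}, condition on $\F^-_{\sigma_-}$, apply Girsanov using the martingale $t_+\mapsto M(t_+\wedge\tau^+_{\xi_+},\sigma_-)$, pass from $\ha w_+$ to $W_+$ using (\ref{paWsigma}) where the cancellation $\kappa\bb+(\kappa/2-3)=0$ removes the $W_{+,2}$ drift, time-change by $\mA$, and recognize the chordal SLE$_\kappa(2,\ulin\rho)$ SDE --- this is exactly the derivation of (\ref{SDE-paW}) in the paper, and the symmetric statement and matching of the commuting-pair DMP close the argument the same way.
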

\begin{proof}
  Let $\ulin\xi=(\xi_+,\xi_-)\in\Xi$. The above corollary implies that  $\EE^{\ulin\rho}_i[M(\tau_{\ulin\xi})]=M(\ulin 0)=1$. So we may define a new probability measure $\PP^{\ulin\rho}_{\ulin \xi}$ by $ {d \PP^{\ulin\rho}_{\ulin \xi}}= {M(\tau_{\ulin\xi})} {d\PP^{\ulin\rho}_i}$.

  Since $M(t_+,t_-)=1$ when $t_+t_-=0$, from the above corollary we know that the marginal laws of $\PP^{\ulin\rho}_{\ulin \xi}$ agree with that of $\PP^{\ulin\rho}_i$, which are $\PP^{\ulin\rho}_+$ and $\PP^{\ulin\rho}_-$. Suppose $(\ha w_+,\ha w_-)$ follows the law $\PP^{\ulin\rho}_{\ulin \xi}$. Then they satisfy Condition (A) in Section \ref{subsection-commuting-SLE-kappa-rho}. Now we write $\tau_\pm$ for $\tau^\pm_{\xi_\pm}$, and $\ulin\tau$ for $\tau_{\ulin\xi}$. Let $\sigma_-\le \tau_-$ be an $\F^-$-stopping time. From Lemma \ref{OST} and Corollary \ref{Doob},
$\frac{d \PP^{\ulin\rho}_{\ulin \xi}|\F _{(t_+,\sigma_-)}}{d\PP^{\ulin\rho}_i|\F _{(t_+,\sigma_-)}}= M(t_+\wedge {\tau_+},{\sigma_-})$,  $0\le t_+<\infty$.
From Girsanov Theorem and (\ref{dhaw},\ref{paM}), we see that, under $\PP^{\ulin\rho}_{\ulin \xi}$,  $\ha w_+$ satisfies  the following  SDE up to $\tau_+$:
\begin{align*}
d\ha w_+=&\sqrt\kappa d B^{\tau_-}_{+}+\kappa \bb \frac{W_{+,2} }{W_{+,1} }\Big|^-_{\tau_-} d t_++  \frac{2 W_{+,1} }{W_+ -W_{-} }\Big|^-_{\tau_-}\,d t_+   + \sum_{\nu\in\{0,+,-\}} \frac{\rho_\nu W_{+,1} }{W_+ -V_\nu }\Big|^-_{\tau_-}\,d t_+ ,
\end{align*}
where $B^{\tau_-}_{+}$  is a standard $(\F _{(t_+,\sigma_-)})_{t_+\ge 0}$-Brownian motion under $\PP^{\ulin\rho}_{\ulin \xi}$. Using Lemma \ref{W=gw} and (\ref{-3}) we find that $W_+(\cdot, \sigma_-)$ under $\PP^{\ulin\rho}_{\ulin \xi}$ satisfies the following SDE up to $\tau_+$:
\BGE d W_+|^{-}_{\sigma_{-}}   \aeq \sqrt\kappa  W_{+,1} |^{-}_{\sigma_{-}}d B_{+}^{\sigma_{-}}  +  \frac {2W_{+,1}^2}{W_{+}-W_{-}}\bigg |^{-}_{\sigma_{-}}  d t_+  +\sum_{\nu\in \{0,+,-\}} \frac {\rho_\nu W_{+,1} ^2}{W_{+}-V_{\nu} }\bigg|^{-}_{\sigma_{-}} d t_+ .\label{SDE-paW}\EDE
Note that the SDE (\ref{SDE-paW}) agrees with the SDE for $W_+(\cdot,\sigma_-)$ if $(\eta_+,\eta_-)$ is a commuting pair of chordal SLE$_\kappa(2,\ulin\rho)$ curves started from $(\ulin w;\ulin v)$, where the speed is $W_{+,1}(\cdot,\sigma_-)^2$. There is a similar SDE for $W_-(\sigma_+,\cdot)$ if $\sigma_+\le \tau_+$ is an $\F^+$-stopping time. Thus, $ { \PP^{\ulin\rho}_{\ulin \xi}}$ agrees with $\PP^{\ulin\rho}$ on $\F^1_{\tau^1_{\xi_1}}\vee\F^2_{\tau^2_{\xi_2}}$, which implies the conclusion of the lemma.
\end{proof}

\begin{Corollary}
  If $\ulin T$ is an $\F$-stopping time, then $\PP^{\ulin\rho} $ is absolutely continuous w.r.t.\ $\PP^{\ulin\rho}_i $ on $\F_{\ulin T}\cap \{\ulin T\in{\cal D}_{\disj}\}$, and the RN derivative is $M(\ulin T)$. In other words, if $A\in \F_{\ulin T}$ and $A\subset \{\ulin T\in{\cal D}_{\disj}\}$, then $\PP^{\ulin\rho} [A]=\EE^{\ulin\rho}_i [{\bf 1}_A M(\ulin T)]$. \label{RN-M-ic-cor}
\end{Corollary}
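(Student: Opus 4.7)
The plan is to reduce Corollary~\ref{RN-M-ic-cor} to Lemma~\ref{RN-M-ic} by covering $\{\ulin T\in{\cal D}_{\disj}\}$ with the countable product family $\{\tau_{\ulin\xi}:\ulin\xi\in\Xi^*\}$ and transferring $M(\tau_{\ulin\xi})$ to $M(\ulin T)$ via the closed-martingale property of $M(\cdot\wedge\tau_{\ulin\xi})$ from Corollary~\ref{Doob}. The main technical obstacle, flagged already in the comment preceding Proposition~\ref{prop-local}, is that a coordinate-wise minimum of two $\F$-stopping times is typically not itself an $\F$-stopping time, so one cannot simply replace $\ulin T$ by $\ulin T\wedge\tau_{\ulin\xi}$; the localisation provided by Proposition~\ref{prop-local} is the substitute.

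For the covering, if $\omega\in\{\ulin T\in{\cal D}_{\disj}\}$ then $\lin{K_+(T_+)}$ and $\lin{K_-(T_-)}$ are disjoint compact sets, so one can pick crosscuts $\xi_\pm\in\Xi_\pm$ enclosing them with positive mutual distance, and then some $\ulin\xi^n\in\Xi^*$ whose components enclose $\xi_\pm$, which forces $\ulin T\leq\tau_{\ulin\xi^n}$. Enumerating $\Xi^*=\{\ulin\xi^n\}_{n\in\N}$ and setting $n(\omega):=\min\{n:T_\pm(\omega)\leq\tau^\pm_{\xi^n_\pm}(\omega)\}$, the level sets $\{n=k\}$ form a countable partition of $\{\ulin T\in{\cal D}_{\disj}\}$. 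Each event $\{T_\sigma\leq\tau^\sigma_{\xi^k_\sigma}\}$ is determined by whether the hull $K_\sigma(T_\sigma)$ is enclosed by $\xi^k_\sigma$ and so lies in $\F^\sigma_{T_\sigma}$; because each marginal $T_\sigma$ is automatically an $\F^\sigma$-stopping time and the product structure $\F_{(t_+,t_-)}=\F^+_{t_+}\vee\F^-_{t_-}$ gives $\F^+_{T_+}\vee\F^-_{T_-}\subset\F_{\ulin T}$, we obtain $\{n=k\}\in\F_{\ulin T}$ for every $k$.

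Fix $k$ and put $B:=A\cap\{n=k\}$, so $B\in\F_{\ulin T}$, $B\subset\{\ulin T\leq\tau_{\ulin\xi^k}\}$, and by Proposition~\ref{T<S}(iii) also $B\in\F_{\tau_{\ulin\xi^k}}$. Lemma~\ref{RN-M-ic} then yields $\PP^{\ulin\rho}[B]=\EE^{\ulin\rho}_i[{\bf 1}_B M(\tau_{\ulin\xi^k})]$. To replace $M(\tau_{\ulin\xi^k})$ by $M(\ulin T)$, set $\ulin t^k:=(\hcap_2(\xi^k_+),\hcap_2(\xi^k_-))$, which bounds $\tau_{\ulin\xi^k}$ coordinate-wise, and form the bounded $\F$-stopping time $\ulin T^{\ulin t^k}$ provided by Proposition~\ref{prop-local}; on $\{\ulin T\leq\ulin t^k\}\supset B$ this stopping time equals $\ulin T$ and $\F_{\ulin T^{\ulin t^k}}$ agrees with $\F_{\ulin T}$, so in particular $B\in\F_{\ulin T^{\ulin t^k}}$. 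Applying Proposition~\ref{OST}(i) to the closed $\F$-martingale $M(\cdot\wedge\tau_{\ulin\xi^k})$ at the finite stopping time $\ulin T^{\ulin t^k}$ gives
\[
\EE^{\ulin\rho}_i\bigl[M(\tau_{\ulin\xi^k})\,\bigm|\,\F_{\ulin T^{\ulin t^k}}\bigr]=M\bigl(\ulin T^{\ulin t^k}\wedge\tau_{\ulin\xi^k}\bigr),
\]
whose right-hand side coincides with $M(\ulin T)$ on $B$. Integrating ${\bf 1}_B$ against this identity produces $\PP^{\ulin\rho}[B]=\EE^{\ulin\rho}_i[{\bf 1}_B M(\ulin T)]$, and summing over $k\in\N$ by countable additivity finishes the proof.
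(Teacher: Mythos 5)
Your localization step — replacing $\ulin T$ by the bounded stopping time $\ulin T^{\ulin t^k}$ from Proposition~\ref{prop-local} and applying Proposition~\ref{OST}(i) to the closed martingale $M(\cdot\wedge\tau_{\ulin\xi^k})$ — is correct and in fact spells out a detail that the paper's proof leaves implicit in the chain $\EE^{\ulin\rho}_i[{\bf 1}_A M(\tau_{\ulin\xi})]=\EE^{\ulin\rho}_i[{\bf 1}_A M(\ulin T\wedge\tau_{\ulin\xi})]=\EE^{\ulin\rho}_i[{\bf 1}_A M(\ulin T)]$. That part is good.

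The gap is in the measurability argument for the countable partition. You assert that each marginal $T_\sigma$ of the $\F$-stopping time $\ulin T$ is \emph{automatically} an $\F^\sigma$-stopping time and that consequently $\F^+_{T_+}\vee\F^-_{T_-}\subset\F_{\ulin T}$; neither claim is valid. The defining condition $\{\ulin T\le\ulin t\}\in\F_{\ulin t}=\F^+_{t_+}\vee\F^-_{t_-}$ gives no control over the single-coordinate event $\{T_\sigma\le t_\sigma\}$: to separate it out one would have to let $t_{-\sigma}\to\infty$, which lands in $\F^\sigma_{t_\sigma}\vee\F^{-\sigma}_\infty$ rather than in $\F^\sigma_{t_\sigma}$. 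A two-point example makes this concrete: with $\F^+_t$ trivial for all $t$ and $\F^-_t$ jumping from trivial to full at $t=1$, the map $\ulin T(a)=(0,1)$, $\ulin T(b)=(1,1)$ is an $\F$-stopping time, yet $\{T_+\le 0\}=\{a\}\notin\F^+_0$. So $\F^\sigma_{T_\sigma}$ need not even be a well-defined sub-$\sigma$-algebra, and you cannot conclude $\{n=k\}\in\F_{\ulin T}$ this way; in turn $B:=A\cap\{n=k\}$ may fail to lie in $\F_{\ulin T}$, and both the application of Proposition~\ref{T<S}(iii) and the appeal to Proposition~\ref{prop-local} (which needs $B\in\F_{\ulin T}$) lose their footing. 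There is also a secondary mismatch worth flagging: Proposition~\ref{T<S}(iii) at best delivers $B\in\F_{\tau_{\ulin\xi^k}}$, the two-parameter stopping-time $\sigma$-algebra, whereas Lemma~\ref{RN-M-ic} provides the Radon--Nikodym identity only on the join $\F^+_{\tau^+_{\xi^k_+}}\vee\F^-_{\tau^-_{\xi^k_-}}$, and the inclusion of the former in the latter is not automatic.

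The paper sidesteps both difficulties by \emph{not} trying to make the pieces of the decomposition $\F_{\ulin T}$-measurable. Instead it fixes $\ulin\xi\in\Xi^*$, takes any $A\in\F_{\ulin T}$ with $A\subset\{\ulin T<\tau_{\ulin\xi}\}$, and shows directly that $A\in\F_{\ulin\xi}:=\F^+_{\tau^+_{\xi_+}}\vee\F^-_{\tau^-_{\xi_-}}$ by slicing at deterministic rational times: $A=\bigcup_{\ulin t\in\Q_+^2}A_{\ulin t}$ with $A_{\ulin t}=A\cap\{\ulin T\le\ulin t<\tau_{\ulin\xi}\}\in\F_{\ulin t}\cap\{\ulin t<\tau_{\ulin\xi}\}$, and a monotone class argument showing $\F_{\ulin t}\cap\{\ulin t<\tau_{\ulin\xi}\}\subset\F_{\ulin\xi}$. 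This only ever uses the stopping-time property of $\ulin T$ at deterministic times, never its marginals. To repair your proof you would need to replace the partition by minimal index $n(\omega)$ with this slicing-and-monotone-class step; once you have $B\in\F_{\ulin\xi^k}$ your OST argument then finishes the job as written.
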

\begin{proof}
  Since $\{\ulin T\in{\cal D}_{\disj}\}= \bigcup_{\ulin\xi\in \Xi^*} \{\ulin T <\tau_{\ulin\xi}\}$ and $\Xi^*$ is countable, it suffices to prove the statement with $ \{\ulin T <\tau_{\ulin\xi}\}$ in place of $ \{\ulin T\in{\cal D}_{\disj}\}$ for every $\ulin\xi\in\Xi^*$. Fix   $\ulin\xi=(\xi_+,\xi_-)\in\Xi^*$. We write $\F_{\ulin\xi}$ for $\F^+_{\tau^+_{\xi^+}}\vee \F^-_{\tau^-_{\xi^-}}$. Let $A\in\F_{\ulin T}\cap \{\ulin T <\tau_{\ulin\xi}\}$.   Fix $\ulin t=(t_+,t_-)\in\Q_+^2$. Let $A_{\ulin t}=A\cap \{\ulin T\le \ulin t <\tau_{\ulin\xi}\}$.  For every  $B_+\in\F^+_{t_+}$ and $B_-\in\F^-_{t_-}$, $B_+\cap B_-\cap \{\ulin t<\tau_{\ulin\xi}\}\in \F^+_{\tau^+_{\xi^+}}\vee \F^-_{\tau^-_{\xi^-}}=\F_{\ulin\xi}$. Using a monotone class argument, we conclude that  $\F_{\ulin t}\cap \{\ulin t<\tau_{\ulin\xi}\}\in \F_{\ulin\xi}$. Thus, $A_{\ulin t}\in\F_{\ulin t}\cap \{\ulin t<\tau_{\ulin\xi}\}\subset \F_{\ulin\xi}$. Since $A=\bigcup_{\ulin t\in\Q_+^2} A_{\ulin t}$, we get $A\in\F_{\ulin\xi}$.  By Lemma \ref{RN-M-ic}, Proposition \ref{OST}, and the martingale property of $M(\cdot\wedge \tau_{\ulin\xi})$, we get
  $\EE^{\ulin\rho}[A]=\EE^{\ulin\rho}_i [{\bf 1}_A M(\tau_{\ulin\xi})]=\EE^{\ulin\rho}_i [{\bf 1}_A M(\ulin T\wedge \tau_{\ulin\xi})]=\EE^{\ulin\rho}_i [{\bf 1}_A M(\ulin T)]$.
\end{proof}

\subsection{SDE along a time curve up to intersection} \label{section-diffusion}
Now assume that $v_+-v_0=v_0-v_-$.  Let $\ulin u=(u_+,u_-):[0,T^u)\to \R_+^2$ be as in Section \ref{time curve}. By Lemma \ref{Beurling}, a.s.\ $T^u=\infty$. 
By Proposition \ref{Prop-u(t)}, $\ulin u(t)$ is an $(\F_{ \ulin t})$-stopping time for each $t\ge 0$. We then get an $\R_+$-indexed filtration $\F^u$ by $\F^u_t:=\F_{ \ulin u(t)}$, $t\ge 0$. For $\ulin \xi=(\xi_+,\xi_-)\in\Xi$,  let $\tau^u_{\ulin\xi}$ denote the first $t\ge 0$ such that $u_1(t)=\tau^1_{\xi_1}$ or $u_2(t)=\tau^2_{\xi_2}$, whichever comes first.
Note that such time exists and is finite because $(\tau^1_{\xi_1},\tau^2_{\xi_2})\in\cal D$. The following proposition has the same form as \cite[Lemma 4.2]{Two-Green-interior}.

\begin{Proposition}
	For every $\ulin\xi\in\Xi$, $\tau^u_{\ulin \xi}$ is an $\F^u$-stopping time, and  $\ulin u(\tau^u_{\ulin \xi})$ and $\ulin u(t\wedge\tau^u_{\ulin \xi})$, $t\ge 0$, are all $\F$-stopping times.   \label{u-st}
\end{Proposition}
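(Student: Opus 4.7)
I will prove the two claims in turn, throughout using that $\ulin u$ is continuous and componentwise non-decreasing (strictly increasing on $[0,T^u)$), that each $\ulin u(t)$ is an $\F$-stopping time by Proposition \ref{Prop-u(t)}, and that $\F^u_t=\F_{\ulin u(t)}$ by definition.

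For the $\F^u$-stopping property of $\tau^u_{\ulin\xi}$: by continuity and monotonicity of $\ulin u$,
\begin{equation*}
\{\tau^u_{\ulin\xi}\le t\}=\{u_+(t)\ge\tau^+_{\xi_+}\}\cup\{u_-(t)\ge\tau^-_{\xi_-}\}.
\end{equation*}
For $\sigma\in\{+,-\}$ set $\ulin\tau^{\sigma}:=\tau^\sigma_{\xi_\sigma}\,\ulin e_\sigma$. This is an $\F$-stopping time, since $\{\ulin\tau^\sigma\le\ulin s\}=\{\tau^\sigma_{\xi_\sigma}\le s_\sigma\}\in\F^\sigma_{s_\sigma}\subset\F_{\ulin s}$. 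Proposition \ref{T<S}(i) applied with $\ulin T=\ulin\tau^\sigma$ and $\ulin S=\ulin u(t)$ gives $\{\ulin\tau^\sigma\le\ulin u(t)\}=\{u_\sigma(t)\ge\tau^\sigma_{\xi_\sigma}\}\in\F_{\ulin u(t)}=\F^u_t$. Taking union over $\sigma\in\{+,-\}$ yields $\tau^u_{\ulin\xi}$ an $\F^u$-stopping time.

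For the second claim, fix $t\in[0,\infty]$ (with $t=\infty$ covering $\ulin u(\tau^u_{\ulin\xi})$) and set $\sigma:=t\wedge\tau^u_{\ulin\xi}$, which is an $\F^u$-stopping time. Approximate from above by the dyadic times $\sigma_n:=2^{-n}\lceil 2^n\sigma\rceil$. For each $k,n$, $\{\sigma_n=k2^{-n}\}\in\F^u_{k2^{-n}}=\F_{\ulin u(k2^{-n})}$. For any $\ulin s\in\R_+^2$,
\begin{equation*}
\{\ulin u(\sigma_n)\le\ulin s\}=\bigcup_{k\ge 0}\bigl(\{\sigma_n=k2^{-n}\}\cap\{\ulin u(k2^{-n})\le\ulin s\}\bigr).
\end{equation*}
Applying Proposition \ref{T<S}(iii) with $\ulin T=\ulin u(k2^{-n})$, constant $\ulin S=\ulin s$, and $f=\mathbf{1}_{\{\sigma_n=k2^{-n}\}}$ (which is $\F_{\ulin u(k2^{-n})}$-measurable) shows each summand lies in $\F_{\ulin s}$; hence $\ulin u(\sigma_n)$ is an $\F$-stopping time. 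Continuity of $\ulin u$ gives $\ulin u(\sigma_n)\downarrow\ulin u(\sigma)$.

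The main obstacle is upgrading $\ulin u(\sigma)$ from an $\lin\F$-stopping time (which is immediate from the decreasing limit via Proposition \ref{right-continuous-0}) to a genuine $\F$-stopping time. My plan is to argue pathwise: on $\{\tau^u_{\ulin\xi}\le t\}$, the point $\ulin u(\tau^u_{\ulin\xi})$ has (at least) one coordinate exactly equal to $\tau^+_{\xi_+}$ or $\tau^-_{\xi_-}$, while the other coordinate is determined by observing the corresponding curve up to that hitting time. Using that each $\tau^\sigma_{\xi_\sigma}$ is already an $\F^\sigma$-stopping time and localizing via Proposition \ref{prop-local}, one rewrites $\{\ulin u(\sigma)\le\ulin s\}$ as a union of events determined purely by observing $\eta_+$ up to $s_+$ and $\eta_-$ up to $s_-$, which places them in $\F_{\ulin s}$. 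This is essentially a two-parameter analogue of the classical ``hitting time of a continuous adapted process is a stopping time'' argument, tailored to the geometry of the time curve $\ulin u$; the cited \cite[Lemma 4.2]{Two-Green-interior} provides the template.
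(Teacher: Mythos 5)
Your argument for the first assertion is correct and clean: the identity $\{\tau^u_{\ulin\xi}\le t\}=\{u_+(t)\ge\tau^+_{\xi_+}\}\cup\{u_-(t)\ge\tau^-_{\xi_-}\}$ follows from monotonicity and continuity of $\ulin u$, and applying Proposition \ref{T<S}(i) with $\ulin T=\tau^\sigma_{\xi_\sigma}\ulin e_\sigma$ and $\ulin S=\ulin u(t)$ places each piece in $\F^u_t$. The paper itself gives no self-contained proof here (it only cites \cite[Lemma 4.2]{Two-Green-interior}), so there is no direct comparison to make, but this part would stand on its own.

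The second half, however, has a gap you yourself name but do not close. The dyadic approximation correctly shows each $\ulin u(\sigma_n)$ is an $\F$-stopping time, but the decreasing limit $\ulin u(\sigma_n)\downarrow\ulin u(\sigma)$ only yields an $\lin\F$-stopping time: for a decreasing sequence $\ulin T^n$ of $\F$-stopping times, $\{\inf_n\ulin T^n\le\ulin s\}=\bigcap_{\eps>0}\bigcup_n\{\ulin T^n\le\ulin s+(\eps,\eps)\}$ lands in $\lin\F_{\ulin s}$, not $\F_{\ulin s}$. The proposition asserts $\F$-measurability, which is genuinely stronger, and which is what downstream applications (Optional Stopping in Section \ref{section-diffusion}) are stated for. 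Your ``plan'' to argue pathwise---that at time $\tau^u_{\ulin\xi}$ one coordinate of $\ulin u$ equals $\tau^\sigma_{\xi_\sigma}$ and the other is determined by the equation $\Lambda=0$ on the time curve, and that one can then decompose $\{\ulin u(\sigma)\le\ulin s\}$ into events seen from $\F^+_{s_+}$ and $\F^-_{s_-}$ separately---is the right intuition, but it is not carried out: you do not write the decomposition, do not handle the two cases $\tau^u_{\ulin\xi}<t$ and $\tau^u_{\ulin\xi}\ge t$, and do not verify the resulting events are in $\F_{\ulin s}$ rather than merely $\lin\F_{\ulin s}$. Referring to the cited lemma as ``the template'' does not close the gap in a blind proof; this step needs an explicit argument (or at least a reduction to a concrete measurability claim) to be considered a proof of the stated $\F$-stopping-time property.
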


Assume that $(\ha w_+,\ha w_-)$ follows the law $\PP^{\ulin\rho}_i$. Let $\eta_\pm$ be the chordal Loewner curve driven by $\ha w_\pm$. Let ${\cal D}_{\disj}$ be as before. Let $\ha w_{-\sigma}^\sigma(t)$ and $\ha v_\nu^\sigma(t)$ be the force point functions for $\eta_\sigma$ started from $w_{-\sigma}$ and $v_\nu$ respectively, $\nu\in\{0,+,-\}$, $\sigma\in\{+,-\}$. Define $\ha B_\sigma$, $\sigma\in\{+,-\}$, by
\BGE \sqrt{\kappa }\ha B_\sigma(t)=\ha w_\sigma(t)-w_\sigma-\int_0^t \frac{2ds}{\ha w_\sigma(s)-\ha w_{-\sigma}^\sigma(s)}-\sum_{\nu\in \{0,+,-\}} \int_0^t \frac{\rho_{\nu}ds}{\ha w_\sigma(s)-\ha v_\nu^\sigma(s)}.\label{hawhaB}\EDE
Then $\ha B_+$ and $\ha B_-$ are independent standard Brownian motions. So we get five $\F$-martingales on ${\cal D}_{\disj}$: $\ha B_+(t_+)$, $\ha B_-(t_-)$, $\ha B_+(t_+)^2-t_+$, $\ha B_-(t_-)^2-t_-$, and $\ha B_+(t_+)\ha B_-(t_-)$. Fix $\ulin\xi\in\Xi$. Using  Propositions \ref{OST}  and \ref{Prop-u(t)} and the facts that $u_\pm$ is uniformly bounded above on $[0,\tau_{\ulin\xi}]$, we conclude that $\ha B^u_\sigma(t \wedge \tau^u_{\ulin \xi})$,  $\ha B^u_\sigma(t \wedge \tau^u_{\ulin \xi})^2- u_\sigma(t\wedge \tau^u_{\ulin \xi})$, $\sigma\in\{+,-\}$, and $\ha B^u_+(t \wedge \tau^u_{\ulin \xi}) \ha B^u_-(t\wedge \tau^u_{\ulin \xi} )$ are all $\F^u$-martingales under $\PP^{\ulin\rho}_i$. Recall that for a function $X$ defined on $\cal D$, we use $X^u$ to denote the function $X\circ \ulin u$ defined on $[0,T^u)$. This rule applies even if $X$ depends only on $t_+$ or $t_-$ (for example, $\ha B^u_\sigma(t)=\ha B_\sigma(u_\sigma(t))$); but does not apply to $\F^u,T^u,T^u_{\disj},\tau^u_{\ulin\xi}$.
Thus, the quadratic variation and covariation of $\ha B^u_+$ and $\ha B^u_-$ satisfy
\BGE \langle \ha B^u_+\rangle_t\aeq u_+(t),\quad \langle \ha B^u_-\rangle_t\aeq u_-(t),\quad \langle \ha B^u_+,\ha B^u_-\rangle_t=0,\label{quadratic}\EDE
up to $\tau^u_{\ulin\xi}$. By Corollary \ref{Doob} and Proposition \ref{OST},  $M^u(\cdot\wedge \tau^u_{\ulin\xi})$  is an $\F^u$-martingale. Let $T^u_{\disj}$ denote the first $t$ such that $\ulin u(t)\not\in{\cal D}_{\disj}$.  Since $T^u_{\disj}=\sup_{\ulin \xi\in\Xi}\tau^u_{\ulin\xi}=\sup_{\ulin \xi\in\Xi^*}\tau^u_{\ulin\xi}$, and $\Xi^*$ is countable,   we see that, $T^u_{\disj}$ is an $\F^u$-stopping time.
We now compute the SDE for $M^u$ up to $T^u_{\disj}$ in terms of $\ha B^u_+$ and $\ha B^u_-$.
Using (\ref{Mirho-crho}) we may express $M^u$ as a product of several factors, among which $E_{W_+,W_-}^u$, $(W_{\sigma,N}^u)^{\bb}$, $(E_{W_\sigma,V_\nu}^u)^{\rho_\nu/\kappa}$, $\sigma\in\{+,-\}$, $\nu\in\{0,+,-\}$,   contribute the local martingale part, and other factors are differentiable in $t$. For $\sigma\in\{+,-\}$, since $W_\sigma(t_+,t_-)=g_{K_{-\sigma}^{t_\sigma}(t_{-\sigma})}(\ha w_\sigma(t_\sigma))$, using (\ref{-3}) 
we get the $\F^u$-adapted SDEs:
\BGE dW_\sigma^u = W_{\sigma,1}^ud\ha w^u_\sigma+\Big(\frac \kappa 2-3\Big)W_{\sigma,2}^u u_\sigma'dt+\frac{2(W_{-\sigma,1}^u)^2u_{-\sigma}'}{W_\sigma^u-W_{-\sigma}^u} \,dt,\label{dWju}\EDE
Since $W_{\sigma,N}=\frac{W_{\sigma,1}}{W_{\sigma,1}|^\sigma_0}$, $W_\sigma^1(t_+,t_-)=g_{K_{-\sigma}^{t_\sigma}(t_{-\sigma})}'(\ha w_\sigma(t_\sigma))$, $W_\sigma^1|^\sigma_0$ is differentiable in $t_{-\sigma}$, and $g_{K_{-\sigma}^{t_\sigma}(t_{-\sigma})}'$ is differentiable in both $t_\sigma$ and $t_{-\sigma}$, we get
the SDE for $(W_{\sigma,N}^u)^{\bb}$:
$$\frac{d(W_{\sigma,N}^u)^{\bb}}{(W_{\sigma,N}^u)^{\bb}}\aeq\bb\frac{W_{\sigma,2}^u}{W_{\sigma,1}^u}\,\sqrt\kappa d \ha B^u_\sigma+\mbox{drift terms}.$$
For the SDE for $(E^u_{W_+,W_-})^{\frac 2\kappa}$, note that when $X=W_+$ and $Y=W_-$, the numerators and denominators in (\ref{RXY}) never vanish. So using (\ref{dWju}) we get
$$\frac{d(E_{W_+,W_-}^u)^{\frac2\kappa}}{ (E_{W_+,W_-}^u)^{\frac{2}\kappa}}=\frac 2\kappa \sum_{\sigma\in\{+,-\}}\Big[\frac{W_{\sigma,1}^u}{W_\sigma^u-W_{-\sigma}^u}-\frac 1{\ha w_\sigma^u-(\ha w_{-\sigma}^\sigma)^u}\Big]\sqrt\kappa d\ha B^u_\sigma +\mbox{drift terms}.$$
Note that  $E_{W_\sigma,V_\nu}^u(t)$ equals $f(\ulin u(t), \ha w^u_\sigma(t),(\ha v_\nu^\sigma)^u(t))$ times a differential function in $u_{-\sigma}(t)$,  where $f(\cdot,\cdot,\cdot)$ is given by (\ref{f(t,w,y)}). Using (\ref{dWju}) we get the SDE for $(E_{W_\sigma,V_\nu}^u)^{\frac{\rho_\nu}\kappa}$:
$$\frac{d(E_{W_\sigma,V_\nu}^u)^{\frac{\rho_\nu}\kappa}}{ (E_{W_\sigma,V_\nu}^u)^{\frac{\rho_\nu}\kappa}}\aeq \frac{\rho_\nu}{\kappa}\Big[\frac{W_{\sigma,1}^u }{W_\sigma^u-V_\nu^u}-\frac 1{\ha w_\sigma^u- (\ha v_\nu^\sigma)^u}\Big] \,\sqrt\kappa d\ha B^u_\sigma+\mbox{drift terms}.$$
Here if at any time $t$, $(\ha v_\nu^\sigma)^u(t)=\ha w_\sigma^u(t)$, then the function inside the square brackets is understood as $\frac 12\frac{W^u_{\sigma,2}(t)}{W^u_{\sigma,1}(t)}$, which is the limit of the function as $(\ha v_\nu^\sigma)^u(t)\to \ha w_\sigma^u(t)$.

Combining the above displayed formulas and using the fact that $M^u$ and $\ha B^u_\pm$ are all $\F^u$-local martingales under $\PP^{\ulin\rho}_i$, we get
\begin{align}
\frac{d M^u}{M^u}\aeq & \sum_{\sigma\in\{+,-\}}\bigg [ \kappa \bb\frac{W_{\sigma,2}^u}{W_{\sigma,1}^u}+2 \Big[\frac{ W_{\sigma,1}^u}{W_\sigma^u-W_{-\sigma}^u}-\frac  1{\ha w_\sigma^u-(\ha w_{-\sigma}^\sigma)^u}\Big]+ \nonumber\\
 &+\sum_{\nu\in\{0,+,-\}} \rho_{\nu} \Big[\frac{ W_{\sigma,1}^u }{W_\sigma^u-V_\nu^u}-\frac {1}{\ha w_\sigma^u- (\ha v_\nu^\sigma)^u}\Big] \bigg ]\,  \frac{d\ha B^u_\sigma}{\sqrt\kappa}.\label{dMitocu}
\end{align}
From Corollary \ref{RN-M-ic-cor} and Proposition \ref{u-st} we know that, for any $\ulin\xi\in\Xi$ and $t\ge 0$,
\BGE \frac{d \PP^{\ulin\rho}|\F_{ \ulin u(t\wedge \tau^u_{\ulin\xi})}}{d \PP^{\ulin\rho}_i|\F_{ \ulin u(t\wedge \tau^u_{\ulin\xi})}}= {M^u(t\wedge \tau^u_{\ulin\xi} )} .\label{RNito4xi}\EDE
We will use  a Girsanov argument to derive the SDEs for $\ha w_+^u$ and $\ha w_-^u$ up to $T^u_{\disj}$ under $\PP^{\ulin\rho}$.

	For $\sigma\in\{+,-\}$, define a process $\til B^u_\sigma(t)$ such that $\til B^u(t)=0$ and
\begin{align}
d\til B_\sigma^u=&d\ha B_\sigma^u-\bigg[\kappa \bb \frac{W_{\sigma,2}^u}{W_{\sigma,1}^u}+  \Big[\frac{2W_{\sigma,1}^u}{W_\sigma^u-W_{-\sigma}^u}-\frac 2{\ha w_\sigma^u-(\ha w_{-\sigma}^\sigma)^u}\Big] \nonumber \\
& +\sum_{\nu\in\{0,+,-\}} \Big[\frac{\rho_\nu W_{\sigma,1}^u }{W_\sigma^u-V_\nu^u}-\frac {\rho_\nu}{\ha w_\sigma^u- (\ha v_\nu^\sigma)^u}\Big] \bigg] \, \frac{u_\sigma'(t)}{\sqrt\kappa}\, dt.\label{tilwju}
\end{align}

\begin{Lemma}
	For any $\sigma\in\{+,-\}$ and $\ulin \xi\in\Xi$, $|\til B_\sigma^u|$ is bounded on $[0,\tau^u_{\ulin \xi}]$ by a constant depending only on $\kappa,\ulin\rho,\ulin w,\ulin v,\ulin\xi$. \label{uniform2}
\end{Lemma}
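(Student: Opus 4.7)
The plan is to rewrite $\til B_\sigma^u(t)$ as a bounded elementary term plus a drift integral whose integrand is pointwise bounded on a bounded interval. Substituting the definition of $\ha B_\sigma$ from (\ref{hawhaB}) into (\ref{tilwju}) and changing variables $s=u_\sigma(r)$ in $\sqrt\kappa\,\ha B_\sigma(u_\sigma(t))$, the negative $1/(\ha w_\sigma-\ha v_\nu^\sigma)$-type terms cancel pairwise with their counterparts in the bracket of (\ref{tilwju}), giving
\begin{equation*}
\sqrt\kappa\,\til B_\sigma^u(t)=\ha w_\sigma^u(t)-w_\sigma-\int_0^t\Bigl[\kappa\bb\tfrac{W_{\sigma,2}^u}{W_{\sigma,1}^u}+\tfrac{2W_{\sigma,1}^u}{W_\sigma^u-W_{-\sigma}^u}+\sum_{\nu\in\{0,+,-\}}\tfrac{\rho_\nu W_{\sigma,1}^u}{W_\sigma^u-V_\nu^u}\Bigr](r)\,u_\sigma'(r)\,dr.
\end{equation*}
The leading term $\ha w_\sigma^u(t)-w_\sigma$ is bounded because $\ha w_\sigma(u_\sigma(t))\in[c_{K_{\xi_\sigma}},d_{K_{\xi_\sigma}}]$ by Proposition \ref{winK}.

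Next I would record two structural facts on $[0,\tau^u_{\ulin\xi}]$. First, $\tau^u_{\ulin\xi}$ is itself bounded: by (\ref{V-V}) the quantity $|V_+-V_-|$ is controlled by $\diam(K(\ulin t)\cup[v_-,v_+])$, which is uniformly bounded on $[\ulin 0,\tau_{\ulin\xi}]$ since $\eta_\pm$ are confined to $K_{\xi_\pm}$; on the other hand, $|V_+^u-V_-^u|=2e^{2t}I$ along the time curve (Lemma \ref{time curve-lem}), so $e^{2t}I$ is bounded above and hence $\tau^u_{\ulin\xi}$ is bounded by a constant depending only on $(\kappa,\ulin\rho,\ulin w,\ulin v,\ulin\xi)$. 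Second, from the proof of Lemma \ref{uniform} I would extract two uniform positive lower bounds on $[\ulin 0,\tau_{\ulin\xi}]$: (i) $W_+-W_-\ge c_1>0$ (using the auxiliary points $v_0^1<v_0^2$ whose $g_{K_{\ulin\xi}}$-separation is bounded below), and (ii) $W_{\sigma,1}\ge c_0>0$ (combining the boundedness of $|\log W_{\sigma,N}|$ with the lower bound $W_{\sigma,1}|^\sigma_0(t_{-\sigma})=g_{K_{-\sigma}(t_{-\sigma})}'(w_\sigma)\ge g_{K_{\xi_{-\sigma}}}'(w_\sigma)>0$, using Proposition \ref{Prop-contraction} and the fact that $w_\sigma$ has positive distance from $K_{\xi_{-\sigma}}^{\doub}$). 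Combining (i) with the identity $W_+-W_-=(R_++R_-)e^{2t}I$ and the upper bound on $e^{2t}I$, I obtain $R_++R_-\ge c_2>0$ uniformly on $[0,\tau^u_{\ulin\xi}]$.

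The pointwise bound on the integrand then follows from the identities $W_\sigma-V_0=\sigma R_\sigma e^{2t}I$, $W_\sigma-V_\sigma=-\sigma(1-R_\sigma)e^{2t}I$, $W_\sigma-V_{-\sigma}=\sigma(R_\sigma+1)e^{2t}I$, combined with the explicit formula $(W_{\sigma,1}^u)^2u_\sigma'=R_\sigma(1-R_\sigma^2)(R_++R_-)^{-1}e^{4t}I^2$ from (\ref{uj''}). Each summand of the integrand multiplied by $u_\sigma'$ reduces to a polynomial in $R_\pm\in[0,1]$ divided by $(R_++R_-)^k\,W_{\sigma,1}$ for some $k\le 2$, times a positive power of $e^{2t}I$, hence is uniformly bounded. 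The Schwarzian-type piece $\kappa\bb W_{\sigma,2}^u/W_{\sigma,1}^u$ is bounded by a Cauchy estimate applied to $g_{K_{-\sigma}^{t_\sigma}(t_{-\sigma})}$ at $\ha w_\sigma(t_\sigma)$, whose distance to the hull's double stays bounded below on $[\ulin 0,\tau_{\ulin\xi}]$ by disjointness and Proposition \ref{g-z-sup}. Integrating a bounded integrand over the bounded interval $[0,\tau^u_{\ulin\xi}]$ then finishes the argument. I expect the main obstacle to be the term $2W_{\sigma,1}/(W_\sigma-W_{-\sigma})$ multiplied by $u_\sigma'$, which upon substitution produces a factor $(R_++R_-)^{-2}$; it is precisely for this term that one needs the full strength of the uniform (rather than merely $t$-dependent) positive lower bound on $R_++R_-$.
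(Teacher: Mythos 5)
Your decomposition is genuinely different from the paper's and much of it is correct, but there is one gap. The paper's route is to bound $\ha B_\sigma^u$ directly (writing $\ha B_\sigma^u(t)=U(u_\sigma(t)\ulin e_\sigma)-U(\ulin 0)$ with $U=W_++W_-+\sum_\nu\frac{\rho_\nu}2 V_\nu$, all bounded on $[\ulin 0,\tau_{\ulin\xi}]$ by Proposition~\ref{g-z-sup}), and then to bound the three \emph{differences} appearing in the drift of (\ref{tilwju}) via Loewner-flow integral representations. You instead substitute (\ref{hawhaB}) into (\ref{tilwju}) and exploit the cancellation of the $1/(\ha w_\sigma-\ha v_\nu^\sigma)$-type terms, reducing the drift to $\kappa\bb\frac{W_{\sigma,2}^u}{W_{\sigma,1}^u}+\frac{2W^u_{\sigma,1}}{W_\sigma^u-W_{-\sigma}^u}+\sum_\nu\frac{\rho_\nu W^u_{\sigma,1}}{W^u_\sigma-V^u_\nu}$. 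This cancellation is correct, and your treatment of the rational terms via the identities $W_\sigma-V_0=\sigma R_\sigma e^{2t}I$ etc., combined with (\ref{uj''}) and the uniform lower bounds you correctly extract from Lemma~\ref{uniform} ($W_+-W_-\ge c_1>0$, $W_{\sigma,1}\ge c_0>0$, hence $R_++R_-\ge c_2>0$), is valid.

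The gap is in the Schwarzian piece $\frac{W_{\sigma,2}^u}{W_{\sigma,1}^u}$. You propose a Cauchy (Koebe distortion) estimate, which would require a uniform lower bound on $\dist\big(\ha w_\sigma(t_\sigma),K_{-\sigma}^{t_\sigma}(t_{-\sigma})^{\doub}\big)$; you assert this follows from ``disjointness and Proposition~\ref{g-z-sup},'' but it does not. The lower bound $W_+-W_-\ge c_1$ lives in the \emph{image} of $g_{K_{-\sigma}^{t_\sigma}(t_{-\sigma})}$; since $g^{-1}$ can compress near the hull, it gives no Euclidean distance bound in the \emph{source}, and indeed the hull $K_{-\sigma}^{t_\sigma}(t_{-\sigma})$ can develop a thin finger approaching $\ha w_\sigma(t_\sigma)$ while $W_+-W_-$ stays bounded away from zero (the finger can carry arbitrarily small harmonic measure). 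Note also that this is the genuinely delicate term, rather than the $2W_{\sigma,1}/(W_\sigma-W_{-\sigma})$ term you flag at the end: that one is fine once $R_++R_-$ and $W_{\sigma,1}$ are bounded below. The correct fix (and what the paper does) is the Loewner-flow identity
\begin{equation*}
\frac{W_{+,2}(t_+,t_-)}{W_{+,1}(t_+,t_-)}=\int_0^{t_-}\frac{4W_{-,1}^2W_{+,1}}{(W_+-W_-)^3}\Big|_{(t_+,s_-)}\,ds,
\end{equation*}
obtained by differentiating $g''_{K_-^{t_+}(t_-)}/g'_{K_-^{t_+}(t_-)}$ at $\ha w_+(t_+)$ in $t_-$; together with $0\le W_{\pm,1}\le 1$, $|W_+-W_-|\ge c_1$, and the uniform bound on $t_-$ on $[\ulin 0,\tau_{\ulin\xi}]$, this gives the required bound. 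That substitution is compatible with everything else in your argument, since it only uses the lower bound on $W_+-W_-$ that you have already established.
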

\begin{proof}
	Throughout the proof, a positive number that depends only on $\kappa,\ulin\rho,\ulin w,\ulin v,\ulin\xi$ is called a constant.  By Proposition \ref{g-z-sup},   $ V_+$ and $V_-$ are bounded in absolute value by a constant on $[0,\tau _{\ulin\xi}]$, and so are $W_+,V_0,W_-$ because $V_+\ge W_+\ge V_0\ge W_-\ge V_-$. It is clear that $\ha B_\sigma^u(t)=U(u_\sigma(t) \ulin e_\sigma)-U(\ulin 0)$, $\sigma\in\{+,-\}$, where $U:=W_++W_-+\sum_{\nu\in\{0,+,-\}} \frac{\rho_\nu}2 V_\nu$. Thus, $\ha B_\sigma^u$, $\sigma\in\{+,-\}$, are bounded in absolute value by a constant on $[0,\tau^u_{\ulin\xi}]$. By (\ref{V-V}) and that $V_+^u(t)-V_-^u(t)=e^{2t}(v_+-v_-)$ for $0\le t<T^u$, we know that $e^{2 \tau^u_{\ulin\xi}}\le 4\diam(\xi_+\cup\xi_-\cup[v_-,v_+])/|v_+-v_-|$. This means that $\tau^u_{\ulin\xi}$ is bounded above by a constant. Since $\ulin u[0,\tau^u_{\ulin \xi}]\subset [\ulin 0,\tau_{\ulin\xi}]$,  it remains to show that, for $\sigma\in\{+,-\}$, $$\frac{W_{\sigma,2} }{W_{\sigma,1} },\quad \frac{ W_{\sigma,1} }{W_\sigma -W_{-\sigma} }-\frac  1{\ha w_\sigma - \ha w_{-\sigma}^\sigma  },\quad \frac{  W_{\sigma,1}  }{W_\sigma -V_\nu }-\frac 1{\ha w_\sigma -  \ha v_\nu^\sigma },\quad \nu\in\{0,+,-\},$$ are all bounded in absolute value on $[\ulin 0,\tau_{\ulin\xi}]$ by a constant.
	
Because $\frac  1{\ha w_\sigma - \ha w_{-\sigma}^\sigma  }=\frac{ W_{\sigma,1} }{W_\sigma -W_{-\sigma} }\Big|^{-\sigma}_0$, the boundedness of $ \frac{ W_{\sigma,1} }{W_\sigma -W_{-\sigma} }-\frac  1{\ha w_\sigma - \ha w_{-\sigma}^\sigma  } $ on $[\ulin 0, \tau_{\ulin\xi}]$ simply follows from the boundedness of $ \frac{ W_{\sigma,1} }{W_\sigma -W_{-\sigma} } $, which in turn follows from $0\le W_{\sigma,1}\le 1$ and that $|W_\sigma-W_{-\sigma}|$ is bounded from below  on $[\ulin 0, \tau_{\ulin\xi}]$  by a positive constant, where the latter bound was given in the proof of Lemma \ref{uniform}.
		
For the boundedness of $ \frac{W_{\sigma,2} }{W_{\sigma,1} } $ on $[\ulin 0, \tau_{\ulin\xi}]$, we   assume $\sigma=+$ by symmetry. Since $W_{+,j}(t_+,t_-)=g_{K_{-}^{t_+}(t_-)}^{(j)}(\ha w_+(t_+))$, $j=1,2$, and $K_{-}^{t_+}(\cdot)$ are chordal Loewner hulls driven by $W_-(t_+,\cdot)$ with speed $W_{-,1}(t_+,\cdot)^2$, by differentiating  $ {g''_{K_{-}^{t_+}(t_-)}}/{g'_{K_{-}^{t_+}(t_-)}}$ at $\ha w_+(t_+)$ w.r.t.\ $t_-$,  we get
$$\frac{W_{+,2}(t_+,t_-)}{W_{+,1}(t_+,t_-)} =\int_0^{t_-}\frac{4 W_{-,1}^2 W_{+,1}}{(W_+-W_-)^3}\bigg|_{(t_+,s_-)}\,ds.$$
From the facts that $0\le W_{+,1},W_{-,1}\le 1$ and that $|W_+-W_-|$ is bounded from below by a constant on $[\ulin 0, \tau_{\ulin\xi}]$, we get the boundedness of $\frac{W_{+,2} }{W_{+,1} }$.
	
For the  boundedness of $\frac{  W_{\sigma,1}  }{W_\sigma -V_\nu }-\frac 1{\ha w_\sigma -  \ha v_\nu^\sigma }$, we assume by symmetry that $\sigma=+$. 
By differentiating  w.r.t.\ $t_-$ and using (\ref{pa-X},\ref{pajW}), we get
$$\frac{  W_{+,1}(t_+,t_-) }{W_+(t_+,t_-) -V_\nu(t_+,t_-) }-\frac 1{\ha w_+(t_+) -  \ha v_\nu^+(t_+) }=\int_0^{t_-}\frac{2 W_{-,1}^2 W_{+,1}}{(W_+-W_-)^2(V_\nu-W_-)}\bigg|_{(t_+,s_-)}\, ds.$$
Since $0\le W_{+,1} \le 1$, $|W_+-W_-|$ is bounded from below by a constant on $[\ulin 0, \tau_{\ulin\xi}]$, and $V_\nu-W_-$ does not change sign (but could be $0$), it suffices to show that $\big|\int_0^{t_-} \frac{2W_{-,1}^2}{V_\nu-W_-}|_{(t_+,s_-)}\,ds\big|$ is bounded by a constant on $[\ulin 0, \tau_{\ulin\xi}]$. This holds because the integral equals $V_\nu(t_+,t_-)-V_\nu(t_+,0)$ by (\ref{pa-X}), and $|V_\nu|$  is bounded by a constant on $[\ulin 0, \tau_{\ulin\xi}]$.
\end{proof}

\begin{Lemma}
	Under $\PP^{\ulin\rho}$,  there is a stopped planar Brownian motion $\ulin B(t)=(B_+(t),B_-(t))$, $0\le t<T^u_{\disj}$,  such that,
for $\sigma\in\{+,-\}$, $\ha w_\sigma^u$   satisfies the SDE
	$$d\ha w_\sigma^u\aeq \sqrt{\kappa u_\sigma' }dB_\sigma  +\Big[\kappa \bb\frac{W_{\sigma,2}^u}{W_{\sigma,1}^u} +\frac{2W^u_{\sigma,1}}{W^u_\sigma-W^u_{-\sigma}}+ \sum_{\nu\in\{0,+,-\}}  \frac{\rho_\nu W^u_{\sigma,1}}{W^u_\sigma-V^u_\nu}   \Big]u_\sigma'dt,\quad 0\le t<T^u_{\disj}.$$
Here by saying that $(B_+(t),B_-(t))$, $0\le t<T^u_{\disj}$, is a stopped planar Brownian motion, we mean that $B_+(t)$ and $B_-(t)$, $0\le t<T^u_{\disj}$, are local martingales with $d\langle B_\sigma\rangle_t=t$, $\sigma\in\{+,-\}$, $d\langle B_+,B_-\rangle_t=0$, $0\le t<T^u_{\disj}$.
\label{Lem-uB}
\end{Lemma}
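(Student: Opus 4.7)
\medskip

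\noindent\textbf{Proof proposal.} The strategy is a Girsanov change of measure from $\PP^{\ulin\rho}_i$ to $\PP^{\ulin\rho}$, followed by a Dambis--Dubins--Schwarz time-change. Under $\PP^{\ulin\rho}_i$, we already know from (\ref{quadratic}) that, stopped at any $\tau^u_{\ulin\xi}$ with $\ulin\xi\in\Xi$, the processes $\ha B^u_+$ and $\ha B^u_-$ are $\F^u$-local martingales with $d\langle \ha B^u_\sigma\rangle_t \aeq u_\sigma'(t)\,dt$ and zero cross-variation, and from Corollary \ref{RN-M-ic-cor} together with Proposition \ref{u-st} that $M^u(\cdot\wedge\tau^u_{\ulin\xi})$ is the $\F^u$-martingale that realizes the Radon--Nikodym derivative (\ref{RNito4xi}). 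Lemma \ref{uniform} guarantees $|\log M^u|$ is bounded on $[0,\tau^u_{\ulin\xi}]$, so Girsanov applies cleanly on this stopped interval.

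First I would apply Girsanov's theorem to (\ref{dMitocu}): under $\PP^{\ulin\rho}$ and up to the stopping time $\tau^u_{\ulin\xi}$, the process obtained from $\ha B^u_\sigma$ by subtracting the drift $\int_0^\cdot \frac{1}{M^u}\,d\langle \ha B^u_\sigma, M^u\rangle$ is an $\F^u$-local martingale. Since $d\langle \ha B^u_+,\ha B^u_-\rangle\aeq 0$ and $d\langle \ha B^u_\sigma\rangle_t\aeq u_\sigma'(t)\,dt$, reading off the coefficient of $d\ha B^u_\sigma/\sqrt\kappa$ in (\ref{dMitocu}) yields exactly the drift prescribed in (\ref{tilwju}); that is, $\til B^u_\sigma$ defined by (\ref{tilwju}) is an $\F^u$-local martingale under $\PP^{\ulin\rho}$ on $[0,\tau^u_{\ulin\xi}]$. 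Sending $\ulin\xi$ through the countable family $\Xi^*$, with $T^u_{\disj}=\sup_{\ulin\xi\in\Xi^*}\tau^u_{\ulin\xi}$, promotes this to an $\F^u$-local martingale on $[0,T^u_{\disj})$; Lemma \ref{uniform2} controls the drift correction on each $[0,\tau^u_{\ulin\xi}]$, ensuring the localization is honest. Because quadratic variation is invariant under equivalent changes of measure, we inherit from (\ref{quadratic}) that $d\langle\til B^u_\sigma\rangle_t\aeq u_\sigma'(t)\,dt$ and $d\langle\til B^u_+,\til B^u_-\rangle_t\aeq 0$ under $\PP^{\ulin\rho}$ up to $T^u_{\disj}$.

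To produce $\ulin B$, I would invoke Dambis--Dubins--Schwarz componentwise: since $u_\sigma$ is continuous and non-decreasing (Lemma \ref{lem-uj}), there exists, after enlarging the probability space by an independent planar Brownian motion to fill in the possibly degenerate intervals where $u_\sigma'$ vanishes, a standard Brownian motion $B_\sigma$ such that $\til B^u_\sigma(t)=\int_0^t \sqrt{u_\sigma'(s)}\,dB_\sigma(s)$ on $[0,T^u_{\disj})$. The vanishing of $d\langle \til B^u_+,\til B^u_-\rangle$ then forces $d\langle B_+,B_-\rangle_t=0$, so $\ulin B=(B_+,B_-)$ is a stopped planar Brownian motion in the sense stipulated at the end of the lemma. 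The SDE for $\ha w_\sigma^u$ is now obtained by composing (\ref{hawhaB}) with $u_\sigma$, which gives $d\ha w_\sigma^u\aeq \sqrt\kappa\,d\ha B^u_\sigma + \big[\tfrac{2}{\ha w_\sigma^u-(\ha w_{-\sigma}^\sigma)^u}+\sum_\nu \tfrac{\rho_\nu}{\ha w_\sigma^u-(\ha v_\nu^\sigma)^u}\big]u_\sigma'(t)\,dt$, then substituting (\ref{tilwju}) to replace $d\ha B^u_\sigma$ by $d\til B^u_\sigma$ plus the Girsanov drift: the $\frac{2}{\ha w_\sigma^u-(\ha w_{-\sigma}^\sigma)^u}$ and $\frac{\rho_\nu}{\ha w_\sigma^u-(\ha v_\nu^\sigma)^u}$ terms cancel precisely, and substituting $d\til B^u_\sigma=\sqrt{u_\sigma'}\,dB_\sigma$ produces the claimed SDE.

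The main obstacles are bookkeeping rather than conceptual. First, one must carefully justify that Girsanov applied on each stopped interval $[0,\tau^u_{\ulin\xi}]$ glues into a single change of measure valid on $[0,T^u_{\disj})$; this is where Lemma \ref{uniform} (boundedness of $M^u$ along $\ulin u$ up to $\tau^u_{\ulin\xi}$) and the stopping-time structure of Proposition \ref{u-st} are essential. Second, one must handle the possibility that $u_\sigma'$ vanishes on a set of positive measure—by (\ref{uj''}) this happens precisely where $R_\sigma\in\{0,1\}$—so the time-change step really requires the DDS/extension construction rather than a naive division by $\sqrt{u_\sigma'}$. Everything else reduces to the algebraic cancellation between (\ref{hawhaB}), (\ref{tilwju}), and (\ref{dMitocu}).
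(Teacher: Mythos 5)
Your proposal is correct and follows essentially the same route as the paper: define $\til B^u_\sigma$ via (\ref{tilwju}), use (\ref{dMitocu}) and Lemmas \ref{uniform}, \ref{uniform2} to show it is a bona fide martingale on each $[0,\tau^u_{\ulin\xi}]$ under $\PP^{\ulin\rho}$ by Girsanov/(\ref{RNito4xi}), pass to $T^u_{\disj}$ via the countable family $\Xi^*$, transport the quadratic (co)variation structure (\ref{quadratic}) across the absolutely continuous change of measure, and then represent $\til B^u_\sigma$ as $\int\sqrt{u_\sigma'}\,dB_\sigma$ for a stopped planar Brownian motion. The only thing you make explicit that the paper leaves implicit is the possible need to enlarge the probability space when $u_\sigma'$ vanishes (i.e.\ where $R_\sigma\in\{0,1\}$ by (\ref{uj''})); the paper glosses over this but it is a minor technical point, not a different strategy.
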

\begin{proof} For $\sigma\in\{+,-\}$,  define $\til B_\sigma^u$ using (\ref{tilwju}). By (\ref{dMitocu}), $\til B_\sigma^u(t)M^u(t)$, $0\le t<T^u_{\disj}$, is an $\F^u$-local martingale under $\PP^{\ulin\rho}_i$. By Lemmas \ref{uniform} and \ref{uniform2},   for any $\ulin \xi\in\Xi$,  $\til B_\sigma^u(\cdot\wedge \tau^u_{\ulin \xi})M^u(\cdot \wedge \tau^u_{\ulin \xi})$  is an $\F^u$-martingale under $\PP^{\ulin\rho}_i$. Since this process is $(\F_{ \ulin u(\cdot\wedge \tau^u_{\ulin \xi})})$-adapted, and $\F_{ \ulin u(t\wedge \tau^u_{\ulin \xi})}\subset \F_{ \ulin u(t)}=\F^u_t$, it is also an $(\F_{ \ulin u(\cdot\wedge \tau^u_{\ulin \xi})})$-martingale. From (\ref{RNito4xi}) we see that $\til B_\sigma^u(\cdot\wedge \tau^u_{\ulin \xi})$, is an $(\F_{ \ulin u(\cdot\wedge \tau^u_{\ulin \xi})})$-martingale under $\PP^{\ulin\rho}$.
Then we see that $\til B_\sigma^u(\cdot\wedge \tau^u_{\ulin \xi})$ is an $\F^u$-martingale under $\PP^{\ulin\rho}$ since for any $t_2\ge t_1\ge 0$ and $A\in\F^u_{t_1}=\F_{\ulin u(t_1)}$, $A\cap \{t_1\le \tau^u_{\ulin \xi}\}\subset \F_{ \ulin u(t_1\wedge \tau^u_{\ulin \xi})}$, and on the event $\{t_1> \tau^u_{\ulin \xi}\}$, $\til B_\sigma^u(t_1\wedge \tau^u_{\ulin \xi})=\til B_\sigma^u(t_2\wedge \tau^u_{\ulin \xi})$. Since	$T^u_{\disj}= \sup_{\ulin \xi\in\Xi^*}\tau^u_{\ulin\xi}$, we see that, for $\sigma\in\{+,-\}$, $\til B_\sigma^u(t)$, $0\le t<T^u_{\disj}$, is an $\F^u$-local martingale under $\PP^{\ulin\rho}$.

From (\ref{quadratic}) and that for any $\ulin\xi\in\Xi^*$ and $t\ge 0$, $\PP^{\ulin\rho}\ll \PP^{\ulin\rho}_i$ on $\F_{ \ulin u(t\wedge \tau^u_{\ulin \xi})}$,  we know that, under $\PP^{\ulin\rho}$, (\ref{quadratic}) holds up to $\tau^u_{\ulin\xi}$. Since 	$T^u_{\disj}= \sup_{\ulin \xi\in\Xi^*}\tau^u_{\ulin\xi}$, and $\til B_\pm^u-\ha B_\pm^u$ are differentiable, (\ref{quadratic}) holds for $\til B^u_+$ and $\til B^u_-$ under $\PP^{\ulin\rho}$ up to $T^u_{\disj}$.
Since $\til B_+^u$ and $\til B_-^u$ up to $T^u_{\disj}$ are local martingales under $\PP^{\ulin\rho}$, the (\ref{quadratic}) for $\til B_\pm^u$ implies that there exists a stopped planar Brownian motion $(B_+(t),B_-(t))$, $0\le t<T^u_{\disj}$, under $\PP^{\ulin\rho}$, such that $d\til B_\sigma^u(t)=\sqrt{u_\sigma'(t)}dB_\sigma(t)$, $\sigma\in\{+,-\}$. Combining this fact with (\ref{hawhaB}) and (\ref{tilwju}), we then complete the proof.
\end{proof}

From now on, we work under the probability measure $\PP^{\ulin\rho}$.
Combining Lemma \ref{Lem-uB} with (\ref{dWju}) and (\ref{pa-X}), we get an SDE for $W_\sigma^u-V_0^u$ up to $T^u_{\disj}$:
\begin{align*}
  d(W_\sigma^u-V_0^u)\aeq & \,W_{\sigma,1}^u \sqrt{\kappa u_\sigma'} dB_\sigma +\sum_{\nu\in\{0,+,-\}} \frac{\rho_\nu (W_{\sigma,1}^u)^2 u_\sigma'}{W_\sigma^u-V_\nu^u}\,dt +\frac{2  (W_{\sigma,1}^u)^2 u_\sigma'}{W_\sigma^u-W_{-\sigma}^u}\,dt \\
  &+ \frac{2  (W_{-\sigma,1}^u)^2 u_{-\sigma}'}{W_\sigma^u-W_{-\sigma}^u}\,dt+ \frac{2  (W_{\sigma,1}^u)^2 u_\sigma'}{W_\sigma^u-V_{0}^u}\,dt + \frac{2  (W_{-\sigma,1}^u)^2 u_{-\sigma}'}{W_{-\sigma}^u-V_{0}^u}\,dt.
\end{align*}
Recall that $R_\sigma =\frac{W_\sigma^u -V_0^u }{V_\sigma^u -V_0^u }\in [0,1]$, $\sigma\in\{+,-\}$, and $V_\sigma^u -V_0^u=\sigma e^{2\cdot} I$.
Combining the above SDE with (\ref{uj''}) and using the continuity of $R_\sigma$ and the positiveness of $R_++R_-$ (because $W_+^u>W_-^u$), we find that $R_\sigma$, $\sigma\in\{+,-\}$, satisfy the following SDE up to $T^u_{\disj}$:
\BGE dR_\sigma =\sigma \sqrt{\frac{\kappa R_\sigma(1-R_\sigma ^2)}{R_++R_-}}dB_\sigma +\frac{(2+\rho_0)-(\rho_\sigma-\rho_{-\sigma})R_\sigma-(\rho_++\rho_-+\rho_0+6)R_\sigma^2}{R_++R_-}\,dt. \label{SDE-R}\EDE

\subsection{SDE in the whole lifespan}
We are going to prove the following theorem in this section.

\begin{Theorem}
	Under $\PP^{\ulin\rho}$,  $R_+$ and $R_-$ satisfy (\ref{SDE-R}) throughout $\R_+$ for a pair of independent Brownian motions $B_+$ and $B_-$. \label{Thm-SDE-whole-R}
\end{Theorem}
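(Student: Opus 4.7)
The plan is to bootstrap Lemma~\ref{Lem-uB}, which establishes (\ref{SDE-R}) only up to $T^u_{\disj}$, to the full half-line $\R_+$, by iteratively applying the two-curve DMP (Lemma~\ref{DMP}). If $T^u_{\disj} = \infty$ almost surely, Lemma~\ref{Lem-uB} already gives the statement; otherwise, at $T^u_{\disj}$ the two hulls first touch, but the DMP allows the commuting pair to be restarted (up to a conformal map) from the configuration at $\ulin u(T^u_{\disj})$, and Lemma~\ref{Lem-uB} applied to the restarted pair gives (\ref{SDE-R}) on a further ``disjoint phase''. Iterating and stitching the driving Brownian motions across these phases then yields the SDE throughout $\R_+$. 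As a preliminary, I would enlarge the probability space to support an auxiliary pair $(\lin B_+,\lin B_-)$ of independent standard Brownian motions on $\R_+$, whose increments will be used to fill in the concatenation on intervals where Lemma~\ref{Lem-uB} does not directly apply.

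Concretely, I would inductively define $\F^u$-stopping times $0=\tau_0\le \tau_1\le \tau_2\le \cdots$, where on even-indexed intervals $[\tau_{2k},\tau_{2k+1})$ the time curve $\ulin u$ stays strictly in ${\cal D}_{\disj}$, and at $\tau_{2k+1}$ the hulls again become tangent. On each disjoint interval, applying Lemma~\ref{DMP} at $\ulin u(\tau_{2k})$ identifies the continuation as a commuting pair of $\SLE_\kappa(2,\ulin\rho)$ curves from new initial data (using the $w^\pm$ convention when a force point has merged with a driving point), and Lemma~\ref{Lem-uB} applied to that pair yields Brownian motions $B^{(k)}_\pm$ driving (\ref{SDE-R}) on $[\tau_{2k},\tau_{2k+1})$. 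I would concatenate a single pair $(B_+,B_-)$ on $\R_+$ by taking $B^{(k)}_\pm$ on each disjoint interval and filling the touching intervals $[\tau_{2k+1},\tau_{2k+2})$ with matching increments of $\lin B_\pm$; by Lévy's characterization, the resulting $(B_+,B_-)$ is a standard planar Brownian motion. To extend the integrated form of (\ref{SDE-R}) across the touching intervals, I would invoke Lemma~\ref{W=V}: $R_\sigma(t)=0$ forces $\eta_\sigma$ to hit $\eta_{-\sigma}\cup[v_{-\sigma},v_0]$, so by the facts that $\eta_\sigma\cap\R$ has Lebesgue measure zero (Section~\ref{subsection-commuting-SLE-kappa-rho}) and that $\rho_0\ge \kappa/4-2$ prevents pathological behavior at $v_0$, the set $\{t:R_+(t)+R_-(t)=0\}$ has Lebesgue measure zero, so the drift integral in (\ref{SDE-R}) converges even across the degenerate times.

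The main obstacle I anticipate is controlling the possible accumulation of the stopping times $\tau_n$ when $\kappa\in(4,8)$: the intersection set of $\eta_+$ and $\eta_-$ can be uncountable and rich, so \emph{a priori} $\sup_n\tau_n$ could be finite. To rule this out, I would rely on continuity of $R_\pm$ (inherited from continuity of $W_\pm,V_0,V_\pm$ on all of $\cal D$ by Lemma~\ref{common-function}) together with the Bessel-type repulsion encoded in the drift of (\ref{SDE-R}), whose leading term $(2+\rho_0)/(R_++R_-)$ becomes arbitrarily large and positive as $R_++R_-\downarrow 0$. Making this rigorous would likely proceed by comparing $R_++R_-$ with an explicit one-dimensional Bessel-type process, using the hypothesis $2+\rho_0\ge \kappa/4$ to rule out trapping at the singular locus, and then re-applying the DMP past any putative accumulation point; a standard countable exhaustion argument then covers all of $\R_+$.
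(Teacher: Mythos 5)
Your framework --- iterate the two-curve DMP and concatenate driving Brownian motions --- is the right skeleton, and your Bessel intuition (that the drift $(2+\rho_0)/(R_++R_-)$ with $\rho_0\ge\frac\kappa4-2$ repels $(R_+,R_-)$ from $(0,0)$) correctly anticipates Lemma~\ref{lemma-0-1}. However the execution has three genuine gaps. First, Lemma~\ref{DMP} is only valid on the event $E_{\ulin\tau}=\{\eta_\sigma(\tau_\sigma)\notin\eta_{-\sigma}[0,\tau_{-\sigma}],\sigma\in\{+,-\}\}$, and at $T^u_{\disj}$ --- the first time $\lin{K_+}\cap\lin{K_-}\ne\emptyset$ along $\ulin u$ --- you cannot guarantee the tips are off the other curve, so the DMP may fail exactly where you want to invoke it. The paper restarts instead at $\ulin\tau=\ulin u(\tau^u_{\ulin\xi})$ for a pair of \emph{separated} crosscuts $\ulin\xi\in\Xi$; this lies strictly in ${\cal D}_{\disj}$, where $E_{\ulin\tau}$ holds automatically. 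Second, there is no alternation of ``disjoint'' and ``touching'' intervals along the original $\ulin u$: the hulls $K_\sigma(\cdot)$ only grow, so once $\lin{K_+}\cap\lin{K_-}\ne\emptyset$ the time curve never re-enters ${\cal D}_{\disj}$, and disjointness is recovered only by discarding the already-drawn hulls via a restart, which sends you back to the first gap. Your ``fill touching intervals with auxiliary Brownian increments'' step therefore has nothing well-defined to glue to, and noting that $\{t:R_+(t)+R_-(t)=0\}$ has Lebesgue measure zero does not by itself control the singular drift integral or identify the dynamics across those times.

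Third and most seriously, the Bessel-type comparison does not by itself rule out finite accumulation of the restart times. It only shows (as in Lemma~\ref{lemma-0-1}) that $\lim_{t\uparrow\tau}\ulin R(t)$ exists and avoids $(0,0)$ on $\{\tau<\infty\}$; it gives no lower bound on the escape-time increments. The paper needs the additional quantitative estimate in Corollary~\ref{lower-bound-Cor}: whenever $R_+(0)+R_-(0)\ge 2r$, the next escape time $\tau^u_{\ulin\xi}$ (for a crosscut pair chosen from the current configuration) is at least $1$ with conditional probability at least $\delta(\kappa,\ulin\rho,r)>0$. Combining this with Lemma~\ref{lemma-0-1} and a conditional Borel--Cantelli argument is what forces $T^\infty=\sum_n\tau^n_{\ulin\xi^n}=\infty$. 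Without such a uniform lower bound on the per-step escape time, the accumulation question remains open even if the DMP step could be salvaged.
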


\begin{Lemma}
	Suppose that $R_+$ and $R_-$ are $[0,1]$-valued semimartingales satisfying (\ref{SDE-R}) for a stopped planar Brownian motion $(B_+,B_-)$ up to some stopping time $\tau$. Then on the event $\{\tau<\infty\}$, a.s.\ $\lim_{t\uparrow \tau} R_\pm(t)$ converges and does not equal $0$. \label{lemma-0-1}
\end{Lemma}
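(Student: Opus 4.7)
My strategy is to replace the two-dimensional SDE (\ref{SDE-R}), whose coefficients are singular when $R_++R_-=0$, by two autonomous one-dimensional SDEs for the combinations $P:=R_+R_-$ and $S:=R_++R_-$, each with the singular denominator removed by cancellation.

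Applying It\^o's formula to $P$ and using that $B_+$ and $B_-$ are independent (so $\langle B_+,B_-\rangle=0$), the denominators $R_++R_-$ cancel out of both the drift and the quadratic variation, and one finds
\[
 dP = \sqrt{\kappa P(1-P)}\,d\hat B + \bigl((2+\rho_0) - (\rho_++\rho_-+\rho_0+6)P\bigr)\,dt,
\]
for a standard Brownian motion $\hat B$ built from $(B_+,B_-)$. This is a Jacobi SDE on $[0,1]$, pathwise-uniquely solvable for all time, so $P$ extends continuously through $\tau$, and $\lim_{t\uparrow\tau}P(t)$ exists a.s.\ on $\{\tau<\infty\}$.

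The main work is to show that $S$ does not approach $0$ on $[0,\tau)$. A direct It\^o computation gives
\[
 dS = dN + \frac{2(2+\rho_0) - (\rho_+-\rho_-)(R_+-R_-) - (\rho_++\rho_-+\rho_0+6)(R_+^2+R_-^2)}{R_++R_-}\,dt,
\]
with $d\langle N\rangle_t = \kappa(R_+(1-R_+^2)+R_-(1-R_-^2))/(R_++R_-)\,dt$. As $(R_+,R_-)\to(0,0)$ the drift is asymptotic to $2(2+\rho_0)/S$ and $d\langle N\rangle_t\sim\kappa\,dt$, so $S/\sqrt\kappa$ locally resembles a Bessel process of dimension $\delta = 1+4(2+\rho_0)/\kappa$. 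The hypothesis $\rho_0\ge\kappa/4-2$ yields $\delta\ge 2$, and Bessel processes of dimension at least two do not hit the origin. To make this rigorous I would use the Bessel scale function $h$ given by $h(x)=x^{2-\delta}$ when $\delta>2$ and $h(x)=-\log x$ when $\delta=2$, and check via It\^o that on $[0,\tau)$ the process $h(S)$ equals a local supermartingale plus a bounded-variation error of finite total variation. Since $h(x)\to+\infty$ as $x\to 0^+$, a.s.\ convergence of the localized supermartingale forces $\inf_{0\le t<\tau} S(t)>0$, and continuity of $S$ then yields a positive limit $\lim_{t\uparrow\tau}S(t)\in (0,2]$.

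Once $\inf_{0\le t<\tau} S>0$, all coefficients in (\ref{SDE-R}) are bounded on $[0,\tau)$, so each $R_\sigma$ is the sum of a continuous local martingale with bounded quadratic variation and a drift of bounded total variation; standard semimartingale convergence then yields $\lim_{t\uparrow\tau} R_\sigma(t)$ exists a.s.\ on $\{\tau<\infty\}$, and the positivity of $\lim S$ gives $(\lim R_+,\lim R_-)\ne (0,0)$, which is the required "does not equal $0$''. The delicate point is the rigorous supermartingale control of $h(S)$ at the critical dimension $\delta=2$ (i.e., $\rho_0=\kappa/4-2$): there the leading-order drift of $-\log S$ cancels exactly to zero, and one has to exploit the precise $O(1/S)$ correction coming from the non-Bessel part of the drift of $S$ to guarantee that the error in the supermartingale decomposition is integrable over $[0,\tau)$.
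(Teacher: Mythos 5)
Your reduction of (\ref{SDE-R}) to a Bessel-type comparison for $S=R_++R_-$, after observing that $P=R_+R_-$ obeys an autonomous Jacobi SDE, is close in spirit to what the paper does: the paper tracks $X=R_+-R_-$ and $Y=1-R_+R_-$ (so $X^2=S^2-4P$), time-changes by $\int\kappa/Y$, and then passes to $Z=X/Y$ and $(\Theta,\Phi)$ with $\Theta=\arcsin\til Z$, $\Phi=\log\frac{1+\sqrt{1-\til Y}}{1-\sqrt{1-\til Y}}$, showing $\Theta^2+\Phi^2$ is comparable near $(0,0)$ to a squared Bessel process of dimension $1+4(\rho_0+2)/\kappa$. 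Since $\Theta^2+\Phi^2\approx S^2$ near the origin, you have identified the correct radial quantity and the correct critical dimension, and your computations of the Jacobi SDE for $P$ and of the drift and quadratic variation of $S$ are right.

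The gap you flag at $\delta=2$ is genuine, and in fact it is present whenever $\rho_+\ne\rho_-$. At the critical $\rho_0=\kappa/4-2$, the drift of $-\log S$ reduces to
\[
\frac{(\rho_+-\rho_-)(R_+-R_-)}{S^2}+O(1),
\]
and the first term has magnitude of order $1/S$ with a fluctuating sign. Asserting that its time integral over $[0,\tau)$ is finite, so that $h(S)$ is a local supermartingale plus a finite-variation error, presupposes precisely the bound $\inf_{[0,\tau)}S>0$ that you are trying to prove; as written, the argument is circular. (In the supercritical case $\delta>2$ your strategy can be completed by taking a suboptimal scale function $S^{2-\delta'}$ with $2<\delta'<\delta$, because the constant surplus in the drift bracket then dominates the $O(S)$ error uniformly for $S$ small; but the lemma requires the case $\rho_0=\kappa/4-2$ as well.)

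What the paper's extra change of variables buys is exactly the control that is missing: in $(\Theta,\Phi)$ coordinates, the $\rho_+-\rho_-$ contribution becomes a bounded drift on the angular coordinate $\Theta$ (the $b_Z\sec\Theta$ term), so its effect on the drift of $\Theta^2+\Phi^2$ is of order $|\Theta|\lesssim\sqrt{\Theta^2+\Phi^2}$, which after the square-root substitution is only a bounded perturbation of the radial Bessel drift. A bounded drift perturbation does not change the non-hitting property of a Bessel process of dimension $\ge 2$ (the scale function $s$ with $s'(y)=e^{cy}/y$ still has $s(0^+)=-\infty$). To repair your proof you would need an analogous orthogonal decomposition that isolates the $\rho_+-\rho_-$ part as an angular drift rather than a drift on $S$ itself; as it stands, the supermartingale control of $h(S)$ at the critical dimension is unproven.
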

\begin{proof}
Let $X=R_+-R_-$ and $Y=1-R_+R_-$. Then $|X|\le Y\le 1$ because $Y\pm X=(1\pm R_+)(1\mp R_-)\ge 0$.   By (\ref{SDE-R}), $X$ and $Y$ satisfy the following SDEs up to $\tau$:
\BGE dX=
dM_X-[(\rho_++\rho_-+\rho_0+6)X+(\rho_+-\rho_-)]dt,\label{dX}\EDE
\BGE dY=
dM_Y-[(\rho_++\rho_-+\rho_0+6)Y-(\rho_++\rho_-+4)]dt,\label{dY}\EDE
where $M_X$ and $M_Y$ are local martingales whose quadratic variation and covariation satisfy the following equations up to $\tau$:
\BGE d\langle M_X\rangle =\kappa(Y-X^2)dt,\quad d\langle M_X,M_Y\rangle =\kappa (X-XY)dt,\quad d\langle M_Y\rangle =\kappa (Y-Y^2)dt.\label{d<X,Y>}\EDE

By (\ref{d<X,Y>}),  $\langle M_X\rangle_\tau,\langle M_Y\rangle_\tau\le \kappa\tau$, which implies that $\lim_{t\uparrow \tau} M_X(t)$ and  $\lim_{t\uparrow \tau} M_Y(t)$ a.s.\ converge on $\{\tau<\infty\}$. By (\ref{dX},\ref{dY}),
$\lim_{t\uparrow \tau} (X(t)-M_X(t))$ and $\lim_{t\uparrow \tau} (Y(t)-M_Y(t))$ a.s.\ converge on $\{\tau<\infty\}$. Combining these results, we see that, on the event $\{\tau<\infty\}$,   $\lim_{t\uparrow \tau}X(t)$ and $\lim_{t\uparrow \tau} Y(t)$ a.s.\ converge, which implies the a.s.\ convergence of $\lim_{t\uparrow \tau} R_\pm(t)$ .

Since $(R_+(t),R_-(t))\to (0,0)$ iff $(X(t),Y(t))\to (0,1)$. It suffices to show that, $ (X(t),Y(t))$ does not converge to $(0,1)$ as $t\uparrow \tau$. Since $(X,Y)$ is Markov, it suffices to show that, if $Y(0)\ne 0$, and if $T=\tau\wedge \inf\{t:Y(t)=0\}$, then $(X(t),Y(t))$ does not converge to $(0,1)$ as $t\uparrow T$. Since $Y\ne 0$ on $[0,T)$, we may define a process $Z=X/Y\in [-1,1]$ on $[0,T)$. Now it suffices to show that $(Z(t),Y(t))$ does not converge to $(0,1)$ as $t\uparrow T$.

From (\ref{dX}-\ref{d<X,Y>}) and It\^o's calculation, we see that there is a stopped planar Brownian motion $(B_{Z}(t),B_Y(t))$, $0\le t<T$, such that $Z$ and $Y$ satisfy the following SDEs on $[0,T)$:
	$$ dZ=\sqrt{\frac{\kappa (1-Z^2)}Y}dB_{Z}-\frac{(\rho_++\rho_-+4)Z+(\rho_+-\rho_-)}{Y}\,dt;$$
	$$ dY=\sqrt{\kappa Y(1-Y)}dB_Y -[(\rho_++\rho_-+\rho_0+6)Y-(\rho_++\rho_-+4)]dt.$$
	Let $v(t)=\int_0^t \kappa/Y(s)ds$, $0\le t<T$, and $\til T=\sup v[0,T)$. Let $\til Z(t)=Z(v^{-1}(t))$ and $\til Y(t)=Y(v^{-1}(t))$, $0\le t<\til T$.   Then there is a stopped planar Brownian motion $(\til B_{Z}(t),\til B_Y(t))$, $0\le t<\til T$, such that $\til Z$ and $\til Y$ satisfy the following SDEs on $[0,\til T)$:
$$d\til Z=\sqrt{1-\til Z^2}d\til B_{Z}-(a_{Z}\til Z+b_{Z}) dt,  $$
$$ d\til Y=\til Y\sqrt{ 1-\til Y}d\til B_Y -\til Y(a_Y(\til Y-1)+b_Y)dt,  $$
	where  $a_{Z}=(\rho_++\rho_-+4)/\kappa$, $b_{Z}=(\rho_+-\rho_-)/\kappa$, $b_Y=(\rho_0+2)/\kappa$, $a_{Y}=a_{Z}+b_Y$.

	Let $\Theta=\arcsin (\til Z)\in [-\pi/2,\pi/2]$ and $\Phi=\log(\frac{1+\sqrt{1-\til Y}}{1-\sqrt{1-\til Y}})\in\R_+$. Then $(\til Z(t),\til Y(t))\to (0,1)$ iff $\Theta(t)^2+\Phi(t)^2\to 0$, and  $\Theta$ and $\Phi$ satisfy the following SDEs on $[0,\til T)$:
	$$ d\Theta=d\til B_{Z}-(a_{Z}-\frac 12)\tan\Theta dt-b_{Z} \sec\Theta dt; $$
	$$ d\Phi=-d\til B_Y+ (b_Y -\frac 14 )\coth_2(\Phi) dt+(\frac 34-a_Y)\tanh_2(\Phi) dt.$$
Here $\tanh_2:=\tanh(\cdot /2)$ and $\coth_2:=\coth(\cdot/2)$.
So for some $1$-dimensional Brownian motion $B_{\Theta,\Phi}$, $\Theta^2+\Phi^2$ satisfies the SDE
$$d(\Theta^2+\Phi^2)=2\sqrt{\Theta^2+\Phi^2}dB_{\Theta,\Phi} +2dt+(2b_Y-\frac 12)\Phi \coth_2(\Phi)dt $$ $$+(\frac 32-2a_Y) \Theta\tanh_2(\Phi)dt-(2a_{Z}-1)\Theta \tan\Theta dt-2b_{Z} \sec\Theta dt.$$
From the power series expansions of $\coth_2,\tanh_2,\tan,\sec$ at $0$,
we see that when $\Theta^2+\Phi^2$ is close to $0$, it behaves like a squared Bessel process of dimension $4b_Y+1=\frac 4\kappa(\rho_0+2)+1\ge 2$ because $\rho_0\ge \frac\kappa 4-2$. Thus, a.s.\ $\lim_{t\uparrow \til T} {\Theta(t)^2+\Phi(t)^2}\ne  0$,  as desired.
\end{proof}

\begin{Remark}
	The assumption $\rho_0\ge \frac\kappa4-2$ is used in the last line of the above proof.
\label{Remark-rho0}
\end{Remark}

\begin{Lemma}
  For every $N>0$ and $L\ge 2$, there is $C>0$ depending only on $\kappa,\ulin\rho,N,L$ such that for any $v_0\in [(-1)^+,1^-]$, $v_+\in [1^+,\infty)$ and $v_-\in (-\infty,(-1)^-]$ with $|v_+-v_-|\le L$, if  $(\eta_+,\eta_-)$ is a commuting pair of chordal SLE$_\kappa(2,\ulin\rho)$ curves started from $(1,-1;v_0,v_+,v_-)$, then for any $y\in(0,N]$, $\PP[E_+(y)\cap E_-(y)]\ge C$, where for $\sigma\in\{+,-\}$, $E_\sigma(y)$ is the event that $\eta_\sigma$ reaches $\{\Imm z=y\}$ before $\{\Ree z=\sigma \frac 12\}\cup \{\Ree z=\sigma\frac 32\}$.
  \label{lower-bound}
\end{Lemma}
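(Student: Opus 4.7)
Since $E_\sigma(N)\subset E_\sigma(y)$ for $y\le N$ (the curve must cross $\{\Imm z=y\}$ before reaching $\{\Imm z=N\}$), it suffices to prove the estimate at $y=N$. For $\sigma\in\{+,-\}$, let $\xi_\sigma$ be the three-segment crosscut of $\HH$ consisting of the top and two vertical sides of the rectangle $R_\sigma:=[\sigma/2,\sigma 3/2]\times(0,N]$. Each $\xi_\sigma$ is a simple crosscut of $\HH$ that separates $w_\sigma=\sigma$ from $w_{-\sigma}$ and $\infty$, and $\dist(\xi_+,\xi_-)=1>0$, so $\ulin\xi:=(\xi_+,\xi_-)\in\Xi$. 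Let $\tau_\sigma^{\xi_\sigma}$ be the first time $\eta_\sigma$ hits $\lin{\xi_\sigma}$. Then $E_\sigma(N)$ is exactly the subevent of $\{\tau_\sigma^{\xi_\sigma}<\infty\}$ on which $\eta_\sigma(\tau_\sigma^{\xi_\sigma})$ lies on the top segment $\{\Imm z=N\}$, and on $E_+(N)\cap E_-(N)$ the hulls $K_\sigma(\tau_\sigma^{\xi_\sigma})\subset R_\sigma$ are contained in disjoint closed strips, so $\tau_{\ulin\xi}\in{\cal D}_{\disj}$.

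\textbf{Transfer to the independent coupling.} The event $A:=E_+(N)\cap E_-(N)$ lies in $\F^+_{\tau_+^{\xi_+}}\vee \F^-_{\tau_-^{\xi_-}}\subset \F_{\tau_{\ulin\xi}}$ and is contained in $\{\tau_{\ulin\xi}\in{\cal D}_{\disj}\}$. Applying Corollary \ref{RN-M-ic-cor} (or equivalently Lemma \ref{RN-M-ic}) with $\ulin T=\tau_{\ulin\xi}$,
\BGEN
\PP^{\ulin\rho}[A]=\EE^{\ulin\rho}_i\bigl[{\bf 1}_{A}\,M(\tau_{\ulin\xi})\bigr].
\EDEN
By hypothesis $|v_+-v_-|\le L$, so Lemma \ref{uniform} applied to $\ulin\xi$ and $R=L$ yields a constant $C_0>0$ depending only on $\kappa,\ulin\rho,N,L$ (since $\ulin\xi$ depends only on $N$) such that $|\log M|\le C_0$ on $[\ulin 0,\tau_{\ulin\xi}]$; in particular $M(\tau_{\ulin\xi})\ge e^{-C_0}$ on $A$. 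Under $\PP^{\ulin\rho}_i=\PP^{\ulin\rho}_+\times\PP^{\ulin\rho}_-$, the curves $\eta_+$ and $\eta_-$ are independent, so
\BGEN
\PP^{\ulin\rho}_i[A]=\PP^{\ulin\rho}_+[E_+(N)]\cdot\PP^{\ulin\rho}_-[E_-(N)].
\EDEN
Combining these two estimates gives $\PP^{\ulin\rho}[A]\ge e^{-C_0}\prod_\sigma \PP^{\ulin\rho}_\sigma[E_\sigma(N)]$, so it remains to bound each marginal probability below uniformly.

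\textbf{The single-curve marginal bound.} Each marginal $\PP^{\ulin\rho}_\sigma[E_\sigma(N)]$ is the probability that a single chordal SLE$_\kappa(2,\ulin\rho)$ from $w_\sigma=\sigma$ with force points at $w_{-\sigma}=-\sigma$, $v_0,v_+,v_-$ exits the rectangle $R_\sigma$ through the top before the sides. For chordal SLE$_\kappa$ this lower bound is immediate from conformal invariance and continuity of SLE$_\kappa$ in $\HH$. To transfer to SLE$_\kappa(2,\ulin\rho)$ one uses Girsanov: the Radon--Nikodym derivative against plain SLE$_\kappa$ is a local martingale built from the force-point functions, which is uniformly bounded on the event $E_\sigma(N)$ \emph{provided the force points are separated from the curve}. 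In our setting three of the four force points ($w_{-\sigma}=-\sigma$ and $v_{-\sigma}$, together with $v_0$ when $v_0\in[-1/2,1/2]$ say) are at distance $\ge 1/2$ from $R_\sigma$ and give bounded contributions. The force points $v_+\in[1^+,\infty)$ and $v_0\in[(-1)^+,1^-]$, however, may sit infinitesimally close to $w_\sigma$.

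\textbf{Main obstacle.} The delicate issue is exactly the possibility that $v_+=1^+$ (or symmetrically $v_0=1^-$, $v_-=(-1)^-$), in which case the force point coincides with the starting point of $\eta_\sigma$ and the naive RN derivative against chordal SLE$_\kappa$ degenerates at $t=0$. I would handle this by splitting the process at a small deterministic time: first argue that on a short initial interval $[0,\eps]$ the SLE$_\kappa(2,\ulin\rho)$ curve reaches some fixed tube inside $R_\sigma$ with uniformly positive probability (using scaling and the fact that $\rho_\nu>\max\{-2,\kappa/2-4\}$ ensures no continuation threshold, together with continuity of the SDE coefficients in the force-point positions), and then use the strong Markov/DMP for SLE$_\kappa(2,\ulin\rho)$ to reduce the remainder to a single-curve estimate in a domain where all force points are now at distance $\gtrsim 1$ from the curve, where Girsanov comparison with chordal SLE$_\kappa$ gives a bounded RN derivative and a uniform lower bound. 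Combining, we obtain $\PP^{\ulin\rho}_\sigma[E_\sigma(N)]\ge c_\sigma>0$ depending only on $\kappa,\ulin\rho,N,L$, which together with the previous paragraph concludes the proof with $C=c_+c_-e^{-C_0}$.
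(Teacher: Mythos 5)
Your overall structure is exactly the paper's: reduce to $y=N$ by monotonicity of $E_\sigma(y)$ in $y$, choose the crosscuts $\xi_\sigma=\HH\cap\pa([\sigma\tfrac12,\sigma\tfrac32]\times[0,N])$, invoke Lemma~\ref{RN-M-ic} (or Corollary~\ref{RN-M-ic-cor}) to compare with the independent coupling $\PP^{\ulin\rho}_i=\PP^{\ulin\rho}_+\times\PP^{\ulin\rho}_-$, bound the Radon--Nikodym derivative using Lemma~\ref{uniform}, and reduce to a lower bound on the single-curve marginal probabilities $\PP^{\ulin\rho}_\sigma[E_\sigma(N)]$. This part of the proposal is correct and is the same proof as in the paper.

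The gap is the single-curve lower bound. The paper dispatches it in one line by citing \cite[Lemma~2.4]{MW}, which already gives a uniform positive lower bound for the probability that a chordal SLE$_\kappa(\ulin\rho)$ exits a rectangle through the top, uniformly over the allowed force-point configurations. You instead try to reprove this via Girsanov comparison with plain chordal SLE$_\kappa$, and your sketch does not go through as stated. The step that fails is the claim that after a short deterministic time $\eps$ you are ``in a domain where all force points are now at distance $\gtrsim 1$ from the curve.'' Fixed distances in the original domain do not help: what enters the RN derivative are the differences $\ha w(t)-\ha v_\nu(t)$ between the driving function and the force-point functions, and when a force point starts adjacent to the seed ($v_+=w_\sigma^+$ or $v_0=w_\sigma^-$) the corresponding force-point function remains $d_{K_t}$ or $c_{K_t}$, which stays adjacent to the hull. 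Whether $\ha w(t)-\ha v_\nu(t)$ is bounded away from zero depends on $\rho_\nu$: for $\rho_\nu<\frac\kappa2-2$ (which is permitted here, since the hypothesis is only $\rho_\nu>\max\{-2,\frac\kappa2-4\}$) the curve returns to touch that boundary arc with positive probability, so $\ha w(t)-\ha v_\nu(t)$ vanishes infinitely often and the naive Girsanov density against chordal SLE$_\kappa$ is not bounded on $E_\sigma(N)$. To make your plan work you would have to either control the RN derivative by a more careful path-by-path estimate on the drift integral, or (as is more standard, and closer to what Miller--Wu actually do) establish continuity of $\PP^{\ulin\rho}_\sigma[E_\sigma(N)]$ in the force-point positions together with strict positivity at each configuration, then invoke compactness of the configuration space $\{|v_+-v_-|\le L\}$. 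As written, the sketch is not a proof; citing the known single-curve estimate, as the paper does, closes the gap cleanly.
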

\begin{proof}
In this proof, a constant depends only on $\kappa,\ulin\rho,N,L$. Since $E_\pm(y)$ is decreasing in $y$, it suffices to prove that $\PP[E_+(N)\cap E_-(N)]$ is bounded from below by a positive constant. By \cite[Lemma 2.4]{MW}, there is a constant $\til C >0$  such that $\PP[E_\sigma(N)]\ge \til C$ for $\sigma\in \{+,-\}$. Thus, if $(\eta_+',\eta_-')$ is an independent coupling of $\eta_+$ and $\eta_-$, then the events $E_\pm'(N)$ for $(\eta_+',\eta_-')$ satisfy that $\PP[E_+'(N)\cap E_-'(N)]\ge \til C^2$. Let $\xi_\sigma=\HH\cap \pa\{x+iy:|x-\sigma 1|\le \frac 12,0\le y\le N\}$, $\sigma\in\{+,-\}$. Since the law of $(\eta_+,\eta_-)$ restricted to $\F^+_{\tau^+_{\xi_+}}\vee \F^-_{\tau^-_{\xi_-}}$ is absolutely continuous w.r.t.\ that of $(\eta_+',\eta_-')$ (Lemma \ref{RN-M-ic}), and the logarithm of the Radon-Nikodym derivative is bounded in absolute value by a constant (Lemma \ref{uniform}), we get the desired lower bound for $\PP[E_+(N)\cap E_-(N)]$.
\end{proof}

\begin{Corollary}
  For any $r\in(0,1]$, there is $\delta>0$ depending only on $\kappa,\ulin\rho,r$ such that the following holds. Suppose $w_+>w_-\in\R$, $v_0\in [w_-^+,w_+^-]$, $v_+\in [w_+^+,\infty)$, $v_-\in (-\infty,w_-^-]$ satisfy $|v_+-v_0|=|v_--v_0|$ and $|w_+-w_-|\ge r|v_+-v_-|$. Let $(\eta_+,\eta_-)$ be a commuting pair of chordal SLE$_\kappa(2,\ulin\rho)$ curves started from $(w_+,w_-;v_0,v_+,v_-)$. Let $\ulin\xi=(\xi_+,\xi_-)$, where $\xi_\sigma=\HH\cap \pa\{x+iy: |x-w_\sigma|\le |w_+-w_-|/4,0\le y\le e^2|v_+-v_-|\}$, $\sigma\in\{+,-\}$.  Let $\tau^u_{\ulin\xi}$ be as defined in Section \ref{section-diffusion}.  Then $\PP[\tau^u_{\ulin\xi}\ge 1]\ge \delta$. \label{lower-bound-Cor}
\end{Corollary}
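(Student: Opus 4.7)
The plan is to reduce to Lemma~\ref{lower-bound} via an affine change of coordinates, and then to combine the resulting one-curve estimate with the Beurling control provided by Lemma~\ref{Beurling}.

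First I would normalize. By translating and dilating $\HH$, take $v_\pm=\pm 1$, $v_0=0$, so $|v_+-v_-|=2$. The affine map $\psi(z)=(2z-(w_++w_-))/(w_+-w_-)$ sends $(w_+,w_-)$ to $(1,-1)$ and turns $(\eta_+,\eta_-)$ into a commuting pair of $\SLE_\kappa(2,\ulin\rho)$ curves started from $(1,-1;\psi(v_0),\psi(v_+),\psi(v_-))$ with $|\psi(v_+)-\psi(v_-)|=4/(w_+-w_-)\le 2/r$. Because the defining relation $|V_\sigma(\ulin u(t))-V_0(\ulin u(t))|=e^{2t}|v_\sigma-v_0|$ is invariant under positive affine maps, the $\ulin u$-parameter $t$ (and hence $\tau^u_{\ulin\xi}$) is preserved by $\psi$, so it suffices to produce the bound in the normalized pair. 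Applying Lemma~\ref{lower-bound} with $L=2/r$ and $N=4e^2/r$ yields a constant $C=C(\kappa,\ulin\rho,r)>0$ with $\PP[E_+(N)\cap E_-(N)]\ge C$. Pulling back through $\psi^{-1}$, the same lower bound holds for $F_+\cap F_-$, where $F_\sigma$ is the event that $\eta_\sigma$ reaches height $N(w_+-w_-)/2\ge 4e^2=2 e^2|v_+-v_-|$ while remaining strictly inside the strip $\{|\Ree z-w_\sigma|<(w_+-w_-)/4\}$.

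Next I would verify the inclusion $F_+\cap F_-\subset\{\tau^u_{\ulin\xi}\ge 1\}$. Since ${\cal D}=\R_+^2$, we have $T^u=\infty$ by Lemma~\ref{Beurling}, and for every $t<1$
$$\rad_{v_0}(\eta_\sigma[0,u_\sigma(t)])\le 2e^{2t}<2e^2=e^2|v_+-v_-|.$$
On $F_\sigma$ this strict bound falls well short of the cutoff height $4e^2$, so $\eta_\sigma$ has not yet reached that height and therefore, by the very definition of $F_\sigma$, has not yet exited its strip; simultaneously the trace lies strictly below the top of $\xi_\sigma$, which sits at height $e^2|v_+-v_-|$. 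Together these two facts place $\eta_\sigma[0,u_\sigma(t)]$ strictly inside the open rectangle bounded by $\xi_\sigma$, and therefore it cannot meet $\lin{\xi_\sigma}$; thus $u_\sigma(t)<\tau^\sigma_{\xi_\sigma}$ for every $t<1$. Letting $t\uparrow 1$ and using continuity of $u_\sigma$ gives $u_\sigma(1)\le\tau^\sigma_{\xi_\sigma}$ for both $\sigma\in\{+,-\}$, i.e., $\tau^u_{\ulin\xi}\ge 1$; we then take $\delta:=C$.

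I do not anticipate any serious obstacles, since the probabilistic content is already packaged in Lemmas~\ref{lower-bound} and~\ref{Beurling}; the only delicate part is aligning the strip widths and the cutoff heights so that Beurling's deterministic estimate rules out hitting $\xi_\sigma$ on each of its three pieces (two vertical sides and one top) before $\ulin u$-time $1$.
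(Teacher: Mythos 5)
Your proof is correct and follows essentially the same route as the paper's: both reduce the probability bound to Lemma~\ref{lower-bound} after an affine normalization, and both use the upper bound in (\ref{V-V'}) to convert the geometric event into the conclusion $\tau^u_{\ulin\xi}\ge 1$. The only cosmetic difference is that the paper works directly with the event $E$ that each $\eta_\sigma$ first hits $\xi_\sigma$ on its top edge (height $e^2|v_+-v_-|$) and argues at the single time $\tau^u_{\ulin\xi}$, whereas you introduce a sub-event (reaching a larger height $N(w_+-w_-)/2$ inside the strip) and verify the inclusion by running through all $t<1$; both are valid, though the paper's version is slightly more economical since it matches the threshold to the actual height of $\xi_\sigma$.
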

\begin{proof}
  Let $E$ denote the event that for both $\sigma\in\{+,-\}$, $\eta_\sigma$ hits $\xi_\sigma$ at its top for the first time. Suppose that $E$ happens. By the definition of $\tau^u_{\ulin\xi}$, for one of $\sigma\in\{+,-\}$, the imaginary part of $\eta_\sigma(u_\sigma(\tau^u_{\ulin\xi}))$  is $e^2|v_+-v_-|$. So $\rad_{v_0}(\eta_\sigma[0,u_\sigma(\tau^u_{\ulin\xi})])\ge e^2 |v_+-v_-|$. By (\ref{V-V'}) we then get $\tau^u_{\ulin\xi}\ge 1$. Thus, $\PP[\tau^u_{\ulin\xi}\ge 1]\ge \PP[E]$, which by  Lemma \ref{lower-bound} and scaling is bounded from below by a positive constant depending only on $\kappa,\ulin\rho,r$ whenever $|v_+-v_-|\le |w_+-w_-|/r$.
 \end{proof}

\begin{proof}[Proof of Theorem \ref{Thm-SDE-whole-R}]
We have known that (\ref{SDE-R}) holds up to $T^u_{\disj}$.
We will combine it with the DMP of commuting pair of chordal SLE$_\kappa(2,\ulin\rho)$ curves (Lemma \ref{DMP}).

Let $\eta^0_\pm=\eta_\pm$. Let ${\cal G}^0$ be the trivial $\sigma$-algebra. We will inductively define the following random objects. Let $n=1$. We have the $\sigma$-algebra ${\cal G}^{n-1}$ and the pair $(\eta^{n-1}_+,\eta^{n-1}_-)$, whose law conditionally on ${\cal G}^{n-1}$ is that of a commuting pair of chordal SLE$_\kappa(2,\ulin\rho)$ curves. Let $K^{n-1},\mA^{n-1},W^{n-1}_\pm,V^{n-1}_0,V^{n-1}_\pm$,  be respectively the hull function, capacity function, driving functions and force point functions. Let ${\cal D}^{n-1}_{\disj}$ and $\Xi^{n-1}$ be respectively the ${\cal D}_{\disj}$ and $\Xi$ defined  for the pair. Let $\F^{n-1}$ be the $\R_+^2$-indexed filtration defined by $ \F^{n-1}_{(t_+,t_-)}=\sigma({\cal G}^{n-1},\eta_\sigma^{n-1}|_{[0,t_\sigma]},\sigma\in\{+,-\}) $. Let $\ulin u^{n-1}$ be the time curve for $(\eta^{n-1}_+,\eta^{n-1}_-)$ as defined in Section \ref{time curve}, which exits for $n=1$ because we assume that $|v_+-v_0|=|v_0-v_-|$.

Let $\ulin\xi^{n-1}$ be the $\ulin\xi$ obtained from applying Corollary \ref{lower-bound-Cor} to $w_\pm=W^{n-1}_\pm(\ulin 0)$ and $v_\pm=V^{n-1}_\pm(\ulin 0)$. Then it is a ${\cal G}^{n-1}$-measurable random element in $\Xi^{n-1}$. Let $\tau^{n-1}_{\ulin\xi^{n-1}}$ be the random time $\tau^u_{\ulin\xi}$ introduced in Section \ref{section-diffusion} for the $(\eta^{n-1}_+,\eta^{n-1}_-)$ and $\ulin\xi^{n-1}$ here. Let $\ulin\tau^{n-1}=(\tau^{n-1}_+,\tau^{n-1}_-)=\ulin u^{n-1}(\tau^{n-1,u}_{\ulin\xi^{n-1}})$. Then $\ulin\tau^{n-1}$ is a finite $\F^{n-1}$-stopping time that lies in ${\cal D}^{n-1}_{\disj}$. Let ${\cal G}^n=\F^{n-1}_{\ulin\tau^{n-1}}$. We then obtain by Lemma \ref{DMP} a random commuting pair of chordal Loewner curves $(\til\eta^n_+,\til\eta^n_-)$ with some speeds, which is the part of $(\eta^{n-1}_+,\eta^{n-1}_-)$ after ${\ulin\tau^{n-1}}$ up to a conformal map, and
the normalization of $(\til\eta^n_+,\til\eta^n_-)$, denoted by $(\eta^n_+,\eta^n_-)$, conditionally on ${\cal G}^n$, is a commuting pair of chordal SLE$_\kappa(2,\ulin\rho)$ curve started from $(W^{n-1}_+,W^{n-1}_-;V^{n-1}_0,V^{n-1}_+,V^{n-1}_-)|_{\ulin\tau^{n-1}}$.
If for some $\sigma\in\{+,-\}$ and $\nu\in\{0,+,-\}$, $V^{n-1}_\nu(\ulin \tau^{n-1})=W^{n-1}_\sigma(\ulin \tau^{n-1})$, then as a force point, $V^{n-1}_\nu(\ulin \tau^{n-1})$ is treated as $(W^{n-1}_\sigma(\ulin \tau^{n-1}))^{\sign(v_\nu-w_\sigma)}$. By the assumption of $\ulin u^{n-1}$, we have $|V^{n-1}_+-V^{n-1}_0|=|V^{n-1}_--V^{n-1}_0|$ at $\ulin\tau^{n-1}$. So we may increase $n$ by $1$ and repeat the above construction.

Iterating the above procedure, we obtain two sequences of pairs $(\eta^n_+,\eta^n_-)$, $n\ge 0$, and  $(\til\eta^n_+,\til\eta^n_-)$, $n\ge 1$. They satisfy that for any $n\in\N$, $(\eta^n_+,\eta^n_-)$ is the normalization of $(\til\eta^n_+,\til\eta^n_-)$, and $(\til\eta^n_+,\til\eta^n_-)$ is the part of $(\eta^{n-1}_+,\eta^{n-1}_-)$ after ${\ulin\tau^{n-1}}$ up to a conformal map. Let $\phi^n_\pm$ be the speed of $\til\eta^n_\pm$, and   $\phi^n_\oplus(t_+,t_-)=(\phi^n_+(t_+),\phi^n_-(t_-))$. By Lemma \ref{DMP-determin-1}, for any $n\in\N$ and $Z\in\{ W_+,W_-,V_0,V_+,V_-\}$, $\til Z^n=Z^n\circ \phi^n_\oplus$ and  $\til Z^n= Z^{n-1}(\ulin\tau^{n-1}+\cdot)$.


 Recall that, for $n\ge 0$, $\ulin u^{n}$ is characterized by the property that
$ |V^{n}_\pm(\ulin u^{n}(t))-V^{n}_0(\ulin u^{n}(t))|=e^{2t} |V^{n}_\pm(\ulin 0)-V^{n}_0(\ulin 0)|$, $t\ge 0$.
So we get $\ulin u^n=\phi^n_\oplus(\ulin u^{n-1}(\tau^{n-1}_{\ulin\xi^{n-1}}+\cdot)-\ulin u^{n-1}(\tau^{n-1}_{\ulin\xi^{n-1}}))$,
which then implies that $Z^{n-1}\circ \ulin u^{n-1}(\tau^{n-1}_{\ulin\xi^{n-1}}+\cdot)=Z^{n}\circ \ulin u^{n}$, $Z\in \{W_+,W_-,V_0,V_+,V_-\}$. Let $R^n_\pm$ be the $R_\pm$ defined in Section \ref{section-diffusion} for $(\eta^n_+,\eta^n_-)$.  Then we have
$R^{n-1}_\pm(\tau^{n-1}_{\ulin\xi^{n-1}}+\cdot)=R^n_\pm$. Let $T^n=\sum_{j=0}^{n-1} \tau^{j}_{\ulin \xi^j}$, $n\ge 0$.  Since $R_\pm= R_\pm^0$, we get $R_\pm(T^n+\cdot)=R^n_\pm$. For $n\ge 0$, since conditionally on ${\cal G}^{n}$, $(\eta^n_+,\eta^n_-)$ has the law of a commuting pair of chordal SLE$_\kappa(2,\ulin\rho)$ curves started from $(W^n_+,W^n_-;V^n_0,V^n_+,V^n_-)|_{\ulin 0}$, by the previous subsection, there is a stopped two-dimensional Brownian motion $(B^n_+,B^n_-)$ w.r.t.\ $\F^n_{\ulin u^n(\cdot)}$ such that $R^n_+$ and $R^n_-$ satisfy the  $\F^n_{\ulin u^n(\cdot)}$-adapted SDE (\ref{SDE-R}) with $(B^n_+,B^n_-)$ in place of $(B_+,B_-)$ up to $\tau^n_{\ulin\xi^n}$. Let $T^\infty=\lim_{n\to\infty}T^n= \sum_{j=0}^{\infty} \tau^{j}_{\ulin \xi^j}$, and define a continuous processes $B_\pm$ on $[0,T^\infty)$ such that $B_\pm(t)-B_\pm(T^n)=B^n(t -T^n)$ for each $t\in[T^n,T^{n+1}]$ and $n\ge 0$. Then $(B_+,B_-)$ is a stopped two-dimensional Brownian motion, and $R_+$ and $R_-$ satisfy (\ref{SDE-R}) up to $T^\infty$. It remains to show that  a.s.\ $T^\infty=\infty$.

Suppose it does not hold that a.s.\ $T^\infty=\infty$. By Lemma \ref{lemma-0-1}, there is an event $E$ with positive probability and a number $r\in(0,1]$ such that on the event $E$, $R_++R_-\ge 2r$ on $[0,T^\infty)$.
For $n\ge 0$, let $E_n=\{|W^n_+(\ulin 0)-W^n_-(\ulin 0)|\ge r |V^n_+(\ulin 0)-V^n_-(\ulin 0)|\}=\{R^n_+(0)+R^n_-(0)\ge 2r\}$, which is ${\cal G}^n$-measurable. Since $R^n_\pm=R_\pm(T^n+\cdot)$, we get $E\subset \bigcap E_n$. Let  $A_n=\{\tau^n_{\ulin\xi^n}\ge 1\}$. By Corollary \ref{lower-bound-Cor}, there is $\delta>0$ depending only on $\kappa,\ulin\rho,r$ such that for $n\ge 0$, $\PP[A_n|{\cal G}^n, E_n]\ge \delta$. Since $E\subset \{\sum_n \tau^n_{\ulin\xi^n}<\infty\}$, we get $E\subset \liminf (E^n\cap A_n^c)$. For any $m\ge n\in\N$,
$$\PP[  \bigcap_{k=n}^m (E_{k}\cap A_k^c)]=\EE[\PP[  \bigcap_{k=n}^m (E_{k}\cap A_k^c)|{\cal G}^m]]\le (1-\delta)\PP[  \bigcap_{k=n}^{m-1} (E_{k}\cap A_k^c)]. $$
So we get $\PP[  \bigcap_{k=n}^m (E_{k}\cap A_k^c)]\le (1-\delta)^{n-m+1}$, which implies that $\PP[ \bigcap_{k=n}^\infty (E_k\cap A_k^c)]=0$ for every $n\in\N$, and so $\PP[E]=0$. This contradiction completes the proof.
\end{proof}

\subsection{Transition density}
In this subsection, we are going to use orthogonal polynomials to derive the transition density of $\ulin R(t)=(R_+(t),R_-(t))$, $t\ge 0$,   against the Lebesgue measure restricted to $[0,1]^2$. A similar approach was first used in \cite[Appendix B]{tip} to calculate the transition density of radial Bessel processes, where one-variable orthogonal polynomials were used. Two-variable orthogonal polynomials were used in \cite[Section 5]{Two-Green-interior} to calculate the transition density of a two-dimensional diffusion process. Here we will use another family of two-variable orthogonal polynomials to calculate the transition density of the $(\ulin R)$ here. In addition, we are going to derive the invariant density of $(\ulin R)$, and estimate the convergence of the transition density to the invariant density.

Let $X=R_+-R_-$ and $Y=1-R_+R_-$. Since $R_+$ and $R_-$ satisfy (\ref{SDE-R}) throughout $\R_+$, $X$ and $Y$ then satisfy (\ref{dX},\ref{dY},\ref{d<X,Y>}) throughout $\R_+$. Moreover, we have $(X,Y)\in \lin\Delta\sem \{(0,1)\}$, where $\Delta=\{(x,y)\in\R^2:0<|x|<y<1\}$. We will first find the transition density of $(X(t),Y(t))$. Assume that the transition density $p(t,(x,y),(x^*,y^*))$ exists, and is smooth in $(x,y)$, then it should satisfies the Kolmogorov's backward equation:
\BGE -\pa_t p+{\cal L} p=0,\label{PDE-boundary}\EDE
where ${\cal L}$ is the second order differential operator defined by
$${\cal L}=\frac{\kappa}{2} (y-x^2)\pa_x^2+\kappa x(1-y)\pa_x\pa_y+\frac \kappa 2 y(1-y)\pa_y^2$$
$$-[(\rho_++\rho_-+\rho_0+6)x+(\rho_+-\rho_-)]\pa_x-[(\rho_++\rho_-+\rho_0+6)y-(\rho_++\rho_-+4)]\pa_y.$$
We perform a change of coordinate $(x,y)\mapsto (r,h)$ by $x=rh$ and $y=h$ at $y\ne 0$. Direct calculation shows that
$$\pa_r=h\pa_x,\quad \pa_h=r\pa_x+\pa_y,\quad \pa_r^2=h^2\pa_x^2,\quad
\pa_h^2=r^2\pa_x^2 +2r\pa_x \pa_y+\pa_y^2,\quad \pa_r\pa_h=rh\pa_x^2+h\pa_x\pa_y.$$
Define  $$\alpha_0=\frac {2 }\kappa(\rho_0+2)-1,\quad\alpha_\pm=\frac 2\kappa(\rho_\pm+2)-1, \quad  \beta=\alpha_++\alpha_-+1;$$
$${\cal L}^{(r)}=(1-r^2)\pa_r^2-  [(\alpha_++\alpha_-+2 )r+(\alpha_+-\alpha_-)]\pa_r;$$
$${\cal L}^{(h)}=h(1-h)\pa_h^2- [(\alpha_0+\beta+2)h-(\beta+1)]\pa_h.$$
Then in the $(r,h)$-coordinate, ${\cal L}=\frac{\kappa}{2} [{\cal L}^{(h)}+\frac 1 h {\cal L}^{(r)}]$. Let
$$  \lambda _n=-n(n+\alpha_0+\beta+1),\quad\lambda^{(r)}_n= -n(n+\beta),\quad n\ge 0.$$
 Direct calculation shows that
\BGE [{\cal L}^{(h)}+\frac 1 h \lambda_n^{(r)}]h^n =h^n [{\cal L}^{(h)}-2n(h-1)\pa_h+\lambda_n ],\label{Lie}\EDE
where each $h^n$ in the formula is understood as a multiplication by the $n$-th power of $h$.
From (\ref{Jacobi-ODE}) we know that Jacobi polynomials $P_n^{(\alpha_+,\alpha_-)}(r)$, $n\ge0$,  satisfy that
\BGE \L^{(r)} P_n^{(\alpha_+,\alpha_-)}(r)=\lambda^{(r)}_n  P_n^{(\alpha_+,\alpha_-)}(r),\quad n\ge 0;\label{r-eigen}\EDE
and the functions $P_m^{(\alpha_0,\beta+2n)}(2h-1)$, $m\ge 0$, satisfy that
\BGE (\L^{(h)}-2n(h-1)\pa_h+\lambda_n )P_m^{(\alpha_0,\beta+2n)}(2h-1)=\lambda_{m+n} P_m^{(\alpha_0,\beta+2n)}(2h-1),\quad m\ge 0.\label{h-eigen}\EDE

For  $n\ge 0$, define a homogeneous  two-variable polynomial $Q_n^{(\alpha_+,\alpha_-)}(x,y)$ of degree $n$ such that
$Q_n^{(\alpha_+,\alpha_-)}(x,y)=y^n P_n^{(\alpha_+,\alpha_-)}(x/y)$ if $y\ne 0$.
It has nonzero coefficient for $x^n$. For every pair of integers $n,m\ge 0$, define a two-variable polynomial $ v_{n,m}(x,y)$ of degree $n+m$ by
$$ v_{n,m}(x,y)=P^{(\alpha_0,\beta+2n)}_m(2y-1) Q^{(\alpha_+,\alpha_-)}_n(x,y).$$
Then $ v_{n,m}$ is also a polynomial in $r,h$ with the expression:
\BGE  v_{n,m}(r,h)=h^n P^{(\alpha_0,\beta+2n)}_m(2h-1)P^{(\alpha_+,\alpha_-)}_n(r).\label{vhr}\EDE
By (\ref{Lie},\ref{r-eigen},\ref{h-eigen}), on $\R^2\sem\{y\ne 0\}$,
$$\frac 2\kappa {\cal L} v_{n,m}=\frac 2\kappa[{\cal L}^{(h)}+\frac {1} h {\cal L}^{(r)}] v_{n,m}
=  [{\cal L}^{(h)}+\frac {1} h \lambda_n^{(r)}] (h^n P^{(\alpha_0,\beta+2n)}_m(2h-1)P^{(\alpha_+,\alpha_-)}_n(r))$$
$$=h^n [{\cal L}^{(h)}-2n(h-1)\pa_h+\lambda_n ] (P^{(\alpha_0,\beta+2n)}_m(2h-1)P^{(\alpha_+,\alpha_-)}_n(r))
=\lambda_{n+m}  v_{n,m}.$$
Since $v_{n,m}$ is a polynomial in $x,y$, by continuity the above equation holds throughout $\R^2$.
Thus, for every $n,m\ge 0$, $ v_{n,m}(x,y)e^{\frac \kappa 2\lambda_{n+m}t}$ solves (\ref{PDE-boundary}), and the same is true for any linear combination of such functions.
From (\ref{vhr}) we get an upper bound of $\|v_{n,m}\|_\infty:=\sup_{(x,y)\in\Delta} |v_{n,m}(x,y)|$:
\BGE  \|  v_{n,m}\|_\infty\le   \|P_m^{(\alpha_0,\beta+2n)}\|_\infty \| P_n^{(\alpha_+,\alpha_-)}\|_\infty,
\label{supernorm'}\EDE
where the supernorm of the Jacobi polynomials are taken on $[-1,1]$ as in (\ref{super-exact},\ref{super-upper}).

Since $P_n^{(\alpha_+,\alpha_-)}(r)$, $n\ge0$, are mutually orthogonal w.r.t.\ the weight function $\Psi^{(\alpha_+,\alpha_-)}(r)$, and for any fixed $n\ge 0$, $P_m^{(\alpha_0,\beta+2n)}(2h -1)$, $m\ge 0$, are mutually orthogonal w.r.t.\ the weight function $\Psi^{(\alpha_0,\beta+2n)}(2h-1)=2^{\alpha_0+\beta+2n}{\bf 1}_{(0,1)}(h)(1-h)^{\alpha_0}h^{\beta+2n}$, using a change of coordinates we conclude that $ v_{n,m}(x,y)$, $n,m\in\N\cup\{0\}$, are mutually orthogonal w.r.t.\ the weight function
$$\Psi(x,y):={\bf 1}_\Delta (x,y) (y-x)^{\alpha_+}(y+x)^{\alpha_-} (1-y)^{\alpha_0}.$$
Moreover, we have
\BGE \| v_{n,m}\|^2_\Psi=2^{-(\alpha_0+\beta+2n+1)}\|P_m^{(\alpha_0,\beta+2n)} \|^2_{\Psi^{(\alpha_0,\beta+2n)} }\cdot \|P^{(\alpha_+,\alpha_-)}_n\|^2_{\Psi^{(\alpha_+,\alpha_-)}}.\label{L2-norm'}\EDE

Let $f(x,y)$ be a polynomial in two variables. Then $f$ can be expressed by a linear combination  $f(x,y)=\sum_{n=0}^\infty\sum_{m=0}^\infty a_{n,m}v_{n,m}(x,y)$, where  $a_{n,m}:=\langle f,v_{(n,m)}\rangle_\Psi/\|v_{n,m}\|_\Psi^2$ are zero for all but finitely many $(n,m)$. In fact,  every polynomial in $x,y$ of degree $\le k$ can be expressed as a linear combination of $v_{n,m}$ with $n+m\le k$. In fact, the number of such $v_{n,m}$ is $\frac{(k+1)(k+2)}2$.
Define $$f(t,(x,y))=\sum_{n=0}^\infty\sum_{m=0}^\infty  a_{n,m} v_{n,m}(x,y)e^{\frac\kappa 2\lambda_{n+m}t}=\sum_{n=0}^\infty\sum_{m=0}^\infty  \frac{\langle f,v_{n,m}\rangle_\Psi} {\|v_{n,m}\|_\Psi^2} \cdot v_{n,m}(x,y) e^{\frac\kappa 2\lambda_{n+m}t}.$$
Then $f(t,(x,y))$ solves (\ref{PDE-boundary}). Let $(X(t),Y(t))$ be a diffusion process in $\Delta$, which solves (\ref{dX},\ref{dY},\ref{d<X,Y>}) with initial value $(x,y)$.  Fix $t_0>0$ and define $M_t=f(t_0-t,(X(t),Y(t)))$, $0\le t\le t_0$. By It\^o's formula, $M$ is a bounded martingale, which implies that
$$\EE[f(X({t_0}),Y(t_0))]=\EE[M_{t_0}]=M_0=f(t_0,(x,y))  $$
\BGE =\sum_{n=0}^\infty\sum_{m=0}^\infty \int\!\!\int_{\Delta} f(x^*,y^*) \Psi(x^*,y^*)\frac {v_{n,m}(x^*,y^*) v_{n,m}(x,y)}{\|v_{n,m}\|_\Psi^2}\cdot e^{\frac\kappa 2\lambda_{n+m}t_0}  dx^*dy^*.\label{Ef-boundary}\EDE

For $t>0$, $(x,y)\in\lin\Delta $, and $(x^*,y^*)\in \Delta $, define
\BGE p_t((x,y),(x^*,y^*))=\Psi(x^*,y^*)\sum_{n=0}^\infty\sum_{m=0}^\infty  \frac {v_{n,m}(x,y) v_{n,m}(x^*,y^*)}{\|v_{n,m}\|_\Psi^2}\cdot e^{\frac\kappa 2\lambda_{n+m}t}. \label{prs}\EDE
Let $p_\infty(x^*,y^*)=  {\bf 1}_{\Delta}(x^*,y^*)\Psi(x^*,y^*) /\|1\|_\Psi^2$. Note that $\lambda_0=0$ and $v_{0,0}\equiv 1$ since $P^{\alpha_0,\beta}_0=P^{\alpha_+,\alpha_-}_0\equiv 1$. So $p_\infty(x^*,y^*)$ corresponds to the first term in the series.

\begin{Lemma}
	For any $t_0>0$, the series in (\ref{prs}) (without the factor $\Psi(x^*,y^*)$) converges uniformly on $[t_0,\infty)\times \lin\Delta\times \lin\Delta$, and there is $C_{t_0}\in(0,\infty)$ depending only on $\kappa$, $\ulin\rho$, and $t_0$ such that for any $(x,y)\in\lin\Delta$ and $(x^*,y^*)\in\Delta$,
	\BGE |p_t((x,y),(x^*,y^*))-p_\infty(x^*,y^*)|\le  C_{t_0} e^{-  (\rho_++\rho_-+\rho_0+6)t}\Psi(x^*,y^*),\quad t\ge t_0 .\label{asym}\EDE
	Moreover, for any $t>0$ and $(x^*,y^*)\in \lin \Delta$,
	\BGE p_\infty (x^*,y^*)=\int\!\!\int_{\Delta} p_\infty (x,y)p_t((x,y),(x^*,y^*))dxdy.\label{invar}\EDE \label{asym-lemma}
\end{Lemma}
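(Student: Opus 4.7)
The plan is to reduce everything to a uniform bound on the general term of the series in (\ref{prs}), then read off both the uniform convergence and the rate (\ref{asym}) from the exponential factor $e^{\frac{\kappa}{2}\lambda_{n+m}t}$, and finally derive the invariance (\ref{invar}) from the orthogonality of the $v_{n,m}$'s against the constant function.

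First I would control the ratio $\|v_{n,m}\|_\infty^2/\|v_{n,m}\|_{\Psi}^2$. Combining the supernorm bound (\ref{supernorm'}) with the factorization (\ref{L2-norm'}), this is bounded by
\[
\frac{\|P_m^{(\alpha_0,\beta+2n)}\|_\infty^2}{2^{-(\alpha_0+\beta+2n+1)}\|P_m^{(\alpha_0,\beta+2n)}\|_{\Psi^{(\alpha_0,\beta+2n)}}^2}\cdot\frac{\|P_n^{(\alpha_+,\alpha_-)}\|_\infty^2}{\|P_n^{(\alpha_+,\alpha_-)}\|_{\Psi^{(\alpha_+,\alpha_-)}}^2}.
\]
Using (\ref{norm}), (\ref{super-exact}) and (\ref{super-upper}), together with Stirling's formula applied to the Gamma ratios (where one must carefully track that the second Jacobi parameter $\beta+2n$ grows linearly in $n$), I would verify that this ratio is at most a polynomial $P(n,m)$ in $n$ and $m$. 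The key computation is that the $2^{-(\alpha_0+\beta+2n+1)}$ prefactor exactly cancels the $2^{\alpha_0+\beta+2n+1}$ appearing in the Jacobi norm formula, so that no exponential growth in $n$ is introduced by the shifting weight.

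Next, I would rewrite the eigenvalue: since $\lambda_k=-k(k+C)$ with $C=\alpha_0+\beta+1$, a quick calculation shows that for $n+m\ge 1$,
\[
\lambda_{n+m}=\lambda_1-(n+m-1)(n+m+1+C),
\]
and $\frac{\kappa}{2}\lambda_1=-(\rho_++\rho_-+\rho_0+6)$ after substituting the definitions of $\alpha_0,\alpha_\pm,\beta$. Combined with the polynomial bound $P(n,m)$ above, this yields, for each $t\ge t_0>0$ and $(x,y),(x^*,y^*)\in\lin\Delta$,
\[
\Big|\frac{v_{n,m}(x,y)v_{n,m}(x^*,y^*)}{\|v_{n,m}\|_\Psi^2}e^{\frac{\kappa}{2}\lambda_{n+m}t}\Big|\le P(n,m)\,e^{\frac{\kappa}{2}\lambda_1 t}\,e^{-\frac{\kappa}{2}(n+m-1)(n+m+1+C)t_0},
\]
valid for $n+m\ge 1$. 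The double-exponential decay in $n+m$ makes the sum over $(n,m)\ne(0,0)$ uniformly convergent on $[t_0,\infty)\times\lin\Delta\times\lin\Delta$; together with the $(0,0)$ term, which equals $p_\infty(x^*,y^*)/\Psi(x^*,y^*)$ (since $v_{0,0}\equiv 1$ and $\lambda_0=0$), this gives the uniform convergence claim, and then factoring out $e^{\frac{\kappa}{2}\lambda_1 t}$ from the tail yields (\ref{asym}) with
\[
C_{t_0}=\sum_{n+m\ge 1}P(n,m)e^{-\frac{\kappa}{2}(n+m-1)(n+m+1+C)t_0}<\infty.
\]

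Finally, for (\ref{invar}) I would integrate the uniformly convergent series termwise: since $v_{0,0}\equiv 1$, orthogonality gives $\int\!\!\int_\Delta v_{n,m}(x,y)\Psi(x,y)\,dx\,dy=\langle 1,v_{n,m}\rangle_\Psi=0$ for $(n,m)\ne(0,0)$, so only the $(0,0)$ term survives and produces $\Psi(x^*,y^*)/\|1\|_\Psi^2=p_\infty(x^*,y^*)$. The main obstacle is the careful Stirling analysis in the first step, in particular showing that the $n$-dependence in the shifting Jacobi weight $\Psi^{(\alpha_0,\beta+2n)}$ does not introduce any exponential growth that could spoil the polynomial bound; everything else is bookkeeping.
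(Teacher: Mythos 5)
Your approach matches the paper's (the paper gives only a one-line sketch citing Stirling's formula, the norm and supernorm bounds, and the facts $\lambda_1>\lambda_n$ and $\lambda_n\asymp -n^2$), and your overall structure is correct: control $\|v_{n,m}\|_\infty^2/\|v_{n,m}\|_\Psi^2$, extract a factor $e^{\frac\kappa2\lambda_1 t}$ and observe the remaining eigenvalue gap is quadratic in $n+m$, then derive (\ref{invar}) by term-wise integration and orthogonality against $v_{0,0}\equiv 1$. Your observation that the $2^{\pm(\alpha_0+\beta+2n+1)}$ factors cancel between (\ref{L2-norm'}) and (\ref{norm}) is correct and necessary.

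However, the claim that $\|v_{n,m}\|_\infty^2/\|v_{n,m}\|_\Psi^2$ is bounded by a polynomial $P(n,m)$ is false. After the $2$-power cancellation, the $P_m^{(\alpha_0,\beta+2n)}$ contribution to the ratio is (up to polynomial factors, for $n$ large so the supernorm is attained at $-1$)
\[
\frac{\Gamma(m+\beta+2n+1)\,\Gamma(m+\alpha_0+\beta+2n+1)}{m!\,\Gamma(\beta+2n+1)^2\,\Gamma(m+\alpha_0+1)},
\]
which behaves like $\binom{m+\beta+2n}{m}^2$ and is therefore \emph{exponential} in $n+m$ (try $\alpha_0=\beta=0$, $m=n$: you get $\binom{3n}{n}^2\sim (729/16)^n$, nowhere near polynomial). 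Your observation that the degree of $(2n)^{2m+1+\alpha_0}$ is $m$-dependent should have been a warning sign. The fix is harmless but necessary: replace the polynomial bound with a bound of the form $c_1^{\,n+m}(n+m)^{c_2}$, which Stirling gives without trouble, and then the same argument closes since $c_1^{\,k}k^{c_2}e^{-\frac\kappa2(k-1)(k+1+C)t_0}$ is still summable in $k=n+m$ — indeed the paper highlights $\lambda_n\asymp -n^2$ precisely because a sub-exponential eigenvalue gap would not suffice here. With that correction your argument is sound, including the factorization $\lambda_{n+m}=\lambda_1-(n+m-1)(n+m+1+C)$, the identity $\frac\kappa2\lambda_1=-(\rho_++\rho_-+\rho_0+6)$, and the orthogonality argument for (\ref{invar}).
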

\begin{proof}
	The estimate (\ref{asym}) and the uniform convergence of the series in (\ref{prs}) both follows from Stirling's formula, (\ref{supernorm'},\ref{L2-norm'},\ref{norm},\ref{super-upper}), and the facts that $0>\lambda_1= -\frac 2\kappa (\rho_++\rho_-+\rho_0+6)>\lambda_n$ for any $n>1$ and $\lambda_n\asymp - n^2$ for big $n$.   Formula (\ref{invar}) follows from the orthogonality of $v_{n,m}$ w.r.t.\ $\langle\cdot,\cdot\rangle_\Psi$ and the uniform convergence of the series in (\ref{prs}).
\end{proof}

\begin{Lemma}
	The process $(X(t),Y(t))$   has a transition density, which is $p_t((x,y),(x^*,y^*))$, and an invariant density, which is $p_\infty(x^*,y^*)$.
\end{Lemma}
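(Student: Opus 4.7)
The plan is to leverage the computation (\ref{Ef-boundary}) which already identifies $\mathbb{E}[f(X(t_0),Y(t_0))]$ with the candidate integral $\int\!\!\int f \cdot p_{t_0}\, dx^* dy^*$ for every polynomial $f$, and then extend this identity to all continuous test functions via Stone--Weierstrass. The main inputs are already in place: Lemma \ref{asym-lemma} supplies uniform convergence of the series defining $p_t$ on $[t_0,\infty)\times\lin\Delta\times\lin\Delta$ (after extracting the factor $\Psi$), together with the pointwise bound $p_t((x,y),(x^*,y^*))\le C_{t_0}'\Psi(x^*,y^*)$ for $t\ge t_0$; since $\alpha_0,\alpha_\pm>-1$ imply $\Psi\in L^1(\Delta)$, this gives $p_t((x,y),\cdot)\in L^1(\Delta)$ with an $(x,y)$-uniform $L^1$-bound.

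First I would record that every polynomial $f$ in $x,y$ of degree $\le k$ is a finite linear combination of the $v_{n,m}$ with $n+m\le k$ (the $v_{n,m}$ form a triangular basis because $v_{n,m}$ has degree exactly $n+m$ and leading form $Q_n^{(\alpha_+,\alpha_-)}$ times a polynomial in $y$ of degree $m$), so the identity (\ref{Ef-boundary}) may be read as
\[
\EE[f(X(t_0),Y(t_0))\mid X(0)=x,\,Y(0)=y]=\int\!\!\int_{\Delta} f(x^*,y^*)\,p_{t_0}((x,y),(x^*,y^*))\,dx^*dy^*
\]
for every polynomial $f$. In particular, taking $f\equiv 1$ (the only surviving term is $n=m=0$, with $v_{0,0}\equiv 1$ and $\lambda_0=0$) yields $\int\!\!\int_{\Delta}p_{t_0}((x,y),(x^*,y^*))\,dx^*dy^*=1$.

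Next I would extend the identity to all $f\in C(\lin\Delta)$: polynomials are uniformly dense in $C(\lin\Delta)$ by Stone--Weierstrass (the domain is compact), the left-hand side is trivially continuous in $f$ with respect to $\|\cdot\|_\infty$, and the right-hand side is continuous by the $L^1$-bound on $p_{t_0}((x,y),\cdot)$. Applying this to arbitrary nonnegative $f\in C(\lin\Delta)$ forces $p_{t_0}((x,y),\cdot)\ge 0$ a.e.\ on $\Delta$; combined with the total-mass calculation above, $p_{t_0}((x,y),\cdot)$ is a probability density on $\Delta$, and it agrees with the conditional law of $(X(t_0),Y(t_0))$ given $(X(0),Y(0))=(x,y)$. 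This is the transition density statement.

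Finally, the invariant density claim follows immediately from (\ref{invar}) of Lemma \ref{asym-lemma}: if $(X(0),Y(0))$ has density $p_\infty$, then the density of $(X(t),Y(t))$ at $(x^*,y^*)$ is $\int\!\!\int_\Delta p_\infty(x,y)p_t((x,y),(x^*,y^*))\,dxdy=p_\infty(x^*,y^*)$. There is no real obstacle here; the only delicate point is the uniform $L^1$-control needed to pass from polynomial test functions to continuous ones, and this is already packaged in the exponential decay estimate (\ref{asym}) proved in the preceding lemma.
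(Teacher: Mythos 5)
Your proof takes essentially the same route as the paper: extend the polynomial identity \eqref{Ef-boundary} to all continuous test functions via Stone--Weierstrass, then invoke \eqref{invar} for the invariant density. You add welcome rigor that the paper leaves implicit — the uniform $L^1$-bound on $p_{t_0}((x,y),\cdot)$ needed to pass to the limit, and the explicit checks that $p_{t_0}((x,y),\cdot)$ is nonnegative with total mass one — but the argument is the same in substance and relies on the same two prior facts.
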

\begin{proof}
Fix $(x,y)\in \lin\Delta\sem\{(0,1)\}$. Let $(X(t),Y(t))$ be the process that satisfies (\ref{dX},\ref{dY},\ref{d<X,Y>}) with initial value $(x,y)$. It suffices to show that, for any continuous function $f$ on $\lin\Delta$,  we have
\BGE \EE[f(X(t_0),Y(t_0))]=\int \!\!\int_\Delta p_{t_0}((x,y),(x^*,y^*))f(x^*,y^*)dx^*dy^*.\label{Efp'-boundary}\EDE
By Stone-Weierstrass theorem, $f$ can be approximated by a polynomial in two variables uniformly on $\Delta$. Thus, it suffices to show that (\ref{Efp'-boundary}) holds whenever $f$ is a polynomial in $x,y$, which follows immediately from (\ref{Ef-boundary}). The statement on $p_\infty(x^*,y^*)$ follows from (\ref{invar}).
\end{proof}

Since $X=R_+-R_-$, $Y=1-R_+R_-$, and the Jacobian of the transformation is $-(R_++R_-)$, we arrive at the following statement.

\begin{Corollary} The process $(\ulin R(t))$   has a transition density:
$$p^R_t(\ulin r,\ulin r^*):=  p_t ((r_+-r_-,1-r_+r_-),(r_+^*-r_-^*,1-r_+^*r_-^*))\cdot (r_+^*+r_-^*),$$  and  an invariant density: $p^R_\infty(\ulin r^*):= p_\infty( r_+^*-r_-^*,1-r_+^*r_-^*)\cdot (r_+^*+r_-^*)$; and for any $t_0>0$, there is $C_{t_0}\in(0,\infty)$ depending only on $\kappa$, $\ulin\rho$, and $t_0$ such that for any $\ulin r\in [0,1]^2$ and $\ulin r^*\in(0,1)^2$,
$$ |p^R_t(\ulin r,\ulin r^*)-p^R_\infty(\ulin r^*)|\le  C_{t_0} e^{-  (\rho_++\rho_-+\rho_0+6)t}p^R_\infty(\ulin r^*),\quad t\ge t_0 .$$
\label{transition-R-infty}
\end{Corollary}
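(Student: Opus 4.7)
The plan is to transfer the transition and invariant density statements from the pair $(X,Y)$ to the pair $\ulin R=(R_+,R_-)$ by a change of variables, and then simply rewrite the estimate (\ref{asym}) in the new coordinates. The substitution is $X=R_+-R_-$ and $Y=1-R_+R_-$, and the key remark is that the map $(R_+,R_-)\mapsto(X,Y)$ is a bijection from $[0,1]^2\setminus\{(0,0),(1,1)\}$ onto $\overline\Delta\setminus\{(0,1)\}$, because given $(X,Y)$ the pair $\{R_+,R_-\}$ is determined as the two roots of $t^2-(R_++R_-)t+R_+R_-=0$ where $R_++R_-=\sqrt{(R_+-R_-)^2+4R_+R_-}=\sqrt{X^2+4(1-Y)}$, and the assignment to $R_+$ versus $R_-$ is resolved by the sign of $X$. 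Hence the change of variables is smooth on the interior $(0,1)^2$ and densities transform by the Jacobian.

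First I would compute the Jacobian: from $X=R_+-R_-$ and $Y=1-R_+R_-$ the Jacobian matrix $\partial(X,Y)/\partial(R_+,R_-)$ equals $\bigl(\begin{smallmatrix}1&-1\\-R_-&-R_+\end{smallmatrix}\bigr)$, whose determinant is $-(R_++R_-)$. So for any bounded continuous test function $f$ on $[0,1]^2$,
\begin{equation*}
\EE[f(\ulin R(t))]=\EE[\til f(X(t),Y(t))],\qquad \til f(x,y):=f(\Phi^{-1}(x,y)),
\end{equation*}
where $\Phi(r_+,r_-)=(r_+-r_-,1-r_+r_-)$. Applying the previous lemma for $(X,Y)$ to $\til f$ and changing variables back via $(x^*,y^*)=\Phi(r_+^*,r_-^*)$ with Jacobian factor $r_+^*+r_-^*$ yields
\begin{equation*}
\EE[f(\ulin R(t))]=\int\!\!\int_{[0,1]^2} f(\ulin r^*)\,p_t(\Phi(\ulin r),\Phi(\ulin r^*))\,(r_+^*+r_-^*)\,dr_+^*dr_-^*,
\end{equation*}
identifying $p^R_t(\ulin r,\ulin r^*)$ as the transition density of $\ulin R$.

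Next I would handle the invariant density by the same substitution applied to (\ref{invar}): substituting $(x,y)=\Phi(\ulin r)$ and $(x^*,y^*)=\Phi(\ulin r^*)$ and inserting the two Jacobian factors gives exactly
\begin{equation*}
p^R_\infty(\ulin r^*)=\int\!\!\int_{[0,1]^2}p^R_\infty(\ulin r)\,p^R_t(\ulin r,\ulin r^*)\,dr_+dr_-,
\end{equation*}
confirming that $p^R_\infty$ is invariant. Finally, the convergence estimate follows immediately: inequality (\ref{asym}) at the point $\Phi(\ulin r^*)$ bounds $|p_t(\Phi(\ulin r),\Phi(\ulin r^*))-p_\infty(\Phi(\ulin r^*))|$ by $C_{t_0}e^{-(\rho_++\rho_-+\rho_0+6)t}\Psi(\Phi(\ulin r^*))$; multiplying both sides by the nonnegative factor $(r_+^*+r_-^*)$ gives
\begin{equation*}
|p^R_t(\ulin r,\ulin r^*)-p^R_\infty(\ulin r^*)|\le C_{t_0}e^{-(\rho_++\rho_-+\rho_0+6)t}p^R_\infty(\ulin r^*),
\end{equation*}
with the same constant $C_{t_0}$.

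There is no real obstacle here; the work is essentially bookkeeping. The only small point to verify is that the change of variables is valid despite the degeneracies on the boundary (where $R_+R_-=0$ or $R_+=R_-$, i.e.\ $Y=1$ or $X=0$): since $\Phi$ is a smooth homeomorphism from $(0,1)^2$ onto the interior of $\Delta$, and the boundary has Lebesgue measure zero, both the identification of the transition density against two-dimensional Lebesgue measure on $[0,1]^2$ and the integral identities above are unaffected. The uniform convergence of the series in (\ref{prs}) guaranteed by Lemma \ref{asym-lemma} allows interchanging summation and integration wherever needed.
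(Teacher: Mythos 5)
Your proposal is correct and takes essentially the same approach as the paper, which simply states that the Jacobian of the transformation $(R_+,R_-)\mapsto(X,Y)=(R_+-R_-,1-R_+R_-)$ is $-(R_++R_-)$ and then asserts the corollary; your write-up spells out the change-of-variables computation and confirms the estimate transfers by multiplying (\ref{asym}) through by the nonnegative Jacobian factor.
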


\section{Commuting Pair of hSLE Curves}\label{section-other-commut}
In this section, we study three commuting pairs of hSLE$_\kappa$ curves. It turns out that each of them is ``locally'' absolutely continuous w.r.t.\ a commuting pair of chordal SLE$_\kappa(2,\ulin\rho)$ curves for some suitable force values. So the results in the previous section can be applied here.  Fix $\kappa\in(0,8)$ and $v_-<w_-<w_+<v_+\in\R$. We write $\ulin w=(w_+,w_-)$ and $\ulin v=(v_+,v_-)$. For $\ulin\rho=(\rho_+,\rho_-)$ that satisfies the conditions in Section \ref{subsection-commuting-SLE-kappa-rho}, let $\PP^{\ulin\rho}_{\ulin w;\ulin v}$ denote the law of the driving functions of a commuting pair of choral SLE$_\kappa(2,\ulin\rho)$ curves started from $(\ulin w;\ulin v)$.

\subsection{Two curves in a $2$-SLE$_\kappa$} \label{section-two-curve}
Suppose  that $(\ha\eta_+,\ha \eta_-)$ is a $2$-SLE$_\kappa$ in $\HH$ with link pattern $(w_+\to v_+;w_-\to v_-)$. By \cite[Proposition 6.10]{Wu-hSLE}, for $\sigma \in\{+,-\}$, $\ha \eta_\sigma$ is an hSLE$_\kappa$ curve in $\HH$ from $w_\sigma$ to $v_\sigma$ with force points $w_{-\sigma}$ and $v_{-\sigma}$. Stopping $\ha\eta_\sigma$ at the first time that it disconnects $\infty$ from any of its force points, and parametrizing the stopped curve by $\HH$-capacity, we get a chordal Loewner curve $\eta_\sigma(t)$, $0\le t<T_\sigma$, which is an hSLE$_\kappa$ curve in the chordal coordinate. Let $\ha w_\sigma$ and $K_\sigma(\cdot)$ be respectively the chordal Loewner driving function and hulls for $\eta_\sigma$;  and let $\F^\sigma$ be the filtration generated by $\eta_{\sigma}$. Let $\F$ be the separable  $\R_+^2$-indexed filtration generated by $\F^+$ and $\F^-$.

For $\sigma\in\{+,-\}$, if $\tau_{-\sigma} $ is an $\F^{-\sigma}$-stopping time, then conditionally on $\F^{-\sigma}_{\tau_{-\sigma}}$ and the event $\{\tau_{-\sigma}<T_{-\sigma}\}$,  the whole $\eta_{ \sigma}$ and the part of $\ha\eta_{-\sigma}$ after $\eta(\tau_{-\sigma})$ together
 form a $2$-SLE$_\kappa$ in $\HH\sem K_{-\sigma}(\tau_{-\sigma})$ with link pattern $(w_\sigma\to v_\sigma;\eta_{ -\sigma}(\tau_{-\sigma})\to v_{-\sigma})$. This in particular implies that the conditional law of $\ha\eta_\sigma$ is that of an hSLE$_\kappa$ curve from $w_\sigma$ to $v_\sigma$ in $\HH\sem K_{-\sigma}(\tau_{-\sigma})$  with force points $\eta_{-\sigma}(\tau_{-\sigma})$ and $v_{-\sigma}$. Since $ f_{K_{-\sigma}(\tau_{-\sigma})}$ maps $\HH$ conformally onto  $\HH\sem K_{-\sigma}(\tau_{-\sigma})$, and sends $\ha w_{-\sigma}(\tau_{-\sigma})$, $g_{K_{-\sigma}(\tau_{-\sigma})}(w_\sigma)$ and $g_{K_{-\sigma}(\tau_{-\sigma})}(v_\nu)$, $\nu\in\{+,-\}$, respectively to $\eta_{-\sigma}(\tau_{-\sigma})$, $w_\sigma$ and $v_\nu$, $\nu\in\{+,-\}$, we see that  there a.s.\ exists a chordal Loewner curve $\eta_{\sigma}^{\tau_{-\sigma}}$ with some speed such that $\eta_ \sigma= f_{K_{-\sigma}(\tau_{-\sigma})}\circ \eta_ {\sigma,\tau_{-\sigma}}$, and the conditional law of the normalization of $\eta_ {\sigma,\tau_{-\sigma}}$ given $\F^{-\sigma}_{\tau_{-\sigma}}$ is that of an hSLE$_\kappa$ curve in $\HH$ from $g_{K_{-\sigma}(\tau_{-\sigma})}(w_\sigma)$ to $g_{K_{-\sigma}(\tau_{-\sigma})}(v_\sigma) $ with force points $\ha w_{-\sigma}(\tau_{-\sigma})$ and $g_{K_{-\sigma}(\tau_{-\sigma})}(v_{-\sigma})$, in the chordal coordinate.

 Thus, $(\eta_+,\eta_-)$ a.s.\ satisfies the conditions in Definition \ref{commuting-Loewner} with $I_\sigma=[0,T_\sigma)$, $\I_\sigma^*=\I_\sigma \cap\Q$, $\sigma\in\{+,-\}$, and  ${{\cal D}_1}:=\I_+\times \I_-$. By discarding a null event, we assume that $(\eta_+,\eta_-;{\cal D}_1)$ is always a commuting pair of chordal Loewner curves, and call $(\eta_+,\eta_-;{\cal D}_1)$ a commuting pair of hSLE$_\kappa$ curves in the chordal coordinate started from $(\ulin w;\ulin v)$.  We  adopt the functions from Section \ref{section-deterministic}.
Define a function $M_1$ on ${\cal D}_1$ by $M_1=G_1(W_+,W_-;V_+,V_-)$, where $G_1$ is given by (\ref{G1(w,v)}).   Since $F$ is continuous and positive on $[0,1]$, $ |W_\sigma-V_\nu|\le |V_+-V_-|$ for $\sigma,\nu\in\{+,-\}$, and $\frac 8\kappa-1,\frac 4\kappa>0$, there is a constant $C>0$ depending only on $\kappa$ such that
  \BGE M_1\le C|V_+-V_-|^{\frac {16}\kappa-1} \min_{\sigma\in\{+,-\}} \{|W_\sigma-V_\sigma|\}^{\frac 8\kappa -1}\le C |V_+-V_-|^{2(\frac{12}\kappa -1)}.\label{M1-boundedness}\EDE
Note that $M_1>0$ on $\cal D$ because $|W_\sigma-V_\sigma|>0$, $\sigma\in\{+,-\}$, on $\cal D$.
We will prove that $M_1$ extends to an $\F$-martingale on $\R_+^2$, which acts as Radon-Nikodym derivatives between two measures. We first need some deterministic properties of $M_1$.

\begin{Lemma}
   $M_1$ a.s.\ extends continuously to $\R_+^2$ with  $M_1\equiv 0$ on $\R_+^2\sem {\cal D}_1$. \label{M-cont}
\end{Lemma}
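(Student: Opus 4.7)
The plan is to reduce the question to three steps: continuously extend the driving and force-point functions $W_\pm,V_\pm$ to $\lin{{\cal D}_1}\cap\R_+^2$; identify which factor of $G_1$ must vanish on each boundary face of ${\cal D}_1$; and verify that all the remaining factors stay bounded. Once these are in place, setting $M_1\equiv 0$ on $\R_+^2\sem{\cal D}_1$ gives the desired continuous extension, since at points strictly outside $\lin{{\cal D}_1}$ a whole neighborhood already lies in $\R_+^2\sem{\cal D}_1$, where $M_1$ is $0$ by definition.

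For the first step I use that $\eta_\sigma$ extends continuously to $[0,T_\sigma]$ as a curve in $\lin\HH$. Indeed, $\eta_\sigma$ is the hSLE curve in the chordal coordinate, parametrized by $\HH$-capacity and stopped at the first disconnection time. Since $\eta_+$ does not hit $(-\infty,w_-]$ and $\eta_-$ does not hit $[w_+,\infty)$, the side force points $w_{-\sigma}$ and $v_{-\sigma}$ can never be disconnected, so $T_\sigma$ is the time at which the target $v_\sigma$ is swallowed and $\eta_\sigma(T_\sigma)\in\R$ is the disconnection point. Remark \ref{Remark-continuity-W} then produces continuous extensions of $W_\pm$ to $\lin{{\cal D}_1}\cap\R_+^2$, and Lemma \ref{common-function}(v), together with the joint continuity of $\ulin t\mapsto K(\ulin t)$ up to the boundary, produces continuous extensions of $V_\pm$.

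For the second step, fix $\ulin t^\ast\in\pa{\cal D}_1\cap\R_+^2$; by symmetry assume $t_+^\ast=T_+$. Because $v_+$ has been swallowed by $\eta_+$, one has $v_+\in\lin{K_+(T_+)}$, and both $V_+(T_+,0)$ and $\ha w_+(T_+)$ equal the rightmost extension $d_{K_+(T_+)}$ of $g_{K_+(T_+)}$ at the swallowing cluster; hence $V_+(T_+,0)=W_+(T_+,0)$. To propagate this to arbitrary $t_-\in[0,T_-]$, I use Lemma \ref{W=gw} to write
$$W_+(T_+,t_-)=g_{K_-^{T_+}(t_-)}^{W_-(T_+,0)}(\ha w_+(T_+)),$$
and Proposition \ref{prop-comp-g} combined with the definition of $V_+$ to write
$$V_+(T_+,t_-)=g_{K_-^{T_+}(t_-)}^{W_-(T_+,0)}(V_+(T_+,0)).$$
Therefore $W_+=V_+$ on the full face $\{t_+=T_+\}\cap\lin{{\cal D}_1}$, and since the exponent $\frac 8\kappa-1$ is positive (as $\kappa<8$), the factor $|W_+-V_+|^{\frac 8\kappa-1}$ in $G_1$ vanishes identically along that face.

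Finally, the remaining factors of $G_1$ are uniformly bounded on $\lin{{\cal D}_1}$. The cross-ratio $R:=\frac{(W_+-W_-)(V_+-V_-)}{(W_+-V_-)(V_+-W_-)}$ lies in $[0,1]$ by the ordering in (\ref{VWVWV}), so $F(R)^{-1}$ is bounded thanks to the positivity and continuity of $F$ on $[0,1]$; the factor $|W_--V_-|^{\frac 8\kappa-1}$ and the cross terms $|W_\sigma-V_{-\sigma}|^{\frac 4\kappa}$ are controlled by powers of $|V_+-V_-|$, which by (\ref{V-V}) is dominated by $4\diam(K(\ulin t)\cup[v_-,v_+])\le 4\diam(\eta_+[0,T_+]\cup\eta_-[0,T_-]\cup[v_-,v_+])<\infty$ almost surely. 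Combining the vanishing factor with the bounded ones shows $M_1(\ulin t)\to 0$ as $\ulin t\to\ulin t^\ast$ through ${\cal D}_1$, which completes the continuous extension to $\R_+^2$. The main obstacle I expect is the endpoint identification $V_+(T_+,0)=\ha w_+(T_+)$ for $\kappa\in(4,8)$, where the swallowing may occur from some point $x^\ast\ge v_+$ distinct from $v_+$; the key is to argue that both quantities equal $d_{K_+(T_+)}$, the rightmost image of $g_{K_+(T_+)}$ at the merged swallowing cluster at the stopping time.
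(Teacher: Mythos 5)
Your plan is sound in outline—extend $W_\pm,V_\pm$ continuously, find a vanishing factor on the boundary, and bound the rest—but Step~2 contains a concrete error that breaks the proof for $\kappa\in(4,8)$.

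You assert: ``Since $\eta_+$ does not hit $(-\infty,w_-]$ and $\eta_-$ does not hit $[w_+,\infty)$, the side force points $w_{-\sigma}$ and $v_{-\sigma}$ can never be disconnected, so $T_\sigma$ is the time at which the target $v_\sigma$ is swallowed.'' This does not follow. The constraint on $\eta_+$ holds on the half-open interval $[0,T_+)$, but the terminal point $\eta_+(T_+)=\lim_{t\uparrow T_+}\eta_+(t)$ is not in the domain of $\eta_+$, and for $\kappa\in(4,8)$ the hSLE$_\kappa$ curve terminates on $(-\infty,v_-)$ with positive probability: marginally, $\ha\eta_+$ is a chordal SLE$_\kappa$ in the unbounded complement component of $\ha\eta_-$, and that component has part of $(-\infty,v_-)$ on its boundary. (The hSLE is only \emph{locally} absolutely continuous to SLE$_\kappa(2,2,2)$, so the non-boundary-hitting behavior of the latter near $(-\infty,w_-]$ does not transfer globally.) In the paper's proof this alternative is stated explicitly: as $t_+\uparrow T_+$, $\eta_+(t_+)$ tends either to a point on $[v_+,\infty)$ \emph{or} to a point on $(-\infty,v_-)$.

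In the second alternative, $v_+$ is \emph{not} swallowed by $K_+(T_+)$; rather $v_-$ and $w_-$ are, so $W_+(T_+,0)<V_+(T_+,0)$ and the factor $|W_+-V_+|^{8/\kappa-1}$ does not vanish. It is instead $|W_--V_-|^{8/\kappa-1}$ that must be shown to vanish, and your propagation argument
$$W_+(T_+,t_-)=g_{K_-^{T_+}(t_-)}^{W_-(T_+,0)}(\ha w_+(T_+)),\qquad V_+(T_+,t_-)=g_{K_-^{T_+}(t_-)}^{W_-(T_+,0)}(V_+(T_+,0))$$
does not transfer to the pair $(W_-,V_-)$ with the same points $\ha w_-(t_-)$ and $V_-(0,t_-)$ being mapped by the same function, since those two input points are genuinely distinct until $v_-$ is swallowed by $\eta_-$; one must argue instead that the intermediate hull $K_+^{t_-}(T_+)$ swallows both $\ha w_-(t_-)$ and $V_-(0,t_-)$. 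In contrast, the paper's proof sidesteps all of this: it uses the bound $M_1\le C|V_+-V_-|^{16/\kappa-1}\min_\sigma|W_\sigma-V_\sigma|^{8/\kappa-1}$ and then proves $\min_\sigma|W_\sigma-V_\sigma|\to 0$ uniformly in $t_-$ via an extremal length estimate near the terminal point $x_0=\eta_+(T_+)$, handling both terminal alternatives symmetrically. Your caveat at the end about ``$x^\ast\ge v_+$ distinct from $v_+$'' addresses the uninteresting side of the $\kappa\in(4,8)$ subtlety and misses the operative one; as stated, the proposal has a real gap.
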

\begin{proof}
It suffices to show that for $\sigma\in\{+,-\}$, as $t_\sigma\uparrow T_\sigma$, $M_1\to 0$ uniformly in $t_{-\sigma}\in [0,T_{-\sigma})$. By symmetry, we may assume that $\sigma=+$.
Since the union of (the whole) $\eta_+$ and $\eta_-$ is bounded, by (\ref{V-V})  $|V_+-V_-|$ is bounded (by random numbers) on ${\cal D}_1$.
For a fixed $t_-\in[0,T_-)$, as $t_+\uparrow T_+$, $\eta_+(t_+)$ tends to either some point on $[v_+,\infty)$ or some point on $(-\infty, v_-)$.
By (\ref{M1-boundedness}), it suffices to show that when $\eta_+$ terminates at $[v_+,\infty)$ (resp.\ at $(-\infty,v_-)$), $W_+-V_+\to 0$ (resp.\ $W_--V_-\to 0$) as $t_+\uparrow T_+$, uniformly in $[0,T_-)$.

For any $\ulin t=(t_+,t_-)\in{\cal D}_1$, neither $\eta_+[0,t_+]$ nor $\eta_-[0,t_-]$ hit $(-\infty, v_-]\cup [v_+,\infty)$, which implies that $v_+,v_-\not\in \lin{K(\ulin t)}$ and $V_\pm(\ulin t)= g_{K(\ulin t)}(v_\pm)$. Suppose that $\eta_+$ terminates at $x_0\in [v_+,\infty)$. Since SLE$_\kappa$ is not boundary-filling for $\kappa\in(0,8)$, we know that $ \dist(x_0,\eta_-)>0$.  Let $r=\min\{|w_+-v_+|,\dist(x_0,\eta_-)\}>0$.
 	 Fix $\eps\in(0,r)$. Since $x_0=\lim_ {t\uparrow T_+} \eta_+(t)$, there is $\delta>0$ such that $|\eta_+(t)-x_0|<\eps$ for $t\in(T_+-\del,T_+)$.  Fix $t_+\in(T_+-\delta,T_+)$ and $t_-\in[0,T_-)$. Let $J$ be the connected component of $\{|z-x_0|=\eps\}\cap (\HH\sem K(\ulin t))$ whose closure contains $x_0+\eps$. Then $J$ disconnects $v_+$ and $\eta_+(t_+,T_+)\cap(\HH\sem K(\ulin t))$ from $\infty$ in $\HH\sem K(\ulin t)$. Thus, $g_{K(\ulin t)}(J)$ disconnects $V_+(\ulin t)$ and $W_+(\ulin t)$ from $\infty$.  Since $\eta_+\cup\eta_-$ is bounded, there is a (random) $R\in(0,\infty)$ such that $\eta_+\cup\eta_-\subset \{|z-x_0|<R\}$. Let $\xi=\{|z-x_0|=2R\}\cap\HH$. By comparison principle, the extremal length (\cite{Ahl}) of the family of curves in $\HH\sem K(\ulin t)$ that separate $J$ from $\xi$ is   $\le \frac{\pi}{\log(R/\eps)}$. By conformal invariance, the extremal length  of the family of curves in $\HH$ that separate $g_{K(\ulin t)}(J)$ from $g_{K(\ulin t)}(\xi)$ is also $\le \frac{\pi}{\log(R/\eps)}$. Now $g_{K(\ulin t)}(\xi)$ and $g_{K(\ulin t)}(J)$ are crosscuts of $\HH$ such that the former encloses the latter. Let $D$ denote the subdomain of $\HH$ bounded by  $g_{K(\ulin t)}(\xi)$. From Proposition \ref{g-z-sup} we know that $D\subset\{|z-x_0|\le 5R\}$. So the Euclidean area of $D$ is less than $13\pi R^2$.  By the definition of extremal length, there is a curve $\gamma$ in $D$ that separates $g_{K(\ulin t)}(J)$ from $g_{K(\ulin t)}(\xi)$  with Euclidean distance less than $2\sqrt{13\pi R^2*\frac{\pi}{\log(R/\eps)}}<8\pi R*\log(R/\eps)^{-1/2}$. Since $g_{K(\ulin t)}(J)$ disconnects $V_+(\ulin t)$ and $W_+(\ulin t)$ from $\infty$, $\gamma$ also separates $V_+(\ulin t)$ and $W_+(\ulin t)$ from $\infty$. Thus, $|W_+(\ulin t)-V_+(\ulin t)|<8\pi R*\log(R/\eps)^{-1/2}$ if $t_+\in(T_+-\del,T_+)$ and $t_-\in[0,T_-)$. This proves the uniform convergence of $\lim_{t_+\uparrow T_+} |W_+-V_+|=0$ in $t_-\in[0,T_-)$ in the case that $\lim_{t_+\uparrow T_+}\eta_+(t_+)\in[v_+,\infty)$. The proof of the uniform convergence of $\lim_{t_+\uparrow T_+} |W_--V_-|=0$ in $t_-\in[0,T_-)$ in the case that $\lim_{t_+\uparrow T_+}\eta_+(t_+)\in(-\infty,v_-)$ is similar.
\end{proof}

From now on, we view $M_1$ as a continuous stochastic process defined on $\R_+^2$ with constant zero on ${\R_+^2}\sem {\cal D}_1$.
For $\sigma\in\{+,-\}$ and $R>|v_+-v_-|/2$, let $\tau^\sigma_R$ be the first time that $|\eta_\sigma(t)-(v_++v_-)/2|=R$ if such time exists; otherwise $\tau^\sigma_R=T_\sigma$. Let $\ulin \tau_R=(\tau^+_R,\tau^-_R)$. Note that $\tau^+_R,\tau^-_R\le \mA(\ulin \tau_R)\le R^2/2$ because   $K(\ulin\tau_R)\subset \{z\in\HH:|z-(v_++v_-)/2|\le R\}$.

\begin{Lemma}
  For every $R>0$, the $M_1(\cdot\wedge \ulin \tau_R)$ is an $\R_+^2$-indexed martingale w.r.t.\ the filtration $(\F^+_{t_+\wedge \tau^+_R}\vee \F^-_{t_-\wedge \tau^-_R})_{(t_+,t_-)\in\R_+^2}$ closed by  $M_1(\ulin \tau_R)$. Moreover, if the underlying probability measure is weighted by $M_1(\ulin\tau_R)/M_1(\ulin 0)$, then the new law of the driving functions $(\ha w_+,\ha w_-)$ agrees with   $\PP^{(2,2)}_{\ulin w;\ulin v} $  on the $\sigma$-algebra $\F^+_{ \tau^+_R}\vee \F^-_{  \tau^-_R}$. \label{M-mart}
\end{Lemma}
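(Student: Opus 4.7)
The plan is to reduce the two-parameter martingale statement to two one-parameter statements, each obtained from Proposition~\ref{Prop-iSLE-3} via the DMP recalled at the start of Section~\ref{section-two-curve}. The key identification is that for any finite $\F^-$-stopping time $t_-$, the normalized curve $\eta_+^{t_-}$ is, conditionally on $\F^-_{t_-}$, an hSLE$_\kappa$ in the chordal coordinate from $W_+(0,t_-)$ to $V_+(0,t_-)$ with force points at $W_-(0,t_-)$ and $V_-(0,t_-)$, and its chordal-coordinate driving and force point functions coincide with $W_+(\cdot,t_-), W_-(\cdot,t_-), V_+(\cdot,t_-), V_-(\cdot,t_-)$ (up to time change) by Lemmas~\ref{Lem-W} and \ref{DMP-determin-1}. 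Matching the definition of $M_1$ with the local martingale in Proposition~\ref{Prop-iSLE-3} under this identification, $M_1(\cdot,t_-)$ is a local martingale in $t_+$ conditionally on $\F^-_{t_-}$.

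First I would establish boundedness. On $[\ulin 0,\ulin\tau_R]$ the hull $K(\ulin t)$ sits in the disc of radius $R$ about $(v_++v_-)/2$, so $|V_+-V_-|\le 8R$ by (\ref{V-V}), and (\ref{M1-boundedness}) yields a deterministic bound on $M_1$ depending only on $\kappa$, $R$, and $|v_+-v_-|$. Consequently $M_1(\cdot\wedge\tau^+_R,t_-)$ is a bounded $\F^+$-martingale conditionally on $\F^-_{t_-}$. Applying the same argument with the roles of $+$ and $-$ swapped yields the analogous statement for $M_1(t_+,\cdot\wedge\tau^-_R)$ conditionally on $\F^+_{t_+}$; here the initial-time process $M_1(0,\cdot)=G_1(W_+(0,\cdot),\ha w_-;V_+(0,\cdot),V_-(0,\cdot))$ is matched with the Proposition~\ref{Prop-iSLE-3} martingale for $\eta_-$ after using the manifest symmetry $G_1(w_+,w_-;v_+,v_-)=G_1(w_-,w_+;v_-,v_+)$ of the expression (\ref{G1(w,v)}).

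To upgrade to a genuine $\R_+^2$-indexed martingale for the filtration $(\F^+_{t_+\wedge\tau^+_R}\vee\F^-_{t_-\wedge\tau^-_R})$, I would use the tower trick: for $\ulin s\le\ulin t$, the quantity
\[
\EE[M_1(\ulin t\wedge\ulin\tau_R)\mid \F^+_{s_+\wedge\tau^+_R}\vee\F^-_{s_-\wedge\tau^-_R}]
\]
is evaluated by first conditioning on $\F^+_{s_+\wedge\tau^+_R}\vee\F^-_{t_-\wedge\tau^-_R}$ and invoking the $t_+$-martingale property to collapse to $M_1(s_+\wedge\tau^+_R,t_-\wedge\tau^-_R)$, then conditioning on the original $\sigma$-algebra and invoking the $t_-$-martingale property with $s_+\wedge\tau^+_R$ fixed. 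This identifies $M_1(\cdot\wedge\ulin\tau_R)$ as a martingale closed by $M_1(\ulin\tau_R)$.

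For the Radon--Nikodym statement, I would factor
\[
\frac{M_1(\ulin\tau_R)}{M_1(\ulin 0)}=\frac{M_1(\tau^+_R,\tau^-_R)}{M_1(0,\tau^-_R)}\cdot\frac{M_1(0,\tau^-_R)}{M_1(\ulin 0)}.
\]
Tilting by the second factor is precisely the one-curve change of measure in Proposition~\ref{Prop-iSLE-3} applied to $\eta_-$, converting its law to a chordal SLE$_\kappa(2,2,2)$ from $w_-$ with force points $w_+,v_+,v_-$. Conditionally on $\F^-_{\tau^-_R}$, tilting by the first factor is the one-curve change of measure of Proposition~\ref{Prop-iSLE-3} applied to $\eta_+^{\tau^-_R}$, giving a chordal SLE$_\kappa(2,2,2)$ in the chordal coordinate from $W_+(0,\tau^-_R)$ with force points $\ha w_-(\tau^-_R),V_+(0,\tau^-_R),V_-(0,\tau^-_R)$. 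These are precisely the marginal laws characterizing a commuting pair of chordal SLE$_\kappa(2,(2,2))$ via condition (B) in Section~\ref{subsection-commuting-SLE-kappa-rho}, so the tilted law agrees with $\PP^{(2,2)}_{\ulin w;\ulin v}$ on $\F^+_{\tau^+_R}\vee\F^-_{\tau^-_R}$. The main obstacle I anticipate is the bookkeeping required to verify that $M_1(t_+,t_-)$, defined through the commuting-pair driving and force point functions in the original half-plane, really coincides with the one-curve martingale of Proposition~\ref{Prop-iSLE-3} applied to $\eta_+^{t_-}$ (and symmetrically for $\eta_-^{t_+}$); $G_1$ is not conformally invariant, and one must carefully use the fact that it depends only on the chordal-coordinate positions of the driving and force points, which is exactly how the functions $W_\sigma$ and $V_\nu$ were set up in Section~\ref{section-deterministic}.
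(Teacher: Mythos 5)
Your proposal matches the paper's proof in both halves: the martingale property is established by applying Proposition~\ref{Prop-iSLE-3} in each coordinate separately (the paper's ``applying this result twice'' is exactly your tower trick spelled out), and the Radon--Nikodym statement is proved by the same two-step factorization $M_1(\ulin\tau_R)/M_1(\ulin 0)=N_2\cdot N_1$ with $N_1=M_1(0,\tau^-_R)/M_1(\ulin 0)$ and $N_2=M_1(\ulin\tau_R)/M_1(0,\tau^-_R)$, each tilt identified via Proposition~\ref{Prop-iSLE-3}. The bookkeeping you flag at the end — that $M_1$ built from the commuting-pair functions $W_\sigma,V_\nu$ coincides with the Proposition~\ref{Prop-iSLE-3} local martingale for $\eta_\sigma^{t_{-\sigma}}$ — is precisely what Section~\ref{section-two-curve} sets up when it identifies the chordal-coordinate driving and force point functions of $\eta_\sigma^{\tau_{-\sigma}}$ with $W_\sigma|^{-\sigma}_{\tau_{-\sigma}}$, $W_{-\sigma}|^{-\sigma}_{\tau_{-\sigma}}$, $V_\nu|^{-\sigma}_{\tau_{-\sigma}}$, so this is not a gap.
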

\begin{proof} Let $R>0$, $\sigma\in\{+,-\}$, $t_{-\sigma}\ge 0$, and $\tau_{-\sigma}=t_{-\sigma}\wedge \tau^{-\sigma}_R$. Since $W_\sigma|^{-\sigma}_{\tau_{-\sigma}}$, $W_{-\sigma}|^{-\sigma}_{\tau_{-\sigma}}$ and $V_\nu|^{-\sigma}_{\tau_{-\sigma}}$, $\nu\in\{+,-\}$, are all $(\F^\sigma_t\vee \F^{-\sigma}_{\tau_{-\sigma}})_{t\ge 0}$-adapted, and are driving function and force point functions for an hSLE$_\kappa$ curves with some speeds in the chordal coordinate conditional on $\F^{-\sigma}_{\tau_{-\sigma}}$, by Proposition \ref{Prop-iSLE-3} (with a time-change), $M_1|^{-\sigma}_{\tau_{-\sigma}}(t)$, $0\le t<T_\sigma$, is an $(\F^\sigma_t\vee \F^{-\sigma}_{\tau_{-\sigma}})_{t\ge 0}$-local martingale. Since $M_1$ is uniformly bounded on $[\ulin 0,\ulin\tau_R]$ and $\tau^\pm_R\le R^2/2$, $M_1|^{-\sigma}_{\tau_{-\sigma}}(\cdot \wedge \tau^\sigma_R)$ is an $(\F^+_{t_+\wedge \tau^+_R}\vee \F^-_{t_-\wedge \tau^-_R})_{t_\sigma\ge 0}$-martingale closed by $M_1|^{-\sigma}_{\tau_{-\sigma}}(\tau^\sigma_R)$. Applying this result twice respectively for $\sigma=+$ and $-$, we obtain the martingale property of $M_1(\cdot\wedge \ulin \tau_R)$

Let $\PP_1$ denote the  underlying probability measure.
By weighting $\PP_1$ by $M_1(\ulin\tau_R)/M_1(\ulin 0)$, we get another probability measure, denoted by $\PP_0$. To describe the restriction of $\PP_0$ to $\F _{\ulin\tau_R}$, we study the new marginal law of $\eta_-$ up to $\tau^-_R$ and the new conditional law of $\eta_+$ up to $\tau^+_R$ given that part of $\eta_-$. We may first weight $\PP_1$ by $N_1:=M_1(0,\tau^-_R)/M_1(0, 0)$ to get a new probability measure $\PP_{.5}$, and then weight $\PP_{.5}$ by $N_2:=M_1(\tau^+_R,\tau^-_R)/M_1(0,\tau^-_R)$ to get $\PP_0$.

By Proposition \ref{Prop-iSLE-3}, the $\eta_-$ up to $\tau^-_R$ under $\PP_{.5}$ is a chordal SLE$_\kappa (2,2,2)$ curve in $\HH$ started from $w_-$ with force points $v_-,w_+,v_+$, respectively, up to $\tau^-_R$. Since $N_1$ depends only on $\eta_-$, the conditional law of $\eta_+$ given any part of $\eta_-$ under $\PP_{.5}$ agrees with that under $\PP_1$ . Since $M_1(0,\tau^-_R)=0$ implies that $N_1=0$, and $\PP_{.5}$-a.s.\ $N_1>0$, we see that  $N_2$ is $\PP_{.5}$-a.s.\ well defined. Since $\EE[N_2|\F^-_{\tau^-_R}]=1$, the law of $\eta_-$ up to $\tau^-_R$ under $\PP_0$ agrees with that under $\PP_{.5}$.  To describe the conditional law of $\eta_+$ up to $\tau^+_R=\tau^+_R(\eta_+)$ given $\eta_-$ up to $\tau^-_R$, it suffices to consider the conditional law of $\eta_{+}^{\tau^-_R}$ up to $\tau^+_R(\eta_+)$ since we may recover $\eta_+$ from $\eta_+^{\tau^-_R}$ using $\eta_+=f_{K_-(\tau^-_R)}\circ \eta_{+}^{\tau^-_R}$. By Proposition \ref{Prop-iSLE-3} again, the  conditional law of the normalization of $\eta_{+}^{\tau^-_R}$ up to $\tau^+_R(\eta_+)$ under $\PP_0$ is that of a chordal SLE$_\kappa(2,2,2)$ curve in $\HH$ started from $W_+(0,\tau^-_R)$ with force points at $V_+(0,\tau^-_R)$, $W_-(0,\tau^-_R)$ and $V_-(0,\tau^-_R)$, respectively. Thus, under $\PP_0$ the joint law of $\eta_+$ up to $\tau^+_R$ and $\eta_-$ up to $\tau^-_R$ agrees with that of a commuting pair of SLE$_\kappa(2,2,2)$ curves started from $(\ulin w;\ulin v)$ respectively up to $\tau^+_R$ and $\tau^-_R$. So the proof is complete
\end{proof}

We let $\PP_1$ denote the joint law of the driving functions $\ha w_+$ and $\ha w_-$ here, and let $\PP^0_1=\PP^{(2,2)}_{\ulin w;\ulin v}$. From the lemma, we find that, for any $\ulin t=(t_+,t_-)\in\R_+^2$ and $R>0$,
\BGE \frac{d\PP^0_1|(\F^+_{t_+\wedge\tau^+_R}\vee \F^-_{t_-\wedge \tau^-_R} )}{d\PP_1|(\F^+_{t_+\wedge\tau^+_R}\vee \F^-_{t_-\wedge\tau^-_R}) }=\frac {M_1(\ulin t\wedge \ulin \tau_R)} {M_1(\ulin 0)}.\label{RN-formula0}\EDE

\begin{Lemma}
Under $\PP_1$, $M_1 $ is an $\F$-martingale; and for any $\F$-stopping time $\ulin \tau$,
	\BGE \frac{d\PP^0_1| \F _{\ulin \tau } \cap\{\ulin \tau\in\R_+^2\}   }{d\PP_1| \F _{\ulin \tau } \cap\{\ulin \tau\in\R_+^2\} }=\frac {M_1(\ulin \tau)} {M_1(\ulin 0)}.\label{RN-formula}\EDE
	\label{RN-Thm1}
\end{Lemma}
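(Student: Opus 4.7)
The plan is to first establish the $\F$-martingale property of $M_1$ under $\PP_1$ at deterministic times by approximating via the stopping times $\ulin\tau_R$ of Lemma \ref{M-mart}, and then extend (\ref{RN-formula}) to arbitrary $\F$-stopping times by truncation at deterministic grids together with Optional Stopping (Proposition \ref{OST}). The main obstacle is that, under $\PP_1$, the stopped martingales $M_1(\cdot\wedge \ulin\tau_R)$ are not uniformly integrable: since the hSLE curves are a.s.\ bounded, $\ulin\tau_R$ eventually equals $(T_+,T_-)\in\pa\cal D_1$, and Lemma \ref{M-cont} forces $M_1(\ulin\tau_R)\to 0$ while $\EE_1[M_1(\ulin\tau_R)]=M_1(\ulin 0)>0$. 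The circumvention is to exploit $M_1\ge 0$ and use monotone convergence, relying on the asymmetry that under $\PP^0_1=\PP^{(2,2)}_{\ulin w;\ulin v}$ the SLE$_\kappa(2,2)$ curves have infinite lifetime, so $\tau^\sigma_R\uparrow\infty$ $\PP^0_1$-a.s.

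Fix $\ulin t\in\R_+^2$ and $A\in\F_{\ulin t}=\F^+_{t_+}\vee\F^-_{t_-}$, and set $A_R=A\cap\{t_+\le \tau^+_R\}\cap\{t_-\le \tau^-_R\}$. A short check shows that $\F^\sigma_{t_\sigma\wedge\tau^\sigma_R}$ coincides with $\F^\sigma_{t_\sigma}$ on $\{t_\sigma\le\tau^\sigma_R\}$, so $A_R\in\F^+_{t_+\wedge\tau^+_R}\vee\F^-_{t_-\wedge\tau^-_R}$; since $\ulin t\wedge\ulin\tau_R=\ulin t$ on $A_R$, (\ref{RN-formula0}) gives
\[
\EE_1[\mathbf 1_{A_R}M_1(\ulin t)]=M_1(\ulin 0)\,\PP^0_1(A_R).
\]
Let $R\to\infty$. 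Under $\PP_1$, $A_R\uparrow A\cap\{\ulin t\le (T_+,T_-)\}$, and on the complementary event $\{t_+>T_+\}\cup\{t_->T_-\}$ Lemma \ref{M-cont} forces $M_1(\ulin t)=0$, so $\mathbf 1_{A_R}M_1(\ulin t)\uparrow \mathbf 1_AM_1(\ulin t)$; under $\PP^0_1$, since $\tau^\sigma_R\uparrow\infty$ on the unbounded SLE$_\kappa(2,2)$ curves, $A_R\uparrow A$. Monotone convergence yields $\EE_1[\mathbf 1_A M_1(\ulin t)]=M_1(\ulin 0)\PP^0_1(A)$. Taking $A=\Omega$ gives $\EE_1[M_1(\ulin t)]=M_1(\ulin 0)<\infty$, and applying the identity at both $\ulin s$ and $\ulin t$ for any $A\in\F_{\ulin s}\subset\F_{\ulin t}$ with $\ulin s\le \ulin t$ proves $\EE_1[M_1(\ulin t)\mid\F_{\ulin s}]=M_1(\ulin s)$, i.e., the $\F$-martingale property.

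Finally, let $\ulin\tau$ be an $\F$-stopping time and $A\in\F_{\ulin\tau}\cap\{\ulin\tau\in\R_+^2\}$; set $\ulin n=(n,n)$ and form the bounded $\F$-stopping time $\ulin\tau^{\ulin n}\le\ulin n$ of Proposition \ref{prop-local}. Since $A\cap\{\ulin\tau\le\ulin n\}\subset\{\ulin\tau\le\ulin n\}$, Proposition \ref{prop-local} places this set in $\F_{\ulin\tau^{\ulin n}}$ and gives $\ulin\tau^{\ulin n}=\ulin\tau$ there; combining Proposition \ref{OST}(ii) applied to the bounded stopping times $\ulin\tau^{\ulin n}\le\ulin n$ with the deterministic-time identity from the previous paragraph at $\ulin t=\ulin n$ yields
\[
\EE_1[\mathbf 1_{A\cap\{\ulin\tau\le\ulin n\}}M_1(\ulin\tau)]=\EE_1[\mathbf 1_{A\cap\{\ulin\tau\le\ulin n\}}M_1(\ulin n)]=M_1(\ulin 0)\PP^0_1(A\cap\{\ulin\tau\le\ulin n\}).
\]
Since $A\subset\{\ulin\tau\in\R_+^2\}$, $A\cap\{\ulin\tau\le\ulin n\}\uparrow A$, and a final monotone convergence (using $M_1\ge 0$) delivers (\ref{RN-formula}).
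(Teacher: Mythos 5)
Your proof is correct and follows essentially the same route as the paper's: in both cases the key identity $\PP^0_1(A)=\EE_1[\mathbf 1_A M_1(\ulin t)]/M_1(\ulin 0)$ for $A\in\F_{\ulin t}$ is obtained by letting $R\to\infty$ in the stopped Radon--Nikodym relation (\ref{RN-formula0}) — the paper states this tersely, while you spell out the monotone convergence and the roles of $M_1=0$ on $\R_+^2\sem\cal D_1$ under $\PP_1$ versus $\tau^\sigma_R\uparrow\infty$ under $\PP^0_1$ — and the extension to a general $\F$-stopping time $\ulin\tau$ is handled identically via the truncated stopping times of Proposition \ref{prop-local} together with Proposition \ref{OST} and one more monotone limit.
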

\begin{proof} For $\ulin t\in\R_+^2$ and $R>0$,
since  $\F^+_{t_+\wedge \tau^+_R}\vee \F^-_{t_-\wedge \tau^-_R}$ agrees with $\F^+_{t_+}\vee \F^-_{t_-}=\F_{\ulin t}$ on $\{\ulin t\le \ulin\tau_R\}$, by (\ref{RN-formula0}),
$$\frac{d\PP^0_1|(\F_{\ulin t}\cap \{\ulin t\le \ulin \tau_R\}) }{d\PP_1|(\F_{\ulin t}\cap \{\ulin t\le \ulin \tau_R\}) }=\frac {M_1(\ulin t)} {M_1(\ulin 0)}.$$
Sending $R\to \infty$, we get ${d\PP^0_1| \F_{\ulin t}  }/{d\PP_1| \F_{\ulin t}  }={M_1(\ulin t)}/ {M_1(\ulin 0)}$ for all $\ulin t\in\R_+^2$. So $M_1$ is an $\F$-martingale  under $\PP_1$.
Let $\ulin \tau$ be an $\F$-stopping time. Fix $A\in\F_{ \ulin \tau}\cap\{\ulin \tau\in\R_+^2\}$. Let $\ulin t\in\R_+^2$.  Define the $\F$-stopping time $\ulin \tau^{\ulin t}$ as in Proposition \ref{prop-local}. Then $A\cap \{\ulin \tau< \ulin t\}=A\cap \{\ulin\tau<\ulin \tau^{\ulin t}\}\in \F_{\ulin \tau^{\ulin t}}\subset\F_{\ulin t}$. So we get $$\PP^0_1[A\cap \{\ulin \tau< \ulin t\}]=\EE_1 \Big[{\bf 1}_{A\cap \{\ulin \tau< \ulin t\}}\frac{M_1(\ulin t)}{M_1(\ulin 0)}\Big ]=\EE_1 \Big[{\bf 1}_{A\cap \{\ulin \tau< \ulin t\}}\frac{M_1(\ulin \tau^{\ulin t})}{M_1(\ulin 0)}\Big ]=\EE_1\Big [{\bf 1}_{A\cap \{\ulin \tau< \ulin t\}}\frac{M_1( \ulin\tau)}{M_1(\ulin 0)} \Big],$$
where the second ``$=$'' follows from Proposition \ref{OST}. Sending both coordinates of $\ulin t$ to $\infty$, we get $\PP^0_1[A ]=\EE_1 [{\bf 1}_{A }\frac{M_1(\ulin \tau)}{M_1(\ulin 0)} ]$. So we get the desired (\ref{RN-formula}).
\end{proof}

\begin{Lemma}
  For any $\F$-stopping time $\ulin \tau$,
	$$ \frac{d\PP_1| \F_{\ulin \tau }\cap \{\ulin \tau\in{\cal D}_1\}  }{d\PP^0_1| \F_{\ulin \tau } \cap \{\ulin \tau\in{\cal D}_1\}  }=\frac {M_1(\ulin 0) }{M_1(\ulin \tau) } .$$
	\label{RN-Thm1-inv}
\end{Lemma}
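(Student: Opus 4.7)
The plan is to derive this inverse Radon--Nikodym formula directly from Lemma \ref{RN-Thm1} by the standard measure-theoretic inversion, exploiting the crucial fact that $M_1$ is strictly positive on $\mathcal{D}_1$ (because $|W_\sigma - V_\sigma|>0$ on $\mathcal{D}_1$ for each $\sigma\in\{+,-\}$, together with the positivity of $F$ on $[0,1]$). Since the event in question is contained in $\{\ulin\tau \in \mathcal{D}_1\}$, the reciprocal $1/M_1(\ulin\tau)$ is well-defined there, and the only real issue is to localize so that one may divide by a bounded-from-below quantity before invoking the forward RN formula.

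Concretely, let $A \in \F_{\ulin\tau}$ with $A \subset \{\ulin\tau \in \mathcal{D}_1\}$, and for each $n\ge 1$ set
$$A_n := A \cap \{M_1(\ulin\tau) \ge 1/n\}.$$
Since $M_1$ is a continuous $\F$-adapted process on $\R_+^2$ and $\ulin\tau$ is an $\F$-stopping time, $M_1(\ulin\tau)\,{\bf 1}_{\{\ulin\tau\in\R_+^2\}}$ is $\F_{\ulin\tau}$-measurable (this is the two-parameter analogue of Proposition \ref{right-continuous-0}, which is the standard fact alluded to just before Lemma \ref{RN-Thm1}). Hence $A_n \in \F_{\ulin\tau}\cap\{\ulin\tau\in\R_+^2\}$, and on $A_n$ the function $g := M_1(\ulin 0)/M_1(\ulin\tau)$ is bounded by $n\,M_1(\ulin 0)$. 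Applying Lemma \ref{RN-Thm1} with the bounded $\F_{\ulin\tau}$-measurable weight ${\bf 1}_{A_n}\, g$, the two factors of $M_1(\ulin\tau)/M_1(\ulin 0)$ cancel and we obtain
$$\EE^0_1\!\left[{\bf 1}_{A_n}\,\frac{M_1(\ulin 0)}{M_1(\ulin\tau)}\right] \;=\; \EE_1\!\left[{\bf 1}_{A_n}\,\frac{M_1(\ulin 0)}{M_1(\ulin\tau)}\cdot\frac{M_1(\ulin\tau)}{M_1(\ulin 0)}\right] \;=\; \PP_1[A_n].$$

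Finally, send $n\to\infty$. Because $M_1(\ulin\tau) > 0$ everywhere on $A\subset\{\ulin\tau\in\mathcal{D}_1\}$, we have $A_n\uparrow A$. Monotone convergence on the left and continuity of probability on the right yield $\EE^0_1[{\bf 1}_A\, M_1(\ulin 0)/M_1(\ulin\tau)] = \PP_1[A]$, which is the desired identity. There is no substantive obstacle here beyond the two bookkeeping points already addressed, namely the $\F_{\ulin\tau}$-measurability of $M_1(\ulin\tau)$ on $\{\ulin\tau\in\R_+^2\}$ and the strict positivity of $M_1$ on $\mathcal{D}_1$; the whole argument is a direct inversion of Lemma \ref{RN-Thm1} and requires no new analytic input.
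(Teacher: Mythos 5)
Your argument is correct and is precisely the fleshed-out version of the paper's one-line proof, which says that the result ``follows from Lemma \ref{RN-Thm1} and the fact that $M_1>0$ on ${\cal D}_1$''; the truncation $A_n=A\cap\{M_1(\ulin\tau)\ge 1/n\}$ and the monotone-convergence passage are exactly the standard steps being compressed there. One small citation note: the $\F_{\ulin\tau}$-measurability of $M_1(\ulin\tau)$ on $\{\ulin\tau\in\R_+^2\}$ is the standard fact that a continuous $\F$-adapted HC process sampled at an $\F$-stopping time is $\F_{\ulin\tau}$-measurable, not really a consequence of Proposition \ref{right-continuous-0} (which concerns right-continuous augmentation and decreasing sequences of stopping times), but this does not affect the validity of the argument.
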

\begin{proof}
  This follows from Lemma \ref{RN-Thm1} and the fact that $M_1>0$ on ${\cal D}_1$.
\end{proof}

 Assume now that $v_0:=(v_++v_-)/2\in [w_-,w_+]$. We understand $v_0$ as $w_\sigma^{-\sigma}$ if $(v_++v_-)/2=w_\sigma$, $\sigma\in\{+,-\}$. Let $V_0$ be the force point function started from $v_0$.  By Section \ref{time curve}, we may  define the time curve $\ulin u:[0,T^u)\to {\cal D}_1$ such that $V_\sigma(\ulin u(t))-V_0(\ulin u(t))=e^{2t}(v_\sigma-v_0)$, $0\le t<T^u$, $\sigma\in\{+,-\}$, and $\ulin u$ can not be extended beyond $T^u$ with such property. We follow the notation there: for every $X$ defined on $\cal D$, we use $X^u$ to denote the function $X\circ \ulin u$ defined on $[0,T^u)$. We also define the processes $R_\sigma =\frac{W_\sigma^u -V_0^u }{V_\sigma^u -V_0^u }\in [0,1]$, $\sigma\in\{+,-\}$,  and $\ulin R=(R_+,R_-)$. Since $T_\sigma$ is an $\F^\sigma$-stopping time for $\sigma\in\{+,-\}$, ${\cal D}_1=[0,T_+)\times [0,T_-)$ is an $\F$-stopping region. As before we extend $\ulin u$ to $\R_+$ such that if $s\ge T^u$ then $\ulin u(s)=\lim_{t\uparrow T^u}\ulin u(t)$. By Proposition \ref{Prop-u(t)}, for any $t\ge 0$, $\ulin u(t)$ is an $\F$-stopping time.

Let $I=v_+-v_0=v_0-v_-$ and define $G_1^*$ on $[0,1]^2$ by $G_1^*(r_+,r_-)=G_1(r_+,-r_-;1,-1)$.
Then $M_1^u(t)=(e^{2t}I)^{\alpha_1} G_1^*(\ulin R(t) )$ for $t\in [0,T^u)$, where $\alpha_1=2(\frac{12}\kappa-1)$ is as in Theorem \ref{main-Thm1}

We 
now derive the transition density of the process $(\ulin R(t) )_{0\le t<T^u}$ under $\PP_1$. In fact,  $T^u$ is $\PP_1$-a.s.\ finite. By saying that $\til p^{R}_1(t,\ulin r,\ulin r^*)$ is the transition density of $(\ulin R)$ under $\PP_1$, we mean that, if $(\ulin R(t))$ starts from $\ulin r$, then for any bounded measurable function $f$ on $(0,1)^2$,
$$\EE_1[{\bf 1}_{\{T^u>t\}}f(\ulin R(t) )]=\int_{[0,1]^2} f(\ulin r^*)\til p^{R}_1(t,\ulin r,\ulin r^*) d\ulin r^*,\quad t>0.$$

Applying Lemma \ref{RN-Thm1-inv} to the $\F$-stopping time $\ulin u(t)$, and using that $\ulin u(t)\in{\cal D}_1$ iff $t<T^u$, we get
$$\frac{d\PP_{1}| \F^u_{t}\cap \{T^u>t\}  }{d\PP^0_1| \F^u_{t} \cap \{T^u>t\}  }=\frac {M_1^u(0) }{M_1^u(t) } =e^{-2\alpha_1t} \frac{G_1^* (\ulin R(0)) }{G_1^* (\ulin R(t)) },\quad t\ge 0.$$
Combining it with Corollary \ref{transition-R-infty}, we get the following transition density.

\begin{Lemma}
Let $p^1_t(\ulin r,\ulin r^*)$ be the transition density $p^R_t(\ulin r,\ulin r^*)$ given in Corollary \ref{transition-R-infty} with $\rho_0=0$ and $\rho_+=\rho_-=2$. Then under $\PP_{1}$, the transition density of $(\ulin R)$ is
  $$\til p^1_t(\ulin r,\ulin r^*):= e^{-2\alpha_1 t}p^1_t(\ulin r,\ulin r^*) {G_1^* (\ulin r)}/{G_1^* (\ulin r^*)}.$$    \label{transition-1}
\end{Lemma}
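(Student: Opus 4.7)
The plan is to reduce the statement to a direct Radon--Nikodym change-of-measure computation, using the two ingredients already assembled just above: (a) the explicit density $e^{-2\alpha_1 t}\,G_1^*(\ulin R(0))/G_1^*(\ulin R(t))$ of $\PP_1$ against $\PP^0_1:=\PP^{(2,2)}_{\ulin w;\ulin v}$ on $\F^u_t\cap\{T^u>t\}$, and (b) the transition density $p^1_t$ of $(\ulin R(t))$ under $\PP^0_1$ coming from Corollary~\ref{transition-R-infty} (with $\rho_0=0$, $\rho_\pm=2$).

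First I would verify that, under $\PP^0_1$, the process $(\ulin R(t))$ is actually defined for all $t\ge 0$, and that its transition density really is the $p^1_t$ of Corollary~\ref{transition-R-infty}. Under $\PP^0_1$ the commuting pair of chordal SLE$_\kappa(2,2)$ curves has no continuation threshold, so its time region is $\R_+^2$; by Lemma~\ref{Beurling} this forces $T^u=\infty$ almost surely. Moreover Theorem~\ref{Thm-SDE-whole-R} gives that $(R_+,R_-)$ solves (\ref{SDE-R}) throughout $\R_+$ for a standard planar Brownian motion, with parameters $\rho_0=0$, $\rho_+=\rho_-=2$, and Corollary~\ref{transition-R-infty} then identifies $p^1_t$ as its transition density.

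Then, for a bounded measurable $f$ on $[0,1]^2$ and initial position $\ulin R(0)=\ulin r$, I would apply the Radon--Nikodym formula displayed immediately before the lemma to the $\F^u_t$-measurable random variable ${\bf 1}_{\{T^u>t\}}f(\ulin R(t))$:
\begin{align*}
\EE_1\bigl[{\bf 1}_{\{T^u>t\}}f(\ulin R(t))\bigr]
&=\EE^0_1\Bigl[f(\ulin R(t))\,e^{-2\alpha_1 t}\,\frac{G_1^*(\ulin r)}{G_1^*(\ulin R(t))}\Bigr] \\
&=e^{-2\alpha_1 t}G_1^*(\ulin r)\int_{[0,1]^2}\frac{f(\ulin r^*)}{G_1^*(\ulin r^*)}\,p^1_t(\ulin r,\ulin r^*)\,d\ulin r^*,
\end{align*}
where the ${\bf 1}_{\{T^u>t\}}$ is dropped on passage to $\PP^0_1$ because $T^u=\infty$ almost surely there. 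Reading off the integrand yields the claimed density $\til p^1_t(\ulin r,\ulin r^*)=e^{-2\alpha_1 t}p^1_t(\ulin r,\ulin r^*)\,G_1^*(\ulin r)/G_1^*(\ulin r^*)$.

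There is no serious obstacle; the only points requiring care are (i) checking that the event $\{T^u>t\}$ lies in $\F^u_t$ so that the RN formula applies to it (this follows from the construction of the filtration $\F^u$ together with the fact that $T^u$ is the explosion time of the continuous process $\ulin u$, hence an $\F^u$-stopping time), and (ii) ensuring $G_1^*(\ulin R(t))>0$ along the trajectory under $\PP_1$ so that division is legal, which holds on $\{T^u>t\}$ since $R_\pm\in(0,1)$ strictly there, making the denominator nonzero.
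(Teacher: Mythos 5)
Your proposal is correct and follows essentially the same route as the paper: the paper itself derives the Radon--Nikodym formula by applying Lemma~\ref{RN-Thm1-inv} to the $\F$-stopping time $\ulin u(t)$ in the displayed equation immediately before the lemma, and then compresses your integration step into the single phrase ``Combining it with Corollary~\ref{transition-R-infty}.'' You have merely unpacked that phrase, together with a sound justification that $T^u=\infty$ $\PP^0_1$-a.s.\ (via Lemma~\ref{Beurling} and $\cal D=\R_+^2$ for a commuting pair of SLE$_\kappa(2,2)$ curves) so that the indicator ${\bf 1}_{\{T^u>t\}}$ can be dropped on the $\PP^0_1$ side.
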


\subsection{Opposite pair of hSLE$_\kappa$ curves, the generic case} \label{section-iSLE-1}
Second, we consider another pair of random curves. Let $\ulin w=(w_+,w_-)$ and $\ulin v=(v_+,v_-)$ be as before. Let $(\eta_w,\eta_v)$ be a $2$-SLE$_\kappa$ in $\HH$ with link pattern $(w_+\lr w_-;v_+\lr v_-)$. For $\sigma\in\{+,-\}$, let $\ha \eta_\sigma$ be the curve $\eta_w$ oriented from $w_\sigma$ to $w_{-\sigma}$ and parametrized by the capacity viewed from $w_{-\sigma}$, which is an hSLE$_\kappa$ curve in $\HH$  from $w_\sigma$ to $w_{-\sigma}$ with force points $v_\sigma$ and $v_{-\sigma}$. Then $\ha \eta_+$ and $\ha \eta_-$ are time-reversal of each other.

For $\sigma\in\{+,-\}$, parametrizing the part of $\ha \eta_\sigma$ up to the time that it disconnects $w_{-\sigma}$ from $\infty$ by $\HH$-capacity, we get a chordal Loewner curve:  $\eta_\sigma(t)$, $0\le t<T_\sigma$, which is an hSLE$_\kappa$ curve in the chordal coordinate. Let $\ha w_\sigma$ and $K_\sigma(\cdot)$ denote the chordal Loewner driving function and hulls for $\eta_\sigma$. Let  $K(t_+,t_-)=\Hull(K_+(t_+)\cup K_-(t_-))$, $(t_+,t_-)\in[0,T_+)\times [0,T_-)$, and define an HC region:
\BGE {\cal D}_2=\{\ulin t\in[0,T_+)\times [0,T_-):K(\ulin t)\subsetneqq \Hull(\eta_w)\}.
\label{D-before-overlap}\EDE

For $\sigma\in \{+,-\}$, let $\F^\sigma$  be the filtration generated by $\eta_\sigma$.  Let $\tau_-$ be an $\F^-$-stopping time. Conditionally on $\F^-_{\tau_-}$ and the event $\{\tau_-<T_-\}$, the part of $\ha\eta_w$ between $\eta_-(\tau_-)$ and $w_+$ and the whole  $\eta_v$ form a $2$-SLE$_\kappa$ in $\HH\sem K_-(\tau_-)$ with link pattern $(w_+\lr \eta_-(\tau_-);v_+\lr v_-)$. So the conditional law of the part of $\ha\eta_+$ up to hitting $\eta_-(\tau_-)$ is that of an hSLE$_\kappa$ curve in $\HH\sem K_-(\tau_-)$ from $w_+$ to $\eta_-(\tau_-)$ with force points $v_+,v_-$, up to a time-change. This implies that there is a random curve $\ha\eta_+^{\tau_-}$ such that the $f_{K_-(\tau_-)}$-image of $\ha\eta_+^{\tau_-}$ is the above part of $\ha\eta_+$, and the conditional law of a time-change of $\ha\eta_+^{\tau_-}$ is that of an hSLE$_\kappa$ curve in $\HH$ from $g_{K_-(\tau_-)}(w_+)$ to $\ha w_-(\tau_-)$ with force points $g_{K_-(\tau_-)}(v_+),g_{K_-(\tau_-)}(v_-)$. By the definition of ${\cal D}_2$, the part of $\eta_+$ up to $T^{{\cal D}_2}_+(\tau_-)$ is a time-change of the part of $\ha\eta_+$ up to the first time that it hits $\eta_-(\tau_-)$ or separates $\eta_-(\tau_-)$ from $\infty$, which is then the $f_{K_-(\tau_-)}$-image of the part of $\ha\eta_+^{\tau_-}$ up to the first time that it hits $\ha w_-(\tau_-)$ or separates $\ha w_-(\tau_-)$ from $\infty$. So there is a random curve $\eta_+^{\tau_-}$ such that the $f_{K_-(\tau_-)}$-image of $\eta_+^{\tau_-}$ is the part of $\eta_+$ up to $T^{{\cal D}_2}_+(\tau_-)$, and the conditional law of a time-change of $\eta_+^{\tau_-}$ is that of an hSLE$_\kappa$ curve in $\HH$ from $g_{K_-(\tau_-)}(w_+)$ to $\ha w_-(\tau_-)$ with force points $g_{K_-(\tau_-)}(v_+),g_{K_-(\tau_-)}(v_-)$, in the chordal coordinate. A similar statement holds with ``$+$'' and ``$-$'' swapped.

Taking the stopping times in the previous paragraph to be deterministic numbers, we find that $(\eta_+,\eta_-;{\cal D}_2)$ a.s.\  satisfies the conditions in Definition \ref{commuting-Loewner} with $\I_\pm=[0,T_\pm)$ and $\I_{\pm}^*=\I_{\pm}\cap\Q$. By removing a null event, we may assume that $(\eta_+,\eta_-;{\cal D}_2)$ is always a commuting pair of chordal Loewner curves. We call $(\eta_+,\eta_-;{\cal D}_2)$ a commuting pair of hSLE$_\kappa$ curves in the chordal coordinate started from $(w_+\lr w_-;v_+,v_-)$.

Let $\F$ be the separable $\R_+^2$-indexed filtration generated by $\F^+$ and $\F^-$, and let $\lin\F$ be the right-continuous augmentation of $\F$. Then ${\cal D}_2$ is an $\lin\F$-stopping region because by Lemma \ref{lem-strict},
$$\{\ulin t\in{\cal D}_2\}=\lim_{\Q^2\ni \ulin s\downarrow\ulin t}(\{\ulin s<(T_+,T_-)\}\cap \{K(\ulin t)\subsetneqq K(\ulin s)\})\in\lin\F_{\ulin t},\quad \forall \ulin t\in\R_+^2.$$

 Define $M_2:{\cal D}_2\to\R_+$ by $M_2=G_2(W_+,W_-;V_+,V_-)$, where $G_2$ is given by (\ref{G2(w,v)}).
Since $V_+\ge W_+\ge W_-\ge V_-$, and $F$ is uniformly positive on $[0,1]$, there is a constant $C>0$ depending only on $\kappa$ such that
 \BGE M_2\le C |W_+-W_-|^{\frac8\kappa-1} |V_+-V_-|^{\frac{16}\kappa -1}\le C |V_+-V_-|^{\frac{2}\kappa(12-\kappa)}.\label{est-M2}\EDE

\begin{Lemma}
   $M_2$ a.s.\ extends continuously to $\R_+^2$ with  $M_2\equiv 0$ on $\R_+^2\sem {\cal D}_2$. \label{M-cont2}
\end{Lemma}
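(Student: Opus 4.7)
The proof naturally splits into two parts: continuity of $M_2$ on $\mathcal{D}_2$, and vanishing at the boundary $\partial \mathcal{D}_2 \cap \R_+^2$.

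For continuity on $\mathcal{D}_2$, I would write $M_2$ as a product of positive powers of the continuous functions $|W_+ - W_-|$, $|V_+ - V_-|$, $|W_\sigma - V_{-\sigma}|$ divided by $F$ applied to a cross ratio.  Continuity of $W_\pm$ on $\mathcal{D}_2$ follows from Lemma \ref{Lem-W}, and continuity of $V_\pm$ from Lemma \ref{common-function}(v) via Definition \ref{def-force-function}.  Strict positivity of each base follows from the ordering \eqref{VWVWV}: $V_- < V_+$ is strict because $\eta_\pm$ never hit $v_\pm$, and $W_- < W_+$ is strict on $\mathcal{D}_2$ because at such points the two curves have not yet jointly filled $\Hull(\eta_w)$, so their two tips are distinct prime ends of $\HH\sem K(\ulin t)$.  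The cross-ratio argument of $F$ lies in $[0,1]$, where $F$ is continuous and strictly positive, so $F^{-1}$ is continuous and bounded.  Combining these observations gives continuity of $M_2$ on $\mathcal{D}_2$.

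For the vanishing at the boundary, by \eqref{est-M2} it suffices to show $|W_+(\ulin t^n) - W_-(\ulin t^n)| \to 0$ along any sequence $\ulin t^n \in \mathcal{D}_2$ converging to some $\ulin t^* \in \partial \mathcal{D}_2 \cap \R_+^2$; indeed $|V_+ - V_-|$ is uniformly bounded on $\mathcal{D}_2$ because $K(\ulin t)\subset \Hull(\eta_w)$ is contained in an a.s.\ bounded random set and Proposition \ref{g-z-sup} controls the image under $g_{K(\ulin t)}$.  A boundary point $\ulin t^*$ falls into either (a) the \emph{meeting} case $\ulin t^* \in [0,T_+)\times[0,T_-)$ with $K(\ulin t^*)=\Hull(\eta_w)$, so that $\eta_+[0,t_+^n]\cup\eta_-[0,t_-^n]$ covers all but a vanishing piece of $\eta_w$; or (b) the \emph{terminal} case $t_+^*=T_+$ or $t_-^*=T_-$, where one of the curves approaches its separation time.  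In both cases, I would identify a compact set $\gamma_n \subset \HH\sem K(\ulin t^n)$ of diameter $r_n\to 0$, surrounded by a crosscut whose $g_{K(\ulin t^n)}$-image encloses both $W_+(\ulin t^n)$ and $W_-(\ulin t^n)$: in case (a), $\gamma_n$ is the uncovered sub-arc of $\eta_w$, while in case (b), $\gamma_n$ encodes the shrinking pinch near the disconnection point that traps both tips and the relevant force point image.

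The core estimate comes from applying the Beurling/extremal-length bound of Lemma \ref{lem-uniform}(i) to concentric circles of inner radius $r_n$ and outer radius $L \asymp \diam(\Hull(\eta_w))$ around a point of $\gamma_n$: the image crosscut has diameter $O(\log(L/r_n)^{-1/2})$, which forces $|W_+(\ulin t^n)-W_-(\ulin t^n)|=o(1)$.  The main technical obstacle is obtaining uniformity of this bound in the terminal case as the other coordinate $t_\mp^n$ ranges over a possibly large interval; this is exactly analogous to the uniformity issue handled in Lemma \ref{M-cont} and is resolved by running the extremal-length argument directly in the joint slit domain $\HH\sem K(\ulin t^n)$ rather than in the one-sided slit, so that the implicit constants depend only on the (random but fixed) curve $\eta_w$ and not on how its coverage is partitioned between $\eta_+$ and $\eta_-$.
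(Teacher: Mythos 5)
Your split into two parts is sensible, and the first part (continuity of $M_2$ on $\mathcal D_2$ itself from continuity of $W_\pm$, $V_\pm$ and positivity of $F$ on $[0,1]$) is fine.  But the core of the lemma is the vanishing at $\pa\mathcal D_2$, and there your argument has a genuine gap.

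The gap is the existence of a compact set $\gamma_n\subset\HH\sem K(\ulin t^n)$ of Euclidean diameter $r_n\to 0$ that separates both tips from $\infty$.  Nothing forces such a set to exist, and in general it does not.  Look at a boundary point of type $\ulin t^*=(s_+,T^{\mathcal D_2}_-(s_+))$, approached along the vertical line $t_+=s_+$.  The tip $\eta_+(s_+)$ is fixed, while $\eta_-(t_-)\to\eta_-(T^{\mathcal D_2}_-(s_+))$; these two points on $\eta_w$ can be far apart in Euclidean distance.  The quantity that goes to zero is $\hcap(\Hull(\eta_w)/K(\ulin t^n))$, but small $\HH$-capacity does not imply small diameter, and $\Hull(\eta_w)\sem K(\ulin t^n)$ need not be connected, so you cannot pass from small capacity to a small enclosing crosscut.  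The extremal-length argument you cite from Lemma \ref{M-cont} works precisely because there $\eta_+(t_+)$ converges to a \emph{fixed} point $x_0\in\R$, so concentric circles around $x_0$ give the shrinking crosscuts.  For $M_2$ there is no analogous fixed center, and the ``shrinking pinch'' you gesture at is neither located nor bounded; in fact, showing that such a pinch traps both tips would essentially amount to re-proving the lemma.

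The paper takes a different route that sidesteps this geometric difficulty entirely.  It first invokes Remark \ref{Remark-continuity-W} (which your argument does not use) to get that $W_\pm$ extend continuously to $\lin{\mathcal D_2}$ -- this is what actually gives a continuous extension, rather than sequential estimates.  Then, to show the extensions agree on $\pa\mathcal D_2\cap\R_+^2$, it argues by contradiction: if $W_+>W_-$ on $\{s_+\}\times[s_-,T^{\mathcal D_2}_-(s_+)]$, then the chordal Loewner process $K_-^{\ulin s}(t)$ driven by $W_-(s_+,s_-+\cdot)$ never swallows $W_+(\ulin s)$, so $W_+(\ulin s)$ keeps a positive distance from $\Hull\bigl(\bigcup_t K_-^{\ulin s}(t)\bigr)=\Hull(\eta_w)/K(\ulin s)$.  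But by Lemma \ref{lem-strict} one can pick $\delta_n\downarrow 0$ with $\eta_+(s_++\delta_n)\in\HH\sem K(\ulin s)$, and then $z_n:=g_{K(\ulin s)}(\eta_+(s_++\delta_n))\to W_+(\ulin s)$ while $z_n\in\Hull(\eta_w)/K(\ulin s)$ for all $n$, a contradiction.  This uses only the Loewner-chain structure and the definition of $\mathcal D_2$, not any estimate on the Euclidean size of the uncovered set, which is why it succeeds where the extremal-length approach stalls.
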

\begin{proof}
Since for $\sigma\in\{+,-\}$, $\eta_\sigma$ a.s.\ extends continuously to $[0,T_\sigma]$, by Remark \ref{Remark-continuity-W}, $W_+$ and $W_-$ a.s.\ extend continuously to $\lin{{\cal D}_2}$. From (\ref{V-V}) we know that a.s.\ $|V_+-V_-|$ is bounded on ${\cal D}_2$. Thus, by (\ref{est-M2}) it suffices to show that the continuations of $W_+$ and $W_-$ agree on $\pa{{\cal D}_2}\cap\R_+^2$. Define $A_\sigma=\{t_\sigma\ulin e_\sigma+T^{{\cal D}_2}_{-\sigma}(t_\sigma)\ulin e_{-\sigma}:t_\sigma\in (0,T_\sigma)\}$, $\sigma\in\{+,-\}$.
Then $A_+\cup A_-$ is dense in $\pa{{\cal D}_2}\cap(0,\infty)^2$. By symmetry, it suffices to show that $W_+$ and $W_-$ agree on $A_+$. 
If this is not true, then there exists $(s_+,s_-)\in{\cal D}_2$ such that $W_+(s_+,\cdot)>W_-(s_+,\cdot)$ on $[s_-,T^{{\cal D}_2}_-(s_+)]$.  

Let $K_-^{\ulin s}(t)=K(s_+,s_-+t)/K(\ulin s)=K_-^{s_+}(s_-+t)/K_-^{s_+}(s_-)$, $0\le t<T':=T^{{\cal D}_2}_-(s_+)-s_-$. Since $K_{-}^{s_+}(t_-)$, $0\le t_-<T^{{\cal D}_2}_-(s_+)$, are chordal Loewner hulls driven by $W_-(s_+,\cdot)$ with speed $\mA(s_+,\cdot)$, $K_{-}^{\ulin s}(t)$, $0\le t<T'$, are chordal Loewner hulls driven by $W_-(s_+,s_-+\cdot)$ with speed $\mA(s_+,s_-+\cdot)$. By Lemma \ref{W=gw} and Proposition \ref{prop-comp-g}, $W_+(s_+,s_-+t)=g_{K_-^{\ulin s}(t)}^{W_-(\ulin s)}(W_+(\ulin s))$, $0\le t< T'$. Since $W_+(s_+,s_-+\cdot)>W_-(s_+,s_-+\cdot)$ on $[0,T')$, we have $\dist(W_+(\ulin s),K_-^{\ulin s}(t))>0$ for $0\le t<T'$. Since $W_+(s_+,s_-+\cdot)$, $W_-(s_+,s_-+\cdot)$ and $\mA(s_+,s_-+\cdot)$ all extend continuously to $[0,T']$, and $W_+(s_+,s_-+T')>W_-(w_+,s_-+T')$, the chordal Loewner process driven by $W_-(s_+,s_-+t)$, $0\le t\le T'$, with speed $\mA(s_+,s_-+\cdot)$ does not swallow $W_+(\ulin s)$ at the time $T'$, which implies that $\dist(W_+(\ulin s),\Hull(\bigcup_{0\le t<T'} K_-^{\ulin s}(t)))>0$.

Since $\ulin s\in{\cal D}_2$, by Lemma \ref{lem-strict} we may choose a (random) sequence $\delta_n\downarrow 0$ such that $\eta_+(s_++\delta_n)\in\HH\sem K(s_+,s_-)$ for all $n$.
Let $z_n=g_{K(s_+,s_-)}(\eta_+(s_++\delta_n))\in K(s_++\delta_n,s_-)/K(s_+,s_-)$, $n\in\N$, then $z_n\to W_+(\ulin s)$ by (\ref{W-def}). So  $\dist(z_{n},\Hull(\bigcup_{0\le t<T'} K_-^{\ulin s}(t)))>0$ for $n$ big enough. However, from
$$\eta_+(s_++\delta_{n})\in \Hull(\eta_w)\sem K(\ulin s)=K(s_+,s_-+T')\sem K(\ulin s)=\Hull(\bigcup_{0\le t<T'} K(s_+,s_-+t))\sem K(\ulin s)$$
we get $z_n\in \Hull(\bigcup_{0\le t<T'} K_-^{\ulin s}(t))$ for all $n$, which is a contradiction.
\end{proof}

From now on, we understand $M_2$ as the continuous extension defined in Lemma \ref{M-cont2}. Let $\tau^\pm_R$ and $\ulin\tau_R$, $R>0$, be as defined before Lemma \ref{M-mart}.

\begin{Lemma}
	For any $R>0$, $M_2(\cdot\wedge \ulin \tau_R)$ is  an $(\F^+_{t_+\wedge \tau^+_R}\vee \F^-_{t_-\wedge \tau^-_R})_{(t_+,t_-)\in\R_+^2}$-martingale closed by $M_2(\ulin \tau_R)$, and if the underlying probability measure is weighted by $M_2(\ulin\tau_R)/M_2(\ulin 0)$, then the new law of $(\ha w_+,\ha w_-)$ agrees with the probability measure $\PP^{(2,2)}_{\ulin w;\ulin v} $  on  $\F^+_{ \tau^+_R}\vee \F^-_{  \tau^-_R}$.
	\label{M-mart2}
\end{Lemma}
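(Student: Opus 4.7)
The plan is to follow the template of Lemma \ref{M-mart}, with Proposition \ref{Prop-iSLE-3} as the engine. The key input is an identification of $M_2$ with the martingale of that proposition. By the two-directional DMP for $2$-SLE$_\kappa$ described in the paragraph preceding the lemma, for $\sigma \in \{+,-\}$ and an $\F^{-\sigma}$-stopping time $\tau_{-\sigma}$, a time-change of the normalization of $\eta_\sigma^{\tau_{-\sigma}}$ has, conditional on $\F^{-\sigma}_{\tau_{-\sigma}}$, the law of an hSLE$_\kappa$ curve (in the chordal coordinate) from $W_\sigma(0,\tau_{-\sigma})$ to $W_{-\sigma}(0,\tau_{-\sigma})$ with force points $V_\sigma(0,\tau_{-\sigma})$ (associated with the start) and $V_{-\sigma}(0,\tau_{-\sigma})$ (associated with the target). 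A direct computation identifies $G_1(\ha w_0,\ha v_1;\ha w_\infty,\ha v_2)$ from Proposition \ref{Prop-iSLE-3} for this hSLE with $G_2(W_+,W_-;V_+,V_-)=M_2$: the four $|\cdot|^{\frac{8}{\kappa}-1}$ and $|\cdot|^{\frac{4}{\kappa}}$ factors match up, and the two cross ratios coincide under the ordering $V_-<W_-<W_+<V_+$ because an even number of sign flips is absorbed into the numerator and denominator.

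With the identification in hand, the two-parameter martingale claim proceeds as in Lemma \ref{M-mart}. Fix $R>0$, $\sigma\in\{+,-\}$ and $\tau_{-\sigma}=t_{-\sigma}\wedge \tau^{-\sigma}_R$. Proposition \ref{Prop-iSLE-3} (after the time-change relating capacity-parametrization in the chordal coordinate to the parametrization of $\eta_\sigma$) asserts that $M_2|^{-\sigma}_{\tau_{-\sigma}}(t)$ is an $(\F^\sigma_t \vee \F^{-\sigma}_{\tau_{-\sigma}})_{t\ge 0}$-local martingale on $[0, T^{{\cal D}_2}_\sigma(\tau_{-\sigma}))$. The estimate (\ref{est-M2}), together with (\ref{V-V}) and the containment $K(\ulin\tau_R)\subset \{z\in\HH:|z-(v_++v_-)/2|\le R\}$, bounds $M_2$ uniformly on $[\ulin 0,\ulin\tau_R]$ by a constant depending only on $\kappa$, $R$ and $v_\pm$. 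Together with Lemma \ref{M-cont2}, which extends $M_2$ continuously by zero past $\pa{\cal D}_2$, this makes $M_2|^{-\sigma}_{\tau_{-\sigma}}(\cdot\wedge \tau^\sigma_R)$ a bounded $(\F^+_{t_+\wedge \tau^+_R}\vee \F^-_{t_-\wedge\tau^-_R})_{t_\sigma\ge 0}$-martingale closed by $M_2|^{-\sigma}_{\tau_{-\sigma}}(\tau^\sigma_R)$. Applying this for both $\sigma=+$ and $\sigma=-$ yields the two-parameter martingale $M_2(\cdot\wedge\ulin\tau_R)$.

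The Radon-Nikodym statement is then obtained by two successive reweightings, mirroring the final paragraph of the proof of Lemma \ref{M-mart}. Reweighting first by $N_1 := M_2(0,\tau^-_R)/M_2(0,0)$: Proposition \ref{Prop-iSLE-3} applied to the marginal hSLE$_\kappa$ law of $\eta_-$ (from $w_-$ to $w_+$, force points $v_-,v_+$) converts the marginal law of $\eta_-|_{[0,\tau^-_R]}$ into that of a chordal SLE$_\kappa(2,2,2)$ started from $w_-$ with force points $v_-, w_+, v_+$. Reweighting next by $N_2 := M_2(\tau^+_R,\tau^-_R)/M_2(0,\tau^-_R)$: Proposition \ref{Prop-iSLE-3} applied to the conditional hSLE$_\kappa$ law of $\eta_+^{\tau^-_R}$ converts the conditional law of $\eta_+|_{[0,\tau^+_R]}$ given $\eta_-|_{[0,\tau^-_R]}$ into that of a chordal SLE$_\kappa(2,2,2)$ started from $W_+(0,\tau^-_R)$ with force points $W_-(0,\tau^-_R), V_+(0,\tau^-_R), V_-(0,\tau^-_R)$. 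Since $N_1 N_2 = M_2(\ulin\tau_R)/M_2(\ulin 0)$, the combined reweighting produces the commuting pair of SLE$_\kappa(2,2,2)$ curves, proving the $\PP^{(2,2)}_{\ulin w;\ulin v}$ agreement on $\F^+_{\tau^+_R}\vee \F^-_{\tau^-_R}$. The main subtlety I expect to require care is the boundary behavior of ${\cal D}_2$: Proposition \ref{Prop-iSLE-3}'s local-martingale assertion ends at the first swallowing of $w_\infty$, which is exactly what happens on $\pa{\cal D}_2$, so I must use Lemma \ref{M-cont2} and the fact that $M_2$ stays at zero once it hits $\pa{\cal D}_2$ to upgrade the stopped local martingale to a genuine bounded martingale up to $\ulin\tau_R$.
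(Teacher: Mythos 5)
Your proposal is correct and follows essentially the same route as the paper's proof, which simply cites the argument of Lemma \ref{M-mart} with the bound (\ref{est-M2}) replacing (\ref{M1-boundedness}). The extra explicit details you supply—the algebraic identification $G_1(W_\sigma,V_\sigma;W_{-\sigma},V_{-\sigma})=G_2(W_+,W_-;V_+,V_-)$ and the use of Lemma \ref{M-cont2} to continue $M_2$ by zero past $\pa{\cal D}_2$—are both accurate and are implicit in the paper's reference to the earlier proof.
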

\begin{proof}
We follow the argument in the proof of Lemma \ref{M-mart}, where Proposition \ref{Prop-iSLE-3} is the key ingredient, except that here we use (\ref{est-M2})  instead of (\ref{M1-boundedness}). \end{proof}

Let $\PP_2$ denote the joint law of the driving functions $\ha w_+$ and $\ha w_-$ here, and let $\PP^0_2=\PP^{(2,2)}_{\ulin w;\ulin v}$.  Following  the proof of Lemma \ref{RN-Thm1} and using  Lemma \ref{M-mart2}, we get the following lemma.

\begin{Lemma}
A revision of Lemma \ref{RN-Thm1} holds with all subscripts ``$1$'' replaced by ``$2$'' and the filtration $\F$ replaced by $\lin\F$.
 \label{RN-Thm2-right}
\end{Lemma}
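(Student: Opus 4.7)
The plan is to mimic the proof of Lemma \ref{RN-Thm1}, making two modifications to accommodate that $M_2$ is only guaranteed to be an $\lin\F$-adapted continuous process (since ${\cal D}_2$ is only known to be an $\lin\F$-stopping region) and that $M_2(\ulin t)$ vanishes on the complement of ${\cal D}_2$ rather than staying strictly positive up to the lifetime.

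First, I would bootstrap from Lemma \ref{M-mart2} to a statement on $\F_{\ulin t}$.  Since $\F^+_{t_+\wedge \tau^+_R}\vee \F^-_{t_-\wedge \tau^-_R}$ coincides with $\F_{\ulin t}$ on $\{\ulin t\le \ulin\tau_R\}$, the formula (analog of (\ref{RN-formula0})) restricts to
\[
\frac{d\PP^0_2|(\F_{\ulin t}\cap \{\ulin t\le \ulin\tau_R\})}{d\PP_2|(\F_{\ulin t}\cap \{\ulin t\le \ulin\tau_R\})}=\frac{M_2(\ulin t)}{M_2(\ulin 0)}.
\]
Letting $R\to\infty$, the events $\{\ulin t\le \ulin\tau_R\}$ increase to a full-measure set under both laws (since $\diam(\eta_w)<\infty$ a.s.), and by Lemma \ref{M-cont2} the continuous extension of $M_2$ is defined on all of $\R_+^2$ and equals $0$ off ${\cal D}_2$.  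Dominated convergence (using the deterministic bound (\ref{est-M2}) together with the a.s.\ boundedness of $|V_+-V_-|$ on ${\cal D}_2$, which is in turn controlled by $\diam(\eta_w)$; one may intersect with $\{\diam(\eta_w)\le N\}$ and then let $N\to\infty$) then yields $d\PP^0_2/d\PP_2 = M_2(\ulin t)/M_2(\ulin 0)$ on $\F_{\ulin t}$, so $M_2$ is an $\F$-martingale under $\PP_2$.  Because $M_2$ has continuous sample paths in $\ulin t$, it is then also an $\lin\F$-martingale (this is the standard right-continuous-augmentation argument: for $A\in\lin\F_{\ulin s}$ and $\ulin\eps>\ulin 0$, apply the $\F$-martingale equality at $(\ulin s+\ulin\eps,\ulin t+\ulin\eps)$ and use continuity together with uniform integrability to send $\ulin\eps\downarrow \ulin 0$).

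For the second statement, let $\ulin\tau$ be an $\lin\F$-stopping time and fix $A\in \lin\F_{\ulin\tau}\cap\{\ulin\tau\in\R_+^2\}$.  I will apply the analog of Proposition \ref{prop-local} for $\lin\F$ in place of $\F$: for each deterministic $\ulin t\in\R_+^2$, the time $\ulin\tau^{\ulin t}$ defined by $\ulin\tau^{\ulin t}=\ulin\tau$ on $\{\ulin\tau\le \ulin t\}$ and $\ulin\tau^{\ulin t}=\ulin t$ otherwise is an $\lin\F$-stopping time bounded by $\ulin t$, and $A\cap\{\ulin\tau<\ulin t\}\in \lin\F_{\ulin\tau^{\ulin t}}\subset \lin\F_{\ulin t}$.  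Applying the Optional Stopping Theorem (Proposition \ref{OST}) to the continuous $\lin\F$-martingale $M_2$ between the bounded $\lin\F$-stopping times $\ulin\tau^{\ulin t}\le \ulin t$, together with the RN formula on $\lin\F_{\ulin t}$, gives
\[
\PP^0_2[A\cap\{\ulin\tau<\ulin t\}]=\EE_2\!\left[{\bf 1}_{A\cap\{\ulin\tau<\ulin t\}}\frac{M_2(\ulin t)}{M_2(\ulin 0)}\right]=\EE_2\!\left[{\bf 1}_{A\cap\{\ulin\tau<\ulin t\}}\frac{M_2(\ulin\tau^{\ulin t})}{M_2(\ulin 0)}\right]=\EE_2\!\left[{\bf 1}_{A\cap\{\ulin\tau<\ulin t\}}\frac{M_2(\ulin\tau)}{M_2(\ulin 0)}\right].
\]
Sending both coordinates of $\ulin t$ to $\infty$ yields the required RN identity.

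The main technical subtlety I anticipate is the dominated-convergence step in the $R\to\infty$ limit: unlike the $2$-SLE$_\kappa$ in Section \ref{section-two-curve}, here the two curves come from a single $2$-SLE$_\kappa$ curve $\eta_w$, and $M_2$ can blow up in principle if $|V_+-V_-|$ does; however (\ref{est-M2}) shows $M_2\le C|V_+-V_-|^{2(12-\kappa)/\kappa}$ and $|V_+-V_-|\le 4\,\diam(K(\ulin t)\cup[v_-,v_+])\le 4\,\diam(\Hull(\eta_w)\cup [v_-,v_+])$ by (\ref{V-V}), which is a.s.\ finite.  Truncating on $\{\diam(\Hull(\eta_w))\le N\}$ and monotone-convergence in $N$ disposes of this, and everything else follows the Lemma \ref{RN-Thm1} template verbatim.
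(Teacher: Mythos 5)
Your proof is correct and follows essentially the same route as the paper's: establish the $\F$-martingale property and RN formula on $\F_{\ulin t}$ by letting $R\to\infty$ in the analog of (\ref{RN-formula0}) (via Lemma \ref{M-mart2}), upgrade to $\lin\F$ using continuity and uniform integrability, then localize with Proposition \ref{prop-local} and apply Proposition \ref{OST}. One small imprecision: the events $\{\ulin t\le\ulin\tau_R\}$ do \emph{not} increase to a full-measure set under $\PP_2$ (they increase to $\{\ulin t\le(T_+,T_-)\}$, whose complement can have positive $\PP_2$-measure when $\ulin t$ is large); the argument goes through because $M_2(\ulin t)=0$ off ${\cal D}_2\subset[0,T_+)\times[0,T_-)$ by Lemma \ref{M-cont2}, so the extra mass contributes nothing, and since $M_2\ge 0$, monotone convergence already suffices without the deterministic-bound truncation you introduce.
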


\begin{Lemma}
  For any $\lin\F$-stopping time $\ulin \tau$, $M_2(\ulin \tau)$ is  $\PP_2$-a.s.\ positive on  $\{\ulin \tau\in{\cal D}_2\}$. \label{positive-M-T}
\end{Lemma}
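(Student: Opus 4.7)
The plan is to reduce the claim to the strict inequality $W_+(\ulin\tau) > W_-(\ulin\tau)$ on $\{\ulin\tau \in {\cal D}_2\}$ and then verify that inequality pathwise from the conformal geometry of the underlying $2$-SLE.

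First I would exploit the explicit formula (\ref{G2(w,v)}) for $G_2$. Since $F$ is continuous and strictly positive on $[0,1]$, $M_2$ can vanish only when one of the four factors $|V_+-V_-|$, $|W_+-W_-|$, $|W_+-V_-|$, $|V_+-W_-|$ vanishes at $\ulin\tau$. The first factor is always strictly positive: $V_+$ is monotonically nondecreasing and $V_-$ monotonically nonincreasing in each time variable (by Proposition \ref{Prop-cd-continuity'}) starting from $v_+ > v_-$. Using the general ordering $V_+ \ge W_+ \ge V_0 \ge W_- \ge V_-$ from (\ref{VWVWV}) (applied with any reference point $v_0 \in [w_-^+, w_+^-]$), vanishing of $|W_+-V_-|$ or $|V_+-W_-|$ forces $W_+(\ulin\tau) = W_-(\ulin\tau)$. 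Therefore it suffices to show $W_+(\ulin\tau) > W_-(\ulin\tau)$ $\PP_2$-a.s.\ on $\{\ulin\tau \in {\cal D}_2\}$.

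Second I would establish this pathwise from the definition of ${\cal D}_2$. When $\ulin\tau \in {\cal D}_2$, $K(\ulin\tau) \subsetneqq \Hull(\eta_w)$, so the remainder $\eta_w^{\mathrm{rem}} := \eta_w \sem (\eta_+[0,\tau_+] \cup \eta_-[0,\tau_-])$ is nonempty. Since $K(\ulin\tau)$ is the $\HH$-hull of $\eta_+[0,\tau_+] \cup \eta_-[0,\tau_-]$, it absorbs any bounded complementary components cut off from $\infty$; hence $\eta_w^{\mathrm{rem}}$ lies in the unbounded component of $\HH \sem K(\ulin\tau)$ and forms a nondegenerate subarc of $\eta_w$ joining the prime ends $\eta_+(\tau_+)$ and $\eta_-(\tau_-)$. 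Under $g_{K(\ulin\tau)}$, extended continuously to prime ends, this subarc maps to a nondegenerate curve in $\lin\HH$ whose two boundary endpoints are exactly $W_+(\ulin\tau)$ and $W_-(\ulin\tau)$ (by Lemma \ref{Lem-W}), which must therefore be distinct. Even when $\eta_+(\tau_+) = \eta_-(\tau_-) = p$ as a point in $\HH$ (possible at a self-touching of $\eta_w$ when $\kappa \in (4,8)$), the two tips approach $p$ from opposite sides of the loop $\eta_w^{\mathrm{rem}}$ inside the unbounded component, defining distinct prime ends that $g_{K(\ulin\tau)}$ sends to distinct points on $\pa\HH$. Combining with $W_+ \ge W_-$ yields $W_+(\ulin\tau) > W_-(\ulin\tau)$ strictly, and hence $M_2(\ulin\tau) > 0$.

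The main obstacle is the topological step in the second paragraph when $\kappa \in (4,8)$ and the tips coincide at a self-touching point of $\eta_w$: one must carefully verify that the two approach directions within the unbounded component of $\HH \sem K(\ulin\tau)$ correspond to genuinely distinct prime ends, so that $g_{K(\ulin\tau)}$ separates them on $\pa\HH$. This involves understanding the local structure of $\pa K(\ulin\tau)$ near $p$, where branches of the two tip trajectories meet branches of $\eta_w^{\mathrm{rem}}$. Any degenerate tangential configurations for which the separation could fail are expected to form a $\PP_2$-null set, consistent with the almost-sure qualifier in the lemma.
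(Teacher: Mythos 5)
Your reduction in the first paragraph is correct: given the ordering $V_- \le W_- \le W_+ \le V_+$ and the strict monotonicity of $V_\pm$, the factor $|V_+ - V_-|$ never vanishes, and each of the other factors of $G_2$ vanishes only if $W_+ = W_-$, so the claim is equivalent to $W_+(\ulin\tau) > W_-(\ulin\tau)$ $\PP_2$-a.s.\ on $\{\ulin\tau\in{\cal D}_2\}$. But the pathwise geometric argument in your second paragraph does not work, and the obstacle you flag is fatal, not incidental. The prime ends $\eta_+(\tau_+)$ and $\eta_-(\tau_-)$ as prime ends of $\HH\sem K(\ulin\tau)$ are \emph{by definition} $f_{K(\ulin\tau)}(W_+(\ulin\tau))$ and $f_{K(\ulin\tau)}(W_-(\ulin\tau))$ (Lemma \ref{Lem-W}): they are distinct prime ends if and only if $W_+(\ulin\tau)\ne W_-(\ulin\tau)$. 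There is no independent geometric fact about ``approach directions'' that one can bring to bear; the claim you want to prove \emph{is} the claim that the prime ends differ. And the paper's remark immediately after the lemma shows the pathwise statement is simply false when $\kappa\in(4,8)$: there a.s.\ exist $(t_+,t_-)\in{\cal D}_2$ with $\eta_+(t_+)=\eta_-(t_-)$, and by Lemma \ref{W=V} at such a time $W_+=V_0=W_-$, so $M_2=0$. Thus the set of times where the separation fails is a.s.\ nonempty, not a null set of realizations. Your closing sentence, that the degenerate configurations ``are expected to form a $\PP_2$-null set,'' is exactly the content of the lemma restated, so the argument is circular.

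The idea you are missing is that this is irreducibly a statement about a \emph{fixed} stopping time, and it is proved probabilistically using the Radon--Nikodym relation with the commuting SLE$_\kappa(2,2)$ law $\PP^0_2$ (Lemma \ref{RN-Thm2-right}). On $A=\{\ulin\tau\in{\cal D}_2\}\cap\{M_2(\ulin\tau)=0\}$, the RN derivative vanishes, so $\PP^0_2[A]=0$; applying the RN formula at the later stopping times $\ulin\tau+\ulin t$, $\ulin t\in\Q_+^2$ (where $A\in\lin\F_{\ulin\tau+\ulin t}$), forces $M_2(\ulin\tau+\ulin t)=0$ $\PP_2$-a.s.\ on $A$, and hence by continuity $M_2\equiv 0$ and $W_+\equiv W_-$ on $(\ulin\tau+\R_+^2)\cap{\cal D}_2$. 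Via Lemma \ref{Lem-W} this would make $\eta_+(\tau_++\cdot)$ constant, a contradiction. This ``propagate the zero forward in time and derive an absurd degeneracy'' step is what replaces the geometric separation you were hoping for, and it is not recoverable from the deterministic structure of a single configuration.
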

\begin{proof}
  Let $\ulin \tau$ be an $\lin\F$-stopping time.  Let $A=\{\ulin \tau\in{\cal D}_2\}\cap\{M_2(\ulin \tau)=0\}$.  We are going to show that $\PP_2[A]=0$. Since ${\cal D}_2$ is an $\lin\F$-stopping region,
we have $\{\ulin \tau\in{\cal D}_2\}\in \lin\F_{\ulin \tau}$, and $A \in  \lin\F_{\ulin \tau}$. Since $M_2(\ulin \tau)=0$ on $A$, by Lemma \ref{RN-Thm2-right},   $\PP^0_2[A]=0$. For any $\ulin t\in\Q_+^2$, since $A\in\lin\F_{\ulin\tau+\ulin t}$, by  Lemma \ref{RN-Thm2-right}, $\PP_2$-a.s\ $M_2(\ulin \tau+\ulin t)=0$  on $A$. Thus, on the event $A$, $\PP_2$-a.s.\ $M_2(\ulin \tau+\ulin t)=0$ for any $\ulin t\in\Q_+^2$, which  implies by the continuity that $M_2\equiv 0$ on $\ulin \tau+\R_+^2$, which further implies that $W_+\equiv W_-$ on $(\ulin \tau+\R_+^2)\cap {\cal D}_2$, which in turn implies by Lemma \ref{Lem-W} that $\eta_+(\tau_++t_+)=\eta_-(\tau_-+t_-)$ for any $\ulin t=(t_+,t_-)\in\R_+^2$ such that $\ulin\tau+\ulin t\in {\cal D}_2$. This is impossible since it implies (by setting $t_-=0$) that  $\eta_+$ stays constant on $[\tau_+,T^{{\cal D}_2}_+(\tau_-))$. So we have $\PP_2[A]=0$.
\end{proof}

\begin{Remark}
  We do not have  $M_2>0$ on ${\cal D}_2$ if there is $(t_+,t_-)\in {\cal D}_2$ such that $\eta_+(t_+)= \eta_-(t_-)$, which almost surely happens when $\kappa\in(4,8)$.
\end{Remark}

\begin{Lemma}
A revision of Lemma \ref{RN-Thm1-inv} holds with all subscripts ``$1$'' replaced by ``$2$'' and the filtration $\F$ replaced by $\lin\F$.
	\label{RN-Thm2-inv}
\end{Lemma}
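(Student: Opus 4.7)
The plan is to mirror the short argument given for Lemma \ref{RN-Thm1-inv}, except that the positivity of $M_2$ is no longer automatic on ${\cal D}_2$ (it can fail when $\eta_+(t_+)=\eta_-(t_-)$ for $\kappa\in(4,8)$). In place of universal positivity, I will use Lemma \ref{positive-M-T}, which gives $\PP_2$-a.s.\ positivity of $M_2(\ulin\tau)$ on $\{\ulin\tau\in{\cal D}_2\}$, and then invert the Radon--Nikodym derivative supplied by Lemma \ref{RN-Thm2-right}.

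In detail: fix an $\lin\F$-stopping time $\ulin\tau$ and an event $A\in \lin\F_{\ulin\tau}$ with $A\subset\{\ulin\tau\in{\cal D}_2\}$. Since ${\cal D}_2\subset\R_+^2$, such an $A$ lies in $\lin\F_{\ulin\tau}\cap\{\ulin\tau\in\R_+^2\}$, so Lemma \ref{RN-Thm2-right} is applicable. Because $M_2$ is continuous and $\lin\F$-adapted, $M_2(\ulin\tau)$ is $\lin\F_{\ulin\tau}$-measurable on $\{\ulin\tau\in\R_+^2\}$, and thus the event $B:=A\cap\{M_2(\ulin\tau)>0\}$ also lies in $\lin\F_{\ulin\tau}\cap\{\ulin\tau\in\R_+^2\}$. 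By Lemma \ref{positive-M-T}, $\PP_2[A\sem B]=0$, and then by Lemma \ref{RN-Thm2-right} also $\PP^0_2[A\sem B]=0$. Hence the quotient $M_2(\ulin 0)/M_2(\ulin\tau)$ is well defined both $\PP_2$-a.s.\ and $\PP^0_2$-a.s.\ on $A$.

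Applying Lemma \ref{RN-Thm2-right} with the integrand ${\bf 1}_B \, M_2(\ulin 0)/M_2(\ulin\tau)$ gives
\begin{align*}
\EE^0_2\Big[{\bf 1}_A\,\frac{M_2(\ulin 0)}{M_2(\ulin\tau)}\Big]
&=\EE^0_2\Big[{\bf 1}_B\,\frac{M_2(\ulin 0)}{M_2(\ulin\tau)}\Big]
=\EE_2\Big[{\bf 1}_B\,\frac{M_2(\ulin\tau)}{M_2(\ulin 0)}\cdot\frac{M_2(\ulin 0)}{M_2(\ulin\tau)}\Big]
=\PP_2[B]=\PP_2[A],
\end{align*}
which is exactly the Radon--Nikodym identity claimed. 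The only step that requires any care is verifying that Lemma \ref{RN-Thm2-right} indeed applies to $B$, i.e.\ that $\{M_2(\ulin\tau)>0\}\in\lin\F_{\ulin\tau}$; this will be the main bookkeeping obstacle, but it follows from the continuity of $M_2$ and the $\lin\F$-adaptedness established in Lemma \ref{M-cont2} combined with a standard two-parameter analogue of Proposition \ref{measurable}-type reasoning (or, concretely, by approximating $\ulin\tau$ from above by countably-valued $\lin\F$-stopping times, each of which makes $M_2(\ulin\tau)$ manifestly $\lin\F_{\ulin\tau}$-measurable).
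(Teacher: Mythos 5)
Your proof is correct and mirrors the paper's own argument: the paper's proof simply cites Lemmas \ref{RN-Thm2-right} and \ref{positive-M-T}, and you have filled in exactly the details of how those two are combined — using a.s.\ positivity of $M_2(\ulin\tau)$ on $\{\ulin\tau\in{\cal D}_2\}$ (which replaces the universal positivity $M_1>0$ on ${\cal D}_1$ used in Lemma \ref{RN-Thm1-inv}) to invert the Radon--Nikodym identity of Lemma \ref{RN-Thm2-right}. The measurability of $M_2(\ulin\tau)$ on $\{\ulin\tau\in\R_+^2\}$, which you flag as the bookkeeping obstacle, is indeed handled by continuity and adaptedness plus approximation by countably-valued stopping times, just as you describe.
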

\begin{proof}
  This follows from Lemmas \ref{RN-Thm2-right} and   \ref{positive-M-T}.
\end{proof}

Assume that $v_0:=(v_++v_-)/2\in [w_-,w_+]$, and let $V_0$ be the force point function started from $v_0$. Here if $v_0=w_\sigma$ for some $\sigma\in\{+,-\}$, we treat it as $w_\sigma^{-\sigma}$. We may   define the time curve $\ulin u:[0,T^u)\to {\cal D}_2$ and the processes $R_\sigma(t)$, $\sigma\in\{+,-\}$, and $\ulin R(t)$ as in Section \ref{time curve}, and extend $\ulin u$ to $\R_+$ such that $\ulin u(s)=\lim_{t\uparrow T^u} \ulin u(t)$ for $s\ge T^u$. Since ${\cal D}_2$ is an $\lin\F$-stopping region, by Proposition \ref{Prop-u(t)}, for any $t\ge 0$, $\ulin u(t)$ is an $\lin\F$-stopping time.

Define $G_2^*$ on $[0,1]^2$ by $G_2^*(r_+,r_-)=G_2(r_+,-r_-;1,-1)$.
Then $M_2^u(t)=(e^{2t}I)^{\alpha_1} G_2^*(\ulin R(t) )$ for $t\in [0,T^u)$, where $\alpha_2=2(\frac{12}\kappa-1)$ is as in Theorem \ref{main-Thm1}.
Applying Lemma \ref{RN-Thm2-inv} to $\ulin u(t)$, we get
the following lemma, which is similar to Lemma \ref{transition-1}.

\begin{Lemma}
  Let $p^2_t(\ulin r,\ulin r^*)$ be the transition density $p^R_t(\ulin r,\ulin r^*)$ given in Corollary \ref{transition-R-infty} with $\rho_0=0$ and $\rho_+=\rho_-=2$. Then under $\PP _{2}$, the transition density of $(\ulin R)$ is
  $$ \til p^2_t(\ulin r,\ulin r^*):= e^{-2\alpha_2 t}p^2_t(\ulin r,\ulin r^*) {G_2^* (\ulin r)}/{G_2^* (\ulin r^*)}.$$
  \label{transition-2}
\end{Lemma}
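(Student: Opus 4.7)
The plan is to reproduce verbatim the two-step argument that established Lemma \ref{transition-1}, with indices ``$1$'' replaced by ``$2$'' throughout. First I would verify the key identity
\[
M_2^u(t) = (e^{2t}I)^{\alpha_2}\, G_2^*(\ulin R(t)), \qquad 0 \le t < T^u.
\]
This is a direct algebraic check from \eqref{G2(w,v)}: along the time curve one has $V_\sigma^u - V_0^u = \sigma e^{2t} I$ and $W_\sigma^u - V_0^u = R_\sigma(V_\sigma^u - V_0^u)$, so under the normalization $v_\pm = \pm 1$ we get $W_+ \mapsto R_+$, $W_- \mapsto -R_-$, $V_\pm \mapsto \pm e^{2t}$. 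The two factors $|v_+ - v_-|^{8/\kappa - 1}$ and $|w_+ - w_-|^{8/\kappa - 1}$ together with $\prod_\sigma |w_\sigma - v_{-\sigma}|^{4/\kappa}$ contribute a total power $2(\frac{8}{\kappa}-1) + 2 \cdot \frac{4}{\kappa} = \frac{24}{\kappa} - 2 = \alpha_2$ of the length scale $e^{2t}I$, while the cross-ratio in the argument of $F$ simplifies to $(1-R_+)(1-R_-)/[(1+R_+)(1+R_-)]$, which is exactly what appears inside $G_2^*(\ulin R)$.

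Second, since ${\cal D}_2$ is an $\lin\F$-stopping region and $\ulin u(t)$ is an $\lin\F$-stopping time (Proposition \ref{Prop-u(t)}) satisfying $\{\ulin u(t)\in{\cal D}_2\} = \{T^u > t\}$, applying Lemma \ref{RN-Thm2-inv} at $\ulin u(t)$ produces the Radon--Nikodym identity
\[
\frac{d\PP_2|_{\lin\F^u_t \cap \{T^u > t\}}}{d\PP^0_2|_{\lin\F^u_t \cap \{T^u > t\}}}
= \frac{M_2^u(0)}{M_2^u(t)} = e^{-2\alpha_2 t}\,\frac{G_2^*(\ulin R(0))}{G_2^*(\ulin R(t))},
\]
where $\lin\F^u_t := \lin\F_{\ulin u(t)}$ and $\PP^0_2 = \PP^{(2,2)}_{\ulin w;\ulin v}$. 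Under $\PP^0_2$ the driving pair is a commuting SLE$_\kappa(2,2)$ pair (so $\rho_0 = 0$, $\rho_\pm = 2$), and Theorem \ref{Thm-SDE-whole-R} together with Corollary \ref{transition-R-infty} then gives that $\ulin R$ is a diffusion on $[0,1]^2$ with transition density $p^2_t$, defined for all $t \ge 0$. Reweighting $\PP^0_2$-expectations by the displayed Radon--Nikodym derivative and integrating against $p^2_t$ yields, for any bounded measurable $f$,
\[
\EE_2\bigl[\mathbf{1}_{\{T^u>t\}} f(\ulin R(t))\bigr]
= \int_{[0,1]^2} f(\ulin r^*)\, \til p^2_t(\ulin r, \ulin r^*)\, d\ulin r^*,
\]
which is the claimed transition density.

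The one conceptual point that deserves attention is that under $\PP_2$ the process $\ulin R$ has a random finite lifespan $T^u$, whereas under $\PP^0_2$ (where no natural region ${\cal D}_2$ exists) the corresponding process is defined on all of $\R_+$. This mismatch is precisely what the vanishing of $M_2$ on $\pa {\cal D}_2$ from Lemma \ref{M-cont2} is designed to handle: the factor $M_2^u(0)/M_2^u(t)$ effectively kills the $\PP^0_2$-diffusion at the boundary of ${\cal D}_2$, so no additional Girsanov-type killing term needs to be analyzed. Beyond this bookkeeping, all nontrivial estimates have been established in Sections \ref{section-commuting-SLE-kappa-rho} and \ref{section-iSLE-1}, so the proof requires no new input.
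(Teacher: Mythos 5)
Your proof is correct and follows exactly the argument the paper intends for Lemma \ref{transition-2}, which mirrors the proof of Lemma \ref{transition-1}: verify the identity $M_2^u(t)=(e^{2t}I)^{\alpha_2}G_2^*(\ulin R(t))$ by substitution along the time curve, then apply Lemma \ref{RN-Thm2-inv} at the $\lin\F$-stopping time $\ulin u(t)$ to obtain the Radon--Nikodym factor $e^{-2\alpha_2 t}G_2^*(\ulin R(0))/G_2^*(\ulin R(t))$ between $\PP_2$ and $\PP^0_2=\PP^{(2,2)}_{\ulin w;\ulin v}$ on $\{T^u>t\}$, and conclude via Corollary \ref{transition-R-infty}. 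Your exponent bookkeeping ($2(\tfrac 8\kappa-1)+2\cdot\tfrac 4\kappa=\alpha_2$) and the reduction of the cross-ratio to $(1-R_+)(1-R_-)/[(1+R_+)(1+R_-)]$ are both correct, and your closing remark that the vanishing of $M_2$ on $\pa{\cal D}_2$ (Lemma \ref{M-cont2}) together with ${\cal D}_2=\R_+^2$ a.s.\ under $\PP^0_2$ is what makes the restriction to $\{T^u>t\}$ harmless is a fair reading of the bookkeeping the paper leaves implicit.
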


\subsection{Opposite pair of hSLE$_\kappa$ curves,  a limit case } \label{section-iSLE-2}
Let $w_-<w_+<v_+\in\R$.  Let $(\eta_w,\eta_v)$ be a $2$-SLE$_\kappa$ in $\HH$ with link pattern $(w_+\lr w_-;v_+\lr \infty)$. For $\sigma\in\{+,-\}$, let $\ha \eta_\sigma$ be the curve $\eta_w$ oriented from $w_\sigma$ to $w_{-\sigma}$ and parametrized by the capacity viewed from $w_{-\sigma}$, which is an hSLE$_\kappa$ curve in $\HH$ from $w_\sigma$ to $w_{-\sigma}$. Then $\ha \eta_+$ and $\ha \eta_-$ are time-reversal of each other.

For $\sigma\in\{+,-\}$, parametrizing the part of $\ha \eta_\sigma$ up to the time that it disconnects $w_{-\sigma}$ from $\infty$ by $\HH$-capacity, we get a chordal Loewner curve:  $\eta_\sigma(t)$, $0\le t<T_\sigma$, which is an hSLE$_\kappa$ curve from $w_\sigma$ to $w_{-\sigma}$ in the chordal coordinate. Define ${\cal D}_3$ using (\ref{D-before-overlap}) for the $(\eta_+,\eta_-)$ here. Then $(\eta_+,\eta_-;{\cal D}_3)$ is a.s.\ a commuting pair of chordal Loewner curves.  Define $W_\pm$, $V_+$ and $\lin\F$ for the $(\eta_+,\eta_-)$ here in the same way as in the previous subsection. Then ${\cal D}_3$ is an $\lin\F$-stopping region. We call $(\eta_+,\eta_-;{\cal D}_3)$ a commuting pair of hSLE$_\kappa$ curves in the chordal coordinate started from $(w_+\lr w_-;v_+)$.

 Define $M_3:{\cal D}_3\to\R_+$ by $M_3=G_3(W_+,W_-;V_+)$, where $G_3$ is given by (\ref{G3(w,v)}).
Since $V_+\ge W_+\ge W_-$, we have $M_3\le C |W_+-W_-|^{\frac 8\kappa -1}|V_{+}-V_-|^{\frac {4}\kappa}\le C |V_{+}-V_-|^{\frac {12}\kappa-1}$ for some constant $C>0$  depending on $\kappa$.
 Then the exactly same proof of Lemma \ref{M-cont2} can be used here to prove the following lemma.

\begin{Lemma}
 $M_3$ a.s.\ extends continuously to $\R_+^2$ with  $M_3\equiv 0$ on $\R_+^2\sem {\cal D}_3$. \label{M-cont3}
\end{Lemma}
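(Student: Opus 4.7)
The plan is to follow the strategy of Lemma \ref{M-cont2} very closely. First I would establish a uniform bound on $M_3$ over ${\cal D}_3$. Since $V_+\ge W_+\ge W_-$ and $F$ is continuous and positive on $[0,1]$, there is a constant $C=C(\kappa)>0$ such that
$$ M_3 \le C\,|W_+-W_-|^{\frac 8\kappa-1}\,|V_+-W_-|^{\frac 4\kappa}. $$
Both factors on the right are bounded on ${\cal D}_3$: $K(\ulin t)\subset \Hull(\eta_w)$ is a.s.\ bounded, so by Proposition \ref{g-z-sup} applied to $g_{K(\ulin t)}$ at $v_+$ and an arbitrary point of $\R$, $V_+=g_{K(\ulin t)}(v_+)$ and $W_\sigma$ (which lies in $[c_{K(\ulin t)},d_{K(\ulin t)}]$ by (\ref{VWVWV})) stay in a bounded (random) interval as $\ulin t$ ranges over ${\cal D}_3$.

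Next I would upgrade $W_\pm$ and $V_+$ to continuous functions on $\R_+^2$. Since $\eta_\sigma$ extends continuously to $[0,T_\sigma]$ for $\sigma\in\{+,-\}$, Remark \ref{Remark-continuity-W} gives continuous extensions of $W_+$ and $W_-$ to $\lin{{\cal D}_3}$; and $V_+$ extends continuously by Lemma \ref{common-function} (v) together with the fact that $v_+\not\in \lin{\Hull(\eta_w)}$. In view of the displayed bound on $M_3$, the lemma reduces to showing that the continuous extensions of $W_+$ and $W_-$ agree on $\pa{\cal D}_3\cap \R_+^2$; indeed, then $|W_+-W_-|^{\frac 8\kappa -1}\to 0$ there while $|V_+-W_-|^{\frac 4\kappa}$ stays bounded, so $M_3\to 0$.

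To establish this equality on $\pa{\cal D}_3\cap \R_+^2$, define $A_\sigma=\{t_\sigma\ulin e_\sigma+T^{{\cal D}_3}_{-\sigma}(t_\sigma)\ulin e_{-\sigma}:t_\sigma\in(0,T_\sigma)\}$ for $\sigma\in\{+,-\}$. Then $A_+\cup A_-$ is dense in $\pa{\cal D}_3\cap(0,\infty)^2$, and by symmetry it suffices to show $W_+=W_-$ on $A_+$. I would then run the contradiction argument from the proof of Lemma \ref{M-cont2} verbatim, since it uses only the definition of ${\cal D}$ as the region where $K(\ulin t)\subsetneqq \Hull(\eta_w)$, the chordal Loewner description of $K_-^{s_+}(\cdot)$ via $W_-(s_+,\cdot)$ with speed $\mA(s_+,\cdot)$ (from Lemma \ref{Lem-W}), the identity $W_+(s_+,s_-+t)=g_{K_-^{\ulin s}(t)}^{W_-(\ulin s)}(W_+(\ulin s))$ (Lemma \ref{W=gw} and Proposition \ref{prop-comp-g}), and the non-swallowing consequence of the strict inequality $W_+(s_+,s_-+\cdot)>W_-(s_+,s_-+\cdot)$ up to the boundary. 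The existence of points $\eta_+(s_++\delta_n)\in\HH\sem K(\ulin s)$ with $\delta_n\downarrow 0$ (Lemma \ref{lem-strict}) then produces points $z_n=g_{K(\ulin s)}(\eta_+(s_++\delta_n))\in\Hull(\bigcup_{0\le t<T'}K_-^{\ulin s}(t))$ converging to $W_+(\ulin s)$, contradicting that this hull has positive distance from $W_+(\ulin s)$. None of these ingredients uses the finite force point $v_-$ of the previous subsection, so the argument transfers without change.

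The only conceptual obstacle I anticipate is to verify that $V_+$ is actually continuous up to $\pa{\cal D}_3\cap\R_+^2$, since here $v_+$ is a force point for the other curve $\eta_v$ rather than a generic boundary point; but because $v_+$ is an endpoint of $\eta_v$ and the interior of $\eta_w$ is disjoint from $\eta_v$, $v_+$ has positive distance from $\Hull(\eta_w)\supset K(\ulin t)$, so the continuous extension of $V_+$ presents no issue.
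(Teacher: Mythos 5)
Your proposal is correct and takes essentially the same route as the paper: the paper's own proof of Lemma \ref{M-cont3} is just the one-line remark (given right before the lemma) that ``the exactly same proof of Lemma \ref{M-cont2} can be used here,'' together with the bound $M_3\le C|W_+-W_-|^{\frac8\kappa-1}|V_+-W_-|^{\frac4\kappa}$, and you have unfolded exactly that argument --- extend $W_\pm$ continuously via Remark \ref{Remark-continuity-W}, bound $|V_+-W_-|$ using $K(\ulin t)\subset\Hull(\eta_w)$ and Proposition \ref{g-z-sup}, and repeat the contradiction argument on $A_\sigma$, noting that it never invokes a finite $v_-$. Your bound is actually the correct form (the paper's displayed inequality has a typo, writing $V_-$ where $W_-$ is meant), and your closing remark about $v_+\notin\lin{\Hull(\eta_w)}$ is a valid observation (it holds because $\eta_w$ neither hits nor separates $v_+$ from $\infty$, regardless of whether $\eta_w$ touches $\eta_v$), though the cleanest justification is simply that once $W_\pm$ extend continuously to the compact set $\lin{{\cal D}_3}$ they are automatically bounded, so only the boundedness of $V_+$ via Proposition \ref{g-z-sup} is additionally needed.
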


Let $\PP_3$   denote the joint law of the driving functions $\ha w_+$ and $\ha w_-$ here, and let  $\PP^0_3$ be the joint law of the driving functions for a commuting pair of chordal SLE$_\kappa(2,2)$ started from $(w_+,w_-;v_+)$. Then  similar arguments as in the previous subsection give  the following lemma.

\begin{Lemma}
Revision of Lemmas \ref{positive-M-T} and \ref{RN-Thm2-inv} hold with all subscripts ``$2$'' replaced by ``$3$''.
\label{RN-Thm3-inv}
\end{Lemma}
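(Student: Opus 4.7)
The plan is to mirror the two-step argument that established Lemmas \ref{positive-M-T} and \ref{RN-Thm2-inv} for the generic case, with the only substantive adjustment being that the role of the ``interior'' force point $v_-$ collapses to $\infty$ and the martingale $G_1$ is replaced by $G_3$. First I would establish the analogs of Lemmas \ref{M-mart2} and \ref{RN-Thm2-right}: for each $R>|v_+|+|w_+|+|w_-|$, $M_3(\cdot\wedge \ulin\tau_R)$ is an $(\F^+_{t_+\wedge\tau^+_R}\vee \F^-_{t_-\wedge\tau^-_R})$-martingale closed by $M_3(\ulin\tau_R)$, and weighting $\PP_3$ by $M_3(\ulin\tau_R)/M_3(\ulin 0)$ yields $\PP_3^0$ on $\F^+_{\tau^+_R}\vee \F^-_{\tau^-_R}$. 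This follows from Proposition \ref{Prop-iSLE-3} applied direction-by-direction (conditioning on $\eta_{-\sigma}$ up to a stopping time and using that the conditional law of the normalization of $\eta_\sigma^{t_{-\sigma}}$ is an hSLE$_\kappa$ in the chordal coordinate), together with the uniform bound $M_3\le C|V_+-V_-|^{12/\kappa-1}$ which upgrades the local martingale to a martingale. Letting $R\to \infty$ and applying the optional stopping argument of Lemma \ref{RN-Thm1} then gives that $M_3$ is an $\lin\F$-martingale under $\PP_3$, and for every $\lin\F$-stopping time $\ulin\tau$,
\[ \frac{d\PP^0_3|\lin\F_{\ulin\tau}\cap\{\ulin\tau\in\R_+^2\}}{d\PP_3|\lin\F_{\ulin\tau}\cap\{\ulin\tau\in\R_+^2\}}=\frac{M_3(\ulin\tau)}{M_3(\ulin 0)}. \]

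For the positivity statement (the analog of Lemma \ref{positive-M-T}), let $\ulin\tau$ be an $\lin\F$-stopping time and set $A=\{\ulin\tau\in{\cal D}_3\}\cap\{M_3(\ulin\tau)=0\}$. Since ${\cal D}_3$ is an $\lin\F$-stopping region, $A\in\lin\F_{\ulin\tau}$, so by the above Radon--Nikodym relation $\PP^0_3[A]=0$. For any $\ulin s\in\Q_+^2$ we have $A\in\lin\F_{\ulin\tau+\ulin s}$ as well, so applying the same Radon--Nikodym relation gives $\PP_3[A\cap\{M_3(\ulin\tau+\ulin s)>0\}]=0$; running $\ulin s$ over $\Q_+^2$ and invoking continuity of $M_3$ (Lemma \ref{M-cont3}), I conclude that, $\PP_3$-a.s.\ on $A$, $M_3\equiv 0$ on $(\ulin\tau+\R_+^2)\cap{\cal D}_3$. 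By the explicit form of $G_3$ this forces $W_+\equiv W_-$ on $(\ulin\tau+\R_+^2)\cap{\cal D}_3$, and Lemma \ref{Lem-W} then implies $\eta_+(\tau_++t_+)=\eta_-(\tau_-+t_-)$ for all admissible $\ulin t$; setting $t_-=0$ shows $\eta_+$ is constant on a nontrivial interval, which contradicts strict increasingness of $K_+$ on $[0,T_+)$. Hence $\PP_3[A]=0$, as required.

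Given positivity, the analog of Lemma \ref{RN-Thm2-inv} is then immediate by inverting the Radon--Nikodym derivative on $\{\ulin\tau\in{\cal D}_3\}$:
\[ \frac{d\PP_3|\lin\F_{\ulin\tau}\cap\{\ulin\tau\in{\cal D}_3\}}{d\PP^0_3|\lin\F_{\ulin\tau}\cap\{\ulin\tau\in{\cal D}_3\}}=\frac{M_3(\ulin 0)}{M_3(\ulin\tau)}. \]

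The main obstacle I anticipate is verifying the hSLE-to-SLE$_\kappa(2,2)$ martingale identity for $M_3$ (step one), where the target is $w_{-\sigma}$ and only one finite force point $v_+$ is present while $v_-$ is at $\infty$. The drift computation has to be redone with $G_3$ in place of $G_1$; however this boils down to the same It\^o/Girsanov calculation that underlies Proposition \ref{Prop-iSLE-3}, using the ODE \eqref{ODE-hyper} satisfied by $F$, and noting that the ``missing'' $v_-$ factor is harmless because its differentials vanish in the limit $v_-\to -\infty$. All subsequent steps (continuity extension, positivity, inversion of RN derivatives) are verbatim transcriptions of the arguments already given for $M_2$.
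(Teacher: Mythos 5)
Your plan correctly reconstructs the proof that the paper leaves implicit (the paper only remarks that the arguments of Section 5.2 carry over), and the structure — Proposition \ref{Prop-iSLE-3} conditionally in each direction for the martingale/RN step, a uniform bound to upgrade local martingale to martingale, then the positivity and inversion arguments transcribed verbatim from the $M_2$ case — is exactly what is intended. Two small remarks: $v_2=\infty$ is explicitly allowed in the hypotheses of Proposition \ref{Prop-iSLE-3} (where $v_2\in\R_{w_\infty}\cup\{\infty\}\setminus\{w_0\}$), so no separate limiting computation is actually needed — the divergent factors in $G_1(\ha w_0,\ha v_1;\ha w_\infty,\infty)$ are constant in time, and after normalizing them away one obtains $G_3(\ha w_0,\ha w_\infty;\ha v_1)$ directly; and the uniform bound you cite is best written as $M_3\le C|V_+-W_-|^{12/\kappa-1}$ (since the auxiliary point $v_-$ is introduced in Section 5.3 only after this lemma), though since $|V_+-W_-|$ is still bounded on $[\ulin 0,\ulin\tau_R]$ via Proposition \ref{g-z-sup} this does not affect the argument.
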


Introduce two new points: $v_0=(w_++w_-)/2$ and $v_-=2v_0-v_+$. Let $V_0$ and $V_-$ be respectively the force point functions started from $v_0$ and $v_-$.
Since $v_0=(v_++v_-)/2$, we may  define the time curve $\ulin u:[0,T^u)\to {\cal D}_2$ and the processes $R_\sigma (t)$, $\sigma\in\{+,-\}$, and $\ulin R(t)$  as in Section \ref{time curve}.
Let $G_3^*(r_+,r_-)=G_3(r_+,r_-;1)$. Then $M_3^u(t)=(e^{2t}I)^{\alpha_1} G_3^*(\ulin R(t) )$ for $t\in [0,T^u)$, where $\alpha_3=\frac{12}\kappa-1$ is as in Theorem \ref{main-Thm1}.
Applying Lemma \ref{RN-Thm3-inv} to $\ulin u(t)$,
we get the following lemma, which is similar to Lemma \ref{transition-1}.

\begin{Lemma}
Let $p^3_t(\ulin r,\ulin r^*)$ be the transition density $p^R_t(\ulin r,\ulin r^*)$ given in Corollary \ref{transition-R-infty} with $\rho_0=\rho_-=0$ and $\rho_+=2$. Then under $\PP_3$, the transition density of $(\ulin R)$ is
$$ \til p^3_t(\ulin r,\ulin r^*):= e^{-2\alpha_3 t}p^3_t(\ulin r,\ulin r^*) {G_3(\ulin r)}/{G_3(\ulin r^*)}.$$ \label{transition-3}
\end{Lemma}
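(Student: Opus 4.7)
The proof will mirror the strategy used in Lemmas \ref{transition-1} and \ref{transition-2}. The plan is to transfer the density computation from the reference measure $\PP^0_3$ (a commuting pair of chordal SLE$_\kappa(2,2)$ started from $(w_+,w_-;v_+)$) to $\PP_3$ via the Radon--Nikodym derivative given in Lemma \ref{RN-Thm3-inv}, evaluated along the time curve $\ulin u$. The key inputs are: (a) under $\PP^0_3$, the diffusion $(\ulin R(t))$ with the fictitious force assignments $\rho_0=\rho_-=0$, $\rho_+=2$ (for the artificially introduced points $v_0$ and $v_-$) satisfies the SDE of Section \ref{section-commuting-SLE-kappa-rho} throughout $\R_+$, hence by Corollary \ref{transition-R-infty} has transition density $p^3_t(\ulin r,\ulin r^*)$; and (b) along $\ulin u$, the martingale $M_3$ factors as a power of the $V_+$--$V_-$ gap times a function of $\ulin R$.

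First I would verify the factorization $M_3^u(t)=(e^{2t}I)^{\alpha_3}G_3^*(\ulin R(t))$ on $[0,T^u)$. This is a direct calculation from the definition of $G_3$ in (\ref{G3(w,v)}) together with the identities $W_\sigma^u-V_0^u=\sigma R_\sigma e^{2t}I$, $V_\sigma^u-V_0^u=\sigma e^{2t}I$, which imply $|W_+^u-W_-^u|=(R_++R_-)e^{2t}I$ and $|V_+^u-W_-^u|=(1+R_-)e^{2t}I$; summing the exponents $\tfrac{8}{\kappa}-1$ and $\tfrac{4}{\kappa}$ gives precisely $\alpha_3=\tfrac{12}{\kappa}-1$. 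Second, since ${\cal D}_3$ is an $\lin\F$-stopping region, Proposition \ref{Prop-u(t)} applies, so $\ulin u(t)$ is an $\lin\F$-stopping time for every $t\ge 0$, and $\{\ulin u(t)\in{\cal D}_3\}=\{t<T^u\}$. Applying Lemma \ref{RN-Thm3-inv} with $\ulin\tau=\ulin u(t)$ yields
\[ \frac{d\PP_3|\lin\F^u_t\cap\{T^u>t\}}{d\PP^0_3|\lin\F^u_t\cap\{T^u>t\}}=\frac{M_3^u(0)}{M_3^u(t)}=e^{-2\alpha_3 t}\frac{G_3^*(\ulin R(0))}{G_3^*(\ulin R(t))}. \]

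Third, I would show that under $\PP^0_3$ the process $(\ulin R(t))$ is precisely the $[0,1]^2$--valued diffusion studied in Section \ref{section-commuting-SLE-kappa-rho} with $\rho_+=2$ and $\rho_0=\rho_-=0$, and hence has transition density $p^3_t(\ulin r,\ulin r^*)$ of Corollary \ref{transition-R-infty}. This requires noting that introducing the auxiliary force points $v_0$ and $v_-$ with zero force weights leaves the SDE for the driving functions unchanged (so $\PP^0_3$ genuinely realizes a commuting SLE$_\kappa(2,\ulin\rho)$ with the above $\ulin\rho$), and then invoking Theorem \ref{Thm-SDE-whole-R} to get the SDE for $\ulin R$ on all of $\R_+$. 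For a bounded measurable $f$ on $[0,1]^2$, combining the two ingredients gives
\[ \EE_3[{\bf 1}_{\{T^u>t\}}f(\ulin R(t))]=\EE^0_3\Big[f(\ulin R(t))\cdot e^{-2\alpha_3 t}\frac{G_3^*(\ulin R(0))}{G_3^*(\ulin R(t))}\Big]=\int_{[0,1]^2} f(\ulin r^*)\,\til p^3_t(\ulin R(0),\ulin r^*)\,d\ulin r^*, \]
which identifies $\til p^3_t$ as the transition density of $(\ulin R)$ under $\PP_3$.

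The main subtlety, and the only step requiring any care, is checking condition (a): that the diffusion $(\ulin R)$ under $\PP^0_3$ is globally well defined and its density is really given by Corollary \ref{transition-R-infty}. One must observe that although Corollary \ref{transition-R-infty} was derived under the hypothesis $\rho_0\ge\tfrac{\kappa}{4}-2$ (used in Remark \ref{Remark-rho0} to rule out $(R_+,R_-)\to(0,0)$), here $\rho_0=0$ does satisfy this bound since $\kappa<8$. Verifying that the force-point coordinates $V_0$ and $V_-$ remain well defined and the SDE (\ref{SDE-R}) holds globally with $\rho_+=2$, $\rho_0=\rho_-=0$ is then immediate from Theorem \ref{Thm-SDE-whole-R}, and the rest of the argument follows the template established for Lemmas \ref{transition-1} and \ref{transition-2} with no essentially new ingredients.
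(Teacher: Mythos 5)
Your proposal is correct and follows exactly the template the paper invokes (``we get the following lemma, which is similar to Lemma~\ref{transition-1}''). The factorization $M_3^u(t)=(e^{2t}I)^{\alpha_3}G_3^*(\ulin R(t))$ with $G_3^*(r_+,r_-)=G_3(r_+,-r_-;1)$ and the exponent count $\bigl(\tfrac 8\kappa-1\bigr)+\tfrac 4\kappa=\alpha_3$ are right; the application of Lemma~\ref{RN-Thm3-inv} along $\ulin u(t)$ and of Corollary~\ref{transition-R-infty} under $\PP_3^0$ (viewing the SLE$_\kappa(2,2)$ pair as an SLE$_\kappa(2,(0,2,0))$ pair with the auxiliary zero-weight force points $v_0,v_-$) are exactly what is needed, and you correctly identify that the only hypothesis needing a check is $\rho_0=0\ge\tfrac\kappa4-2$, which holds since $\kappa<8$. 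The only cosmetic point worth flagging is that the displayed formula in the lemma statement writes $G_3(\ulin r)/G_3(\ulin r^*)$ where it must mean $G_3^*(\ulin r)/G_3^*(\ulin r^*)$ (the function $G_3$ itself takes three arguments); your argument implicitly uses the correct $G_3^*$.
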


For $j=1,2,3$, using Lemmas \ref{transition-1}, \ref{transition-2}, and \ref{transition-3}, we can obtain a quasi-invariant density of $\ulin R$ under   $\PP_j$ as follows. Let $G_j^*(r_+,r_-)=G_j(r_+,-r_-;1,-1)$, $j=1,2$, and $G_3^*(r_+,r_-)=G_3(r_+,-r_-;1)$.
Let $p^j_\infty$ be the invariant density $p^R_\infty$ of $\ulin R$ under  $\PP_0^j$ given by Corollary \ref{transition-R-infty}, where $\PP_0^1=\PP_0^2=\PP_{r_+,-r_-;1,-1}^{(2,2)}$ and $\PP_0^3=\PP_{r_+,-r_-;1}^{(2)}$. Define
\BGE {\cal Z}_j=\int_{(0,1)^2} \frac{p^j_\infty(\ulin r^*)}{G_j^*(\ulin r^*)}d\ulin r^*,\quad \til p^j_\infty=\frac 1{{\cal Z}_j} \frac{p^j_\infty}{G_j^* },\quad j=1,2,3. \label{tilZ}\EDE
It is straightforward to check that ${\cal Z}_j\in(0,\infty)$, $j=1,2,3$. To see this, we compute $p^j_\infty(r_+,r_-)\asymp (1-r_+)^{\frac  8\kappa -1}(1-r_-)^{\frac  8\kappa -1} (r_+r_-)^{\frac 4\kappa -1}$ for $j=1,2$, and $\asymp (1-r_+)^{\frac 8\kappa -1}(1-r_-)^{\frac 4\kappa -1}(r_+r_-)^{\frac 4\kappa -1}$ for $j=3$; $G_j^*(r_+,r_-)\asymp (1-r_+)^{\frac  8\kappa -1}(1-r_-)^{\frac  8\kappa -1}$ for $j=1$, and $\asymp (r_++r_-)^{\frac 8\kappa -1}$ for $j=2,3$.

\begin{Lemma} The following statements hold.
\begin{enumerate}
  \item [(i)] For any $j\in\{1,2,3\}$, $t>0$ and $\ulin r^*\in(0,1)^2$, $ \int_{[0,1]^2}  \til p^j_\infty(\ulin r)\til p^j_t(\ulin r,\ulin r^*)d\ulin r=e^{-2\alpha_jt}\til p^j_\infty(\ulin r^*)$.
This means, under the law $\PP_j$, if the process $(\ulin R)$ starts from a random point in $ (0,1)^2$ with density $\til p^j_\infty$, then for any deterministic $t\ge 0$, the density of (the survived) $\ulin R(t)$ is $e^{-2\alpha_j t} \til p^j_\infty$. So we call $\til p^R_j$ a quasi-invariant density for $(\ulin R)$ under $\PP_j$.
\item [(ii)] Let $\beta_1=\beta_2=10$ and $\beta_3=8$. For $j\in\{1,2,3\}$ and $\ulin r\in(0,1)^2$, if $\ulin R$ starts from $\ulin r$, then
	\BGE\PP_{j}[T^u>t]={{\cal Z}_j} G_j^*(\ulin r) e^{-2\alpha_j t}(1+O(e^{- \beta_j t}));
	\label{P[T>t]}\EDE
	\BGE \til p^R_j(t,\ulin r,\ulin r^*)=\PP_{j}[T^u>t]\til p^j_\infty(\ulin r^*)(1+O(e^{-\beta_j t})).\label{tilptT>t}\EDE
Here we emphasize that the implicit constants in the $O$ symbols do not depend on $\ulin r$.
\end{enumerate}
\label{property-til-p}
\end{Lemma}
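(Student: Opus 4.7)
The plan is to derive both parts of the lemma from the transition density formulas in Lemmas \ref{transition-1}, \ref{transition-2}, \ref{transition-3} together with the uniform exponential convergence estimate in Corollary \ref{transition-R-infty}. Nothing deep is needed; the work is essentially bookkeeping of constants.

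For part (i), I would just compute directly. Using the formula $\til p^j_t(\ulin r,\ulin r^*)=e^{-2\alpha_j t}p^j_t(\ulin r,\ulin r^*)G_j^*(\ulin r)/G_j^*(\ulin r^*)$ and the definition $\til p^j_\infty=\frac 1{{\cal Z}_j}p^j_\infty/G_j^*$, the $G_j^*(\ulin r)$ factors cancel, giving
\[
\int_{[0,1]^2}\til p^j_\infty(\ulin r)\til p^j_t(\ulin r,\ulin r^*)\,d\ulin r
=\frac{e^{-2\alpha_j t}}{{\cal Z}_j G_j^*(\ulin r^*)}\int_{[0,1]^2}p^j_\infty(\ulin r)p^j_t(\ulin r,\ulin r^*)\,d\ulin r.
\]
By the invariance identity (\ref{invar}) (which transfers from $p_t$ to $p^R_t$ via the change of variables in Corollary \ref{transition-R-infty}), the inner integral equals $p^j_\infty(\ulin r^*)$, and the result is $e^{-2\alpha_j t}\til p^j_\infty(\ulin r^*)$.

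For part (ii), I first obtain (\ref{P[T>t]}) by integrating $\til p^R_j(t,\ulin r,\cdot)$ over $[0,1]^2$: since $\PP_j[T^u>t]$ equals this integral and the $G_j^*(\ulin r)$ factor pulls out, I get
\[
\PP_j[T^u>t]=e^{-2\alpha_j t}G_j^*(\ulin r)\int_{[0,1]^2}\frac{p^j_t(\ulin r,\ulin r^*)}{G_j^*(\ulin r^*)}\,d\ulin r^*.
\]
Applying the uniform estimate from Corollary \ref{transition-R-infty} (the bound $|p^R_t-p^R_\infty|\le C_{t_0}e^{-(\rho_++\rho_-+\rho_0+6)t}p^R_\infty$ is uniform in the starting point $\ulin r$), I replace $p^j_t(\ulin r,\ulin r^*)$ by $p^j_\infty(\ulin r^*)(1+O(e^{-\beta_j t}))$, where $\beta_j=\rho_++\rho_-+\rho_0+6$ for the force values associated with $\PP^0_j$. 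For $j=1,2$ the force values are $(\rho_0,\rho_+,\rho_-)=(0,2,2)$ giving $\beta_j=10$, and for $j=3$ they are $(0,2,0)$ giving $\beta_3=8$; this matches the $\beta_j$ in the statement. The remaining integral $\int p^j_\infty/G_j^*$ is exactly ${\cal Z}_j$ by (\ref{tilZ}), and its finiteness is checked from the asymptotic shapes of $p^j_\infty$ and $G_j^*$ at the corners of $[0,1]^2$, producing (\ref{P[T>t]}). The formula (\ref{tilptT>t}) then follows by substituting $p^j_t(\ulin r,\ulin r^*)=p^j_\infty(\ulin r^*)(1+O(e^{-\beta_j t}))$ directly into the expression for $\til p^R_j(t,\ulin r,\ulin r^*)$, rewriting $e^{-2\alpha_j t}G_j^*(\ulin r){\cal Z}_j=\PP_j[T^u>t](1+O(e^{-\beta_j t}))$ via (\ref{P[T>t]}), and absorbing the two error factors into a single $1+O(e^{-\beta_j t})$.

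There is no real obstacle here; the only mild care needed is to track that the implicit constants are uniform in $\ulin r$, which is automatic since the bound in Corollary \ref{transition-R-infty} is stated uniformly in $\ulin r\in[0,1]^2$, and to verify the finiteness of ${\cal Z}_j$, which is immediate from the stated asymptotics $p^j_\infty(r_+,r_-)\asymp(1-r_+)^{8/\kappa-1}(1-r_-)^{8/\kappa-1}(r_+r_-)^{4/\kappa-1}$ (for $j=1,2$) versus $G_1^*\asymp(1-r_+)^{8/\kappa-1}(1-r_-)^{8/\kappa-1}$ and $G_2^*\asymp(r_++r_-)^{8/\kappa-1}$ on $(0,1)^2$, and analogously for $j=3$.
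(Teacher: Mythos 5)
Your proof is correct and follows essentially the same route as the paper: part (i) is a direct consequence of the invariance identity (\ref{invar}) after cancellation of the $G_j^*$ factors, and part (ii) is obtained by integrating the transition density formula, invoking the uniform exponential bound from Corollary \ref{transition-R-infty}, identifying $\beta_j=\rho_++\rho_-+\rho_0+6$ for the appropriate force values, and then substituting back to get (\ref{tilptT>t}). The only presentational difference is that you spell out the change-of-variables transfer of (\ref{invar}) from $p_t$ to $p^R_t$, which the paper leaves implicit; this is a correct and harmless elaboration.
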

\begin{proof}
	Part (i) follows easily from (\ref{invar}). For part (ii), suppose $\ulin R$  starts from $\ulin r$. Using Corollary \ref{transition-R-infty}, Lemmas \ref{transition-1}, \ref{transition-2}, and \ref{transition-3}, and formulas (\ref{tilZ}), we get
	$$\PP_{j}[T^u>t]=\int_{(0,1)^2} \til p^j_t(\ulin r,\ulin r^*)d\ulin r^*=\int_{(0,1)^2}  e^{-2\alpha_j t}  p^j_t(\ulin r,\ulin r^*) \frac{G_j^*(\ulin r)}{G_j^*(\ulin r^*)}d\ulin r^*$$
	$$=\int_{(0,1)^2} e^{-2\alpha_j t}  p^j_\infty(\ulin r^*)(1+O(e^{-\beta_j t})) \frac{G_j^*(\ulin r)}{G_j^*(\ulin r^*)}d\ulin r^*={\cal Z}_j G_j^*(\ulin r) e^{-2\alpha_jt}(1+O(e^{-\beta_j t})),$$
	which is (\ref{P[T>t]}); and
	$$ \til p^j_t(\ulin r,\ulin r^*)=e^{-2\alpha_j t}  p^j_\infty(\ulin r^*)(1+O(e^{-\beta_j t})) \frac{G_j^*(\ulin r)}{G_j^*(\ulin r^*)}
	=e^{-2\alpha_j t} {\cal Z}_j \til p^j_\infty(\ulin r^*)(1+O(e^{-\beta_j t})) G_j^*(\ulin r),$$
	which together with (\ref{P[T>t]}) implies (\ref{tilptT>t}).
\end{proof}

We will need the following lemma, which follows from the argument in   \cite[Appendix A]{Green-cut}.

\begin{Lemma}
For $j=1,2,3$, the $(\eta_+,\eta_-;{\cal D}_j)$ in the three subsections  satisfies the two-curve DMP as described in Lemma \ref{DMP} except that the conditional law of the normalization of $(\til\eta_+,\til\eta_-;\til{\cal D}_j)$ has the law of a commuting pair of hSLE$_\kappa$ curves in the chordal coordinate respectively started from $(W_+,W_-;V_+,V_-)|_{\ulin\tau}$, $(W_+\lr W_-;V_+,V_-)|_{\ulin\tau}$, and $(W_+\lr W_-;V_+)|_{\ulin\tau}$.
\label{DMP-123}
\end{Lemma}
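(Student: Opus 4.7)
The plan is to transfer the two-curve DMP from the chordal SLE$_\kappa(2,\ulin\rho)$ setting (Lemma \ref{DMP}) to each of the three commuting pairs of hSLE$_\kappa$ curves by means of the local absolute continuity established in Lemmas \ref{RN-Thm1}, \ref{RN-Thm2-right}, and \ref{RN-Thm3-inv}. The deterministic half -- that on the event $E_{\ulin\tau}=\{\ulin\tau\in\R_+^2,\,\eta_\sigma(\tau_\sigma)\notin \eta_{-\sigma}[0,\tau_{-\sigma}],\sigma\in\{+,-\}\}\cap\{\ulin\tau\in{\cal D}_j\}$ there exists a pair $(\til\eta_+,\til\eta_-;\til{\cal D}_j)$ with $\til{\cal D}_j=\{\ulin t\in\R_+^2:\ulin\tau+\ulin t\in{\cal D}_j\}$ and $\eta_\sigma(\tau_\sigma+\cdot)=f_{K(\ulin\tau)}\circ\til\eta_\sigma$ forming a commuting pair of chordal Loewner curves with some speeds -- is an immediate sample-path statement read off from Section \ref{section-deterministic} together with Definition \ref{Def-speeds}. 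Its driving and force-point functions transform according to Lemma \ref{DMP-determin-1}(ii), so that $(\til W_+,\til W_-;\til V_+,\til V_-)|_{\ulin 0}=(W_+,W_-;V_+,V_-)|_{\ulin\tau}$ (with the convention on $w_\sigma^\pm$ when two points coincide).

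The remaining task is to identify the conditional law. Fix $j\in\{1,2,3\}$ and an $\lin\F$-stopping time $\ulin\tau$. Since the form of $G_j$ is conformally covariant under $g_{K(\ulin\tau)}^{\ulin w}$, the key identity
\BGEN
\frac{M_j(\ulin\tau+\ulin t)}{M_j(\ulin\tau)}=\frac{\til M_j(\ulin t)}{\til M_j(\ulin 0)}\quad\text{on }E_{\ulin\tau},\ \ulin t\in\til{\cal D}_j,
\EDEN
holds, where $\til M_j$ is the analogue of $M_j$ built from $(\til W_\pm,\til V_\pm)$; this is again a direct consequence of Lemma \ref{DMP-determin-1}(ii). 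Now pick a bounded $\lin\F_{\ulin\tau}$-measurable random variable $\xi$ with $\xi=0$ off $E_{\ulin\tau}$, and a bounded measurable functional $\Phi$ of the normalization of $(\til\eta_+,\til\eta_-;\til{\cal D}_j)$ stopped at an appropriate bounded stopping time before exiting $\til{\cal D}_j$. Using Lemmas \ref{RN-Thm1-inv}, \ref{RN-Thm2-inv}, \ref{RN-Thm3-inv} and then Lemmas \ref{RN-Thm1}, \ref{RN-Thm2-right}, \ref{RN-Thm3-inv} a second time, one computes
\begin{align*}
\EE_j[\xi\,\Phi]
&=\EE^0_j\Big[\xi\,\Phi\cdot\frac{M_j(\ulin\tau)}{M_j(\ulin 0)}\cdot\frac{M_j(\ulin\tau+\ulin t_\Phi)}{M_j(\ulin\tau)}\cdot\frac{M_j(\ulin 0)}{M_j(\ulin\tau+\ulin t_\Phi)}\Big]\\
&=\EE^0_j\Big[\xi\,\Phi\cdot\frac{\til M_j(\ulin t_\Phi)}{\til M_j(\ulin 0)}\Big]
=\EE^0_j[\xi\,\EE^0_j[\Phi\cdot\til M_j(\ulin t_\Phi)/\til M_j(\ulin 0)\mid\lin\F_{\ulin\tau}]],
\end{align*}
where $\ulin t_\Phi$ is a bounded deterministic bound on the domain of dependence of $\Phi$. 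Under $\PP^0_j$, Lemma \ref{DMP} says that conditionally on $\lin\F_{\ulin\tau}\cap E_{\ulin\tau}$ the normalization of $(\til\eta_+,\til\eta_-;\til{\cal D}_j)$ has the law of a commuting pair of chordal SLE$_\kappa(2,\ulin\rho_j)$ curves started from the shifted configuration, so by the one-step RN identity applied to this new pair, the inner conditional expectation equals the expectation of $\Phi$ under the law of a commuting pair of hSLE$_\kappa$ curves started from the shifted configuration (in the chordal coordinate). This is precisely the claimed DMP. A monotone class argument then extends from test functionals to the full conditional law.

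The steps in order: (i) verify the deterministic existence of $(\til\eta_+,\til\eta_-;\til{\cal D}_j)$ and the transformation rules of the relevant functions under the shift by $\ulin\tau$ (already available from Section \ref{section-deterministic}, in particular Lemma \ref{DMP-determin-1}); (ii) record the multiplicative decomposition of $M_j(\ulin\tau+\ulin t)$ into $M_j(\ulin\tau)$ times $\til M_j(\ulin t)/\til M_j(\ulin 0)$ on $E_{\ulin\tau}$; (iii) combine with the SLE$_\kappa(2,\ulin\rho_j)$ DMP (Lemma \ref{DMP}) and the two RN identities to compute the conditional law; (iv) promote by monotone class. The main obstacle is the $\sigma$-algebra bookkeeping on the boundary: $\lin\F_{\ulin\tau}$ is the right-continuous augmentation of a two-parameter filtration, $\ulin\tau$ need not be a product of $\F^\pm$-stopping times, and the event $E_{\ulin\tau}$ (in particular the precise meaning of ``$\eta_\sigma(\tau_\sigma)\notin\eta_{-\sigma}[0,\tau_{-\sigma}]$'' on a null set of paths) needs to be handled carefully; this is exactly the technical machinery carried out in \cite[Appendix A]{Green-cut} for the chordal SLE$_\kappa$ case, and its argument transfers line-by-line once the RN factorization in step (ii) is in place. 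The second, smaller, technical point is to accommodate the case where some $V_\nu(\ulin\tau)$ collides with $W_\sigma(\ulin\tau)$, in which Lemma \ref{DMP-determin-1}(ii) already specifies the convention $W_\sigma(\ulin\tau)^{\sign(v_\nu-w_\sigma)}$ that keeps both the transformation of force points and the formula for $M_j$ continuous.
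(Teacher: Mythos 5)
Your strategy is genuinely different from the paper's: the paper, like Lemma \ref{DMP} itself, simply invokes \cite[Appendix A]{Green-cut}, whereas you propose to derive Lemma \ref{DMP-123} from the already-established Lemma \ref{DMP} through the local Radon--Nikodym identities (Lemmas \ref{RN-Thm1}, \ref{RN-Thm1-inv}, \ref{RN-Thm2-right}, \ref{RN-Thm2-inv}, \ref{RN-Thm3-inv}) together with the shift formula $\til W_\sigma=W_\sigma(\ulin\tau+\cdot)$, $\til V_\nu=V_\nu(\ulin\tau+\cdot)$ of Lemma \ref{DMP-determin-1}(ii). The multiplicative identity $\til M_j(\ulin t)/\til M_j(\ulin 0)=M_j(\ulin\tau+\ulin t)/M_j(\ulin\tau)$ is correct, and the reduction is an appealing, essentially self-contained way to identify the conditional law once the structural facts about the shifted pair are in place.

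The displayed chain of equalities, however, does not hold as written, and both errors concern the direction of the Radon--Nikodym factors. The three quotients in your first equality telescope to $1$, so that line asserts $\EE_j[\xi\Phi]=\EE^0_j[\xi\Phi]$, which is false. The correct transfer from $\PP_j$ to $\PP^0_j$ at time $\ulin\tau+\ulin t_\Phi$ is the single factor $M_j(\ulin 0)/M_j(\ulin\tau+\ulin t_\Phi)$ from the inverse RN lemma, which one then factors as $\frac{M_j(\ulin 0)}{M_j(\ulin\tau)}\cdot\frac{\til M_j(\ulin 0)}{\til M_j(\ulin t_\Phi)}$. After conditioning on $\lin\F_{\ulin\tau}$ the inner expectation therefore carries the weight $\til M_j(\ulin 0)/\til M_j(\ulin t_\Phi)$ --- not $\til M_j(\ulin t_\Phi)/\til M_j(\ulin 0)$ as in your second line --- and this is exactly the weight that turns the shifted SLE$_\kappa(2,\ulin\rho)$ conditional law supplied by Lemma \ref{DMP} into the shifted hSLE$_\kappa$ law; with your inverted weight the tilting goes the wrong way. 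With both corrections the computation closes:
$$\EE_j[\xi\Phi]=\EE^0_j\Big[\xi\,\frac{M_j(\ulin 0)}{M_j(\ulin\tau)}\,\EE^0_j\Big[\Phi\,\frac{\til M_j(\ulin 0)}{\til M_j(\ulin t_\Phi)}\,\Big|\,\lin\F_{\ulin\tau}\Big]\Big],$$
the inner expectation becomes the hSLE$_\kappa$ expectation of $\Phi$ from the shifted configuration on $E_{\ulin\tau}$, and the outer expectation transfers back to $\PP_j$ by the inverse RN applied at time $\ulin\tau$; one must also keep track that each RN application requires the integrand to be supported on the corresponding event $\{\cdot\in{\cal D}_j\}$. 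As you note, the two-parameter stopping-time bookkeeping and the regularity of $E_{\ulin\tau}$ are still deferred to \cite[Appendix A]{Green-cut}, the same reference the paper cites, so the added value of your route lies specifically in making the law-identification step explicit.
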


\section{Boundary Green's Functions}
We are going to prove the main theorem in this section.

\begin{Lemma}
For $j=1,2$, let $U_j$ be a simply connected subdomain of the Riemann sphere $\ha\C$,  which contains $\infty$ but no $0$, and let $f_j$ be a conformal map from $\D^*:=\ha\C\sem \{|z|\le 1\}$  onto $U_j$, which fixes $\infty$. Let $a_j=\lim_{z\to \infty} |f_j(z)|/|z|>0$, $j=1,2$, and $a=a_2/a_1$. If $R>4a_1$, then $\{|z|> R\}\subset U_1$, and  $\{|z|> aR+4a_2\}\subset f_2\circ f_1^{-1}(\{|z|> R\}) \subset \{|z|\ge aR-4a_2\}$. \label{distortion}
\end{Lemma}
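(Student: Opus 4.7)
\medskip
\noindent\textbf{Proof plan.}
The plan is to reduce everything to the standard Koebe $1/4$ theorem and Koebe growth theorem applied to the reciprocal coordinate. For each $j$, write the Laurent expansion at $\infty$ as $f_j(z)=\alpha_j z+\beta_j+O(1/z)$ with $\alpha_j\in\C^*$ and $|\alpha_j|=a_j$, and define
\[
g_j(\zeta):=\frac{1}{f_j(1/\zeta)},\qquad \zeta\in\D,
\]
which is a univalent map on $\D$ with $g_j(0)=0$ and $g_j'(0)=1/\alpha_j$. The normalized map $\alpha_j g_j$ is then univalent on $\D$ with value $0$ and derivative $1$ at the origin, so all classical estimates for such maps apply to it.

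First I would establish $\{|z|>R\}\subset U_1$ (and analogously $\{|z|>4a_j\}\subset U_j$, which will be used in the second bullet). Koebe's $1/4$ theorem applied to $\alpha_j g_j$ gives
\[
\alpha_j g_j(\D)\supset\{|w|<1/4\},\qquad\text{hence}\qquad g_j(\D)\supset\{|w|<1/(4a_j)\}.
\]
Taking reciprocals in the relation $1/g_j=f_j(1/\cdot)$ translates this into $U_j\supset\{|w|>4a_j\}$. Since $R>4a_1$, we get $\{|z|>R\}\subset U_1$.

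Next I would record the pointwise distortion estimate
\begin{equation}\label{plan-dist}
a_j\,\frac{(|z|-1)^2}{|z|}\;\le\;|f_j(z)|\;\le\;a_j\,\frac{(|z|+1)^2}{|z|},\qquad z\in\D^*.
\end{equation}
This follows from the Koebe growth theorem applied to $\alpha_j g_j$, namely $|\zeta|/(1+|\zeta|)^2\le|\alpha_j g_j(\zeta)|\le|\zeta|/(1-|\zeta|)^2$, after the substitution $\zeta=1/z$. The key elementary identity that will unlock the final constants is
\begin{equation}\label{plan-id}
\frac{(x+1)^2}{x}-\frac{(x-1)^2}{x}=4,\qquad x>0,
\end{equation}
together with the fact that $x\mapsto(x-1)^2/x$ is strictly increasing on $[1,\infty)$.

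Now for the two inclusions. Take $w$ with $|w|>R$ and let $\tilde z=f_1^{-1}(w)\in\D^*$. The upper half of \eqref{plan-dist} forces $(|\tilde z|+1)^2/|\tilde z|>R/a_1$; by \eqref{plan-id} this yields $(|\tilde z|-1)^2/|\tilde z|>R/a_1-4\ge 0$, and then the lower half of \eqref{plan-dist} applied to $f_2$ gives $|f_2(\tilde z)|\ge a_2(R/a_1-4)=aR-4a_2$, establishing $f_2\circ f_1^{-1}(\{|z|>R\})\subset\{|z|\ge aR-4a_2\}$. For the other inclusion, take $z$ with $|z|>aR+4a_2$. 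Since $aR+4a_2>4a_2$, the first step (applied to $f_2$) puts $z$ in $U_2$, and we set $\tilde z=f_2^{-1}(z)\in\D^*$; the upper half of \eqref{plan-dist} applied to $f_2$ gives $(|\tilde z|+1)^2/|\tilde z|>(aR+4a_2)/a_2=R/a_1+4$, so \eqref{plan-id} yields $(|\tilde z|-1)^2/|\tilde z|>R/a_1$, and the lower half of \eqref{plan-dist} applied to $f_1$ gives $|f_1(\tilde z)|>R$, i.e.\ $z\in f_2\circ f_1^{-1}(\{|z|>R\})$. The only conceptually nontrivial step is spotting that the algebraic identity \eqref{plan-id} aligns perfectly with the factor $4$ coming from Koebe's $1/4$ theorem so that the constants in the conclusion come out exactly as $4a_2$; the rest is routine.
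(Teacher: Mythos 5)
Your proof is correct and uses essentially the same approach as the paper: Koebe's $1/4$ theorem for the first inclusion, and the Koebe growth theorem applied to the reciprocal maps $\zeta\mapsto 1/f_j(1/\zeta)$ for the two-sided distortion estimate. The paper first scales to $a_1=a_2=1$ and records the bound as $r_0+1/r_0-2\le r_j\le r_0+1/r_0+2$, concluding $|r_1-r_2|\le 4$, while you keep the $a_j$'s explicit and organize the same computation around the identity $(x+1)^2/x-(x-1)^2/x=4$; these are algebraically identical.
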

\begin{proof}
By scaling we may assume that $a_1=a_2=1$. Let $f=f_2\circ f_1^{-1}$.
That $\{|z|>4\}\subset U_1$ follows from Koebe's $1/4$ theorem applied to $J\circ f_1\circ J$, where $J(z):=1/z$. Fix $z_1\in U_1$. Let $z_0=f_1^{-1}(z_1)\in\D^*$ and $z_2=f_2(z_0)\in U_2$.   Let $r_j=|z_j|$, $j=0,1,2$. Applying Koebe's distortion theorem to $J\circ f_j\circ  J$, we find that $r_0+\frac 1{r_0}-2\le r_j\le r_0+\frac 1{r_0}+2$, $j=1,2$, which implies that $ |r_1-r_2|\le 4$. Thus, for $R>4$,  $f(\{|z|> R\})\subset\{|z|> R-4 \}$, and $f(\{|z|=R\})\subset \{|z|\le R+4\}$. The latter inclusion implies that $f(\{|z|> R\})\supset \{|z|> R+4\}$.
\end{proof}

\begin{Theorem}
  Let $v_-<w_-<w_+<v_+\in\R$ be such that $0\in [v_-,v_+]$. Let $(\ha\eta_+,\ha\eta_-)$ be a $2$-SLE$_\kappa$ in $\HH$ with link pattern $(w_+\lr v_+;w_-\lr v_-)$. Let $\alpha_1=2(\frac{12}\kappa -1)$,  $\beta_1'=\frac{5}{6}$, and
 $G_1(\ulin w;\ulin v)$ be as in (\ref{G1(w,v)}).
  Then there is a constant $C>0$ depending only on $\kappa$ such that,
  \BGE  \PP[\ha\eta_\sigma\cap\{|z|>L\}\ne \emptyset,\sigma\in\{+,-\}]= CL^{-\alpha_1} G_1(\ulin w;\ulin v)(1+O( {|v_+-v_-|}/L)^{\beta_1'}),\label{Thm1-est}\EDE
  as $L\to \infty$,
  where the implicit constants in the $O(\cdot)$ symbol depend only on $\kappa$. \label{Thm1}
\end{Theorem}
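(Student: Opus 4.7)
The plan is to translate the event $A_L:=\{\ha\eta_\sigma\cap\{|z|>L\}\neq\emptyset,\ \sigma\in\{+,-\}\}$ into a survival event for the time curve of Section \ref{time curve} and then invoke Lemma \ref{property-til-p}(ii). Set $v_0:=(v_++v_-)/2$ and $I:=(v_+-v_-)/2$. By first growing one of the curves using the two-curve DMP in Lemma \ref{DMP-123}, we may reduce to the case $v_0\in[w_-,w_+]$, so that $v_0$ qualifies as the interior marked point required for the time-curve construction. Working with the commuting pair of hSLE$_\kappa$ curves $(\eta_+,\eta_-;{\cal D}_1)$ in the chordal coordinate from Section \ref{section-two-curve}, form the time curve $\ulin u:[0,T^u)\to{\cal D}_1$ and the diffusion $\ulin R=(R_+,R_-)$ on $[0,1]^2$ of Section \ref{section-diffusion}. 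Set $t_L:=\tfrac12\log(L/I)$.

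Next, compare $A_L$ with $\{T^u>t_L\}$. The bound (\ref{V-V'}) in Lemma \ref{Beurling} gives $\rad_{v_0}(\eta_\sigma[0,u_\sigma(t)])\asymp e^{2t}I$ for $t<T^u$, and after $T^u$ one of the two curves is trapped in a bounded complementary component of diameter $O(e^{2T^u}I)$, so its continuation cannot reach $\{|z|>L\}$ once $T^u$ is appreciably less than $t_L$. Combined, these facts yield rough inclusions $\{T^u>t_L+C\}\subset A_L\subset\{T^u>t_L-C\}$ for an absolute constant $C$ depending only on $|v_0|/I$, which already produces the correct leading order. A short calculation using the definition of $\ulin R$ and the scaling of $G_1$ in (\ref{G1(w,v)}) identifies the initial data: $R_\sigma(0)=(w_\sigma-v_0)/(v_\sigma-v_0)$ and $G_1^*(\ulin R(0))=I^{-\alpha_1}G_1(\ulin w;\ulin v)$. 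Plugging $e^{2t_L}I=L$ into Lemma \ref{property-til-p}(ii) then gives
\begin{equation*}
\PP[T^u>t_L] \;=\; {\cal Z}_1\, L^{-\alpha_1} G_1(\ulin w;\ulin v)\bigl(1+O((I/L)^5)\bigr),
\end{equation*}
since the factor $e^{-\beta_1 t_L}$ with $\beta_1=10$ becomes $(I/L)^5$, which is far smaller than the asserted error.

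The main obstacle is sharpening the event comparison, whose rough form only gives a relative error of order $1$ (the probabilities at $t_L\pm C$ differ by a multiplicative factor $e^{\pm 2\alpha_1 C}$). To close this gap I would apply the two-curve DMP of Lemma \ref{DMP-123} at the $\F$-stopping time $\ulin u(t_L)$, and, conditionally on $\{T^u>t_L\}$, use Lemma \ref{property-til-p}(i) to argue that $\ulin R(t_L)$ is close in total variation to the quasi-invariant density $\til p^1_\infty$. Under this conditioning, the configuration at scale $e^{2t_L}I=L$ is, up to conformal rescaling, close to a stationary one, and the conditional probability of $A_L$ reduces to a one-curve hSLE$_\kappa$ reach probability inside a rescaled domain, which is controlled by a Beurling-type boundary-arm estimate. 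The exponent $\beta_1'=5/6$ should emerge from balancing this one-curve reach bound against the transition-density convergence rate from Lemma \ref{property-til-p}; making this balance quantitative and verifying the precise exponent $5/6$ is the technical heart of the proof.
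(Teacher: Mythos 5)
Your setup matches the paper's: reduce to the case $v_0:=(v_++v_-)/2\in[w_-,w_+]$, form the time curve $\ulin u$ and the diffusion $\ulin R$ of Section \ref{time curve}, and use Lemma \ref{property-til-p} for the survival probability and the quasi-stationary density. The identification $G_1^*(\ulin R(0))=I^{-\alpha_1}G_1(\ulin w;\ulin v)$ and the computation $\PP_1[T^u>t_L]=\mathcal{Z}_1 L^{-\alpha_1}G_1(\ulin w;\ulin v)(1+O((I/L)^5))$ are both correct. However, there is a genuine gap in how you pass from this to $\PP[A_L]$, and your proposed fix does not close it.

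The crucial missing step is the renewal argument from \cite{LR} that the paper runs in the middle of its proof. Your ``rough inclusions'' $\{T^u>t_L+C\}\subset A_L\subset\{T^u>t_L-C\}$ only yield $\PP[A_L]\asymp \PP_1[T^u>t_L]$ with an unspecified constant factor; they cannot produce the sharp constant $C$ in the statement. The paper instead establishes the two-sided comparison (\ref{inclusion}), where the $\pm 2$ correction comes not from a crude inclusion of events but from the Koebe distortion estimate of Lemma \ref{distortion}, applied to the conformal rescaling $\ha g^u_{t_0}$ of the configuration at time $\ulin u(t_0)$. This converts the conditional probability of $A_L$ given $\F_{\ulin u(t_0)}$ into the same two-curve reach probability $p(\ulin R(t_0);L/e^{2t_0}\pm2)$ at a smaller scale. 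Integrating this against the quasi-invariant density (using the two-curve DMP of Lemma \ref{DMP-123}) gives the self-similar recursion (\ref{L-L}) for $q(L):=L^{\alpha_1}p(L)$, from which one shows the Cauchy-type bound (\ref{q-lim}) and hence the \emph{existence} of $\lim_{L\to\infty}q(L)=:q(\infty)\in(0,\infty)$. Only then, combining (\ref{p-asymp}) with the transition-density estimate and optimizing over $t_0$ by choosing $e^{2t_0}=L^{2/(2+\beta_1)}$, do you get the sharp constant $C_0=\mathcal{Z}_1 q(\infty)$ and the exponent $\beta_1'=\beta_1/(\beta_1+2)=5/6$.

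Your alternative — conditioning on $\{T^u>t_L\}$, invoking closeness to $\til p^1_\infty$, and appealing to ``a one-curve hSLE$_\kappa$ reach probability controlled by a Beurling-type boundary-arm estimate'' — does not work as described. After the DMP at $\ulin u(t_L)$ you still have a \emph{two-curve} reach probability in the rescaled picture, not a one-curve one, so there is no single-curve Beurling estimate to appeal to. More fundamentally, Beurling-type arm estimates give upper/lower bounds up to constants, never the exact leading asymptotic; they cannot identify $C$. The recursion and its limit are what actually pin down the constant, and that piece is absent from your argument. Finally, your reduction to $v_0\in[w_-,w_+]$ ``by first growing one of the curves'' is the right idea, but the paper's implementation relies on a specific one-curve stopping time $\tau_+$ determined by the ratio $(V_+-V_0)/(V_0-V_-)$ together with the Optional Stopping Theorem for the $\F$-martingale $M_1=G_1(\ulin W;\ulin V)$ (via (\ref{EM1})); without that martingale identity, the $G_1(\ulin w;\ulin v)$ dependence does not come out correctly in the general case.
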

\begin{proof}
Let $p(\ulin w;\ulin v;L)$ denote the LHS of (\ref{Thm1-est}).
Construct the random commuting pair of chordal Loewner curves $(\eta_+,\eta_-;{\cal D}_1)$ from $\ha\eta_+$ and $\ha \eta_-$ as in Section \ref{section-two-curve}, where ${\cal D}_1=[0,T_+)\times [0,T_-)$, and $T_\sigma$ is the lifetime of $\eta_\sigma$, $\sigma\in\{+,-\}$. We adopt the symbols from Sections \ref{section-deterministic1}. Note that, when $L>|v_+|\vee |v_-|$, $\ha\eta_+$ and $\ha\eta_-$ both intersect $\{|z|>L\}$ if and only if $\eta_+$ and $\eta_-$ both intersect $\{|z|>L\}$. In fact, for any $\sigma\in\{+,-\}$,  $\eta_\sigma$ either disconnects $v_j$ from $\infty$, or disconnects $v_{-j}$ from $\infty$. If $\eta_\sigma$ does not intersect $\{|z|>L\}$, then in the former case, $\ha\eta_\sigma$ grows in a bounded connected component of $\HH\sem \eta_\sigma$ after the end of $\eta_\sigma$, and so can not hit $\{|z|>L\}$; and in the latter case $\eta_{-\sigma}$ grows in a bounded connected component of $\HH\sem \eta_\sigma$, and can not hit $\{|z|>L\}$. We first  consider a very special case:  $v_\pm=\pm 1$ and $w_\pm=\pm r_\pm $, where $r_\pm \in[0,1)$. Let $v_0=0$. Let $V_\nu$ be the force point function started from $v_\nu$, $\nu\in\{0,+,-\}$, as before. Since $|v_+-v_0|=|v_0-v_-|$, we may define a time curve $\ulin u:[0,T^u)\to \cal D$ as in Section \ref{time curve} and adopt the symbols from there.
Define $p(\ulin r;L)=p(r_+,-r_-;1,-1;L)$.

Suppose $L>2e^6$, and so $\frac 12 \log(L/2)>3$. Let $t_0\in [3,\frac 12\log(L/2))$.  If both $\eta_+$ and $\eta_-$ intersect $\{|z|>L\}$, then there is some $t'\in[0,T^u)$ such that either $\eta_+\circ u_+[0,t']$ or $\eta_-\circ u_-[0,t']$ intersects $\{|z|>L\}$, which by (\ref{V-V'}) implies that $L\le 2e^{2t'}$, and so $T^u>t'\ge \log(L/2)/2>t_0$.
Thus, $\{\eta_\sigma\cap\{|z|>L\}\ne\emptyset,\sigma\in\{+,-\}\}\subset \{T^u>t_0\}$. By (\ref{V-V'}) again, $\rad_{0}(\eta_\sigma[0,u_\sigma(t_0)])\le 2e^{2t_0}<L$. So $\eta_\sigma\circ u_\sigma[0,t_0]$, $\sigma\in\{+,-\}$, do not  intersect $\{|z|>L\}$.

Let $\ha g^u_{t_0}(z)= (g_{K(\ulin u(t_0))}(z)-V^u_0(t_0))/e^{2t_0}$. Then $\ha g^u_{t_0}$ maps $\C\sem (K(\ulin u(t_0))^{\doub}\cup [v_-,v_+])$ conformally onto $\C\sem [-1,1]$, and fixes $\infty$ with $\ha g^u_{t_0}(z)/z\to e^{-2t_0}$ as $z\to\infty$.
 From $V^u_- \le v_-<0$, $V^u_+ \ge v_+>0$, and $V^u_0=(V^u_++V^u_-)/2$, we get  $|V^u_0(t_0)|\le |V^u_+(t_0)-V^u_-(t_0)|/2 =e^{2t_0}$. Applying Lemma \ref{distortion} to $f_2(z)=(z+1/z)/2$, $a_2=1/2$,  $f_1=(\ha g^u_{t_0})^{-1}\circ f_2$ and $a_1=e^{2t_0}/2$, and using that $L>2 e^{2t_0}$, we get $\{|z|> L\}\subset \C\sem (K(\ulin u(t_0))^{\doub}\cup [v_-,v_+])$ and
\BGE \{|z|> L/e^{2t_0}-2\}\supset \ha g^u_{t_0}(\{|z|> L\})\supset \{|z|> L/e^{2t_0}+2\}.\label{R1R2}\EDE

Note that both $\eta_+$ and $\eta_-$ intersect $\{|z|>L\}$ if and only if $T^u>t_0$ and the $\ha g^u_{t_0}$-image of the parts of $\eta_\sigma$ after $u_\sigma(t_0)$, $\sigma\in\{+,-\}$, both intersect the $\ha g^u_{t_0}$-image of  $\{|z|>L\}$.
By Lemma \ref{DMP-123} for $j=1$, conditionally on $\F_{\ulin u(t_0)}$ and the event $\{T^u>t_0\}$, the $\ha g^u_{t_0}$-image of the parts of $\eta_\sigma$ after $u_\sigma(t_0)$, $\sigma\in\{+,-\}$, after normalization, form a commuting pair of hSLE$_\kappa$ curves in the chordal coordinate started from $(R_+(t_0),-R_-(t_0);1,-1)$. The condition that $\eta_\sigma(u_\sigma(t_0))\not\in \eta_{-\sigma}[0,u_{-\sigma}(t_0)]$, $\sigma\in\{+,-\}$, is a.s.\ satisfied on $\{T^u>t_0\}$,which follows from Lemma \ref{W=V} and the fact that a.s.\ $R_\sigma(t_0)=(W^u_\sigma(t_0)-V_0^u(t_0))/(V_\sigma^u(t_0)-V_0^u(t_0))>0$, $\sigma\in\{+,-\}$, on $\{T^u>t_0\}$ (because of the transition density of $(\ulin R)$ vanishes outside $(0,1)^2$).
 From (\ref{R1R2}) we get
\BGE  \PP[\eta_\sigma\cap \{|z|>L\}\ne\emptyset, \sigma\in\{+,-\}|\lin\F_{\ulin u(t_0)},T^u>t_0]  \gtreqless p(\ulin R(t_0);   L/{e^{2t_0}}\pm 2)].\label{inclusion}\EDE
Here when we choose $+$ (resp.\ $-$) in $\pm$, the inequality holds with $\ge$ (resp.\ $\le$).

We use the approach of \cite{LR} to prove the convergence of $\lim_{L\to\infty} L^{\alpha_1} p(\ulin r,L)$.
We first estimate $p(L):=\int_{(0,1)^2} p(\ulin r;L) \til p^1_\infty(\ulin r)d\ulin r$, where $\til p^1_\infty$ is the quasi-invariant density for the process $(\ulin R)$ under $\PP_1$ given in Lemma \ref{property-til-p}. This is the probability that the two curves in a $2$-SLE$_\kappa$ in $\HH$ with link pattern $(r_+\lr 1;-r_-\lr -1)$ both hit $\{|z|>L\}$, where $(r_+,r_-)$ is a random point in $(0,1)^2$ that follows the density $\til p^1_\infty$. From Lemma \ref{property-til-p} we know that, for the deterministic time $t_0$, $\PP[T^u>t_0]=e^{-\alpha_1 t_0}$, and the law of $(\ulin R(t_0))$ conditionally on  $\{T^u>t_0\}$ still has density $\til p^1_\infty$.  Thus, the conditional joint law of the $\ha g^u_{t_0}$-images of the parts of $\ha\eta_\sigma$ after $\eta_\sigma(u_\sigma(t_0))$, $\sigma\in\{+,-\}$ given $\F^u_{t_0}$ and $\{T^u>t_0\}$ agrees with that of $(\ha\eta_+,\ha\eta_-)$. From (\ref{inclusion})
we get $P(L)\gtreqless  e^{-2\alpha_1 t_0} p( L/e^{2t_0}\pm 2)$.
Let $q(L)=L^{\alpha_1} p(L)$. Then (if $t_0\ge 3$ and $L>2e^{2t_0}$)
\BGE q(L)\gtreqless (1\pm 2e^{2t_0}/L)^{-\alpha_1} q( L/e^{2t_0}\pm 2).\label{L-L}\EDE
Suppose $L_0> 4$ and $L\ge e^6(L_0+2)$.  Let $t_\pm=\log(L/(L_0\mp 2))/2$. Then $L/e^{2t_\pm}\pm 2=L_0$, $t_+> t_-\ge 3$ and $L=(L_0-2)e^{2t_+}>2e^{2t_+}> 2e^{2t_-}$. From (\ref{L-L}) (applied here with $t_\pm$   in place of $t_0$) we get
\BGE   q(L)\gtreqless (1\mp 2/L_0)^{\alpha_1} q(L_0), \quad \mbox{if }L\ge e^6(L_0+2)\mbox{ and }L_0>4.\label{q-lim}\EDE
From (\ref{V-V'}) we know that $T^u>t_0$ implies that both $\eta_+$ and $\eta_-$ intersect $\{|z|>e^{2t_0}/64\}$. Since $\PP[T^u>t_0]=e^{-2\alpha_1 t_0}>0$ for all $t_0\ge 0$, we see that $p$ is positive on $[0,\infty)$, and so is $q$. From (\ref{q-lim}) we see that $\lim_{L\to \infty} q(L)$ converges to a point in $(0,\infty)$. Denote it by $q(\infty)$. By fixing $L_0\ge 4$ and sending $L\to \infty$ in (\ref{q-lim}), we get
\BGE   p(L_0)\gtreqless q(\infty) L_0^{-\alpha_1} (1\mp 2/L_0)^{-\alpha_1},\quad \mbox{if }L_0\ge 4. \label{p-asymp}\EDE

Now we estimate $p(\ulin r;L)$ for a fixed deterministic $\ulin r\in [0,1)^2\sem \{(0,0)\}$.
The process $(\ulin R)$   starts from $\ulin r$ and has transition density $\til p^1_t$ given by Lemma \ref{transition-1}. Fix $L>2e^6$ and choose $t_0\in[3, \log(L/2)/2)$. Then  both $\eta_+$ and $\eta_-$ intersect $\{|z|>L\}$ implies that $T^u>t_0$. Let $\beta_1=10$. From Lemma \ref{property-til-p} we know that $\PP_{1}[T^u>t_0]={{\cal Z}_1} G_1^*(\ulin r) e^{-2\alpha_1 t_0}(1+O(e^{- \beta_1 t_0}))$ and the law of $\ulin R(t_0)$ conditionally on   $\{T^u>t_0\}$ has a density on $(0,1)^2$, which equals $\til p^1_\infty\cdot (1+O(e^{-\beta_1 t_0}))$, where $\beta_1=10$. Using Lemma \ref{DMP-123} and (\ref{inclusion},\ref{p-asymp}) we get
$$p(\ulin r;L)={\cal Z}_1 q(\infty) G_1^*(\ulin r) e^{-2\alpha_1 t_0}(L/e^{2t_0})^{-\alpha_1}(1+O(e^{-\beta_1 t_0}))(1+O(e^{2t_0}/L)).$$
For $L>e^{36}$, by choosing $t_0>3$ such that $e^{2t_0}=L^{2/(2+\beta_1)}$ and letting $C_0={\cal Z} q(\infty)$, we get $p(\ulin r;L)=C_0  G_1^*(\ulin r) L^{-\alpha_1} (1+O(L^{-\beta_1'}))$. Here we note that $\beta_1'=\beta_1/(\beta_1+2)$.

Since $G_1^*(r_+,r_-)=G_1(r_+,-r_-;1,-1)$, we proved (\ref{Thm1-est}) for $v_\pm =\pm 1$,  $w_+\in[0,1)$, and $w_-\in(-1,0]$.
Since $G_1(aw_++b,aw_-+b;av_++b ,av_-+b)=a^{-\alpha_1} G_1(w_+,w_-;v_+,v_-)$ for any $a>0$ and $b\in\R$, by a translation and a dilation, we get (\ref{Thm1-est}) in the case that $(v_++v_-)/2\in[w_-,w_+]$. Here we use the assumption that $0\in[v_-,v_+]$ to control the amount of translation.

Finally, we consider all other cases, i.e., $(v_++v_-)/2\not\in [w_-,w_+]$. By symmetry, we may assume that $(v_++v_-)/2<w_-$. Let $v_0=(w_++w_-)/2$. Then $v_+>w_+>v_0>w_->v_-$, but $v_+-v_0<v_0-v_-$. We still let $V_\nu$ be the force point functions started from $v_\nu$, $\nu\in\{0,+,-\}$. By (\ref{pa-X}), $V_\nu$ satisfies the PDE $\pa_+ V_\nu\aeq \frac{2W_{+,1}^2}{V_\nu -W_+ }$ on ${\cal D}_1^{\disj}$ as defined in Section \ref{section-deterministic-2}.  Thus, on ${\cal D}_1^{\disj}$, for any $\nu_1\ne \nu_2\in\{+,-,0\}$,
$\pa_+\log |V_{\nu_1} -V_{\nu_2} |\aeq \frac{-2W_{+,1}^2} {(V_{\nu_2} -W_+ )(V_{\nu_1} -W_+ )}$,
which implies that
\BGE \frac{\pa_+(\frac{V_+-V_0}{V_0-V_-})}{\pa_+\log(V_+-V_-)}
=\frac{V_+-V_0}{W_+-V_0}\cdot \frac{V_+-V_-}{V_0-V_-} >1.\label{>1}\EDE
The displayed formula means that $\frac{V_+ -V_0 }{V_0 -V_- }|^-_0$ is increasing  faster than $\log(V_+ -V_-)|^-_0$. From the assumption, $\frac{V_+(\ulin 0)-V_0(\ulin 0)}{V_0(\ulin 0)-V_-(\ulin 0)}=\frac{v_+-v_0}{v_0-v_-}\in(0,1)$. Let $\tau_+$ be the first $t$ such that $\frac{V_+(t,0)-V_0(t,0)}{V_0(t,0)-V_-(t,0)}=1$; if such time does not exist, then set $\tau_+=T_+$. Then $\tau_+$ is an $\F^+$-stopping time, and from (\ref{>1}) we know that, for any $0\le t<\tau_+$, $|V_+(t,0)-V_-(t,0)|<e|v_+-v_-|$, which implies by (\ref{V-V}) that $\diam([v_-,v_+]\cup\eta_+[0,t])< e|v_+-v_-|$. Let $L=e|v_+-v_-|$. From  $0\in [v_-,v_+]$ we get $\tau_+\le \tau^+_L$.

Here and below, we write $\ulin W$ and $\ulin V$ for $(W_+,W_-)$ and $(V_+,V_-)$, respectively. From Lemma  \ref{M-mart} we know that $M_1(\cdot\wedge \tau_L^+,0)$ is a martingale closed by $M_1(\tau_L^+,0)$. By Proposition \ref{OST} and the facts that $M_1=G_1(\ulin W;\ulin V)$ and $M_1(t,0)=0$ for $t\ge T_+$, we get
\BGE \EE[{\bf 1}_{\{\tau_+<T_+\}} G_1(\ulin W ;\ulin V )|_{(\tau_+,0)}]=\EE[M_1(\tau_+,0)]=M_1(0,0)=G_1(\ulin w;\ulin v).\label{EM1}\EDE
Using the same argument as in the proof of (\ref{inclusion}) with $(\tau_+,0)$ in place of $\ulin u(t_0)$ and $g_{K(\tau_+,0)}$ in place of $\ha g^u_{t_0}$, we get
\BGE  \PP[\eta_\sigma\cap\{|z|=L\}\ne\emptyset,\sigma\in\{+,-\}|\F^+_{\tau_+},\tau_+<T_+]\gtreqless p((\ulin W;\ulin V)|_{(\tau_+,0)} ;L\pm(V_+ -V_- )|_{(\tau_+,0)}).\label{p=exp}\EDE

Suppose $\tau_+<T_+$. Then the middle point of $[V_-(\tau_+,0),V_+(\tau_+,0)]$ is $V_0(\tau_+,0)$, which lies in $[W_-(\tau_+,0),W_+(\tau_+,0)]$. Also note that $0\in [V_-(\tau_+,0),V_+(\tau_+,0)]$ since $V_\pm (\tau_+,0)\gtreqless v_\pm \gtreqless 0$. Let $L_\pm=L\pm(V_+ -V_- )|_{(\tau_+,0)}$.
We may apply the result in the particular case to get
\begin{align}
p((\ulin W;\ulin V)|_{(\tau_+,0)} ;L_\pm) \nonumber  =& C_0 G_1(\ulin W;\ulin V)|_{(\tau_+,0)}\cdot L_\pm^{-\alpha_1} (1+O( {(V_+ -V_- )|_{(\tau_+,0)}}/{L_\pm} )^{\beta_1'}  ) \nonumber \\
  =& C_0 G_1(\ulin W;\ulin V)|_{(\tau,0)}\cdot L ^{-\alpha_1} (1+O({|v_+-v_-|}/{L})^{\beta_1'}). \label{p=est}
\end{align}
Here in the last step we used $(V_+ -V_- )|_{(\tau_+,0)}\le e|v_+-v_-|$ and $L_\pm /L=1+O(|v_+-v_-|/L)$. Plugging (\ref{p=est}) into (\ref{p=exp}), taking expectation on both sides of (\ref{p=exp}), and using the fact that $\{\eta_+\cap \{|z|=L\}\ne\emptyset\}\subset\{\tau_+<T_+\}$, we get
\begin{align*}
p(\ulin w;\ulin v;L)  =&C_0 \EE[{\bf 1}_{\{\tau<T_+\}} G_1(\ulin W;\ulin V)|_{(\tau,0)}]\cdot L^{-\alpha_1} (1+O(|v_+-v_-|/L)^{\beta_1'})\\
  =&C_0G_1(\ulin w;\ulin v) \cdot L^{-\alpha_1} (1+O(|v_+-v_-|/L)^{\beta_1'}),
\end{align*}
 where in the last step we used (\ref{EM1}). The proof is now complete.
\end{proof}

\begin{Theorem}
Let $\kappa\in(4,8)$. Then	Theorem \ref{Thm1} holds with  the same  $\alpha_1,\beta_1,G_1$ but a different positive constant $C$ under either of the following two modifications:
\begin{enumerate}
	\item [(i)]   the set $\{|z|>L\}$ is replaced by $(L,\infty)$, $(-\infty,-L)$, or $(L,\infty)\cup (-\infty,-L)$;
	\item [(ii)]   the event that $\eta_\sigma\cap \{|z|>L\}\ne \emptyset$, $\sigma\in \{+,-\}$, is replaced by $\eta_+\cap \eta_-\cap \{|z|>L\}\ne\emptyset$.
\end{enumerate}
\label{Thm1'}
\end{Theorem}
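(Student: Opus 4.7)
The plan is to reuse the three-phase strategy from the proof of Theorem \ref{Thm1}, tracking how the modified event behaves under the self-similar rescaling. Write $E_*(L)$ for the event appearing in either (i) or (ii), and set $p_*(\ulin w;\ulin v;L)=\PP[E_*(L)]$. As in Theorem \ref{Thm1}, I first reduce to the symmetric case $v_\pm=\pm1$, $w_\pm=\pm r_\pm$ via the martingale pre-processing with $M_1$ at the stopping time $\tau_+$ used there, since $M_1$ is the characteristic $2$-SLE$_\kappa$ partition function and is insensitive to the particular far-field event. What needs checking is only that $\{\eta_\sigma\cap\{|z|>L\}\ne\emptyset\}\subset\{\tau_+<T_+\}$ still holds when $\{|z|>L\}$ is replaced by the one-sided sets in (i) or by the intersection event in (ii); this is automatic for $L>e|v_+-v_-|$.

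In the symmetric case, I would carry out the rescaling at $\ulin u(t_0)$ exactly as before. The crucial observation is that $E_*(L)$ is preserved by the map $\ha g^u_{t_0}$ up to an $O(1)$ shift of the distance parameter: for (i), $\ha g^u_{t_0}((L,\infty))=((g_{K(\ulin u(t_0))}(L)-V_0^u(t_0))/e^{2t_0},\infty)=(L/e^{2t_0}+O(1),\infty)$ by the same distortion bound used for Lemma \ref{distortion}; for (ii), (\ref{R1R2}) gives the two-sided trapping, and the intersection event itself is preserved because $\ha g^u_{t_0}$ is a homeomorphism of the relevant domain. Applying Lemma \ref{DMP-123} in the same way as (\ref{inclusion}) gives
\[ \PP[E_*(L)\mid\lin\F_{\ulin u(t_0)},\,T^u>t_0]\gtreqless p_*(\ulin R(t_0);\,L/e^{2t_0}\pm C_0) \]
for some $C_0=C_0(\kappa)$. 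Setting $p_*(L)=\int p_*(\ulin r;L)\til p^1_\infty(\ulin r)\,d\ulin r$ and $q_*(L)=L^{\alpha_1}p_*(L)$, the same telescoping as in (\ref{L-L})--(\ref{p-asymp}), fed by the stationarity property and exponential convergence in Lemma \ref{property-til-p}, shows that $q_*(L)$ is Cauchy and converges to some $q_*(\infty)\in[0,\infty)$, and then upgrades this to the pointwise asymptotic $p_*(\ulin r;L)=q_*(\infty)\,{\cal Z}_1\,G_1^*(\ulin r)L^{-\alpha_1}(1+O(L^{-\beta_1'}))$. Finally the $M_1$-martingale identity (\ref{EM1}) at $\tau_+$ extends the asymptotic to arbitrary $(\ulin w;\ulin v)$, and gives (\ref{Thm1-est}) with a new constant $C_*=C_*(\kappa)>0$ in place of $C$.

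The main obstacle is proving strict positivity $q_*(\infty)>0$; the iteration alone only yields the upper bound. For (i), the way I would establish this is to combine Theorem \ref{Thm1} with a symmetric boundary-hitting estimate: for $\kappa\in(4,8)$ the chordal SLE$_\kappa$ curves hit both sides of the real axis with uniformly positive probability, so conditioned on the event of Theorem \ref{Thm1} (which has probability $\asymp L^{-\alpha_1}$), the conditional probability that each curve ultimately exits $\{|z|>L\}$ through, say, $(L,\infty)$ is bounded below uniformly in $L$; this step uses the DMP after the first hit of $\{|z|=L\}$ and an estimate of Wu--Miller--Sheffield type (as in \cite[Lemma 2.4]{MW}) together with the analog of Lemma \ref{uniform} on $\xi$-stopped domains. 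For (ii), positivity reduces to showing that the two curves of a $2$-SLE$_\kappa$ have a nonempty intersection with positive probability inside a bounded region; this is known for $\kappa\in(4,8)$ (indeed, it is the very feature that motivates the Minkowski-content program outlined in the introduction). Localizing via Lemma \ref{DMP-123}, conditioning on $\ulin R(t_0)$ taking values in a compact subset $K\subset(0,1)^2$, and applying the intersection-probability lower bound uniformly in $K$, yields the required $\liminf_L L^{\alpha_1}p_*(L)>0$. This positivity step is where $\kappa>4$ is used essentially; for $\kappa\le 4$ both modifications would be null events, matching the restriction in the statement.
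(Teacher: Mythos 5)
Your proposal is correct and is essentially a fleshed-out version of the paper's own (very terse) proof, which simply observes that the Theorem~\ref{Thm1} argument carries over verbatim, with $\kappa\in(4,8)$ needed to ensure the modified far-field events have positive probability. The extra detail you supply — the distortion bound showing $\ha g^u_{t_0}$ sends $(L,\infty)$ (resp.\ the two-sided set and the intersection event) to an analogous set with distance parameter $L/e^{2t_0}+O(1)$, and the positivity discussion via boundary-hitting and intersection estimates for $\kappa>4$ — is exactly what the paper leaves implicit when it says "the same argument works"; so this is the same route, not a different one.
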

\begin{proof}
	The same argument in the proof of Theorem \ref{Thm1} works here, where the assumption that $\kappa\in(4,8)$ is used to guarantee that the probability of all event are positive for any $L>0$.
\end{proof}

\begin{Theorem}
   Let $v_-<w_-<w_+<v_+\in\R$ be such that $0\in [v_-,v_+]$. Let $\eta_w$ be an hSLE$_\kappa$ curve in $\HH$ connecting $w_+$ and $w_-$ with force points $v_+$ and $v_-$.
    Let $\alpha_2=\frac{2}\kappa(12-\kappa)$, $\beta_2'=\frac 56$, and $G_2$ be as in (\ref{G2(w,v)}.
  Then there is a constant $C>0$ depending only on $\kappa$ such that, as $L\to \infty$,
  $$  \PP[\ha\eta_w\cap\{|z|>L\}\ne \emptyset  ]= CL^{-\alpha_2} G_2(\ulin w;\ulin v) (1+O ( {|v_+-v_-|}/L )^{\beta_2'}),$$
  where the implicit constants in the $O(\cdot)$ symbol depend only on $\kappa$.
 \label{Thm2}
\end{Theorem}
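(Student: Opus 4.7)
The plan is to reduce Theorem \ref{Thm2} to the two-curve framework of Section \ref{section-iSLE-1}. Realize $\eta_w$ as the common underlying curve of the commuting pair $(\eta_+,\eta_-;{\cal D}_2)$, where $\eta_\sigma$ is the chordal reparametrization of $\eta_w$ oriented from $w_\sigma$, stopped at the first time it disconnects $w_{-\sigma}$ from $\infty$. The first step is the geometric reduction: for $L$ larger than a deterministic constant depending on $|v_+-v_-|$ (and certainly for $L$ larger than the diameter of $\eta_+\cup\eta_-$ would force), the event $\{\eta_w\cap\{|z|>L\}\ne\emptyset\}$ coincides with $\{\eta_\sigma\cap\{|z|>L\}\ne\emptyset,\ \sigma\in\{+,-\}\}$. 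Indeed, after $\eta_+$ terminates, the remaining portion of $\eta_w$ lies in a bounded complementary component whose diameter is controlled by that of $\eta_+$ (via (\ref{V-V}) applied to $(\eta_+,\eta_-;{\cal D}_2)$), so $\eta_w$ can reach $\{|z|>L\}$ only through the initial segment $\eta_+$; symmetrically $\eta_-$ must also reach it.

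With the reduction in place, the entire argument of Theorem \ref{Thm1} transfers almost verbatim, with $M_1,G_1,\alpha_1,\til p^1_\infty,{\cal Z}_1,\beta_1'$ replaced by $M_2,G_2,\alpha_2,\til p^2_\infty,{\cal Z}_2,\beta_2'=5/6$. Treating first the normalized special case $v_\pm=\pm 1$ with $v_0:=0\in[w_-,w_+]$, introduce the time curve $\ulin u:[0,T^u)\to{\cal D}_2$ from Section \ref{time curve} and the process $\ulin R=(R_+,R_-)$. For $t_0\ge 3$ and $L>2e^{2t_0}$, the crossing event forces $T^u>t_0$, and Lemma \ref{DMP-123} for $j=2$ says that, conditionally on $\lin\F_{\ulin u(t_0)}\cap\{T^u>t_0\}$, the pushforward of the continuation of the pair under $\ha g^u_{t_0}$, after normalization, is a commuting pair of hSLE$_\kappa$ curves in the chordal coordinate with link pattern $(R_+(t_0)\lr -R_-(t_0);1,-1)$. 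Combining with the inclusion (\ref{R1R2}), we obtain the analog of (\ref{inclusion}), and integrating against the quasi-invariant density $\til p^2_\infty$ (Lemma \ref{property-til-p}) yields the self-similarity inequalities (\ref{L-L})-(\ref{q-lim}) for $q(L):=L^{\alpha_2}\int p(\ulin r;L)\til p^2_\infty(\ulin r)\,d\ulin r$; positivity of $p(L)$ for all $L>0$ (which holds since $T^u$ can be arbitrarily large with positive probability by Lemma \ref{property-til-p}) implies the convergence of $q(\infty)\in(0,\infty)$.

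For a fixed starting configuration in the special case, invoke Lemma \ref{property-til-p}(ii) with $j=2$, $\beta_2=10$: under $\PP_2$ starting from $\ulin r$ we have $\PP_2[T^u>t_0]={\cal Z}_2 G_2^*(\ulin r)e^{-2\alpha_2 t_0}(1+O(e^{-\beta_2 t_0}))$ and the conditional density of $\ulin R(t_0)$ on $\{T^u>t_0\}$ equals $\til p^2_\infty(1+O(e^{-\beta_2 t_0}))$. Applying the DMP-inclusion to this conditional distribution, together with the already-established $p(L_0)=q(\infty)L_0^{-\alpha_2}(1+O(1/L_0))$, and optimizing by $e^{2t_0}=L^{2/(\beta_2+2)}$, gives $p(\ulin r;L)=C_0 G_2^*(\ulin r)L^{-\alpha_2}(1+O(L^{-\beta_2'}))$ with $\beta_2'=\beta_2/(\beta_2+2)=5/6$. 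Translation and scaling then handle arbitrary positions with $(v_++v_-)/2\in[w_-,w_+]$, using the scaling $G_2(aw_\sigma+b;av_\sigma+b)=a^{-\alpha_2}G_2(\ulin w;\ulin v)$ and the hypothesis $0\in[v_-,v_+]$ to bound the required translation.

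Finally, the asymmetric subcase $(v_++v_-)/2\notin[w_-,w_+]$ is handled by an Optional Stopping argument with the martingale $M_2$ of Lemma \ref{M-mart2}: run $\eta_+$ until the driving and force configuration becomes symmetric (as in (\ref{>1}) for Theorem \ref{Thm1}), use Proposition \ref{OST} together with $M_2=G_2(\ulin W;\ulin V)$ and $M_2=0$ on $\R_+^2\setminus{\cal D}_2$ to identify $\EE[{\bf 1}_{\{\tau_+<T_+\}}G_2(\ulin W;\ulin V)|_{(\tau_+,0)}]=G_2(\ulin w;\ulin v)$, and combine with the just-established symmetric-case asymptotic. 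The main technical obstacle is the geometric reduction of the first step: one must quantify carefully how large $L$ must be so that the remaining bounded-component portion of $\eta_w$ is inside $\{|z|\le L\}$, and propagate this threshold through the iterated application of the DMP without degrading the $O((|v_+-v_-|/L)^{\beta_2'})$ error, which is accomplished using the Beurling-type bound of Lemma \ref{Beurling} in tandem with Proposition \ref{g-z-sup}.
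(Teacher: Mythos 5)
Your proposal is correct and follows the paper's own route: reduce the single hSLE$_\kappa$ hitting event to the two-curve event for the commuting pair $(\eta_+,\eta_-;{\cal D}_2)$ of Section \ref{section-iSLE-1}, and then repeat the argument of Theorem \ref{Thm1} with $M_2,G_2,\alpha_2$, Lemma \ref{DMP-123} ($j=2$), Lemma \ref{property-til-p} ($j=2$, $\beta_2=10$, $\beta_2'=5/6$), and Lemma \ref{M-mart2} in place of Lemma \ref{M-mart}.

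One small miscalibration in the closing paragraph: the ``main technical obstacle'' you describe is not really there. The geometric reduction is an \emph{exact} identity of events for every fixed $L>\max\{|v_+|,|v_-|\}$ (once $\eta_+$ separates $w_-$ from $\infty$, the remainder of $\eta_w$ is trapped in $\Hull(\eta_+)$, which is contained in $\{|z|\le L\}$ because $\eta_+\subset\{|z|\le L\}$ and the terminal endpoints lie in $[v_-,v_+]\subset[-L,L]$; no estimate like (\ref{V-V}) is required). After conformal normalization in the DMP step, the new threshold is $\max\{|\pm 1|\}=1$, and $L/e^{2t_0}\pm 2>1$ is automatic from the standing constraint $L>2e^{2t_0}$ used in the proof of Theorem \ref{Thm1}, so the condition propagates trivially without any use of Lemma \ref{Beurling} or Proposition \ref{g-z-sup} for this purpose; those are used elsewhere in Theorem \ref{Thm1}'s proof to control the time curve, not to maintain the trapping threshold.
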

\begin{proof}
  Let $(\eta_+,\eta_-;{\cal D}_2)$ be the  random commuting pair of chordal Loewner curves as defined in Section \ref{section-iSLE-1}. Then for $L>\max\{|v_+|,|v_-|\}$, $\ha \eta_w\cap\{|z|>L\}\ne \emptyset$ if and only if $\eta_{\sigma}\cap \{|z|>L\}\ne\emptyset$ for $\sigma\in \{+,-\}$.
   The rest of the proof follows  that  of Theorem \ref{Thm1} except that we now apply Lemmas \ref{property-til-p} and \ref{DMP-123} with $j=2$ and  use Lemma \ref{M-mart2}
    in place of Lemma  \ref{M-mart}.
\end{proof}

\begin{Theorem}
    Let $ w_-<w_+<v_+ \in\R$ be such that $0\in [w_-,v_+]$. Let $\eta_w$ be an hSLE$_\kappa$ in $\HH$ connecting $w_+$ and $w_-$ with force points $v_+$ and $\infty$. Let $\alpha_3=\frac {12}\kappa-1$, $\beta_3'=\frac 45$, and
   $G_3(\ulin w;v_+)$ be as in (\ref{G3(w,v)}).
  Then there is a constant $C>0$ depending only on $\kappa$ such that, as $L\to \infty$,
  $$  \PP[\ha\eta_w \cap\{|z|>L\}\ne \emptyset ]= CL^{-\alpha_3} G_3(\ulin w;v_+) (1+O ( {|w_+-v_-|}/L )^{ {\beta_3'} } ),$$
  where the implicit constants in the $O(\cdot)$ symbol depend only on $\kappa$.
  \label{Thm3}
\end{Theorem}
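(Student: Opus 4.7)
The plan is to follow the proof of Theorem \ref{Thm1}, using the commuting pair $(\eta_+,\eta_-;{\cal D}_3)$ of Section \ref{section-iSLE-2} and the Case (B) machinery developed there. First I note that for $L$ exceeding the diameter of the support of the endpoints and the force point of $\ha\eta_w$, the event $\ha\eta_w\cap\{|z|>L\}\ne\emptyset$ is equivalent to $\eta_\sigma\cap\{|z|>L\}\ne\emptyset$ for both $\sigma\in\{+,-\}$, since $\eta_+$ and $\eta_-$ (two parametrizations of $\eta_w$ from opposite ends) together cover all of $\eta_w$. This reduces the problem to estimating a joint two-curve hitting probability, exactly as in Theorems \ref{Thm1} and \ref{Thm2}.

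For the special case, I scale to $v_+=1$ and restrict to $w_-\in(-1,0)$ and $w_+\in(0,1)$. Introducing the artificial points $v_0=0$ and $v_-=-1$ (so that $v_+-v_0=v_0-v_-$) makes the time curve $\ulin u$ of Section \ref{time curve} applicable, and the process $(\ulin R)$ has initial value $(\ulin R(0))=(w_+,-w_-)\in(0,1)^2$. In these coordinates the family of configurations $w_\pm=\pm r_\pm$ with $(r_+,r_-)\in(0,1)^2$ is preserved under the DMP map from Lemma \ref{DMP-123} with $j=3$. Mirroring the proof of Theorem \ref{Thm1}, I set $p(\ulin r;L)$ for this family, define $p(L):=\int p(\ulin r;L)\til p^3_\infty(\ulin r)\,d\ulin r$ with $\til p^3_\infty$ the quasi-invariant density from Lemma \ref{property-til-p} ($j=3$), pick $t_0\in[3,\tfrac12\log(L/2))$, use $V_+^u\ge 1$ and $V_-^u\le -1$ to obtain $|V_0^u(t_0)|\le e^{2t_0}$, and apply Lemma \ref{distortion} to $\ha g^u_{t_0}$ to get $\{|z|>L/e^{2t_0}+2\}\subset \ha g^u_{t_0}(\{|z|>L\})\subset\{|z|>L/e^{2t_0}-2\}$. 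Combining this with the quasi-invariance of $\til p^3_\infty$ yields the recursion $q(L)\gtreqless(1\pm 2/L_0)^{\alpha_3}q(L_0)$ for $q(L):=L^{\alpha_3}p(L)$, and hence convergence $q(L)\to q(\infty)\in(0,\infty)$.

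For deterministic initial data $\ulin r\in(0,1)^2$, a single-step DMP analysis combined with Lemma \ref{property-til-p}(ii) gives $\PP_3[T^u>t_0]={\cal Z}_3 G_3^*(\ulin r)e^{-2\alpha_3 t_0}(1+O(e^{-\beta_3 t_0}))$ with $\beta_3=8$, and the conditional density of $\ulin R(t_0)$ on $\{T^u>t_0\}$ equals $\til p^3_\infty$ up to a $1+O(e^{-\beta_3 t_0})$ factor. Choosing $e^{2t_0}=L^{2/(\beta_3+2)}$ to balance the two errors yields $p(\ulin r;L)=C G_3^*(\ulin r)L^{-\alpha_3}(1+O(L^{-\beta_3'}))$ with $\beta_3'=\beta_3/(\beta_3+2)=\tfrac 45$. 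The identification $G_3^*(r_+,r_-)=G_3(r_+,-r_-;1)$ and the scaling property of $G_3$ then produce the theorem in this special case.

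The general case, when the scaled $(w_+,w_-)$ falls outside the nice range $(0,1)\times(-1,0)$, is handled as in the last paragraph of the proof of Theorem \ref{Thm1}: I reset $v_0=(w_++w_-)/2$ and $v_-=2v_0-v_+$, so the initial setup is symmetric about the shifted $v_0$ and the initial $(\ulin R)$ lies on the diagonal of $(0,1)^2$; introduce a stopping time $\tau$ at which the driving/force-point configuration enters the nice range (analogous to the $\tau_+$ in Theorem \ref{Thm1}); and combine the martingale property of $M_3$ (implicit in Section \ref{section-iSLE-2} by the same argument as Lemma \ref{M-mart}) under optional stopping with the special case estimate applied after the DMP at $\tau$. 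The identity $\EE[{\bf 1}_{\{\tau<T\}}G_3(\ulin W;V_+)|_\tau]=G_3(\ulin w;v_+)$ together with the bound $|V_+-V_-|(\tau)\lesssim v_+-w_-$ produces the stated error term $O(|w_+-v_-|/L)^{\beta_3'}=O((v_+-w_-)/L)^{\beta_3'}$. The hard part will be executing this final reduction cleanly: one must track carefully how the annulus $\{|z|>L\}$ maps under the DMP normalization when $V_0^u(\tau)$ is offset from the origin, and verify that this shift is controlled by a constant multiple of $e^{2t_0}$ so that the special-case asymptotic applies with the claimed error rate $\beta_3'$.
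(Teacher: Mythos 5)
Your proposal captures the correct overall structure (reduction to a two‑curve hitting estimate, the time curve and the process $\ulin R$, Lemma \ref{property-til-p} with $j=3$, the Koebe distortion step with Lemma \ref{distortion}, and the choice $e^{2t_0}=L^{2/(\beta_3+2)}$), and the identification $\beta_3'=\beta_3/(\beta_3+2)=4/5$ is right. The issue is your ``general case'' paragraph, which is not needed here and reveals a conceptual confusion. The whole point of introducing $v_0:=(w_++w_-)/2$ and $v_-:=2v_0-v_+$ is that, unlike Theorems \ref{Thm1} and \ref{Thm2} where $v_-$ is prescribed data that may violate the symmetry $(v_++v_-)/2\in[w_-,w_+]$, here $v_-$ is \emph{artificial}: you get to choose it. With the paper's choice this symmetry holds identically, the initial value $\ulin R(0)$ is \emph{always} on the diagonal of $(0,1)^2$ (since $R_\pm(0)=\tfrac{w_+-w_-}{(v_+-w_+)+(v_+-w_-)}\in(0,1)$), and the affine normalization $z\mapsto (z-v_0)/(v_+-v_0)$ puts you in the nice range directly. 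The stopping-time $\tau$ and the optional-stopping identity for $M_3$ that you invoke are exactly the machinery used for the genuine obstruction in Theorem \ref{Thm1} (there $\tau_+$ is nondegenerate because the symmetry can fail), but for Theorem \ref{Thm3} the analogue of $\tau_+$ is identically $0$, so that paragraph collapses. You in fact write that resetting $v_0,v_-$ makes ``the initial $\ulin R$ lie on the diagonal of $(0,1)^2$'' --- at that point you should stop, not introduce $\tau$.

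Your closing worry about ``tracking how the annulus $\{|z|>L\}$ maps under the DMP normalization when $V_0^u(\tau)$ is offset from the origin'' conflates two things that are already handled. The offset $|V_0^u(t_0)-v_0|<e^{2t_0}I$ inside the recursion is exactly what Lemma \ref{distortion} controls (the $\pm 2$ squeeze), and the initial offset coming from translating the origin to $v_0$ is controlled by the hypothesis $0\in[w_-,v_+]$ (which gives $|v_0|\lesssim |v_+-w_-|$) and is absorbed into the error term $O(|v_+-w_-|/L)^{\beta_3'}$ by the same $(L\pm\text{const})^{-\alpha_3}=L^{-\alpha_3}(1+O(1/L))$ comparison used at the end of the special case in Theorem \ref{Thm1}. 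No separate stopping-time argument is required. (As a minor note, the quantity $|w_+-v_-|$ in the error term refers to the artificial $v_-=(w_++w_-)-v_+$, so $|w_+-v_-|=|v_+-w_-|$; the proof naturally produces $|v_+-v_-|$, which is comparable to $|v_+-w_-|$ within a factor of $2$.)

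Other than this unnecessary detour, the remainder of your argument --- the equivalence of $\{\ha\eta_w\cap\{|z|>L\}\ne\emptyset\}$ with the two-curve event, the averaged recursion for $q(L)=L^{\alpha_3}p(L)$ and its convergence, the deterministic-initial-data estimate via Lemma \ref{property-til-p}(ii), and the scaling property of $G_3$ --- matches the paper's intended proof.
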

\begin{proof}
  The proof follows  those of Theorems \ref{Thm2} and \ref{Thm1} except that we now introduce $v_0:=(w_++w_-)/2$ and $v_-:=2v_0-v_+$ as in Section \ref{section-iSLE-2}. Then we can define the time curve $\ulin u$ as in Section \ref{time curve} and apply Lemmas \ref{property-til-p} and \ref{DMP-123} with $j=3$. 
\end{proof}

\begin{proof}[Proof of Theorem \ref{main-Thm1}]
By conformal invariance of $2$-SLE$_\kappa$, we may assume that $D=\HH$ and $z_0=\infty$. Case (A1) follows immediately from Theorem \ref{Thm1}. Cases (A2) and (B) respectively follow from Theorems \ref{Thm2} and \ref{Thm3} since we only need to consider the Green's function for the curve connecting $w_+$ and $w_-$, which is an hSLE$_\kappa$ curve.
\end{proof}

\begin{Remark}
 The  hSLE$_\kappa$ curve  is a special case of the intermediate SLE$_\kappa(\rho)$ (iSLE$_\kappa(\rho)$ for short) curves in \cite{kappa-rho} with $\rho=2$. An iSLE$_\kappa(\rho)$  curve is defined using Definition \ref{Def-hSLE} with $F:= \,_2F_1(1-\frac{4}\kappa,\frac{2\rho}\kappa; \frac{2\rho+4}\kappa;\cdot )$ and $\til G:=\kappa \frac{F'}F+\rho$. The curve is well defined for $\kappa\in(0,8)$ and $\rho>\min\{-2,\frac \kappa 2-4\}$, and satisfies reversibility when $\kappa\in(0,4]$ and $\rho>-2$ or $\kappa\in (4,8)$ and $\rho\ge \frac \kappa 2-2$ (cf.\ \cite{multipl-kappa-rho}). When an iSLE$_\kappa(\rho)$ satisfies reversibility, we can obtain a commuting pair of iSLE$_\kappa(\rho)$ curves in the chordal coordinate started from $(w_+\lr w_-;v_+,v_-)$ or $(w_+\lr w_-;v_+)$ for given points $v_-<w_-<w_+<v_+$, which satisfy two-curve DMP. Following similar arguments, we find that Theorems \ref{Thm2} and \ref{Thm3} respectively hold for iSLE$_\kappa(\rho)$ curves with $\alpha_2=\frac{\rho+2}\kappa(\rho+4-\frac \kappa 2)$, $\alpha_3=\frac 2\kappa (\rho+4-\frac \kappa 2)$, $\beta_2'=\frac{2\rho+6}{2\rho+8}$, $\beta_3'=\frac{\rho+6}{\rho+8}$, and (with $F= \,_2F_1(1-\frac{4}\kappa,\frac{2\rho}\kappa; \frac{2\rho+4}\kappa;\cdot )$)
$$G_2(\ulin w;\ulin v)=|w_+-w_-|^{\frac 8\kappa -1}|v_+-v_-|^{\frac{\rho(2\rho+4-\kappa)}{2\kappa}}\prod_{\sigma\in\{+,-\}} |w_\sigma-v_{-\sigma}|^{\frac{2\rho}\kappa}  F\Big(\frac{(v_+-w_+)(w_--v_-)}{(w_+-v_-)(v_+-w_-)}\Big)^{-1},$$
$$G_3(\ulin w;v_+)=|w_+-w_-|^{\frac 8\kappa -1}|v_+-w_-|^{\frac{2\rho}\kappa}  F\Big(\frac{v_+-w_+ }{ w_+-w_-}\Big)^{-1}.$$
 The proofs use the estimate on the transition density of $\ulin R$ under $\PP_{\ulin w;\ulin v}^{(\rho,\rho)}$ and $\PP_{\ulin w;\ulin v}^{(\rho)}$ (Corollary \ref{transition-R-infty}) and revisions of Lemmas \ref{RN-Thm2-inv} and  \ref{RN-Thm3-inv})  with $\PP_2^0$ and $\PP_3^0$ now respectively representing  $\PP_{\ulin w;\ulin v}^{(\rho,\rho)}$ and $\PP_{\ulin w;\ulin v}^{(\rho)}$, $\PP_2$ and $\PP_3$ now respectively representing the joint law of the driving functions for a commuting pair of iSLE$_\kappa(\rho)$ curves in the chordal coordinate started from $(w_+\lr w_-;v_+,v_-)$ and from $(w_+\lr w_-;v_+)$, and $M_2$ and $M_3$ replaced by $G_2(W_+,W_-;V_+,V_-)$ and $G_3(W_+,W_-;V_+)$ for the current $G_2$ and $G_3$.

The revision of Theorem \ref{Thm2} (resp.\ \ref{Thm3}) also holds in the degenerate case: $v_+=w_+^+$, in which  the $\eta_w$ oriented from $w_=$ to $w_+$ is a chordal SLE$_\kappa(\rho)$ curve in $\HH$ from $w_-$ to $w_+$ with the force point at $v_-$ (resp.\ $\infty$). After a conformal map, we then obtain the boundary Green's function for a chordal SLE$_\kappa(\rho)$ curve in $\HH$ from $0$ to $\infty$ with the force point $v>0$ at a point $z_0\in (v,\infty)$ or at $z_0=v$. Such Green's functions may also be obtained from the traditional one-curve approach in \cite{Mink-real}.
The exponents $\alpha_2$ and $\alpha_3$ have appeared in \cite[Theorem 3.1]{MW} with a rougher estimate on the intersection probability.
\end{Remark}


\begin{thebibliography}{00}
	\bibitem{Ahl} Lars V.\ Ahlfors. {\it Conformal invariants: topics in geometric function theory}. McGraw-Hill Book Co., New York, 1973.
\bibitem{Julien} Julien Dub\'edat. Commutation relations for SLE, {\it Comm. Pure Applied Math.}, {\bf 60}(12):1792-1847, 2007.
\bibitem{multiple} Michael Kozdron and Gregory Lawler. The configurational measure on mutually avoiding SLE paths. {\it Universality and renormalization, Fields Inst. Commun.}, {\bf 50}, Amer. Math. Soc., Providence, RI, 2007, pp. 199-224.
\bibitem{Mink-real} Gregory Lawler. Minkowski content of the intersection of a Schramm-Loewner evolution (SLE) curve with the real line, {\it J. Math. Soc. Japan.}, {\bf 67}:1631-1669, 2015.
	\bibitem{Law-SLE} Gregory Lawler. {\em Conformally Invariant Processes in the Plane}, Amer. Math. Soc, 2005.
	\bibitem{LR} Gregory F.\ Lawler and Mohammad A.\ Rezaei. Minkowski content and natural parametrization for the Schramm-Loewner evolution. {\it Ann.\ Probab.}, {\bf 43}(3):1082-1120, 2015.
	\bibitem{LSW1} Gregory Lawler, Oded Schramm and Wendelin Werner.	Values of Brownian intersection exponents I: Half-plane exponents.	 {\it Acta Math.}, {\bf 187}(2):237-273, 2001.
\bibitem{LSW-8/3} Gregory   Lawler, Oded Schramm and Wendelin Werner. Conformal restriction: the chordal case, {\it J.\ Amer.\ Math.\ Soc.}, {\bf 16}(4): 917-955, 2003.
\bibitem{MS3} Jason Miller and Scott Sheffield. Imaginary Geometry III: reversibility of SLE$_\kappa$ for $\kappa\in (4, 8)$. {\it Ann.\ Math.}, {\bf 184}(2):455-486, 2016.
\bibitem{MS2} Jason Miller and Scott Sheffield. Imaginary Geometry II: reversibility of SLE$_\kappa(\rho_1 ;\rho_2 )$ for $\kappa\in (0,4)$.
{\it Ann. Probab.}, {\bf 44}(3):1647-722, 2016.
	\bibitem{MS1} Jason Miller and Scott Sheffield. Imaginary Geometry I: intersecting SLEs. {\it Probab.\ Theory Relat.\ Fields}, {\bf 164}(3):553-705, 2016.
\bibitem{MSW} Jason Miller, Scott Sheffield and Wendelin Werner. Non-simple SLE curves are not determined by their range. To appear in {\it J.\ Eur.\ Math.\ Soc.}
\bibitem{MW} Jason Miller and Hao Wu. Intersections of SLE Paths: the double and cut point dimension of SLE. {\it Probab.\ Theory Rel.}, {\bf 167}(1-2):45-105, 2017.
	\bibitem{NIST:DLMF} NIST Digital Library of Mathematical Functions. \url{http://dlmf.nist.gov/18}, Release 1.0.6 of 2013-05-06.
\bibitem{RY} Daniel Revuz and Marc Yor. {\it Continuous Martingales and Brownian Motion}. Springer, Berlin, 1991.
\bibitem{RS} Steffen Rohde and Oded Schramm. Basic properties of SLE. {\it Ann.\  Math.}, {\bf 161}:879-920, 2005.
\bibitem{BSLE} Steffen Rohde and Dapeng Zhan. Backward SLE and the symmetry of the welding. {\it Probab.\ Theory Relat.\ Fields}, {\bf 164}(3-4):815-863, 2016.
	\bibitem{SW} Oded Schramm and David B.\ Wilson. SLE coordinate changes. {\it New York J.\ Math.}, {\bf 11}:659--669, 2005.
\bibitem{Wu-hSLE} Hao Wu. Hypergeometric SLE: Conformal Markov Characterization and Applications. {\it Commun.\ Math.\ Phys.}, {\bf 374}:433-484, 2020.
\bibitem{Green-cut} Dapeng Zhan. Green's function for cut points of  chordal SLE attached with boundary arcs. In preprint, arXiv:2004.02242.
\bibitem{Two-Green-interior} Dapeng Zhan. Two-curve Green's function for $2$-SLE: the interior case. {\it Commun.\ Math.\ Phys.}, {\bf 375}:1-40, 2020.
\bibitem{multipl-kappa-rho} Dapeng Zhan. Time-reversal of multiple-force-point SLE$_\kappa(\ulin\rho)$ with all force points lying on the same side. In preprint, arXiv:1906.06005.
	\bibitem{decomposition} Dapeng Zhan. Decomposition of Schramm-Loewner evolution along its curve. {\it Stoch.\ Proc.\ Appl}, {\bf 129}(1):129-152, 2019.
	\bibitem{tip} Dapeng Zhan. Ergodicity of the tip of an SLE curve. {\it Prob.\ Theory Relat.\ Fields}, {\bf 164}(1):333-360, 2016.
	\bibitem{kappa-rho} Dapeng Zhan. Reversibility of some chordal SLE$(\kappa;\rho)$ traces. {\it J.\ Stat.\ Phys.}, {\bf 139}(6):1013-1032, 2010.
	\bibitem{duality} Dapeng Zhan. Duality of chordal SLE.  {\it Invent.\ Math.}, {\bf 174}(2):309-353, 2008.
	\bibitem{reversibility} Dapeng Zhan. Reversibility of chordal SLE. {\it Ann.\ Probab.}, {\bf 36}(4):1472-1494, 2008.
\bibitem{LERW} Dapeng Zhan. The Scaling Limits of Planar LERW in Finitely Connected Domains. {\it Ann.\ Probab.} {\bf 36}, 467-529, 2008.
\end{thebibliography}
\end{document}